\documentclass[oneside, 16pt]{amsart}
\usepackage[utf8]{inputenc}

\usepackage{wrapfig}
\usepackage{PersCMDS}

\usepackage{amssymb}
\usepackage{mathtools}
\usepackage{ mathrsfs }
\usepackage{tikz-cd}

\usepackage[hidelinks]{hyperref}
\usepackage{enumitem}

\usepackage[style=alphabetic]{biblatex}

\usepackage{subfiles}
\usepackage[]{rotating}
\usepackage{xcolor}

\newtheorem{theorem}{Theorem}[section]
\newtheorem*{theorem*}{Theorem}
\newtheorem{corollary}[theorem]{Corollary}
\newtheorem{proposition}[theorem]{Proposition}
\newtheorem{lemma}[theorem]{Lemma}
\theoremstyle{definition}
\newtheorem{definition}[theorem]{Definition}
\newtheorem{remark}[theorem]{Remark}

\newcommand{\Perv}[2]{\text{Perv}(#1)}
\renewcommand{\derCat}[3]{\text{D}^b_{#1}(#2)}
\renewcommand{\perv}[1]{\text{Perv}(#1)}

\newcommand{\consder}[1]{\text{D}^b_c(#1)}

\title{Spreading out perverse sheaves}
\author[]{Beat Zurbuchen}
\newif\draft

\begin{document}
	\begin{abstract} We spread out geometrically irreducible perverse sheaves. Over an arithmetic base scheme, such sheaves extend from the generic fiber to a relatively perverse, universally locally acyclic sheaf over a dense open of the base scheme. As an application, we prove the arithmetic Kashiwara conjecture by Esnault and Kerz for geometric traits in equicharacteristic and generalize the decomposition theorem for arithmetic complexes to fields finitely generated over a separably closed field. We also recover the Hard Lefschetz theorem for arithmetic perverse sheaves. 
	\end{abstract}
	\maketitle
	
\section*{Introduction} Specialization arguments involving perverse sheaves are often based on a certain equivalence of categories introduced in \cite[{}{6.1}]{BBD}. This equivalence of categories extends sheaves from the generic fiber of a scheme over a trait to the scheme itself. 

In general, one does not expect these extensions to globalize to more general bases because there are sheaves which are not generically unramified. The main observation of this paper is that, if the perverse sheaf on the generic fiber is geometrically irreducible, then it becomes generically unramified after passing to an etale neighborhood of the base  and twisting by a rank one sheaf (coming from the etale neighborhood). An arithmetic character is always generically unramified by the quasi-unipotent monodromy theorem. This allows us to extend geometrically irreducible perverse sheaves from the generic fiber to an integral arithmetic base scheme directly:
\begin{theorem*}Let $\ell$ be a prime. Let $S_0$ be an integral scheme of finite type over $\BZ[1/\ell]$ with function field $k$. Let $X_0 \rightarrow S_0$ be a finite type morphism with generic fiber $X_k$. Let $M \in \Perv{X_k}{\Qbarl}$ be a geometrically irreducible perverse sheaf with coefficients in $\Qbarl$. There exists a dense open subscheme $U_0 \subseteq S_0$ and $M_0 \in \text{Perv}^{\text{ULA}}(X_0/U_0)$ such that the restriction of $M_0$ to the generic fiber is isomorphic to $M$. If $U_0$ is geometrically unibranch, the extension to $U_0$ is unique up to unique isomorphism. 
	\end{theorem*}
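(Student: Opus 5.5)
The plan is to spread out $M$ by first descending it to a finitely generated situation, then removing the ramification along the base by a rank one twist. This is the observation announced in the introduction.

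\textbf{Step 1: descent to a finite level.} $M$ is a $\Qbarl$-perverse sheaf on $X_k$, so it is defined with coefficients in a finite extension $E_\lambda$ of $\mathbb{Q}_\ell$. By the usual limit arguments (constructibility and finite presentation), we can choose a dense open $V_0\subseteq S_0$ and a constructible complex $K_0\in\consder{X_{V_0}}$ whose restriction to the generic fiber is $M$. Shrinking $V_0$ further, generic base change and constructibility of $R f_*$ (Deligne's generic local acyclicity, SGA $4\tfrac12$ Th.\ finitude) let us assume the following:
\begin{itemize}
\item $K_0$ is universally locally acyclic relative to $V_0$;
\item the formation of the stratification and of the perverse truncations of $K_0$ commutes with base change.
\end{itemize}
Then $K_0$ is relatively perverse, i.e.\ its restriction to every geometric fiber is perverse. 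This gives existence of \emph{some} ULA relatively perverse extension once we are allowed to shrink, but only for objects already defined over an open of $S_0$.

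\textbf{Step 2: the main obstacle.} The main obstacle is that a $\Qbarl$-sheaf on $X_k$ is really a representation involving $\text{Gal}(\bar k/k)$, and the arithmetic part may be ramified everywhere along the base. To handle this, use geometric irreducibility. The restriction $M_{\bar k}$ is irreducible, and by a Galois-descent argument it is defined over a finite extension. Spreading out $M_{\bar k}$ over a finite type model, the relevant data live over an étale neighborhood $S_1\to S_0$ on which the geometric part $\bar M$ extends to a ULA relatively perverse $N_1$. Pulling back to $S_1$, compare $M$ with $N_1|_{\text{generic}}$. Both are geometrically irreducible with isomorphic geometric restriction, so $\mathcal{H}om$ between them on $X_{\bar k}$ is one-dimensional. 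The Galois action on this line is a character $\chi$ of $\text{Gal}(\bar k/k_1)$, and $M\simeq N_1\otimes\chi$. By the quasi-unipotent monodromy theorem (Grothendieck), $\chi$ is unramified along some dense open of $S_1$, after a finite étale extension making it tame and quasi-unipotent. It therefore extends to a rank one lisse sheaf $\mathcal{L}_\chi$ there. Then $N_1\otimes\mathcal{L}_\chi$ extends $M$ over $S_1$.

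\textbf{Step 3: descend and prove uniqueness.} Descend from the étale neighborhood to a dense open $U_0$ of $S_0$ using the canonical descent datum, which is forced by uniqueness. For uniqueness over a geometrically unibranch $U_0$, take two extensions $M_0,M_0'$. ULA-ness and relative perversity show that $R\mathcal{H}om$ between them has lisse cohomology sheaves after pushforward to $U_0$, with formation compatible with base change. On a geometrically unibranch normal base, sections of a lisse sheaf over the generic point extend uniquely. So the given isomorphism at the generic point extends uniquely. For the isomorphism to be an isomorphism on each fiber, check it fiberwise using local acyclicity and the specialization equivalence of \cite[6.1]{BBD} along traits.
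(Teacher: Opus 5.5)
\textbf{The gap is in Step 2}, at the assertion that over an \'etale neighborhood $S_1\to S_0$ the geometric part $\bar M$ extends to a ULA relatively perverse $N_1$. Nothing justifies this, and it carries essentially all the content of the theorem.

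A $\Qbarl$-sheaf is not a finitely presented object. Limit arguments spread out each reduction $M\otimes\CO/\lambda^n$ over some dense open $U_n$, but $\bigcap_n U_n$ need not contain a dense open.

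Passing to $\bar k$ does not help. $M_{\bar k}$ is already defined over $k$, so the ``Galois-descent argument'' is vacuous. By Schur, every model of $M_{\bar k}$ on $X_{k_1}$, with $k_1/k$ finite, is $M_{k_1}\otimes\psi$ for a character $\psi$. So the existence of your $N_1$ is equivalent to some twist of $M_{k_1}$ spreading out, i.e.\ to Theorem \ref{extension_THM}, the core of the paper.

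For the same reason, the first sentence of Step 1 is false as stated; if it were true, Step 2 would be superfluous. Take $X_k=\operatorname{Spec}\BQ$ and distinct primes $p_i\neq\ell$. The extension of $\Qbarl$ by $\Qbarl(1)$ given by the Kummer class of $\prod_i p_i^{\ell^i}\in\varprojlim_n\BQ^\times/(\BQ^\times)^{\ell^n}$ is ramified at every $p_i$. It therefore spreads out over no dense open of $\operatorname{Spec}\BZ[1/\ell]$. Geometric irreducibility must be used exactly at the spreading-out step, and your proposal never uses it there.

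\textbf{What is missing is the mechanism controlling ramification along vertical divisors.} The paper proceeds as follows.
\begin{enumerate}
\item Write $M=j_{!*}(F[d])$ with $F$ lisse and geometrically irreducible on some $V_0$. After a Frobenius twist and passing to the reduction, $V_0$ is smooth over $S_0$ with geometrically connected fibers.
\item By Zariski--Nagata purity, $F$ extends over $V_{U_0}$ as soon as it is unramified at almost all vertical divisors.
\item Take a section $s$ over an \'etale neighborhood \cite[Cor.~17.16.3]{EGA4}. Abhyankar's lemma shows that this ramification is detected by $s^*F$ (Lemma \ref{pickup_LEM}). The input here is that $F\otimes\CO/\lambda$ spreads out, so the local monodromy is pro-$\ell$, hence tame.
\item To see that $s^*F$ is generically unramified, show that at almost all codimension one points $x$ the nearby cycles $\Psi_x(F)$ are lisse and irreducible (specialization of monodromy), and that $s^*\Psi_x(F)\cong\Psi_x(s^*F)$ (Lemma \ref{bcnearbycycles_LEM}).
\item By Schur, inertia then acts on $s^*F$ by scalars. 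Since $\det(s^*F)$ is generically unramified by the quasi-unipotent monodromy theorem (Lemma \ref{charactersunr_LEM}), these scalars are roots of unity of order dividing the rank. Such scalars are detected modulo a fixed $\lambda^N$, and $F\otimes\CO/\lambda^N$ spreads out.
\end{enumerate}

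Your use of quasi-unipotence is the right ingredient, but it enters through $\det(s^*F)$ inside this argument. In the arithmetic case no twist and no \'etale descent are needed: Theorem \ref{extensionpre_THM} gives the lisse extension directly over a dense open of $S_0$, and Lemma \ref{intermed_LEM} then produces $M_0$.

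Finally, uniqueness is simply \cite[Thm.~1.10]{hansen2023relativeperversity}. Your ad hoc claim that $\pi_*R\mathcal{H}om(M_0,M_0')$ has lisse cohomology sheaves does not follow from ULA-ness when $X_0\to U_0$ is not proper.
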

	When the base scheme is regular but not arithmetic, we can obtain an extension of a twist over a dominant etale neighborhood, see Theorem \ref{extension_THM}. We deduce the theorem from a variant for geometrically irreducible lisse sheaves, see Theorem \ref{extensionthmWeak_THM} for the arithmetic case, and Theorem \ref{simpson_THM} for the general case. We remark that similar extension have been considered in the literature before. In \cite[Prop.~6.1]{petrov23}, the authors construct extensions when $S_0 = \BZ[1/\ell]$ and the sheaf on the generic fiber is lisse. In \cite{zockmasterthesis}, the author considers such extensions for lisse sheaves on scheme which are smooth over a curve.
	
	The goal of this article is to prove the above extension theorem and to provide the theory required to apply this theorem. The resulting theory is softer in some respects than \cite{BBD}: after spreading out a perverse sheaf over a base scheme, the specialization of the perverse sheaf is provided by taking a fiber over a point of the base scheme. Loc. cit. provides a specialization via an equivalence of categories. This equivalence first spreads out the complex over a base trait and then takes the fiber at the closed point.
	
	Nevertheless, the extension theorem yields essentially the same conclusions as the specialization techniques of loc. cit. In some cases, it even allows one to reach conclusions that do not seem to be accessible by the methods of loc. cit. For example, the extension theorem implies the decomposition theorem over fields finitely generated over a separably closed field. This theorem is stated later in the introduction.
	
	We describe the main argument relevant for our applications. Consider a geometrically irreducible perverse sheaf on a scheme of finite type over an arithmetic field $k$ (i.e. a field finitely generated over its prime field). The above theorem allows us to spread out the perverse sheaf to an arithmetic scheme base scheme, i.e. a regular connected scheme of finite type over $\BZ[1/\ell]$ with function field $k$. After shrinking the base scheme, the fiber of the spreading of the perverse sheaf is geometrically irreducible (see Theorem \ref{filtration_THM}) over all closed points. Lafforgue's theorem (see \cite[Cor.~VII.8]{LafforguePurity}) then tells us that the fiber of the perverse sheaf over a closed point is pure (up to a twist). The applications of the extension theorem provided in this paper all follow this argument.
	
	Kashiwara's conjectures \cite[(C1),~(C2),~(C3)]{kashiwara98conjectures} for constructible mixed complexes on a scheme over a finite field are consequences of the theory of weights. Thanks to the purity afforded by Lafforgue's theorem, these statements can be generalized to all geometrically semisimple complexes over schemes over finite fields. With the specialization method of \cite{BBD}, Kashiwara's conjectures over finite fields can be amplified (see \cite[Rem.~3.3.4,~Prop.~3.3.10]{zhensun16paritysymmetry}) to prove the Hard Lefschetz theorem and the decomposition theorem for arithmetic complexes (see Definition \ref{arithmeticcomplex_DEF}) over arbitrary fields.
	
	The secondary goal of this paper is to revisit Kashiwara's conjectures for arithmetic complexes with the extension theorem in hand. This is then the main topic of our applications. 
	
	Before we describe the applications in more detail, we fix the following conventions: the letter $\ell$ denotes a prime. If $X$ is a scheme, then we require that $\ell$ is invertible on $X$. When $X$ is qcqs, the category $\consder{X}$ refers to the derived category of constructible complexes with coefficients in $\Qbarl$ (in the proetale topology). The notations and conventions are fixed more precisely in a separate section at the end of the introduction.  
\subsection*{Kashiwara's Conjectures} We can apply the extensions to nearby cycles of geometrically semisimple arithmetic perverse sheaves. When the perverse sheaf descends to an arithmetic field, i.e. a field finitely generated over a prime field, the above theorem allows us to spread out nearby cycles over an arithmetic base scheme. Once we have spread out the nearby cycles to an arithmetic base scheme, we can specialize to a finite field. There, we are able to apply the theory of weights to make statements about the nearby cycles. 
	
Over a finite field, Lafforgue's theorem together with a theorem of Gabber show that the nearby cycles are monodromy pure. This idea, together with certain descent results provided in Section \ref{descent_SEC}, allows us to prove the arithmetic Kashiwara conjecture (see \cite[Conj.~9.7]{esnault2025semistablelefschetzpencils}) in equicharacteristic (see Theorem \ref{kashiwara_THM}) for traits of ``geometric origin":
	\begin{theorem*}
		Let $S$ be the spectrum of the strict henselization of a discrete valuation ring essentially of finite type over a field. Let $X$ be a scheme of finite type over $S$ and $M \in \perv{X_{{\eta}}}$ a geometrically semisimple arithmetic perverse sheaf on the generic fiber.
		\begin{enumerate}
			\item The graded pieces of the monodromy filtration $\text{Gr}^M_n(\Psi^{\text{uni}}(M))$ are geometrically semisimple for all $n \in \BZ$.
			\item Let $f\colon X \rightarrow Y$ be a proper morphism of finite type $S$-schemes. The monodromy spectral sequence
			\[
			E_1^{pq} := \pervCoh{p + q}{f_*(\text{Gr}_{-q}^M(\Psi^{\text{uni}}(M)))} \Rightarrow \pervCoh{p + q}{f_*(\Psi^{\text{uni}}(M))}
			\]
			degenerates on the second page. Moreover, the filtration induced by the monodromy spectral sequence agrees with the monodromy filtration. 
		\end{enumerate}  
	\end{theorem*}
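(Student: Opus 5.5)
The plan is to reduce both assertions to statements over a finite field, where Lafforgue's purity and Gabber's monodromy-weight theorem apply, using the extension theorem of the introduction together with the descent results of Section \ref{descent_SEC}.

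\textbf{Step 1: reduction to a geometrically irreducible sheaf over an arithmetic field.} Both statements are additive in $M$, so we may assume $M$ geometrically irreducible. Since $M$ is arithmetic, it descends to a perverse sheaf $M_1$ on a model $X_{\eta_1}$ over a field $\eta_1$ finitely generated over the prime field. Because $S$ is the strict henselization of a DVR essentially of finite type over a field, we may choose a smooth curve $C_1$ over an arithmetic field $k_1$, a closed point $c\in C_1$, and models $X_1\to C_1$, $f_1$ of $X$ and $f$, such that $S$ is the strict henselization of $C_1$ at $c$ and $M$ is the pullback of a geometrically irreducible $M_1$ on the generic fibre. The descent results of Section \ref{descent_SEC} should let us pass freely between $S$ and this model. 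In particular, nearby cycles $\Psi^{\text{uni}}$, the monodromy operator $N$ and its filtration, and geometric semisimplicity should be insensitive to this change of base up to unramified twist. The same holds for degeneration of the monodromy spectral sequence, since it is a statement about differentials vanishing after base change.

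\textbf{Step 2: spreading out.} The field $k_1$ is the function field of an integral scheme $T_0$ of finite type over $\BZ[1/\ell]$. We spread $X_1\to C_1$ and the point $c$ over $T_0$. We then apply the extension theorem of the introduction, in its geometrically-irreducible relative form (Theorem \ref{extension_THM}), to $M_1$ over the base $C_1$ viewed as spread over $T_0$. Applying constructibility and generic base change for nearby cycles over $T_0$, after shrinking $T_0$ we obtain a relatively perverse ULA sheaf whose fibre over a closed point $t\in T_0$ is a geometrically irreducible perverse sheaf $M_t$ on the generic fibre of a curve over the finite field $\kappa(t)$, by Theorem \ref{filtration_THM}. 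Over the same open, formation of $\Psi^{\text{uni}}$, of $N$, and of $f_*$ commutes with specialization to $t$. We also need that the kernel and image filtrations, hence the monodromy filtration and its graded pieces, specialize. I expect this to follow because, after shrinking $T_0$, all relevant perverse cohomology objects are ULA, so perverse truncations commute with restriction to fibres.

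\textbf{Step 3: weights over the finite field.} By Lafforgue's theorem \cite[Cor.~VII.8]{LafforguePurity}, $M_t$ is pure of some weight $w$ after a twist. Gabber's theorem then says that $\Psi^{\text{uni}}(M_t)$ is monodromy pure: $\text{Gr}^M_n$ is pure of weight $w-1+n$. Pure perverse sheaves over a finite field are geometrically semisimple by BBD 5.3.8, which gives (1) at $t$. For (2), $\pervCoh{p+q}{f_*\text{Gr}^M_{-q}}$ is pure of weight $w-1+p$ by Deligne's Weil II, so the differential $d_r$ for $r\ge2$ maps between pure objects of distinct weights and vanishes. The identification of the abutment filtration with the monodromy filtration then follows from the standard weight argument: the abutment filtration is the weight filtration, and the weight filtration coincides with the monodromy filtration by purity, as in Saito's and Deligne's arguments.

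\textbf{Step 4: lifting back.} Geometric semisimplicity of $\text{Gr}^M_n$ must be transported from the special fibre at $t$ back to $S$. For this I would use Theorem \ref{filtration_THM} applied to the spread-out graded pieces: if a ULA relatively perverse sheaf has geometrically semisimple fibres at a dense set of closed points, its generic fibre is geometrically semisimple. Degeneration is transported because the spectral sequence specializes. I expect the main obstacle to be Step 2, namely ensuring that nearby cycles relative to the curve are compatible with the arithmetic spreading, and in particular that the extension theorem applies to the generic fibre over the two-dimensional base $C_1\times T_0$ rather than to $X_1$ itself. I would try first to spread out over the generic point of the curve and invoke Deligne's generic base change for $\Psi$.
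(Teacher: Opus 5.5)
Your overall architecture is the paper's: spread out over an arithmetic base, specialize to closed points, apply Lafforgue, Gabber and weights there, and return via the density criteria of Lemma \ref{morphismzero_LEM}, Corollary \ref{isocrit_COR} and Proposition \ref{critsemisimple_PROP}. Step 1, however, has a genuine gap.

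Over $\eta$, the generic point of a strictly henselian trait, the hypotheses ``geometrically semisimple'' and ``arithmetic'' only constrain $M_{\overline{\eta}}$. They say nothing about the action of inertia, which is the whole Galois group of $\eta$ and is exactly what the nearby cycles see. So additivity does not reduce you to a geometrically irreducible $M$. Theorem \ref{geomsimpleperverse_THM} only gives, after a finite extension, $M\cong\bigoplus_i M_i\otimes\rho_i$, with $M_i$ geometrically irreducible and $\rho_i$ representations of inertia that need not be semisimple.

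Concretely, let $M'$ be geometrically irreducible with an arithmetic model, and let $\rho$ be a unipotent Jordan block of size at least two of the pro-$\ell$ inertia. Then $M'\otimes\rho$ is arithmetic and geometrically semisimple, but it is indecomposable (its endomorphism ring is $\text{End}_I(\rho)$) and not geometrically irreducible. The extension theorem does not apply to it. Moreover $\Psi(M'\otimes\rho)\cong\Psi(M')\otimes\rho$ carries the operator $N\otimes 1+1\otimes N_\rho$, whose monodromy filtration is the convolution of Proposition \ref{tensorformula_PROP}, not the monodromy filtration of $\Psi(M')$.

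There is a second problem in the same step. Theorem \ref{modelsIrr_THM} only yields an arithmetic model of a geometrically irreducible arithmetic sheaf after twisting by a character $\chi$ of an open subgroup of inertia, and $\chi$ can have infinite order. So your claim that $M$ itself descends to some $M_1$ over a curve over an arithmetic field is unjustified.

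The paper's proof is organised around exactly these issues:
\begin{enumerate}
\item It reduces to $\Psi^{\text{quni}}$. It then passes to a finite Galois extension $S'\to S$, using that $\Psi^{\text{quni}}_S(M)=\Psi^{\text{quni}}_{S'}(M)^P$ is a direct factor of $\Psi^{\text{quni}}_{S'}(M)$, where $P$ is the wild part of the Galois group.
\item It writes $M=M'\otimes\rho\otimes\chi$. If $\chi$ has infinite order it is discarded, since then $\Psi^{\text{quni}}(M)=0$. If $\chi$ has finite order it is absorbed into $M'$.
\item The remaining $\rho$ is handled in Theorem \ref{weightmon_THM}. There $\rho$ is realized as the local monodromy at $0$ of an auxiliary sheaf $\SL_0$ pulled back from $\mathbb{G}_m$ over a finite extension of the prime field. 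Only the geometrically irreducible factor is spread out, by Theorem \ref{extensionari_THM}.
\item At closed points, monodromy purity of $\Psi^\ell_0(M_0\otimes\SL_0)\cong\Psi^\ell_0(M_0)\otimes\Psi^\ell_0(\SL_0)$ follows from Gabber's theorem plus the tensor product formula (Lemma \ref{puretensor_LEM}, which rests on the fibre functor of Theorem \ref{fibfuncperv_THM}).
\end{enumerate}
You need this, or some other argument transporting (1) and (2) from $(\Psi(M'),N)$ to $(\Psi(M')\otimes\rho,\ N\otimes1+1\otimes N_\rho)$.

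Two smaller points. First, $S$ is in general not the strict henselization of a curve over an arithmetic field, only a cofiltered limit of such curves (Corollary \ref{strhenscurves_COR}). What lets you pass to such a curve is the change-of-trait compatibility of pro-$\ell$ nearby cycles (Lemma \ref{prolncbc_LEM}), not the descent results of Section \ref{descent_SEC}. Second, the obstacle you flag in Step 2 is real. The paper resolves it in Theorem \ref{prolfamily_THM}, which obtains generic base change uniformly along the tower $C_0[\pi^{1/\ell^n}]$ by working with coefficients in the Iwasawa algebra $R[[\mathbb{Z}_\ell]]$. A global nilpotent $N$ on the family is then supplied by Proposition \ref{qunimon_PROP}, which uses that the base is arithmetic.
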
	
	Another classical application of such specialization arguments is the decomposition theorem for arithmetic complexes (see \cite{DrinfeldKashiwara}). Using the spreading out technique, we also recover the decomposition theorem for arithmetic complexes by specializing to the known case over finite fields via the above models (see Theorem \ref{decomposition1_THM}). In fact, the technique of spreading out perverse sheaves allows us to obtain the following generalization of the decomposition theorem to fields finitely generated over a separably closed field (see Theorem \ref{decomposition2_THM}): 
	\begin{theorem*}\label{decomposition_THM}
		Let $k$ be a field finitely generated over a separably closed field. We consider a proper morphism $f\colon X \rightarrow Y$ of finite type $k$-schemes $X$ and $Y$. Let $K \in \derCat{c}{X}{\Qbarl}$ be an arithmetic semisimple complex on $X$.
		The pushforward $f_*(K)$ is semisimple.
	\end{theorem*}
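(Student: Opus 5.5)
The plan is to reduce to the decomposition theorem for arithmetic complexes over arithmetic fields (Theorem \ref{decomposition1_THM}) by a spreading-out argument, this time over a base that is of finite type over the separably closed field rather than over $\BZ[1/\ell]$. Write $k$ as finitely generated over a separably closed field $k_1$. An arithmetic complex descends, together with $f$, $X$ and $Y$, to some field finitely generated over the prime field. So we may choose a finitely generated subfield $k_0 \subseteq k_1$, models $f_0\colon X_0\to Y_0$ over a finitely generated extension $K_0$ of $k_0$ inside $k$, and an arithmetic semisimple complex $K_0$ on $X_0$ whose pullback to $X$ is $K$. Since semisimplicity of $f_*K$ only concerns the perverse cohomology sheaves and the splitting of the perverse truncation triangles, it suffices to treat $K$ a shifted geometrically irreducible perverse summand. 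Being arithmetic, such a summand is a summand of the pullback of a geometrically semisimple perverse sheaf from an arithmetic field.

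The key step is to compare semisimplicity over $k$ with semisimplicity over $\bar k$. First I will show that $f_{\bar k *}K_{\bar k}$ is semisimple. This follows from Theorem \ref{decomposition1_THM} applied over the arithmetic field $K_0$, combined with proper base change $f_{\bar k*}(K_{\bar k}) = (f_{0*}K_0)_{\bar k}$. Then I descend semisimplicity from $\bar k$ to $k$. Here the separably closed hypothesis is used: $k$ is the function field of an integral variety $S$ over $k_1$. The extension theorem (Theorem \ref{extension_THM}, or its lisse variant Theorem \ref{simpson_THM}) lets me spread each geometrically irreducible constituent $M$ of ${}^p\mathcal H^i(f_*K)$ to a relatively perverse ULA sheaf over a dense open $U\subseteq S$, after a twist by a rank one sheaf on an étale neighbourhood.

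Over such a base the Galois action of $\mathrm{Gal}(\bar k/k)$ on $\mathrm{Ext}^1(M,M')$ factors through $\pi_1(U)$ of a variety over a separably closed field. The task is then to show that the relevant extension classes vanish. My first approach is to specialize to a closed point $u\in U$, whose residue field is $k_1$ and hence separably closed. There the ULA extension restricts to a complex over a separably closed field, to which the geometric decomposition theorem applies. The next step is to show that an extension over $k$ which splits at the generic geometric point and at all closed fibers actually splits. For this I would use local acyclicity to identify $\mathrm{Ext}$ groups over $U$ with $\pi_1(U)$-invariants of the geometric ones. I would then argue that the obstruction lies in $H^1(\pi_1(U), \mathrm{Hom}(M_{\bar k}, M'_{\bar k}))$ with weight or semisimplicity input killing it. Semisimplicity of the $\pi_1(U)$-action on $\mathrm{Hom}$ comes from geometric irreducibility and the twists (Theorem \ref{filtration_THM}).

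The main obstacle is this last descent. Over an arithmetic field one uses Frobenius weights, and here there are none. I would try to replace weights by the fact that the twisted local systems on $U$ have finite monodromy up to the rank one twist, since they come from an arithmetic field via a $\overline{K_0}$-structure. The $\mathrm{Hom}$ local systems then have finite monodromy, and their $H^1$ contributions to the relevant Ext splitting vanish after the twist is accounted for. Checking that the twists can be chosen compatibly for all constituents at once is where I expect the most work.
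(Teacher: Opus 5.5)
Your first step is fine (Theorem \ref{decomposition1_THM} in fact applies directly to $K_{\overline{k}}$ over $\overline{k}$). But everything beyond Theorem \ref{decomposition1_THM} lies in passing from $\overline{k}$ to $k$, and the proposal has no working mechanism for that step.

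The criterion you aim for is false: an extension over $k$ that splits on the geometric generic fibre and on all closed fibres of $U$ need not split. Closed points of $U$ have separably closed residue fields, so these fibres only see the image of the extension class in the geometric $\mathrm{Ext}^1$. They never see its component in $H^1(G_k,\mathrm{Hom}(M_{\overline{k}},M'_{\overline{k}}))$. Concretely, take $k=k_1(t)$, $X=Y=\mathrm{Spec}(k)$, and the $G_k$-representation given by a rank two unipotent local system on $\mathbb{G}_{m,k_1}$ with Jordan block monodromy. It is a non-split extension of $\Qbarl$ by $\Qbarl$, yet it splits geometrically and at every closed fibre.

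The same example shows that the obstruction group does not vanish, even with trivial coefficients: $H^1(\pi_1(\mathbb{G}_{m,k_1}),\Qbarl)=\mathrm{Hom}(\pi_1(\mathbb{G}_{m,k_1}),\Qbarl)\neq 0$. Moreover, the representations at stake include the multiplicity spaces $\rho_j$ in the decomposition of $\pervCoh{i}{f_*(K)}$ as $\bigoplus_j N_j\otimes\rho_j$ after a finite separable extension. These typically have infinite monodromy: take $H^1$ of the Legendre family over $k_1(\lambda)$, with $K=\Qbarl[1]$ on the elliptic curve. So a finite-monodromy argument cannot work either. What you need is an input that singles out extension classes of geometric origin, i.e.\ a Deligne-type semisimplicity statement, and the proposal does not supply one.

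The missing idea is to spread out the input $K$, not the constituents of $f_*(K)$, and to apply Theorem \ref{decomposition1_THM} to the total space over the separably closed field. The paper proceeds in four steps.
\begin{enumerate}
\item It reduces to $K$ geometrically irreducible, using Theorem \ref{geomsimpleperverse_THM}, semisimplicity of the $\rho_i$, and the projection formula.
\item It twists $K$ by a character as in Theorem \ref{modelsIrr_THM}. Without such a twist, your opening claim that an arithmetic complex descends to a finitely generated field fails in general.
\item It makes the separably closed base $k_0$ algebraically closed and descends $f$ and $K$ to a field $k_i'$ with $k_i'\otimes_{k_i}k_0=k$, for an arithmetic subfield $k_i\subseteq k_0$. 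It spreads them over a smooth $k_i$-variety $S_i$ with function field $k_i'$ via Theorem \ref{extensionari_THM}, then base changes to $k_0$.
\item This gives a smooth $k_0$-variety $S_0$ with function field $k$, a proper $X_0\to Y_0$ over $S_0$ with generic fibre $f$, and $M_0\in\text{Perv}^{\text{ULA}}(X_0/S_0)$ with generic fibre $K$.
\end{enumerate}
By Lemma \ref{irred_LEM}(1), $M_0[d]$ is an irreducible perverse sheaf on the $k_0$-scheme $X_0$. It is arithmetic because it descends to $k_i$. Hence Theorem \ref{decomposition1_THM} over $k_0$ makes $f_*(M_0)$ semisimple on $Y_0$. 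Lemma \ref{irred_LEM}(2) and proper base change then show that its restriction to the generic fibre, namely $f_*(K)$, is semisimple over $k$. The $G_k$-action is absorbed into geometry over $k_0$, so no Galois cohomology or weights are needed.
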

	The above theorem is only stated for arithmetic sheaves. We expect that the statement could be extended by our method to complexes over a scheme in characteristic zero without having to assume $K$ arithmetic by appealing to the decomposition theorem for semisimple complexes over an algebraically closed field of characteristic zero. We can also utilize a specialization argument to recover the Hard Lefschetz theorem for geometrically semisimple arithmetic perverse sheaves (see Theorem \ref{hardlefschetz_THM}):
	\begin{theorem*}
		Let $k$ be a field and $f\colon X \rightarrow Y$ a projective morphism of finite type $k$-schemes $X$ and $Y$. Let $\eta \in H^2(X, \Qbarl(1))$ be the Chern class of a $f$-ample line bundle on $X$ and $M \in \perv{X}$ a geometrically semisimple arithmetic perverse sheaf. The morphism
		\[
		\eta^i\colon \pervCoh{-i}{f_*(M)} \rightarrow \pervCoh{i}{f_*(M)}(i)
		\]
		is an isomorphism for all $i \geq 0$. 
	\end{theorem*}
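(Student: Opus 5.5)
The plan is to reduce to the case of a geometrically irreducible arithmetic perverse sheaf over an arithmetic field. Then spread out, specialize to a finite field, and use the Hard Lefschetz theorem for pure perverse sheaves there, as proved in \cite{BBD}.

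\emph{Reductions.} The statement is geometric: $\eta^i$ is an isomorphism if and only if it is one after base change to $\bar k$. Since $M$ is geometrically semisimple, it is a direct sum of geometrically semisimple pieces, and over $\bar k$ the pullback $M_{\bar k}$ splits further. We may therefore pass to a finite extension of $k$ over which every geometric simple constituent of $M$ descends. These constituents are still arithmetic, so we may assume $M$ is geometrically irreducible. Since $M$ is arithmetic, the data $(X,Y,f,\eta,M)$ descend to a subfield $k_0\subseteq k$ finitely generated over the prime field. For this step I would use Definition~\ref{arithmeticcomplex_DEF} together with the descent results of Section~\ref{descent_SEC}, enlarging $k_0$ so that the line bundle and its $f$-ampleness descend as well. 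Geometric irreducibility is preserved by this descent because it is checked over $\bar k_0 \subseteq \bar k$. So we may assume $k$ is arithmetic.

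\emph{Spreading out.} Choose an arithmetic base scheme $S_0$ with function field $k$, and a projective morphism $f_0\colon X_0\to Y_0$ over $S_0$ with a relatively ample line bundle. By the extension theorem stated above, after shrinking $S_0$ there is $M_0\in\mathrm{Perv}^{\mathrm{ULA}}(X_0/S_0)$ extending $M$. After further shrinking, we arrange three things using generic base change and the ULA property. First, the relative perverse cohomologies ${}^p\mathcal H^{j}(f_{0*}M_0)$ are ULA relatively perverse. Second, their formation commutes with restriction to the fibers over all points of $S_0$. Third, by Theorem~\ref{filtration_THM}, the fibers $M_s$ over closed points $s$ are geometrically irreducible. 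The map $\eta^i$ spreads to a morphism $\eta_0^i$ of these ULA objects. Its kernel and cokernel are constructible over $X_0$, and their supports over $S_0$ are constructible. Hence if $\eta^i_0$ is an isomorphism on the fiber over some closed point, then both vanish generically, so they vanish at the generic point. It therefore suffices to check the isomorphism at a single closed point. This part, passing from fibers at closed points to the generic fiber via ULA and perverse $t$-exactness on fibers, is the technical step I expect to be the main obstacle. It needs relative perversity to be preserved by projective pushforward after shrinking, and it needs the perverse $t$-structure to commute with taking fibers.

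\emph{Closed point.} At a closed point $s$ with finite residue field $\mathbb F_q$, the sheaf $M_s$ is a geometrically irreducible perverse sheaf. By Lafforgue \cite[Cor.~VII.8]{LafforguePurity}, some twist $M_s\otimes\chi$ by a rank one character of $\mathrm{Gal}(\bar{\mathbb F}_q/\mathbb F_q)$ is pure. The relative Hard Lefschetz theorem \cite[Thm.~5.4.10]{BBD} then gives that $\eta_s^i\colon {}^p\mathcal H^{-i}(f_{s*}M_s)\to {}^p\mathcal H^{i}(f_{s*}M_s)(i)$ is an isomorphism. Twisting by $\chi$ does not affect this, since $\chi$ is pulled back from the base. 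We conclude via the previous step.
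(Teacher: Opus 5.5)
Your overall strategy is the paper's: spread $M$ out over an arithmetic base with Theorem~\ref{extensionari_THM}, transfer the isomorphism from closed fibers to the generic fiber, and conclude at closed points by Lafforgue and \cite[Thm.~5.4.10]{BBD}. Two steps fail as written, though.

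\textbf{The descent step.} Arithmeticity in the sense of Definition~\ref{arithmeticcomplex_DEF} only constrains $M_{\bar k}$. It does not give a model of $M$ itself over a finitely generated subfield of $k$, even after a finite extension. For instance, on $X=\operatorname{Spec} k$ every continuous character of $G_k$ is arithmetic and geometrically irreducible. When $k$ is not finitely generated, such a character need not factor through $G_{k_0}$ for any finitely generated $k_0\subseteq k$.

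The descent results of Section~\ref{descent_SEC} only give a model up to a twist: Theorem~\ref{modelsIrr_THM} descends $M_{k_1k}\otimes\chi$, not $M$. You can repair this by twisting. By the projection formula, tensoring with a character of the base field and passing to a finite base extension do not affect whether $\eta^i$ is an isomorphism. The paper instead base changes to $\bar k$ at the outset and reduces to $M$ irreducible via Theorem~\ref{subquotientsarithmetic_THM}. It then applies Corollary~\ref{models_COR}. Over a separably closed field the character in Theorem~\ref{modelsIrr_THM} is trivial, and the resulting model is automatically geometrically irreducible.

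\textbf{The single closed point.} You claim that an isomorphism on one closed fiber suffices, but this does not follow from constructibility. The image in $S_0$ of the support of $\ker\eta^i_0$ or $\operatorname{coker}\eta^i_0$ is a constructible set. It may contain the generic point and still miss a given closed point, for example when that support lies on a part of $X_0$ not meeting $X_s$. What is true is Corollary~\ref{isocrit_COR}: an isomorphism on the fibers over a dense set of points implies an isomorphism on the generic fiber, because a nonempty constructible subset of the Jacobson scheme $S_0$ contains a closed point. Your closed-point step applies to \emph{every} closed point of the shrunken $S_0$, so with this correction the argument closes.

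Finally, the step you flag as the main obstacle is not a real difficulty. You do not need $f_{0*}M_0$ to be relatively perverse. You only need proper base change and the fact that restriction to fibers is $t$-exact for the relative perverse $t$-structure, which holds by its definition.
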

\subsection*{Generic base change} Before we give an overview of the various sections of this paper, we would like to point out the generic base change theorems proven in the appendix. This theorem is essential to our specialization arguments and is of independent interest. Theorem \ref{genbc_THM}, Theorem \ref{genula_THM}, and Theorem \ref{genbchom_THM} may be summarized by: 
\begin{theorem*}
	Let $f\colon X \rightarrow Y$ be a finitely presented morphism of qcqs schemes. We consider one of the following two coefficient rings $R$:
	\begin{enumerate}
		\item The ring $R$ is a complete Noetherian local ring. We assume the characteristic of $R/\idm$ is invertible on $Y$.
		\item The ring $R$ is $R = \Qbarl$ for a prime $\ell$ invertible on $Y$.
	\end{enumerate}
	Let $K, L \in \text{D}_c^b(X, R)$ be constructible complexes of $R$-modules. There exists a finite constructible partition
	\[
	Y = \bigsqcup_{i = 1}^n Y_i
	\]
	such that all $1 \leq i \leq n$ satisfy: 
	\begin{enumerate}
		\item The pushforward $f_*(K|_{Y_i})$ is constructible over $Y_i$.
		\item The formation of the pushforward $f_*(K|_{Y_i})$ commutes with base change $Y' \rightarrow Y_i$.
		\item The complex $K|_{Y_i}$ is universally locally acyclic over $Y_{i}$.
		\item There exists an internal mapping object $\text{Hom}(K|_{Y_i}, L|_{Y_i}) \in \text{D}^b_c(X_{Y_i}, R)$.
		\item The formation of the internal mapping object $\text{Hom}(K|_{Y_i}, L|_{Y_i})$ commutes with base change $Y' \rightarrow Y_i$. 
	\end{enumerate}
\end{theorem*}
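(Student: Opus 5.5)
Since every assertion is stable under refining the partition, the plan is to deal with each property on its own and then take a common refinement. We run a Noetherian induction on $Y$. Because $f$ is finitely presented and $K,L$ are constructible, standard limit arguments let us assume $Y$ is of finite type over $\BZ[1/\ell]$ (or over $\BZ[1/p]$ with $p$ the residue characteristic of $R$). It also suffices to find a dense open $V$ of each irreducible closed subset over which the properties hold, and then recurse on the complement. So we may take $Y$ integral. We first treat case (1) with $R$ replaced by $R/\idm^n$ for all $n$ at once. For case (2) we reduce to (1): choose an integral model over the ring of integers $\mathcal{O}_E$ of a finite extension $E/\BQ_\ell$, which is possible since constructible $\Qbarl$-complexes come from $\mathcal{O}_E$-lattices. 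Then we pass to the isogeny category, using that $-\otimes \Qbarl$ commutes with pushforward and with internal Hom on constructible objects.

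For (1)–(3) in the torsion case we use Deligne's generic base change theorem (SGA $4\tfrac12$, Th.\ finitude). After shrinking $Y$ it gives that $f_*(K)$ is constructible and commutes with arbitrary base change. Applying the same theorem to all the maps $X\times_Y Z\to Z$ at once gives universal local acyclicity. Concretely, ULA of $K$ over $Y$ amounts to base change for $K$ along every $Y'\to Y$ after compactifying $f$ and using the nearby-cycles criterion. Alternatively we can cite the known generic ULA statement: over an integral base, after shrinking, $K$ is ULA. This is proved by dévissage to $K$ a constant sheaf on a stratum, compactifying, and resolving generically via de Jong alterations so that one is reduced to a smooth family with an snc divisor. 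Granting that step, (4)–(5) follow formally. Once $K$ is ULA over $Y$, the internal Hom $\text{Hom}(K,L)\simeq \mathbb{D}(K\otimes\mathbb{D}L)$ up to twist and shift, in the relative sense. Relative duality commutes with base change for ULA complexes, and after shrinking $Y$ it also commutes with base change for $L$. Hence the internal Hom, computed in the full derived category, is constructible and its formation commutes with base change.

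The delicate point is the passage to $R$ complete local and to $\Qbarl$. For each $n$ we get a dense open $V_n$, and we need a single $V$ valid for all $n$. To get it we use the bounded Tor-amplitude argument: represent $K$ by a perfect-amplitude complex, or use that $K\otimes^L_R R/\idm$ determines constructibility. The properties then hold for all $n$ once they hold for $n=1$, by induction on $n$ along the triangles $\idm^n/\idm^{n+1}\otimes K_1 \to K_{n+1}\to K_n$. For non-regular $R$ one first treats $R/\idm$-modules and then uses finite filtrations of $R/\idm^n$. Base change and ULA are stable under extensions, and limits then give the $R$-statement. I expect the main obstacle to be this uniformity in $n$, together with the generic ULA step. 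For $\Qbarl$ we must also check that the proétale internal Hom of the lattice, after inverting $\ell$, is the internal Hom in $\text{D}^b_c(X,\Qbarl)$. This holds because constructible objects are compact enough for Hom to commute with the colimit over finite extensions.
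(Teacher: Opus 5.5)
Your overall plan can be made to work: a lattice for $\Qbarl$, reduction to coefficients $R/\mathfrak m$, Deligne's Th.\ finitude in the torsion case, then propagation in $n$ and passage to $\varprojlim$. Your propagation along $\mathfrak m^n/\mathfrak m^{n+1}$ for the internal Hom is exactly Lemma~\ref{genbchomcommutes_LEM}. The route departs from the paper in two places.

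\emph{Items (1)--(3).} The paper does not induct on $n$. It tensors with the Koszul complex on generators of $I$ and uses derived Nakayama (pushforwards of derived complete complexes are derived complete). This is why Theorems~\ref{genbc_THM} and~\ref{genula_THM} hold for arbitrary Noetherian $I$-adic $R$. Your induction uses that $\mathfrak m^n/\mathfrak m^{n+1}$ is an $R/\mathfrak m$-vector space, which is fine for the statement as given.

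\emph{Items (4)--(5).} The paper defines $\text{Hom}(K,L)$ as $\varprojlim_n$ of the classical mod-$\mathfrak m^n$ internal Homs. It gets generic constructibility and base change of the mod-$\mathfrak m$ term directly from Deligne, and checks the universal property in $\text{D}^b_c$ by hand using Lemma~\ref{genbcinternalhom_LEM}. You instead derive (4)--(5) from generic ULA through relative Verdier duality. This is more conceptual, but it needs heavier input: relative biduality, and compatibility of $\mathbb D_{X/Y}$ with base change on ULA objects (Lu--Zheng, Hansen--Scholze).

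As written, the duality step is wrong. ULA of $K$ is not the relevant hypothesis, and ULA of $K$ and $L$ together does not suffice. Take coefficients $\Lambda=R/\mathfrak m$, $X=\mathbb A^1_x\times\mathbb A^1_t\to Y=\mathbb A^1_t$, $K=j_!\Lambda$ with $j$ the inclusion of the complement of $\{x=0\}$, and $L=\Lambda_C$ with $C=\{x=t\}$.
\begin{itemize}
\item Both $K$ and $L$ are ULA.
\item $\text{Hom}(K,L)=j_*j^*L$ has stalk $\Lambda\oplus\Lambda(-1)[-1]$ at the origin.
\item On the fibre over $t=0$, $L_0$ is the skyscraper at $x=0$, so $\text{Hom}(K_0,L_0)=0$.
\end{itemize}
The identity $\text{Hom}(K,L)\simeq\mathbb D_{X/Y}(K\otimes\mathbb D_{X/Y}L)$ rests on biduality for $L$. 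Its compatibility with base change needs $\mathbb D_{X/Y}$ to commute with base change on $L$ \emph{and} on $K\otimes\mathbb D_{X/Y}L$. Tensor products of ULA complexes need not be ULA: in the example, $K\otimes\mathbb D_{X/Y}L$ is $\Lambda$ on $C$ minus the origin, extended by zero. The repair is to shrink $Y$ so that both $L$ and $K\otimes\mathbb D_{X/Y}L$ are ULA. Then $g^*\text{Hom}(K,L)\simeq\mathbb D(g^*K\otimes\mathbb D g^*L)\simeq\text{Hom}(g^*K,g^*L)$.

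Further points:
\begin{itemize}
\item \emph{Order of the limit argument.} Your opening reduction to $Y$ of finite type over $\mathbb Z[1/\ell]$ cannot be applied to $K$ and $L$ themselves, since an adic complex need not descend to any finite type model. For example, take a rank one $\mathbb Z_\ell$-local system on $\operatorname{Spec}\mathbb C[t_1^{\pm1},t_2^{\pm1},\dots]$ whose monodromy is non-trivial on every factor of $\pi_1=\prod_i\widehat{\mathbb Z}(1)$. Only the reductions modulo $\mathfrak m^n$ descend, over models depending on $n$. So first reduce, over the original qcqs $Y$, to $K\otimes^L_R R/\mathfrak m$ and $L\otimes^L_R R/\mathfrak m$; the $n$-induction and the passage to $\varprojlim$ are formal over any qcqs base. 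Only then approximate, get the partition on the model, and pull it back. Noetherian induction is available only at that stage. This is the order the paper follows.
\item \emph{ULA.} ``Applying generic base change to all maps $X\times_Y Z\to Z$ at once'' does not give ULA, since there are infinitely many base changes. Cite Deligne's generic local acyclicity \cite[Ch.~7,~Thm.~2.13]{DeligneSGA45}, as the paper does.
\item \emph{Constructibility of $f_*K$.} Your ``limits then give the $R$-statement'' hides the identification $f_*K\otimes^L_R R/\mathfrak m\simeq f_*(K\otimes^L_R R/\mathfrak m)$. The paper checks this on geometric fibres using the base change already established.
\end{itemize}
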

For items (1) - (3), we remark that Theorem \ref{genbc_THM} and Theorem \ref{genula_THM} apply to complete Noetherian $I$-adic coefficient rings $R$ when the characteristic of $R/I$ is invertible on $Y$. However, for items (4) and (5), we can not prove the above theorem in this generality. The reason is that there are certain technical points occuring during the proof we were unable to resolve. We refer the reader to the corresponding theorems in the appendix for the precise definitions.
	
\subsection*{Overview} We summarize the various sections here for overview. We also point out results which may be of independent interest.

In section 1, we treat arithmetic sheaves and decomposition series of perverse sheaves. We begin by proving a descent lemma for lisse sheaves in Lemma \ref{descentlocsys_LEM}. The descent lemma allows one to descend lisse sheaves so that the geometric and the arithmetic monodromy group agree. While this is not used in this paper, we hope that it may be useful in other contexts. 

With this descent lemma, we can descend semisimple arithmetic complexes on schemes over separably closed fields to arithmetic fields in Corollary \ref{models_COR}. In Theorem \ref{geomsimpleperverse_THM}, we decompose an arithmetic geometrically semisimple perverse sheaf on a scheme over a field $k$ into arithmetic geometrically irreducible perverse sheaves.

In Theorem \ref{filtration_THM}, we construct a certain relative decomposition series of a relatively perverse sheaf over a geometrically unibranch topologically Noetherian base scheme. This relative decomposition series allows one to compare the decomposition series across fibers in a family. 

In Section 2, we construct the models of perverse sheaves alluded to in the introduction. We begin by proving that it is sufficient to construct an extension ``on a section" in Theorem \ref{extensionpre_THM}. In Theorem \ref{extensionthmWeak_THM}, we apply Theorem \ref{extensionpre_THM} to obtain the desired extensions for lisse sheaves. The proof of each of these statements is sketched just after the statement of the theorem. In Theorem \ref{extension_THM}, we construct the extensions for perverse sheaves over a regular scheme after a twist and in Theorem \ref{extensionari_THM} over an arithmetic scheme without a twist.

In Section 3, we construct certain schemes required for our arguments. The material of this section is well-known and we recall it for clarity of exposition. In Section 4, we prove the decomposition theorem and the Hard Lefschetz theorem for arithmetic complexes. In Section 5, we treat the arithmetic Kashiwara conjecture in positive equicharacteristic.

In the appendix, we prove theorems which do not fit the text otherwise. In Theorem \ref{fibfuncperv_THM}, we prove that the category of perverse sheaves admits faithful exact functors to the category of finite dimensional $\Qbarl$-vector spaces for all schemes of finite type over a field. At the end of the appendix, we prove generic base change, generic universal local acyclicity, and the generic existence of a constructible internal mapping object for finitely presented morphisms of qcqs schemes. 
\newline

\textit{Acknowledgments: } I would like to thank Joseph Ayoub for encouraging me to write this article and for helpful comments on a draft. I would also like to thank Moritz Kerz and H\'el\`ene Esnault for their interest and comments on a draft of this article. 
\vspace*{3.3mm}
\begin{center}
	\textbf{Notations and conventions}
\end{center}
\vspace*{2.5mm}
	\begin{itemize}
		\item We fix a prime $\ell$ and assume $\ell$ is invertible on all schemes. 
		\item If $S$ is a scheme, we say $X$ is an $S$-scheme if $X$ is a scheme with a fixed finitely presented structure morphism $X \rightarrow S$.
		\item The letter $k$ always denotes a field. 
		\item For a scheme $X$, we denote by $\derCat{c}{X}{\Qbarl}$ the category of construcible $\Qbarl$-complexes constructed in \cite[Def.~6.8.8]{BhattSPE}. 
		\item We sometimes consider constructible sheaves with coefficients in $\CO$, where $\CO$ is a ring of integers in a finite extension $K/\BQ_\ell$. We denote by $\lambda$ a uniformizer in $\CO$. 
		\item Let $S_0$ be a topologically Noetherian scheme and $X_0$ an $S_0$-scheme. We denote the category of relative perverse sheaves by $\perv{X_0/S_0}$. We also consider the category of universally locally acyclic relatively perverse sheaves $\text{Perv}^{\text{ULA}}(X_0/S_0)$. We define the category of perverse sheaves $\perv{X}$ on a scheme $X$ of finite type over an field $k$ to be $\perv{X} := \perv{X/k}$. The category $\perv{X_0/S_0}$ is abelian by   \cite[Thm.~1.1]{hansen2023relativeperversity} and Proposition \ref{topnot_PROP}.
		\item For a field $k$, we denote by $\overline{k}$ an algebraic closure of $k$.
		\item A field $k$ is called \textit{arithmetic} if it is finitely generated over its prime field.
		\item A scheme $X$ is called \textit{arithmetic} if it is of finite type over $\BZ[1/\ell]$. 

		\item Let $X_0$ be a scheme over a scheme $S_0$ and let $S \rightarrow S_0$ a morphism. We denote by $X_S$ the base change of $X_0$ to $S$. For example, if $\overline{x} \rightarrow S_0$ is a geometric point, then we have $X_{\overline{x}} = X_0\times_{S_0}\overline{x}$. We also use this notation for complexes on $X_0$ to indicate the pullback to $X_S$. If we are given a morphism $\pi\colon X_0 \rightarrow S_0$, then we sometimes also use the letter $\pi$ to denote the base change $\pi\colon X_S \rightarrow S$.
		\item A complex $K \in \derCat{c}{X}{\Qbarl}$ on a scheme $X$ of finite type over a field $k$ is called \textit{semisimple} if $K$ is isomorphic to a direct sum of shifted irreducible perverse sheaves. In other words, there is an isomorphism
		\[
		K \cong \bigoplus_{n \in \BZ} \pervCoh{n}{K}[-n]
		\]
		and $\pervCoh{i}{K}$ is a direct sum of irreducible perverse sheaves. The complex $K$ is called \textit{geometrically semisimple} if $K_{\overline{k}}$ is semisimple.
		\item A complex is called \textit{arithmetic} if it satisfies Definition \ref{arithmeticcomplex_DEF}. All semisimple geometric complexes (see \cite[{}6.2.4]{BBD}) are arithmetic.
	\end{itemize}

\section{Descent and decomposition}\label{descent_SEC} The goal of this section is to provide a variety of theorems necessary for our application of the extension theorem to problems about geometrically semisimple arithmetic complexes. 

Note that the extension theorem only applies to geometrically irreducible perverse sheaves. For our application, we must therefore be able to decompose arithmetic geometrically semisimple complexes into arithmetic geometrically irreducible perverse sheaves. Such a decomposition is provided by Theorem \ref{geomsimpleperverse_THM}.

Section \ref{arithmeticcomplexes_SEC} treats arithmetic complexes in detail. In particular, we show that all semisimple arithmetic complexes on a finite type scheme over a separably closed field admit a model over an arithmetic field in Corollary \ref{models_COR}.

Finally, to prove statements on semisimplicity of complexes by specialization, we want to understand the behaviour of the decomposition series of the fibers of a relatively perverse sheaf. We construct a relative decomposition series in Theorem \ref{filtration_THM} and prove a variety of properties of this decomposition series.
\subsection{Geometrically irreducible lisse sheaves} The extension theorem is first established for geometrically irreducible sheaves. Therefore, to apply the extension theorem, we must be able to provide geometrically irreducible perverse sheaves from geometrically semisimple perverse sheaves in the first place. We achieve this by decomposing geometrically semisimple sheaves into geometrically irreducible sheaves. The goal of the following theorem is to provide such a decomposition for lisse sheaves.
\begin{remark}\label{unibranchClass_RMK}
	Let $X$ be a geometrically unibranch $k$-scheme, and $k'/k$ an arbitrary field extension. The scheme $X_{k'}$ is geometrically unibranch by \cite[Prop.~6.15.7]{EGA4}. In particular, if $X_{k'}$ is connected, $\ell$-adic representations of the etale fundamental group of $\pi_1(X_{k'})$ correspond to lisse $\ell$-adic sheaves by \cite[Lem.~7.4.10]{BhattSPE}. 
	
	Recall that the arithmetic and the geometric monodromy group of a lisse $\Qbarl$-sheaf $F$ on a scheme $X$ as above is the Zariski closure of $\pi_1(X)$ and $\pi_1(X_{\overline{k}})$ in $\mathbf{GL}(F)$ respectively.
\end{remark}
\begin{theorem}\label{geomsimple_THM}
	Let $X$ be a geometrically connected geometrically unibranch  $k$-scheme. Let $F$ be a lisse geometrically semisimple $\Qbarl$-sheaf on $X$. There exists a finite separable extension $k'/k$, lisse geometrically irreducible $\Qbarl$-sheaves $F_1, \ldots, F_n$ on $X_{k'}$, and continuous $\Qbarl$-representations $\rho_1, \ldots, \rho_n$ of the Galois group $G_{k'}$ of $k'$ such that
	\[
	F_{k'} \cong (F_1\otimes\rho_1)\oplus\ldots \oplus (F_n\otimes \rho_n).
	\]
	For all $1 \leq i \leq n$, we can choose $F_i$ so that the geometric and the arithmetic monodromy groups of $F_i$ agree. 
\end{theorem}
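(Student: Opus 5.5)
The plan is Clifford theory for the exact sequence
\[
1 \rightarrow \pi_1(X_{\overline{k}}) \rightarrow \pi_1(X) \rightarrow G_k \rightarrow 1,
\]
which is available because $X$ is geometrically connected. Since $X$ is geometrically unibranch, Remark \ref{unibranchClass_RMK} lets me identify lisse $\Qbarl$-sheaves with continuous $\Qbarl$-representations of $\pi_1$ on $X$, on $X_{k'}$ and on $X_{\overline{k}}$. Write $V$ for the representation attached to $F$. Since $F$ is geometrically semisimple, $V|_{\pi_1(X_{\overline{k}})} \cong \bigoplus_j W_j^{\oplus m_j}$ with the $W_j$ pairwise non-isomorphic irreducible representations.

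\textbf{Step 1: make each constituent Galois-stable.} Conjugation by $\pi_1(X)$ permutes the finitely many isomorphism classes $[W_j]$. The kernel of this permutation action is an open subgroup of finite index containing $\pi_1(X_{\overline{k}})$. It therefore has the form $\pi_1(X_{k'})$ for a finite separable extension $k'/k$, and after replacing $k$ by $k'$ every $[W_j]$ is $\pi_1(X)$-stable.

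\textbf{Step 2: extend each $W_j$ to $\pi_1(X_{k'})$.} For each $g \in \pi_1(X)$ we have $W_j^g \cong W_j$, and by Schur's lemma the intertwiner is unique up to a scalar. This gives a projective representation of $\pi_1(X)$ on $W_j$ extending $W_j$. Normalizing the intertwiners to have determinant one reduces the ambiguity to $\mu_{d}$ with $d = \dim W_j$, and the resulting obstruction is a class in $H^2(G_k, \mu_d)$, which I then push to $H^2(G_k, \BQ/\BZ)$. By Tate's theorem $H^2(G_k, \BQ/\BZ) = 0$ for the absolute Galois group of a field. So after enlarging the roots of unity the obstruction is killed, and $W_j$ extends to a genuine representation $F_j$ of $\pi_1(X)$. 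I need to check continuity: an open compact lattice stabilized by a finite-index subgroup should give a continuous choice.

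\textbf{Step 3: identify the multiplicity spaces.} Set $\rho_j := \mathrm{Hom}_{\pi_1(X_{\overline{k}})}(F_j, V)$. Letting $g$ act by $\phi \mapsto g\phi g^{-1}$ gives an action of $\pi_1(X)$ that is trivial on the normal subgroup $\pi_1(X_{\overline{k}})$. It therefore factors through a continuous representation of $G_k$. The evaluation map $\bigoplus_j F_j \otimes \rho_j \rightarrow V$ is $\pi_1(X)$-equivariant. By the isotypic decomposition over $\pi_1(X_{\overline{k}})$ it is an isomorphism, and each $F_j$ is geometrically irreducible by construction.

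\textbf{Step 4: equalize the monodromy groups.} This is the step I expect to be the main obstacle. Let $H \subseteq G$ be the geometric and arithmetic monodromy groups of $F_j$; $H$ is normal in $G$ and irreducible on $W_j$. The outer action of $G$ on $H$ factors through $G/H$, which is a quotient of $G_k$. Conjugation by $G$ permutes the finitely many components of $H$, and I also need it to preserve $H^0$ up to inner automorphisms. Granting that, after a further finite extension of $k$ every element of $G$ acts on $H$ as an inner automorphism. Then $G = H \cdot Z$, where $Z$ is the group of scalars, by Schur's lemma again. This gives a map $c \colon G_k \rightarrow \Qbarl^\times/(H \cap Z)$. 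Since $H \cap Z$ is a finite group of roots of unity on the relevant component, I would lift $c$ to a continuous character $\chi$ of $G_k$, again using the vanishing of $H^2(G_k, \BQ/\BZ)$ or simply a finite extension. Replacing $F_j$ by $F_j \otimes \chi^{-1}$ and $\rho_j$ by $\rho_j \otimes \chi$ leaves the decomposition unchanged and makes the arithmetic monodromy lie in $H$, so the two groups agree. The delicate points are the finiteness of $H \cap Z$ and the continuity of the lift. If these fail, I would first reduce to determinant-normalized extensions, so that the scalars in $H$ are contained in $\mu_d$.
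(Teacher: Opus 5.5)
Your outline is the paper's: make the constituents Galois-stable, extend each one, take $\rho_j$ as a multiplicity space, then twist to fix the monodromy. Step 3 is fine, but Step 2 fails as written.

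\textbf{Step 2.} If you normalize all intertwiners to determinant one, you can no longer take $A_n=\sigma(n)$ on $\pi_1(X_{\overline{k}})$, since $\det\sigma\neq 1$ in general. The $\mu_d$-cocycle then lives on $\pi_1(X)$, not on $G_k$. Its restriction to $\pi_1(X_{\overline{k}})$ vanishes only if $(\det\sigma)^{-1}$ has a continuous $d$-th root, so in general no finite extension of $k$ kills it.

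The extension problem really can fail at the level of abstract profinite groups. Let $\Pi$ be the group of unipotent upper triangular $3\times 3$ matrices over $\mathbb{Z}_\ell$, an extension of $\mathbb{Z}_\ell^2$ by its centre $N\cong\mathbb{Z}_\ell$. A character of $N$ of infinite order is $\Pi$-stable, but it extends to no open subgroup containing $N$. Such a subgroup contains lifts of $\ell^n e_1,\ell^n e_2$, and their commutator is $\ell^{2n}$ times a generator of $N$, which any character must kill.

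The missing input is the one used in Lemma~\ref{descentlocsys_LEM}. After a finite separable extension $k'/k$, the sequence $1\to\pi_1(X_{\overline{k}})\to\pi_1(X_{k'})\to G_{k'}\to 1$ splits, via a closed point. You then:
\begin{enumerate}
\item choose intertwiners $A_s$ only for $s$ in the complement;
\item get a homomorphism $G_{k'}\to\mathbf{PGL}(W_j)$, whose continuity must still be checked;
\item lift it to $\mathbf{SL}(W_j)$ on an open subgroup, which is possible because the kernel is finite;
\item put $F_j(ns):=\sigma(n)A_s$.
\end{enumerate}

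Tate's theorem is also not the right tool. The vanishing $H^2(G_k,\mathbb{Q}/\mathbb{Z})=0$ holds for global and local fields but not in general: for $k=\mathbb{C}((x))((y))$ one has $G_k\cong\widehat{\mathbb{Z}}^2$ and $H^2(G_k,\mathbb{Q}/\mathbb{Z})\cong\mathbb{Q}/\mathbb{Z}$. It is also unnecessary, since a class in $H^2(G_{k'},\mu_d)$ is inflated from a finite quotient and dies after a finite separable extension.

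\textbf{Step 4.} The statement you grant is precisely the nontrivial part; in the paper it is Lemma~\ref{conjactioninner_LEM}, used in Proposition~\ref{descentmonodromy_PROP}. One needs the following.
\begin{enumerate}
\item $H^0$ is reductive, which follows from irreducibility.
\item $\mathrm{Aut}(H^0)$ is locally algebraic with identity component the adjoint group. Hence on an open subgroup the action on $H^0$ is inner.
\item This inner action lifts to a continuous homomorphism into $H^0$, because the identity component of the kernel of $H^0\to H^0_{\mathrm{ad}}$ is a torus.
\item Even then, an action that is inner on $H^0$ need not be inner on $H$. The component group is handled by a crossed homomorphism $H/H^0\to Z(H^0)$ together with the finiteness of $H^1(H/H^0,Z(H^0))$.
\end{enumerate}
Once this is in place, your twist by a character, using that $H\cap Z$ is finite or $Z\subseteq H$, is fine. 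It is equivalent to the paper's replacement of the lift by $\rho\cdot\phi$.

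\textbf{Step 1.} Finite index alone does not make a subgroup of $\pi_1(X)$ open. The kernel is open because the stabilizer of $[W_j]$ is the stabilizer of the $W_j$-isotypic subspace of $V$, hence closed; the paper argues via trace functions in Lemma~\ref{conjaction_LEM}.
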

We sketch the argument. The first step is to prove that the conjugation action of the Galois group of $k$ on the decomposition factors of $F_{\overline{k}}$ has finite orbits. By a well-known descent argument, we can then prove that the irreducible decomposition factors of $F_{\overline{k}}$ descend to a finite extension of $k$. We can descend the lisse sheaves so that the arithmetic and the geometric monodromy groups agree by Proposition \ref{descentmonodromy_PROP}. 
\begin{lemma}\label{conjaction_LEM}
	Let $\rho\colon G \rightarrow \mathbf{GL}(V)$ be a continuous finite dimensional representation over $\Qbarl$ of a profinite group $G$. Let $H \subseteq G$ be a normal closed subgroup. There exists an open subgroup $U \subseteq G$, such that the conjugation action of $U$  on the isomorphism classes of the irreducible subquotients of $\rho|_H$ is trivial.
\end{lemma}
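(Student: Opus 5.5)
The plan is to reduce to the case where $\rho$ has image in $\mathbf{GL}_n(\CO)$ for the ring of integers $\CO$ of a finite extension $K/\BQ_\ell$, and then exploit that $\rho(G)$ is a compact $\ell$-adic Lie group. First, since $G$ is compact, a Baire category argument shows that $\rho$ is defined over some finite extension $K$ of $\BQ_\ell$ and stabilizes a lattice. Hence $\rho(G) \subseteq \mathbf{GL}_n(\CO)$ is a compact subgroup of an $\ell$-adic Lie group. Next, $G$ acts on the set of isomorphism classes of irreducible $\Qbarl$-representations of $H$ by $\sigma \mapsto \sigma^g := \sigma(g^{-1}\,\cdot\,g)$. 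Because $\rho|_H^g \cong \rho|_H$ via $\rho(g)$, and Jordan--Hölder factors are unique up to isomorphism, $G$ permutes the finite set $\Sigma$ of isomorphism classes of irreducible subquotients of $\rho|_H$ (the multiplicities are also preserved). This gives a homomorphism $G \to \mathrm{Sym}(\Sigma)$, and the main point is to show it is continuous, i.e.\ that its kernel contains an open subgroup. That kernel is then the desired $U$, being open of finite index.

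For continuity, I would pass through traces. Two semisimple representations of $H$ over $\Qbarl$, a field of characteristic zero, are isomorphic if and only if their characters agree. So it suffices to find an open $U$ such that for every $\sigma \in \Sigma$ and $u \in U$ we have $\mathrm{tr}\,\sigma(u^{-1}hu) = \mathrm{tr}\,\sigma(h)$ for all $h \in H$. The natural attempt is to take $U$ to be the preimage of a small neighbourhood and use continuity of the characters. However, this only gives closeness of traces, not equality, so I would instead argue via the isotypic decomposition. Let $W$ be the semisimplification of $\rho|_H$, realized as $\bigoplus \mathrm{Gr}$ of a $G$-stable filtration. Such a filtration can be chosen, for example the $H$-socle filtration, which is $G$-stable because $H$ is normal. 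Then $G$ acts on each $\mathrm{Gr}_i = \mathrm{soc}$-layer, and the $H$-isotypic components of each layer are permuted by $G$. The stabilizer of an isotypic component $V_\sigma$ is exactly the stabilizer of $\sigma$ in $\Sigma$.

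This reduces everything to showing that the stabilizer of a subspace $V_\sigma \subseteq \mathrm{Gr}_i$ under the continuous linear $G$-action is open. It is closed, since it is the preimage of the closed stabilizer in $\mathbf{GL}(\mathrm{Gr}_i)$ of a point of a Grassmannian. It also has finite index, since $G$ permutes the finitely many isotypic components. A closed subgroup of finite index in a profinite group is open. Intersecting over all $\sigma$ and $i$ gives $U$.

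The main obstacle is making sure the socle filtration is genuinely $G$-stable and continuous over $\Qbarl$. For this I would use that $g$ maps $H$-submodules to $H$-submodules because $H$ is normal, so $g$ preserves the socle, and I would work over a finite $K$ with everything defined there. This avoids topological subtleties of $\Qbarl$.
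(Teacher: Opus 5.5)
Your argument is correct, but it takes a different route from the paper.

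\textbf{The paper's route.} The paper descends a decomposition series of $\rho|_H$ to a finite extension $K/\mathbb{Q}_\ell$. Then the trace functions $t_1,\dots,t_n$ of the irreducible constituents are continuous $K$-valued functions on $H$. Conjugation gives a continuous action of $G$ on the space of such functions with the compact-open topology. The finite set $\{t_1,\dots,t_n\}$ is $G$-stable and, being finite and Hausdorff, discrete. Hence each orbit map $g\mapsto (h\mapsto t_i(g^{-1}hg))$ is locally constant and the stabilizer of $t_i$ is open. One concludes because traces classify irreducible representations in characteristic zero.

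In particular, the trace approach you set aside does work. Closeness of traces is upgraded to equality precisely because $g\cdot t_i$ always lands in that finite discrete set.

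\textbf{Your route.} You use the $G$-stable $H$-socle filtration and identify the stabilizer of $\sigma\in\Sigma$ with the stabilizer of the $\sigma$-isotypic part of a layer, or equivalently of its preimage in $V$. Openness then comes from two facts. The stabilizer is closed, as the preimage of a Zariski-closed stabilizer of a subspace. It has finite index, since the orbit of $\sigma$ lies in the finite set $\Sigma$. A closed finite-index subgroup of a profinite group is open.

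\textbf{Comparison.} Your route avoids function-space topologies and the character-theoretic classification. It also works directly over $\overline{\mathbb{Q}}_\ell$ with its valuation topology, because polynomial conditions cut out closed subsets of $\mathbf{GL}(V)(\overline{\mathbb{Q}}_\ell)$. So your Baire-category reduction to a finite extension, and the worry in your last paragraph about continuity over $\overline{\mathbb{Q}}_\ell$, are unnecessary. The paper's argument is shorter once the trace classification is granted; yours is more self-contained.
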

\begin{proof}
	By Jordan-Hölder, there is a finite number of trace functions $t_1, \ldots, t_n$ associated to the irreducible subquotients of $\rho|_H$. Note that any decomposition series of $\rho|_H$ descends to a finite extension $K/\BQ_\ell$ (see \cite[Lem.~4.3.7]{BhattSPE}). In particular, the trace functions $t_i$ take values in $K$ and are continuous. The group $G$ acts continuously on $H$ by conjugation. The conjugation action of $G$ induces a continuous action on $\text{Hom}(H, K)$ in the compact-open topology. It preserves the finite set $\{t_1, \ldots, t_n\}$. The subspace $\{t_1, \ldots, t_n\}$ is finite and Hausdorff, hence discrete.  Therefore, the stabilizer of $t_i$ in $G$ is open. Since irreducible $\Qbarl$-representations of $H$ are classified by their trace functions, this implies that the isomorphism class of each irreducible subquotient of $\rho|_H$ is stabilized by an open subgroup of $G$. 
\end{proof}

Before we prove Theorem \ref{geomsimple_THM}, we apply Proposition \ref{descentmonodromy_PROP} to local systems. Note that this proposition allows us to descend local systems so that the geometric and the arithmetic monodromy group agree. 
\begin{lemma}\label{descentlocsys_LEM}
	Let $X$ be a geometrically connected geometrically unibranch  $k$-scheme and $F$ an irreducible lisse $\Qbarl$-sheaf on $X_{\overline{k}}$. Suppose 
	\[
	\sigma^*(F) \cong F
	\]
	for all $\sigma\in G_k$. There exists a finite separable extension $k'/k$ such that $F$ descends to a lisse $\Qbarl$-sheaf $F'$ on $X_{k'}$. We can choose $F'$ so that the geometric and the arithmetic monodromy group of $F'$ agree.
\end{lemma}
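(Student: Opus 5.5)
Translate to group theory and use Proposition \ref{descentmonodromy_PROP}. Fix a geometric point $\bar x$ of $X_{\overline{k}}$. By Remark \ref{unibranchClass_RMK}, lisse $\Qbarl$-sheaves on $X_{\overline{k}}$ and on each $X_{k'}$ correspond to continuous representations of $\pi_1(X_{\overline{k}},\bar x)$ and $\pi_1(X_{k'},\bar x)$. Since $X$ is geometrically connected, we have the exact sequence
\[
1 \rightarrow \pi_1(X_{\overline{k}},\bar x) \rightarrow \pi_1(X,\bar x) \rightarrow G_k \rightarrow 1 .
\]
Thus $F$ is an irreducible representation $\rho\colon H:=\pi_1(X_{\overline{k}})\to \mathbf{GL}(V)$. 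The hypothesis $\sigma^*F\cong F$ for all $\sigma\in G_k$ says that the isomorphism class of $\rho$ is invariant under conjugation by $G:=\pi_1(X)$.

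\textbf{Step 1 (descent to an open subgroup).} By Schur, for each $g\in G$ there is an intertwiner $A_g\in\mathbf{GL}(V)$, unique up to $\Qbarl^\times$, with $\rho(ghg^{-1})=A_g\rho(h)A_g^{-1}$. This gives a projective representation $\bar\rho\colon G\to \mathbf{PGL}(V)$ extending $\rho$ mod scalars. First we check that $\bar\rho$ is continuous. Then we want to lift it to a genuine continuous representation on an open subgroup $U\subseteq G$ containing $H$. The obstruction is a class in $H^2(G,\Qbarl^\times)$, with values in a finite extension $K$ after choosing a lattice.

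The standard trick is to kill it on an open subgroup. Choose a continuous lift of $\bar\rho$ near the identity, which is possible on a small open subgroup using the logarithm/exponential on an open pro-$\ell$ subgroup of $\mathbf{GL}_n(\CO)$. The determinant condition then reduces the obstruction to one with values in $\mu_n$, and this is trivialized on an open subgroup. The resulting $U$ contains $H$ up to twisting the lift on $H$ back to $\rho$, which is possible since the restriction to $H$ differs from $\rho$ by a character of $H$ that is trivial modulo $\mu_n$ and can be corrected. Then $U$ corresponds to $\pi_1(X_{k'})$ for a finite separable $k'/k$, and the extended representation gives the lisse sheaf $F'$ on $X_{k'}$ descending $F$. \textbf{This lifting of the projective representation is the main obstacle.}

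\textbf{Step 2 (matching monodromy groups).} We now have a representation $\rho'$ of $\pi_1(X_{k'})$ restricting to $\rho$ on $H$. Let $G_{\mathrm{geom}}\subseteq G_{\mathrm{arith}}$ be the Zariski closures of the images of $H$ and of $\pi_1(X_{k'})$. By normality, $G_{\mathrm{arith}}$ normalizes $G_{\mathrm{geom}}$. Since $\rho$ is irreducible, Schur implies that the centralizer of $G_{\mathrm{geom}}$ is the scalars. We would now invoke Proposition \ref{descentmonodromy_PROP}, which presumably asserts exactly that one can modify the descent to achieve $G_{\mathrm{arith}}=G_{\mathrm{geom}}$.

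Concretely, it modifies $\rho'$ by a character of $G_{k'}$ and passes to a further finite extension. Since $\mathrm{Aut}(G_{\mathrm{geom}}^\circ)/\mathrm{Inn}$ and $\pi_0$ are finite-type data, the image of $\pi_1(X_{k'})$ in $G_{\mathrm{arith}}/(\Qbarl^\times G_{\mathrm{geom}})$ is finite. We shrink $k'$ to kill this image. Afterwards each arithmetic element acts as a scalar times a geometric element, and this scalar defines a continuous character $\chi$ of $G_{k'}$. Twisting $F'$ by $\chi^{-1}$ yields a descent with equal arithmetic and geometric monodromy groups.
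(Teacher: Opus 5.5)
\textbf{The gap is in Step 1}, at the sentence ``the resulting $U$ contains $H$ up to twisting''.

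The congruence-subgroup (log/exp) lift of $\bar\rho$ exists on $U_0=\bar\rho^{-1}(V)$ for a small neighbourhood $V$ of $1$. This open subgroup of $\pi_1(X)$ does not contain $H=\pi_1(X_{\overline{k}})$ in general: it corresponds to a finite \'etale cover of $X$, not to some $X_{k'}$. On $U_0\cap H$ your lift equals $\chi\rho$ for a character $\chi$ defined only on $U_0\cap H$. If you normalise into $\mathbf{SL}_n$, then $\chi^n=(\det\rho)^{-1}$, so $\chi$ is not ``trivial modulo $\mu_n$'' unless $\det\rho$ is.

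To get a representation of $U_0H$ restricting to $\rho$, you must extend the invariant character $\chi$ from $U_0\cap H$ to $U_0$. That is a Clifford-type obstruction (a class in $H^2(U_0/(U_0\cap H),\Qbarl^\times)$ with non-torsion coefficients), and nothing forces it to die on open subgroups. Your Step 1 only uses that $H$ is a closed normal subgroup of a profinite group $G$ and that $\rho$ is $G$-invariant, and in that generality the conclusion is false. Take $G$ the group of upper unitriangular $3\times 3$ matrices over $\BZ_\ell$, $H\cong\BZ_\ell$ its centre, and $\rho\colon c\mapsto(1+\ell)^c$. Here $n=1$, so $\bar\rho$ is trivial and lifts on all of $G$. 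Yet for any open $U\supseteq H$, choose $m$ with $\ell^m\BZ_\ell^2\subseteq U/H$. Lifts to $U$ of $(\ell^m,0)$ and $(0,\ell^m)$ have commutator $\ell^{2m}\in H$. A character of $U$ extending $\rho$ would kill this commutator, forcing $(1+\ell)^{\ell^{2m}}=1$, which is impossible.

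\textbf{The missing input is geometric.} Since $X$ has a closed point, take $k'$ to be the separable closure of $k$ in its residue field. Then $1\to\pi_1(X_{\overline{k}})\to\pi_1(X)\to G_k\to 1$ acquires a section $s$ over $G_{k'}$, so $\pi_1(X_{k'})=H\rtimes s(G_{k'})$. Now you only need to lift $\bar\rho\circ s$ to some $\sigma$ on an open subgroup $U'\subseteq G_{k'}$. Your congruence trick works here, because no containment is required. Then set $\rho'(h\,s(u))=\rho(h)\sigma(u)$. This is a homomorphism on $H\rtimes s(U')=\pi_1(X_{k''})$, since conjugation by $\sigma(u)$ is insensitive to the scalar ambiguity.

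This is exactly the paper's proof: split the sequence, then apply Proposition~\ref{descentmonodromy_PROP}. That proposition's hypothesis is precisely such a semidirect product, so you could not invoke it in Step 2 without the splitting either. It delivers the descent and the equality of monodromy groups in one go.

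Your own sketch of Step 2 would also need repair:
\begin{itemize}
\item The centraliser of $G_{\mathrm{geom}}^\circ$ need not be scalar, and $\mathrm{Out}$ of a reductive group can be infinite (tori). This is why Lemma~\ref{conjactioninner_LEM} uses a finiteness statement for $H^1(\pi_0(G_{\mathrm{geom}}),Z(G_{\mathrm{geom}}^\circ))$.
\item Your ``scalar'' is only defined modulo $\Qbarl^\times\cap G_{\mathrm{geom}}$. Turning it into an honest character costs one more application of Lemma~\ref{liftinglemma_LEM}.
\end{itemize}
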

\begin{proof} We have a short exact sequence
	\[
	0 \rightarrow \pi_1(X_{\overline{k}}) \rightarrow \pi_1(X) \rightarrow G_k \rightarrow 0.
	\]
	This short exact sequence splits after a finite separable extension $k'/k$. Proposition \ref{descentmonodromy_PROP} implies that after a finite separable extension $k''/k'$, the sheaf $F$  descends to a lisse $\Qbarl$-sheaf $F'$ on $X_{k''}$ such that the geometric and the arithmetic monodromy group of $F'$ agree. 
\end{proof}
The following proof is an adaptation of \cite[Prop.~5.3.9]{BBD} to our setting.
\begin{proof}[Proof (Thm.~\ref{geomsimple_THM}).]
	Lemma \ref{conjaction_LEM} implies that after a finite separable extension $k'/k$ the conjugation action of $\pi_1(X_{k'})$ on the isotypical components of $F_{\overline{k}}$ is trivial. In particular, we obtain a direct sum decomposition
	\[
	F_{k'} = \bigoplus_{i = 1}^n G_i
	\] 
	such that $G_i$ is geometrically isotypical for $1 \leq i \leq n$. Lemma \ref{descentlocsys_LEM} implies that there is a finite extension $k''/k'$ and a geometrically irreducible lisse $\Qbarl$-sheaf $F_i$ on $X_{k''}$  whose geometric and the arithmetic monodromy groups agree, and such that $G_i$ is geometrically $F_i$-isotypical. Define the $G_{k''}$-representation
	\[
	\rho_i := \text{Hom}_{\pi_1(X_{\overline{k}})}(F_i, G_i).
	\]
	Then we have an isomorphism given by evaluation
	\[
	\rho_i\otimes F_i = G_i.
	\]
	Thus
	\[
	F_{k''} = (F_1\otimes \rho_1)\oplus \ldots \oplus (F_n\otimes \rho_n).
	\]
\end{proof}
	\subsection{Arithmetic complexes and models}\label{arithmeticcomplexes_SEC} We analyze the condition of arithmeticity in more detail by proving the existence of models for semisimple arithmetic complexes on schemes over separably closed fields. We begin by defining the notion of a model. 
\begin{definition}\label{model_DEF}
	Let $X$ be a  $k$-scheme and $k_0 \subseteq k$ a subfield. A \textit{model of $X$ over $k_0$} is a $k_0$-scheme $X'$ with a fixed isomorphism $\alpha\colon X \rightarrow X'_{k}$. 
	
	Let $K \in\derCat{c}{X}{}$ be a complex. If $X'$ is a model of $X$ over $k_0$, then a \textit{model of $K$ on $X'$} is a complex $K' \in \derCat{c}{X'}{}$ together with a fixed isomorphism $\alpha^*(K'_k) \rightarrow K$. We say $K$ \textit{admits a model over $k_0$} if there exists a model $X'$ of $X$ over $k_0$ such that there exists a model of $K$ on $X'$.
\end{definition}
\begin{remark}\label{uniquenessmodels_REM}
	Models of schemes are unique up to finite field extensions. Let $X$ be a $k$-scheme. Let $X'$ and $X''$ be models of $X$ over fields $k_0 \subseteq k$ and $k_1 \subseteq k$. There exists a finite extension $k_2/k_0k_1$ such that $X'_{k_2} \cong X''_{k_2}$ by a limit argument (see \cite[Prop.~10.1]{milne2024descentalgebraicschemes}). In particular, a model over an algebraically closed field is unique up to isomorphism.
	
	Models of schemes are also unique up to finitely generated field extensions contained in the base-field. We consider schemes $X$, $X'$, and $X''$ as above. Then we have an isomorphism
	\[
	X'\times_{k_1} k \cong X''\times_{k_2}k.
	\]
	Since the schemes $X'$ and $X''$ are of finite type, there exists a finitely generated field extension $k_3/k_0k_1$ of $k_0k_1$ with $k_3 \subseteq k$ such that
	\[
	X'\times_{k_0}k_3 = X''\times_{k_1}k_3
	\]
	Thus we see that models of schemes are unique up to finitely generated field extensions. 
	
	A further limit argument implies that any scheme $X$ of finite type over a field $k$ admits a model over an arithmetic subfield $k_0 \subseteq k$. 
\end{remark}
\begin{definition}\label{arithmeticcomplex_DEF}
	Let $X$ be a $k$-scheme. A complex $K \in \derCat{c}{X}{}$ is called \textit{arithmetic} if there exists a model $X'$ of $X$ over an arithmetic field $k_0$ such that the following condition is satisifed. The model induces an $\text{Aut}(\overline{k}/k_0)$-action on $X_{\overline{k}}$. The orbit $\sigma^*(K_{\overline{k}})$ for  $\sigma \in \text{Aut}(\overline{k}/k_0)$ consists of finitely many isomorphism classes.
\end{definition} 
\begin{remark}\label{independence_RMK}
	Let $X$ be a $k$-scheme. If a complex on $X$ admits a model over an arithmetic field, then it is arithmetic. In particular, a semisimple geometric complex (see \cite[{}6.2.4]{BBD}) is arithmetic.
	
	The condition of being arithmetic is independent of the implicitly chosen model by Remark \ref{uniquenessmodels_REM} in the following sense. Let $K \in \derCat{c}{X}{}$ be an arithmetic complex and $X'$ a model of $X$ over a field $k_0$. There exists a finitely generated extension $k_1/k_0$ with $k_1 \subseteq k$ such that the action of $\text{Aut}(\overline{k}/k_1)$ has a finite orbit when acting on the isomorphism class of $K_{\overline{k}}$.
\end{remark}
We first establish that arithmeticity passes to irreducible subquotients. 
\begin{theorem}\label{subquotientsarithmetic_THM}
	Let $X$ be a $k$-scheme an $K \in \derCat{c}{X}{}$ an arithmetic complex. We have the following:
	\begin{enumerate}[]\item The perverse cohomology sheaves $\pervCoh{n}{K}$ are arithmetic for all $n \in \BZ$.
		\item Any irreducible subquotient of $\pervCoh{n}{K}_{\overline{k}}$ is arithmetic for all $n \in \BZ$. 	
	 \end{enumerate}
\end{theorem}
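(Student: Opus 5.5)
Both parts reduce to the following observation: for $\sigma \in \text{Aut}(\overline{k}/k_0)$, the pullback $\sigma^*$ along the automorphism of $X_{\overline{k}}$ induced by the model $X'$ is an autoequivalence of $\derCat{c}{X_{\overline{k}}}{}$. This autoequivalence is t-exact for the perverse t-structure, since $\sigma$ is an automorphism of schemes compatible with the dimension function. We fix a model $X'$ over an arithmetic field $k_0$ witnessing arithmeticity of $K$, so that the isomorphism classes $\{\sigma^*(K_{\overline{k}})\}_{\sigma}$ form a finite set.

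For (1), perverse cohomology commutes with the base change $X_{\overline{k}} \rightarrow X$, since pullback along a field extension is perverse t-exact. Thus $\pervCoh{n}{K}_{\overline{k}} \cong \pervCoh{n}{K_{\overline{k}}}$. By t-exactness of $\sigma^*$ we get
\[
\sigma^*\bigl(\pervCoh{n}{K_{\overline{k}}}\bigr) \cong \pervCoh{n}{\sigma^*(K_{\overline{k}})}.
\]
The right-hand side depends only on the isomorphism class of $\sigma^*(K_{\overline{k}})$. Since there are only finitely many such classes, the orbit of $\pervCoh{n}{K}_{\overline{k}}$ is finite, so $\pervCoh{n}{K}$ is arithmetic with respect to the same model $X'$.

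For (2), let $P$ be an irreducible subquotient of $\pervCoh{n}{K}_{\overline{k}}$; this is a complex on $X_{\overline{k}}$. The natural model to use is $X'$ itself, viewed as a model of $X_{\overline{k}}$ over $k_0$. Here $\text{Aut}(\overline{\overline{k}}/k_0) = \text{Aut}(\overline{k}/k_0)$, and the base change of $P$ to $\overline{k}$ is $P$ itself. Since $\sigma^*$ is an exact autoequivalence of $\perv{X_{\overline{k}}}$, it sends irreducible subquotients of $M := \pervCoh{n}{K_{\overline{k}}}$ to irreducible subquotients of $\sigma^*M$. By (1), $\sigma^* M$ runs through finitely many isomorphism classes $M_1,\dots,M_r$. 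Each $M_j$ has finite length, because perverse sheaves on a finite type scheme over a field are Artinian and Noetherian. By Jordan--Hölder, each $M_j$ therefore has only finitely many isomorphism classes of irreducible subquotients. Hence $\sigma^*(P)$ ranges over a finite set of isomorphism classes, and $P$ is arithmetic.

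The main point requiring care is checking that the $\text{Aut}(\overline{k}/k_0)$-action on $X_{\overline{k}}$ coming from the model really gives perverse t-exact functors, and that viewing $X'$ as a model of $X_{\overline{k}}$ is compatible with Definition \ref{arithmeticcomplex_DEF}. By Remark \ref{independence_RMK}, the choice of model is immaterial. The $\sigma$ act as scheme automorphisms of $X_{\overline{k}}$, though not $\overline{k}$-linearly. Pullback along a scheme isomorphism preserves the perverse t-structure, because the perversity conditions are stated in terms of dimensions of supports and of stalks/costalks, and isomorphisms preserve both. This settles the point, and the rest is formal.
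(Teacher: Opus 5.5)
Your proposal is correct and follows the paper's proof essentially verbatim. Part (1) comes from the perverse $t$-exactness of the $\sigma^*$. Part (2) comes from the fact that these $t$-exact autoequivalences permute the finitely many isomorphism classes of irreducible subquotients of the finitely many perverse sheaves in the orbit, by Jordan--H\"older. Your treatment of (1) via isomorphism classes of $\sigma^*(K_{\overline{k}})$, rather than via a single isomorphism $\sigma^*(K)\cong K$, is if anything slightly more careful than the paper's write-up. The same holds for your explicit check that $X'$ serves as a model of $X_{\overline{k}}$.
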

\begin{proof} Note that the perverse cohomology sheaves of $K$ are arithmetic because an isomorphism $\sigma^*(K) \xrightarrow{\sim} K$ induces isomorphisms
	\[
	\sigma^*(\pervCoh{n}{K}) \xrightarrow{\sim} \pervCoh{n}{K}
	\]
	for all $n \in \BZ$ by $t$-exactness of $\sigma^*$. 
	
	We assume $K$ is perverse. Let $X'$ be a model over an arithmetic field $k_0$ such that the orbit of the isomorphism class $K_{\overline{k}}$ under the $\text{Aut}(\overline{k}/k_0)$-action is finite. Let $\{M_1, \ldots, M_n\}$ be the orbit of the isomorphism class of $K_{\overline{k}}$. Let $\{N_1,\ldots, N_m\}$ be the isomorphism classes of irreducible subquotients of $M_1, \ldots, M_n$. Then $\text{Aut}(\overline{k}/k_0)$ acts on the set $\{N_1, \ldots, N_m\}$ by the Jordan-Hölder theorem because the functors $\sigma^*$ are $t$-exact equivalences of categories for $\sigma \in \text{Aut}(\overline{k}/k_0)$. In particular, the orbit of $N_i$ is finite for $1 \leq i \leq m$.
\end{proof}
We can always descend a complex to the algebraic closure of a finitely generated field. Let us begin by proving that morphisms uniquely descend to models over separably closed fields.
\begin{lemma}\label{smoothbasechange_LEM}
	Let $k$ be a separably closed field and $X$ a $k$-scheme. Let $k'/k$ be a separably closed extension and $K, L \in \derCat{c}{X}{}$ complexes on $X$. The base change morphism
	\[
	\text{Hom}(K, L) \rightarrow \text{Hom}(K_{k'}, L_{k'})
	\]
	is an isomorphism.
\end{lemma}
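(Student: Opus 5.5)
Write $\pi\colon X_{k'} \rightarrow X$ for the projection. The plan is to reduce the statement to a base change computation for $\text{R}\mathcal{H}om$. By adjunction,
\[
\text{Hom}(K_{k'}, L_{k'}) = \text{Hom}(K, \pi_*\pi^*L),
\]
so it suffices to show that the unit $L \rightarrow \pi_*\pi^*L$ is an isomorphism. Equivalently, one can pass to internal Hom and show that $\mathrm{R}\Gamma(X, \text{R}\mathcal{H}om(K, L)) \rightarrow \mathrm{R}\Gamma(X_{k'}, \text{R}\mathcal{H}om(K_{k'}, L_{k'}))$ is an isomorphism. I take the first route, since it avoids proving that $\text{R}\mathcal{H}om$ commutes with base change.

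\textbf{Step 1: a smooth base change statement.} The morphism $\text{Spec}(k') \rightarrow \text{Spec}(k)$ is a limit of smooth morphisms $\text{Spec}(A) \rightarrow \text{Spec}(k)$, with $A$ running over smooth $k$-algebras. This holds because $k$ is perfect-enough: $k'/k$ is separable since $k$ is separably closed, so $k'$ is a filtered colimit of smooth $k$-algebras by Popescu's theorem, or more elementarily by using separable transcendence bases of finitely generated subextensions. For each such $A$, put $X_A = X\times_k \text{Spec}(A)$ and let $p_A\colon X_A \rightarrow X$ be the projection. Smooth base change applied to the cartesian square over $\text{Spec}(A) \rightarrow \text{Spec}(k)$ gives
\[
p_{A*}p_A^*L \cong L\otimes_{\Qbarl} \mathrm{R}\Gamma(\text{Spec}(A), \Qbarl).
\]
A constructible complex can be pulled back along a field extension, and the unit $L \rightarrow \pi_*\pi^*L$ is then computed by passing to the limit over $A$. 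Using continuity of étale cohomology for limits of qcqs schemes with affine transition maps, first with torsion coefficients $\CO/\lambda^n$ and then passing to $\CO$ and $\Qbarl$, one obtains
\[
\pi_*\pi^*L \cong L\otimes \mathrm{R}\Gamma(\text{Spec}(k'), \Qbarl).
\]

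\textbf{Step 2: the cohomology of the point.} Since $k'$ is separably closed, $\mathrm{R}\Gamma(\text{Spec}(k'), \Qbarl) = \Qbarl$, and hence $L \rightarrow \pi_*\pi^*L$ is an isomorphism, which gives the lemma. The only case needing care is when $k$ has characteristic $p$ and $k'$ is not algebraic over $k$. Here the coefficients are $\ell$-adic with $\ell \neq p$, so only $\ell$-torsion Galois cohomology of $k'$ enters, and it vanishes.

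\textbf{Main obstacle.} The hardest step is the limit argument in the $\Qbarl$ (proétale) setting. I would reduce to $\CO$-coefficients, since $K$ and $L$ descend to some finite $K/\BQ_\ell$ and Hom is computed after inverting $\lambda$. I would then reduce to $\CO/\lambda^n$ by derived completeness and a Mittag-Leffler argument, where the Hom groups are finite. Next I would reduce to $K$ of the form $j_!$ of a locally constant sheaf, by dévissage along a stratification. At that point the statement becomes the classical invariance of étale cohomology of finite type schemes with torsion coefficients prime to the characteristic under extension of separably closed base field, as in SGA 4½ [Arcata] and SGA 4 XII. Once this is available, Step 1 is routine.

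A cleaner alternative is available if needed: write $\text{Hom}(K, L) = H^0\mathrm{R}\Gamma(X, \mathbb{D}(K\otimes \mathbb{D}L))$-type expressions via Verdier duality over the separably closed point. Then $\text{Hom}(K,L) = H^0 \mathrm{R}\Gamma_c(X, K\otimes\mathbb{D}L)^\vee$, and invariance of $\mathrm{R}\Gamma_c$ under extension of separably closed fields is proper base change combined with the fact that duality commutes with this base change.
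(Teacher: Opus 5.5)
Your route differs from the paper's and is sound in outline, but one justification in Step~1 is false and needs a repair.

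\textbf{The false step.} You assert that $k'/k$ is separable because $k$ is separably closed. This fails when $k$ is imperfect. In that case $k^{1/p}$, and also $\overline{k}$ (which is then the perfect closure of $k$), are separably closed, purely inseparable extensions of $k$. Such a $k'$ is not a filtered colimit of smooth $k$-algebras: any such colimit is geometrically reduced over $k$, whereas $k'\otimes_k k'$ has nonzero nilpotents. So neither Popescu nor the limit form of smooth base change applies to $\mathrm{Spec}(k')\to\mathrm{Spec}(k)$.

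The fix is topological invariance. The perfect closures $k^{\mathrm{perf}}\subseteq k'^{\mathrm{perf}}$ are algebraically closed. The projections $X_{k^{\mathrm{perf}}}\to X$ and $X_{k'^{\mathrm{perf}}}\to X_{k'}$ are universal homeomorphisms, so they induce equivalences of (pro)\'etale topoi compatible with pullback (cf.\ \cite[Lem.~5.4.2]{BhattSPE}, which the paper uses elsewhere). Since $k'^{\mathrm{perf}}/k^{\mathrm{perf}}$ is separable, your argument applies there.

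This is also where the characteristic-$p$ care really belongs. Your remark in Step~2 is beside the point: $H^i(\mathrm{Spec}(k'),\Lambda)=0$ for $i>0$ and all coefficients because $k'$ is separably closed, and $\ell\neq p$ is needed only for smooth base change itself. The paper's one-line appeal to smooth base change tacitly needs the same reduction.

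\textbf{Comparison of routes.} The paper works in two steps:
\begin{enumerate}
\item It shows that internal Hom commutes with the base change $\alpha\colon X_{k'}\to X$, citing \cite[Prop.~9.5.5]{LeiFu}, which relies on constructibility of $K$.
\item It applies smooth base change to $H^0(X,\text{Hom}(K,L))$.
\end{enumerate}
You instead use the adjunction and reduce to the unit $L\to\pi_*\pi^*L$ being an isomorphism. That says exactly that $R\Gamma(U,L)\to R\Gamma(U_{k'},L)$ is an isomorphism for every qcqs \'etale $U\to X$. After the reduction above, this follows directly from smooth base change for $U\to\mathrm{Spec}(k)$ along the pro-smooth morphism $\mathrm{Spec}(k')\to\mathrm{Spec}(k)$. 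Your intermediate formula for $p_{A*}p_A^*L$ is therefore an unnecessary detour.

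Your route never needs base change for internal Hom and works for arbitrary $K$. In particular, the d\'evissage of $K$ to sheaves $j_!$ of local systems in your last paragraph is superfluous: once the unit is an isomorphism, the claim holds for all $K$ at once. The price is the reduction from $\Qbarl$ to $\CO/\lambda^n$, which is routine. It uses three facts:
\begin{itemize}
\item completed pullback is left adjoint to $\pi_*$ on derived complete objects;
\item $\pi_*$ commutes with $R\varprojlim$;
\item \'etale and pro\'etale pushforwards agree on classical bounded below complexes.
\end{itemize}
The paper's route is shorter given its citation. Your Verdier-duality alternative is closer to the paper's, since it needs $\mathbb{D}$ to commute with the base change, an input of the same kind as the internal-Hom base change.
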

\begin{proof}
	Let $\alpha\colon X_{k'} \rightarrow X$ be the base change morphism. \cite[Prop.~9.5.5]{LeiFu} implies that the morphism
	\[
	\alpha^*(\text{Hom}(K, L)) \rightarrow \text{Hom}(\alpha^*(K), \alpha^*(L)) 
	\]
	is an isomorphism. The natural morphism
	\[
	H^0(X, \text{Hom}(K, L)) \rightarrow 	H^0(X_{k'}, \alpha^*(\text{Hom}(K, L))) 
	\]
	is an isomorphism by smooth base change. Thus the composite
	\[
	\text{Hom}(K, L) \rightarrow \text{Hom}(\alpha^*(K), \alpha^*(L)) 
	\]
	is an isomorphism. This composite identifies with the morphism above. 
\end{proof}	
\begin{lemma}\label{lefschetzprinciple_LEM}
	Let $X$ be a $k$-scheme and $K \in \derCat{c}{X}{}$. Suppose $X'$ is a model of $X$ over a field $k_0 \subseteq k$. There exists a finitely generated field extension $k_1/k_0$ with $k_1 \subseteq \overline{k}$ such that $K_{\overline{k}}$ admits a model on $X'_{\overline{k}_1}$.
	
	Given an extension $k_1/k_0$ with $k_1 \subseteq \overline{k}$ such that $K_{\overline{k}}$ admits a model on $X'_{\overline{k}_1}$, then the model is unique up to unique isomorphism.
\end{lemma}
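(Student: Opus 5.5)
Existence will come from a limit (spreading-out) argument, and uniqueness from Lemma \ref{smoothbasechange_LEM}.

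\emph{Existence.} Write $\overline{k}$ as the filtered union of the algebraic closures $\overline{k}_1 \subseteq \overline{k}$, where $k_1$ runs over finitely generated extensions of $k_0$ inside $\overline{k}$. Then $X_{\overline{k}} = X'_{\overline{k}} = \lim_{k_1} X'_{\overline{k}_1}$ is a cofiltered limit of qcqs schemes with affine transition maps. Next, I reduce $K$ to finite coefficient data: by \cite[Lem.~4.3.7]{BhattSPE} and the construction of $\derCat{c}{X}{}$ in \cite{BhattSPE}, $K_{\overline{k}}$ comes from a constructible complex of $\CO$-modules $K_{\CO}$ for some finite extension $K/\BQ_\ell$, modulo inverting $\ell$. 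Such a complex is determined by the compatible system of perfect constructible complexes $K_{\CO}\otimes^L \CO/\lambda^m$.

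The main obstacle is descending this whole $\lambda$-adic system, not just each finite level, through the limit. At a single level, standard limit theorems for constructible sheaves of finite modules \cite[Exp.~IX]{SGA4} apply: a constructible complex with torsion coefficients on $\lim X'_{\overline{k}_1}$ descends to some stage, and morphisms descend after enlarging the stage. However, infinitely many levels could a priori force infinitely many enlargements of $k_1$. To avoid this, I will use that all the $X'_{\overline{k}_1}$ are finite type over algebraically closed fields. First choose a stratification of $X_{\overline{k}}$ on whose strata $K_{\CO}$ has lisse cohomology. This stratification is finitely presented, so it descends to some $\overline{k}_1$. Each lisse sheaf on a stratum is a representation of $\pi_1$ of the stratum, and by \cite{SGA1} $\pi_1$ is invariant under extension of algebraically closed fields (the specialization and invariance of $\pi_1$ for normal schemes; for general strata use the equivalence of finite étale sites). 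Hence the cohomology sheaves, together with the gluing data, descend to $\overline{k}_1$ at once. Concretely, I will argue inductively on the number of strata via the gluing triangles $j_!j^*\to \mathrm{id}\to i_*i^*$, using that the extension classes live in $\mathrm{Hom}$ groups. Those $\mathrm{Hom}$ groups are invariant under $\overline{k}_1 \subseteq \overline{k}$ by Lemma \ref{smoothbasechange_LEM}, which applies because all the fields involved are separably closed. So every step of the gluing can be performed over $\overline{k}_1$.

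\emph{Uniqueness.} Suppose $K_1, K_2 \in \derCat{c}{X'_{\overline{k}_1}}{}$ are two models, with fixed isomorphisms $\phi_i\colon (K_i)_{\overline{k}} \xrightarrow{\sim} K_{\overline{k}}$. By Lemma \ref{smoothbasechange_LEM}, applied to $\overline{k}_1 \subseteq \overline{k}$ (both separably closed), base change induces a bijection $\mathrm{Hom}(K_1,K_2)\to \mathrm{Hom}((K_1)_{\overline{k}},(K_2)_{\overline{k}})$. So $\phi_2^{-1}\phi_1$ lifts uniquely to a morphism $\psi\colon K_1\to K_2$. Applying the same bijection to $\phi_1^{-1}\phi_2$ gives a morphism $K_2 \to K_1$. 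Full faithfulness shows that this morphism is inverse to $\psi$, since the composites become identities after base change. Hence $\psi$ is the unique isomorphism of models compatible with the fixed identifications.
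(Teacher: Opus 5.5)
Your uniqueness argument is correct and is the paper's (via Lemma \ref{smoothbasechange_LEM}). Your devissage for existence is also a legitimate substitute for the paper's induction over perverse truncations, composition series and intermediate extensions. That devissage descends a stratification with lisse cohomology on the strata, then rebuilds $K$ from its cohomology sheaves and the gluing triangles; all connecting morphisms lie in $\mathrm{Hom}$ groups that are invariant by Lemma \ref{smoothbasechange_LEM}.

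The gap is in the lisse step. You correctly isolate the obstacle: the levels $\CO/\lambda^m$ might force infinitely many enlargements of $k_1$. But you remove it by claiming that $\pi_1$, or the finite etale site, of a finite type scheme is invariant under extension of algebraically closed base fields. In characteristic $p>0$ this is false for non-proper schemes. \cite[Exp.~X]{SGA1} gives invariance only for proper schemes, and Exp.~XIII otherwise only for the prime-to-$p$ part.

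For example, take $k_0=\mathbb{F}_p$, $X=\BA^1$, and $t\in\overline{k}$ transcendental over $\overline{\mathbb{F}}_p$. Classes in $H^1(\BA^1_F,\mathbb{Z}/p)=F[x]/\{f^p-f \mid f\in F[x]\}$ are represented uniquely by polynomials supported on monomials $x^i$ with $p\nmid i$. Base change from $\overline{\mathbb{F}}_p$ only produces such polynomials with coefficients in $\overline{\mathbb{F}}_p$. Hence the Artin--Schreier cover $y^p-y=tx$ of $\BA^1_{\overline{k}}$ does not come from $\BA^1_{\overline{\mathbb{F}}_p}$. The associated rank one lisse $\Qbarl$-sheaf has the trivial stratification, which descends to $\overline{\mathbb{F}}_p$, yet the sheaf itself only descends once $k_1$ contains $t$. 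So the claim that the cohomology sheaves descend to $\overline{k}_1$ ``at once'' fails. As written, your argument only works where invariance of $\pi_1$ is available, e.g.\ in characteristic zero.

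The missing idea, which is how the paper proceeds, is to split off the finite (possibly wildly ramified) part of the monodromy from the pro-$\ell$ part. Refine your stratification so the strata are smooth, so that lisse sheaves are classified by $\pi_1$. For a lisse $\CO$-sheaf $F$ on a smooth connected stratum $U$, choose a finite Galois etale cover $q\colon V\to U$ trivializing $F\otimes\CO/\lambda$. Since $\ker\bigl(\mathrm{GL}_n(\CO)\to\mathrm{GL}_n(\CO/\lambda)\bigr)$ is pro-$\ell$, the sheaf $q^*F$ has pro-$\ell$ monodromy.

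The single cover $q$ descends after one enlargement of $k_1$ by the usual limit argument. Unlike the full $\pi_1$, the pro-$\ell$ fundamental group is invariant under $\overline{k}_1\subseteq\overline{k}$ \cite[Cor.~4.5]{orgogozoFundamentalGroup}, using $\ell\neq p$. Hence $q^*F$ descends with no further enlargement. Finally, $F$ is a direct summand of $q_*q^*F$. The corresponding idempotent descends because $\pi_1(U)\to\pi_1(U'_{\overline{k}_1})$ is surjective for the model $U'$ of $U$ (equivalently, by Lemma \ref{smoothbasechange_LEM}). With this in place of your appeal to invariance of $\pi_1$, the rest of your argument goes through.
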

\begin{proof} We can assume $k$ to be algebraically closed by base changing $k$ to $\overline{k}$. The uniqueness follows from Lemma \ref{smoothbasechange_LEM}.

	We construct a subfield $k_1 \subseteq k$ during the proof by repeatedly adjoining a finite number of elements of $k$ to $k_1$. We then say that we are enlarging $k_1$. We begin the argument with $k_1 := k_0$. 
	
	We assume the lemma is known when $K$ is a perverse sheaf. We now prove the lemma by induction on the number of non-zero perverse cohomology sheaves of $K$. In the base case we have $K = 0$, i.e. there is nothing to prove. Suppose $K$ has at least one non-zero perverse cohomology sheaf. Let $n \in \BZ$ be the largest integer with $\pervCoh{n}{K} \neq 0$. We have a distinguished triangle
	\[
	\pvTRC{\leq n - 1}(K) \rightarrow K \rightarrow \pervCoh{n}{K}[-n] \xrightarrow[]{+}.
	\]
	We can enlarge $k_1$ such that the complexes $\pvTRC{\leq n - 1}(K)$ and $\pervCoh{n}{K}$ admit models over ${\overline{k}_1}$ by the induction hypothesis and the assumption. By Lemma \ref{smoothbasechange_LEM}, the morphism
	\[
	\pervCoh{n}{K}[-n] \rightarrow \pvTRC{\leq n - 1}(K)[1]
	\]
	descends to a morphism of models on $X'_{\overline{k}_1}$. Hence its cone admits a model on $X'_{\overline{k}_1}$. The cone is $K[1]$ hence $K$ admits a model on $X'_{\overline{k}_1}$.
	
	We assume the lemma is known when $K$ is an irreducible perverse sheaf. We can now prove the lemma for a perverse sheaf by induction on the length, similar to the first step.
	
	We treat the case when $K$ is an irreducible perverse sheaf. There exists a locally closed immersion $j\colon U \rightarrow X$ with $U$ irreducible and smooth such that $j^*(K)$ is lisse, concentrated in degree $-d$ for some integer $d \geq 0$, and $K = j_{!*}j^*(K)$. We can enlarge $k_1$ and assume the locally closed subset $j\colon U \rightarrow X$ admits a model 
	\[
	U' \rightarrow X'_{k_1}
	\]
	over $k_1$. Note that $U'$ is geometrically irreducible and smooth because $U$ is irreducible and smooth and $k$ is algebraically closed.
	
	Assume, after enlarging $k_1$, that $j^*(K)$ admits a model on $U'_{\overline{k}_1}$. The complex $K$ admits a model on $X'_{\overline{k}_1}$, since intermediate extension commutes with base change. Therefore, it is sufficient if we construct a model for $j^*(K)$ on $U_{\overline{k}_1}$ after enlarging $k_1$. We can write $j^*(K) = F[d]$ for a lisse sheaf $F$ on $U$. In particular, it is sufficient to construct a model of $F$ on $U'_{\overline{k}_1}$ after an enlargement of $k_1$.  
	
	There exists a finite Galois cover $q\colon V \rightarrow U$ such that $q^*(F)$ has pro-$\ell$-monodromy (i.e. the image of the fundamental group $\pi_1(V)$ in $\text{GL}(F)$ is a pro-$\ell$ group). We can enlarge $k_1$ and assume $q$ descends to a finite Galois cover $V' \rightarrow U'$ over $k_1$. Assume $q^*(F)$ admits a model on $V'_{\overline{k}_1}$.  The base change morphism
	\[
	\pi_1(U) \rightarrow \pi_1(U'_{{\overline{k}_1}})
	\]
	is surjective because $\overline{k}_1$ is separably closed. Thus any direct factor of $q_*q^*(F)$ admits a model on $U'_{\overline{k}_1}$. In particular, $F$ admits a model over $\overline{k}_1$ because $F$ is a direct factor of $q_*q^*(F)$ by the projection formula. Thus it is sufficient if we construct a model of $q^*(F)$ on $V'_{\overline{k}_1}$.
	
	The base change morphism
	\[
	\pi_1^{\ell}(V_{\overline{k}}) \rightarrow \pi_1^{\ell}(V'_{{\overline{k}_1}})
	\]
	on pro-$\ell$ fundamental groups is an isomorphism by \cite[Cor.~4.5]{orgogozoFundamentalGroup}. Thus $q^*(F)$ admits a model over $\overline{k}_1$. 
\end{proof}
For geometrically irreducible arithmetic perverse sheaves, we prove the existence of models after a twist by a rank one sheaf.
\begin{theorem}\label{modelsIrr_THM}
	Let $X$ be a $k$-scheme. Let $M \in \perv{X}$ be a geometrically irreducible arithmetic perverse sheaf on $X$. For a model $X'$ of $X$ over a field $k_0 \subseteq k$, there exists a finitely generated field extension $k_1/k_0$ with $k_1 \subseteq\overline{k}$ and a one-dimensional character $\chi$ of the Galois group of $k_1k$ such that $M_{k_1k}\otimes \chi$ admits a model on $X'_{k_1}$. 
\end{theorem}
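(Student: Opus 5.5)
First I reduce to the case where the geometric model already exists, then I descend along a finite extension of an arithmetic field. Lemma \ref{lefschetzprinciple_LEM} gives a finitely generated extension $k_1/k_0$ with $k_1 \subseteq \overline{k}$ and a perverse sheaf $N$ on $X'_{\overline{k}_1}$ that is a model of $M_{\overline{k}}$. It is unique up to unique isomorphism. Enlarging $k_1$ by Remark \ref{independence_RMK}, I may assume the $\text{Aut}(\overline{k}/k_1)$-orbit of $M_{\overline{k}}$ is finite. Note that $M_{\overline{k}}$ is irreducible, so $N$ is irreducible too, by Lemma \ref{smoothbasechange_LEM} (endomorphisms and subobjects are detected after the separably closed base change). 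The orbit of $N$ under $G_{k_1} = \text{Gal}(\overline{k}_1/k_1)$ is then finite. Here I use that restriction $\text{Aut}(\overline{k}/k_1) \to G_{k_1}$ is surjective and that the uniqueness in Lemma \ref{lefschetzprinciple_LEM} makes $\sigma^*N$ the model of $\sigma^*M_{\overline{k}}$. Indeed, $M$ is defined over $k$, so $\sigma^*M_{\overline{k}} \cong M_{\overline{k}}$ for $\sigma$ fixing $k$. Combining this with the finiteness of the orbit, after a finite separable enlargement of $k_1$ (still inside $\overline{k}$) I get $\sigma^*N \cong N$ for all $\sigma \in G_{k_1k\cap \overline{k}_1}$. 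The cleanest way is to simply replace $k_1$ by a finite extension so that $\sigma^*N\cong N$ for all $\sigma\in G_{k_1}$, using openness of stabilizers.

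Next I descend $N$ to $X'_{k_1}$ up to a character. For each $\sigma \in G_{k_1}$ I choose an isomorphism $\phi_\sigma\colon \sigma^*N \to N$. Since $N$ is irreducible, $\text{End}(N)=\Qbarl$, so each $\phi_\sigma$ is unique up to a scalar. The cocycle defect $\phi_{\sigma\tau}^{-1}\phi_\sigma\sigma^*(\phi_\tau)$ therefore defines a class $c \in H^2(G_{k_1},\Qbarl^\times)$. Its vanishing is exactly what is needed to descend $N$ after a twist. To make it vanish, I imitate the proof of Lemma \ref{descentlocsys_LEM}. First, restrict to a dense open $j\colon U'\to X'_{k_1}$ (after enlarging $k_1$) where $N = j_{!*}F[d]$ with $F$ lisse. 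Then apply Proposition \ref{descentmonodromy_PROP} to $\pi_1(U'_{\overline{k}_1}) \to \pi_1(U') \to G_{k_1}$, after enlarging $k_1$ so that the sequence splits (e.g. by choosing a rational point, or a splitting over a finite extension). A splitting $s$ lets me define the descended representation directly as $F$ extended to $\pi_1(U')\cong \pi_1(U'_{\overline{k}_1})\rtimes G_{k_1}$. For this I need a projective action of $G_{k_1}$ that lifts continuously, which is what Proposition \ref{descentmonodromy_PROP} provides after a further finite extension. This gives a lisse $F'$ on $U'$ with $F'_{\overline{k}_1}\cong F$, and then $j_{!*}F'[d]$ is a perverse sheaf $M'$ on $X'_{k_1}$ with $M'_{\overline{k}_1}\cong N$.

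Finally I compare with $M$, which is where the character appears. Both $M_{k_1k}$ and $M'_{k_1k}$ are geometrically irreducible perverse sheaves on $X_{k_1k}$ with isomorphic geometric fibres. Hence $L:=\text{Hom}(M'_{k_1k}, M_{k_1k})$ on the geometric fibre is a one-dimensional $\Qbarl$-space with a continuous action of $G_{k_1k}$, by Lemma \ref{smoothbasechange_LEM} and irreducibility. This gives a character $\chi$. Evaluation yields $M_{k_1k}\cong M'_{k_1k}\otimes\chi$, and twisting by $\chi^{-1}$ gives the claim; this is the same evaluation argument as in the proof of Theorem \ref{geomsimple_THM}. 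I expect the main obstacle to be the middle step. Proposition \ref{descentmonodromy_PROP} is stated for lisse sheaves, so the argument must stay on the open $U'$ and use that $j_{!*}$ commutes with base change. It also has to verify that the Galois-invariance of $N$ transfers to $F$, and check continuity and that the descended representation is genuinely $\ell$-adic, i.e. lands in some $\mathbf{GL}_n(K)$ for a finite $K/\BQ_\ell$.
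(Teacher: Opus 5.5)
Your proposal is correct and follows the paper's proof step for step. Like the paper, you take a Lefschetz-principle model over $\overline{k}_1$, kill the finite $G_{k_1}$-orbit by a finite extension, restrict to a lisse open and descend via Lemma \ref{descentlocsys_LEM} (i.e. Proposition \ref{descentmonodromy_PROP}), and then read off $\chi$ from the one-dimensional space $\text{Hom}(M'_{\overline{k}}, M_{\overline{k}})$. The $H^2$ detour is unnecessary. The one point to tighten is that a finite orbit only gives a finite-index stabilizer, which need not be open in a profinite group; you need it to be closed, which the paper gets from the trace-function argument of Lemma \ref{conjaction_LEM} applied to the lisse restriction.
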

\begin{proof}
	We construct $k_1$ by enlarging the field during the proof as in Lemma \ref{lefschetzprinciple_LEM}. By Remark \ref{independence_RMK}, we can assume the action of $\text{Aut}(\overline{k}/k_1)$ has a finite orbit when acting on the isomorphism class of $M$ by taking a finitely generated extension $k_1/k_0$. By Lemma \ref{lefschetzprinciple_LEM}, we can enlarge $k_1$ and assume $M$ descends to a model $M'_{\overline{k}_1}$ on $X'_{\overline{k}_1}$.
	
	After a finite extension of $k_1$, we can assume there exists an immersion $j\colon U \rightarrow X'_{k_1}$ with $U$ smooth and geometrically connected, $M_{k_1k} = j_{!*}(j^*(M_{k_1k}))$, and $j^*(M_{k_1k})$ is lisse. Since the intermediate extension commutes with base change, we can replace $X$ by $U$ and assume $M$, and hence $M'_{\overline{k}_1}$, are lisse.
	
	The action of $\text{Aut}(\overline{k}/k_1)$ on the isomorphism class of $M$ factors through the Galois group $G_{k_1}$ of $k_1$. Therefore, the action of $G_{k_1}$ has a finite orbit when acting on the isomorphism class of $M'_{\overline{k}_1}$ because the restriction morphism
	\[
	\text{Aut}(\overline{k}/k_1) \rightarrow G_{k_1}
	\] 
	is surjective. An argument with trace functions, similar to the one in Lemma \ref{conjaction_LEM}, implies that the stabilizer in $G_{k_1}$ of $M$ is closed and hence open (see also \cite[Cor.~5.4]{zockmasterthesis}). Thus, after passing to a finite (separable) extension of $k_1$, Lemma \ref{descentlocsys_LEM} implies that $M'_{\overline{k}_1}$ descends to a model $M'$ on $X_{k_1}$. The space 
	\[
	\chi := \text{Hom}(M'_{\overline{k}}, M_{\overline{k}})
	\] 
	is one-dimensional and comes with a continuous action by the group $G_{k_1k}$. We have an isomorphism given by evaluation
	\[
	M'_{k_1k}\otimes\chi = M_{k_1k}.
	\]
	Thus $M'$ is a model of $M_{k_1k}\otimes\chi^\vee$ on $X'_{k_1}$. 
\end{proof}
In particular, we can prove the existence of models under a semisimplicity assumption.
\begin{corollary}\label{models_COR}
	Let $k$ be a separably closed field and $X$ a  $k$-scheme. Let $K \in \derCat{c}{X}{}$ be an arithmetic semisimple complex. The complex $K$ admits a model over an arithmetic field.
\end{corollary}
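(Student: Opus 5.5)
Since $K$ is semisimple, we write
\[
K \cong \bigoplus_{n\in\BZ}\pervCoh{n}{K}[-n], \qquad \pervCoh{n}{K} \cong \bigoplus_j M_{n,j},
\]
a finite direct sum of shifted irreducible perverse sheaves $M_{n,j}$. By Theorem \ref{subquotientsarithmetic_THM}, each $M_{n,j}$ is arithmetic. A direct sum of complexes admitting models on a common model $X'$ admits a model on $X'$. It therefore suffices to find one model $X'$ of $X$ over a single arithmetic field $k_1$ on which every $M_{n,j}$ admits a model.

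Fix a model $X'$ of $X$ over an arithmetic subfield $k_0\subseteq k$, using Remark \ref{uniquenessmodels_REM}. Since $k$ is separably closed, every irreducible perverse sheaf on $X$ is geometrically irreducible: pullback along the purely inseparable extension $\overline{k}/k$ is a universal homeomorphism and preserves irreducibility. So Theorem \ref{modelsIrr_THM} applies to each $M_{n,j}$. It gives a finitely generated extension $k_{n,j}/k_0$ inside $\overline{k}$ and a character $\chi_{n,j}$ of $G_{k_{n,j}k}$ such that $(M_{n,j})_{k_{n,j}k}\otimes\chi_{n,j}$ admits a model on $X'_{k_{n,j}}$.

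Now $k_{n,j}k/k$ is algebraic, hence purely inseparable, because $k$ is separably closed. Thus $G_{k_{n,j}k}$ is trivial, $\chi_{n,j}$ is trivial, and $(M_{n,j})_{k_{n,j}k}$ corresponds to $M_{n,j}$ under the equivalence of étale sites. We obtain a model of $M_{n,j}$ over the finitely generated field $k_{n,j}$, which is arithmetic because $k_0$ is. Let $k_1$ be the compositum of the finitely many $k_{n,j}$. It is still finitely generated over $k_0$, hence arithmetic. Pulling each model back to $X'_{k_1}$ gives models of all the $M_{n,j}$ on the single model $X'_{k_1}$.

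The main subtlety is that $k_{n,j}$ need not lie in $k$, whereas Definition \ref{model_DEF} asks for a subfield $k_1\subseteq k$. I would handle this with the Frobenius. The extension $k_1k/k$ is purely inseparable of finite exponent, say $p^e$. Then $k_1^{p^e}\subseteq k$ is isomorphic to $k_1$ via the $p^e$-th power map, hence still arithmetic. Transporting the model along this isomorphism, and using topological invariance of the étale site under the relative Frobenius, gives a model over $k_1^{p^e}\subseteq k$. In characteristic zero this issue does not arise. The remaining steps are routine bookkeeping of the fixed isomorphisms $\alpha$.
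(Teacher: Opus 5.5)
Your core argument is the paper's. Reduce via semisimplicity and Theorem \ref{subquotientsarithmetic_THM} to an irreducible arithmetic perverse sheaf, which is geometrically irreducible because $k$ is separably closed, and apply Theorem \ref{modelsIrr_THM}. Your extra observations are correct and make explicit what the paper leaves tacit: $\chi$ is trivial because $k_1k$ is separably closed, and a compositum gives one common model $X'_{k_1}$ for all summands.

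The step that fails is the Frobenius fix for $k_1\not\subseteq k$. Transport the $k_1$-scheme $X'_{k_1}$ along the abstract isomorphism $k_1\cong k_1^{p^e}$ and base change to $k$. You get $X'\times_{k_0,\,x\mapsto x^{p^e}}k = X\times_{k,F^e}k$, which is the Frobenius twist $X^{(p^e)}$, not $X$. The relative Frobenius $X\to X^{(p^e)}$ is a universal homeomorphism, so it lets you move sheaves. It is not an isomorphism of $k$-schemes, however. For instance, take an elliptic curve over $\mathbb{F}_p(t)$ with $j$-invariant $t$: its twist has $j$-invariant $t^{p^e}$. So your construction never produces the isomorphism $\alpha\colon X\to X''_k$ that Definition \ref{model_DEF} requires.

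The repair is to descend rather than transport. Put $F:=k_0\,k_1^{p^e}$.
\begin{itemize}
\item $F$ is arithmetic and contained in $k\cap k_1$.
\item $k_1/F$ is finite and purely inseparable, since $k_1$ is generated over $F$ by finitely many elements whose $p^e$-th powers lie in $F$. Hence $X'_{k_1}\to X'_F$ is a universal homeomorphism, and by topological invariance your model on $X'_{k_1}$ descends uniquely to a complex on $X'_F$.
\item Since $X'_F\times_F k=X'\times_{k_0}k\cong X$, the scheme $X'_F$ is a model of $X$ over $F\subseteq k$.
\item The descended complex pulls back to a complex on $X$ isomorphic to $K$. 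Both become isomorphic after pullback along the universal homeomorphism $X_{k_1k}\to X$, and pullback along it is an equivalence on $\derCat{c}{X}{}$.
\end{itemize}
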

\begin{proof}
	Because $K$ is semisimple and the irreducible factors are arithmetic by Theorem \ref{subquotientsarithmetic_THM}, we can assume $K$ is an irreducible perverse sheaf. The sheaf $K$ is then geometrically irreducible. This case is treated by Theorem \ref{modelsIrr_THM}.
\end{proof}


\subsection{Geometrically semisimple perverse sheaves}
We generalize Theorem \ref{geomsimple_THM} to perverse sheaves. This provides a variant of \cite[Prop.~5.3.9]{BBD} over arbitrary base-fields. 
\begin{theorem}\label{geomsimpleperverse_THM}
	Let $X$ be a $k$-scheme. Let $M \in \Perv{X}{\Qbarl}$ be a geometrically semisimple perverse sheaf on $X$. There exists a finite separable extension $k'/k$, geometrically irreducible perverse sheaves $M_1, \ldots, M_n \in \Perv{X_{k'}}{\Qbarl}$, and continuous $\Qbarl$-representations $\rho_1, \ldots, \rho_n$ of the Galois group of $k$ such that
	\[
	M_{k'} = (M_1\otimes\rho_1) \oplus \ldots \oplus (M_n\otimes\rho_n).
	\]
	If $M$ is arithmetic, then $M_i$ is arithmetic for $1\leq i \leq n$. 
\end{theorem}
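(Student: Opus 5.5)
Mirror the proof of Theorem \ref{geomsimple_THM} in the category $\perv{X}$, using the Galois action on isomorphism classes of irreducible perverse sheaves on $X_{\overline{k}}$ in place of the conjugation action on representations.

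First, $M_{\overline{k}}$ is semisimple of finite length, with pairwise non-isomorphic simple constituents $N_1,\ldots,N_m$ and isotypic decomposition $M_{\overline{k}}=\bigoplus_j M^{(j)}$. The group $G_k$ permutes the finite set $\{N_1,\ldots,N_m\}$, since each $\sigma^*$ is a $t$-exact autoequivalence and $\sigma^*M_{\overline{k}}\cong M_{\overline{k}}$. Hence an open subgroup $G_{k'}$ fixes every $N_j$. To see that the stabilizers are open rather than merely of finite index, I would reduce, as in Lemma \ref{conjaction_LEM}, to the lisse case on a stratum.

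Next I descend the isotypic pieces. Since $\text{Hom}(N_j, M_{\overline{k}})$ and $\text{End}$ spaces are computed compatibly with base change by Lemma \ref{smoothbasechange_LEM}, the isotypic projector of $M_{\overline{k}}$ onto $M^{(j)}$ is canonical, hence $G_{k'}$-invariant. It should therefore descend, giving $M_{k'}=\bigoplus_j G_j$ with each $G_j$ geometrically $N_j$-isotypic. To make this precise, I would write $N_j=j_{!*}(L_j[d_j])$ for a lisse $L_j$ on a smooth connected stratum $U_j$ of $X_{k'}$ (after enlarging $k'$ so that the strata are defined and geometrically connected). Intermediate extension commutes with base change, so the decomposition can be checked on strata together with the gluing data.

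Then I descend each $N_j$ to a geometrically irreducible $M_j$ on $X_{k''}$. On $U_j$, Lemma \ref{descentlocsys_LEM} applies because $\sigma^*L_j\cong L_j$ for all $\sigma\in G_{k'}$, and it yields a lisse descent $L_j'$ over a finite separable extension $k''$. Set $M_j:=j_{!*}(L_j'[d_j])$, which is geometrically $N_j$. Put
\[
\rho_j:=\text{Hom}(M_{j,\overline{k}},G_{j,\overline{k}}),
\]
a continuous $G_{k''}$-representation. Evaluation $M_j\otimes\rho_j\to G_j$ is an isomorphism over $\overline{k}$, since $G_{j,\overline{k}}\cong N_j^{\oplus r}$ and $\text{End}(N_j)=\Qbarl$. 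It is $G_{k''}$-equivariant, hence an isomorphism over $k''$. The arithmetic claim then follows from Theorem \ref{subquotientsarithmetic_THM}: each $M_{j,\overline{k}}\cong N_j$ is a subquotient of $M_{\overline{k}}$, hence arithmetic.

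The main obstacle is the descent step. One must make sense of descending perverse sheaves and morphisms along $\overline{k}/k''$, including continuity of the Galois actions on Hom spaces, which are profinite-continuous only after choosing integral models. I would handle this by working entirely with the $j_{!*}$ of lisse sheaves, where Remark \ref{unibranchClass_RMK} and Lemma \ref{descentlocsys_LEM} give the descent. The map $M_j\otimes\rho_j\to M_{k''}$ is defined directly by adjunction over $k''$ after identifying $\rho_j$ with $\text{Hom}$ of the base changes to $\overline{k}$. Note also that in the statement the $\rho_i$ are representations of the Galois group of $k'$, not of $k$.
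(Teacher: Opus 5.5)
Your outline is workable, but it is organized differently from the paper's proof. The step you leave as ``it should therefore descend'' is precisely the one the paper's route avoids.

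The paper never descends isotypic projectors. It first splits $M_{k'}$ by supports. For a closed immersion $i\colon Z\to X_{k'}$, the natural map ${}^{p}\mathcal{H}^0(i^!M)\to{}^{p}\mathcal{H}^0(i^*M)$ is an isomorphism, which is checked over $\overline{k}$ by geometric semisimplicity. Hence the largest subobject supported on $Z$ is already a direct summand over $k'$. This gives $M_{k'}=\bigoplus_i N_i$, with all constituents of $N_i$ supported on a geometrically irreducible $Z_i$. The paper then takes a dense geometrically unibranch open $j\colon U_i\to Z_i$ on which $N_i$ is lisse and $N_i=j_{!*}j^*N_i$. It applies the lisse Theorem~\ref{geomsimple_THM} to $j^*N_i$ and takes $j_{!*}$. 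So open stabilizers, descent of the simple constituents, and the multiplicity representations are all handled inside the representation theory of $\pi_1$. This includes continuity, since $\rho_i=\text{Hom}_{\pi_1(X_{\overline{k}})}(F_i,G_i)$. No Galois descent of morphisms of perverse sheaves is needed.

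Your route has three steps: descend the isotypic idempotents, descend each simple constituent via Lemma~\ref{descentlocsys_LEM} on a stratum, and build $\rho_j$ as a Hom in the perverse category. This is the standard semisimple-category argument and produces the isotypic decomposition directly. However, it needs two inputs you have not correctly supplied.

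\textbf{Descending the idempotents.} This requires $\text{End}(M_{k'})\xrightarrow{\sim}\text{End}(M_{\overline{k}})^{G_{k'}}$. That is not Lemma~\ref{smoothbasechange_LEM}, which only compares two separably closed base fields. Instead, apply Galois descent for global sections (Hochschild--Serre in degree $0$) to $R\mathcal{H}om(M,M)$. This complex has no cohomology sheaves in negative degrees, because negative local Ext's between perverse sheaves vanish. Your fallback of checking ``on strata together with the gluing data'' does not replace this. $M_{k'}$ has constituents with different supports, so it is not a single $j_{!*}$ from one stratum.

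\textbf{Continuity of $\rho_j$ and the evaluation map.} You need $\rho_j$ to be continuous and the evaluation map to be defined over $k''$. Take $\rho_j:=\mathcal{H}^0(\pi_*R\mathcal{H}om(M_j,G_j))$ on $\text{Spec}\,k''$. This is a constructible sheaf whose geometric stalk is $\text{Hom}(M_{j,\overline{k}},G_{j,\overline{k}})$. The counit of $\pi^*\dashv\pi_*$ then gives a map $M_j\otimes\rho_j\to G_j$, which is an isomorphism because it is one over $\overline{k}$.

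With these two points filled in, your argument is complete. You are also right that the $\rho_i$ are representations of $G_{k'}$ rather than $G_k$.
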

This theorem will be deduced from the variant for lisse sheaves already proven. 
\begin{proof}  Let $Z_1, \ldots, Z_n \subseteq X_{\overline{k}}$ be the supports of the irreducible subquotients of $M_{\overline{k}}$. There exists a finite separable extension $k'/k$ such that the closed subsets come from base change by geometrically irreducible closed subsets $Z_i \subseteq X_{k'}$. For all closed immersions $i\colon Z \rightarrow X_{k'}$ the natural map induces an isomorphism
	\[
	\pervCoh{0}{i^!(M)} \rightarrow \pervCoh{0}{i^*(M)}
	\]
	because it induces an isomorphism after base change to $X_{\overline{k}}$ by geometric semisimplicity. 
	In particular, we obtain a decomposition
	\[
	M_{k'} = \bigoplus_{i = 1}^n N_i
	\]
	with perverse sheaves $N_i \in \Perv{X_{k'}}{\Qbarl}$ whose irreducible subquotients have support $Z_i$. Let $1 \leq i \leq n$. There is a dense open immersion $j\colon U_i \rightarrow Z_i$ such that $M_i$ is lisse over $U_i$, the scheme $U_i$ is geometrically unibranch, and we have 
	\[
	M'_i = j_{!*}j^*(N_i).
	\] 
	For $j^*(N_i)$, the required decomposition was constructed in Theorem \ref{geomsimple_THM}.
	
	Note that $(M_i)_{\overline{k}}$ are irreducible subquotients of $M_{\overline{k}}$, hence they are arithmetic by Theorem \ref{subquotientsarithmetic_THM}.
\end{proof}
\subsection{Decomposition series of a relatively perverse sheaf} During the course of the proof of the decomposition theorem, it is necessary to understand the behaviour of the decomposition series in a family. The following theorem extends the decomposition series from the generic fiber to the entire space by \cite[Thm.~1.1]{hansen2023relativeperversity}. Before stating the theorem, we construct the relative intermediate extension of a perverse sheaf generically. This construction will also be used later when extending perverse sheaves.
	\begin{lemma}\label{intermed_LEM}
		Let $S_0$ be an irreducible qcqs scheme and $X_0$ an $S_0$-scheme. Let $j\colon V_0 \rightarrow X_0$ be an immersion and $M \in \text{Perv}(V_0/S_0)$. There exists a constructible dense open subscheme $U_0 \subseteq S_0$ and $N \in \text{Perv}^{\text{ULA}}(X_{U_0}/U_0)$ such that $N_x = j_{!*}(M_x)$ for all $x \in U_0$. 
	\end{lemma}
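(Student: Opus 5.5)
The plan is to realize the relative intermediate extension by a formula that is built from the six functors, and then use the generic base change results of the appendix (Theorem \ref{genbc_THM}, Theorem \ref{genula_THM}) so that, after shrinking $S_0$, everything commutes with passage to fibers. First factor $j$ as an open immersion $j'\colon V_0 \rightarrow \overline{V}_0$ followed by a closed immersion $i\colon \overline{V}_0 \rightarrow X_0$, where $\overline{V}_0$ is the closure. Since $i_*$ commutes with arbitrary base change and preserves relative perversity and ULA-ness, it is enough to treat $j'$. So assume $j$ is a dense open immersion with closed complement $Z_0$.

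The candidate is given by Deligne's truncation formula. Stratify $Z_0$ by dimension of fibers, or more simply use the description
\[
j_{!*}(M) = \tau^{p}_{\leq -1}\ldots
\]
Concretely, I would work with the image description $j_{!*}(M_x) = \text{Im}\bigl(\pervCoh{0}{j_!(M_x)} \rightarrow \pervCoh{0}{j_*(M_x)}\bigr)$ and define $N$ as the image of $\pervCoh{0}{j_!M} \rightarrow \pervCoh{0}{j_*M}$, with relative perverse truncations in the abelian category $\perv{X_0/S_0}$ of \cite[Thm.~1.1]{hansen2023relativeperversity}. The key steps are as follows. (i) Apply Theorem \ref{genbc_THM} to $j\colon V_0 \rightarrow X_0$ and $M$ to find a dense constructible open $U_0$ over which $j_*(M)$ is constructible and its formation commutes with base change to every point $x \in U_0$; $j_!$ commutes with base change automatically. (ii) Apply Theorem \ref{genula_THM} to $j_!M$, $j_*M$ and then, after a further shrinking, to the perverse cohomology sheaves, the kernel, cokernel and image of the map, to make all of them ULA over $U_0$. (iii) Use the fact, from the relative perverse t-structure of loc. cit., that for ULA objects the relative perverse truncation functors commute with restriction to fibers. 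Then $N_x = \text{Im}\bigl(\pervCoh{0}{j_!M_x} \rightarrow \pervCoh{0}{j_*M_x}\bigr) = j_{!*}(M_x)$, and $N \in \text{Perv}^{\text{ULA}}(X_{U_0}/U_0)$.

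The main obstacle is step (iii), namely the compatibility of relative perverse truncations, and of images in $\perv{X_0/S_0}$, with fibers. Relative perversity is defined fiberwise, so truncations are compatible with restriction to fibers as long as the pieces being truncated are ULA, or at least universally well behaved. To secure this I would first invoke generic ULA (Theorem \ref{genula_THM}) for the finitely many complexes involved, namely $j_!M$, $j_*M$, their perverse cohomologies, and the cone of the map. I would shrink $U_0$ each time, and only finitely many shrinkings occur. This keeps $U_0$ a dense constructible open because $S_0$ is irreducible. Exactness of fiber functors on ULA relatively perverse sheaves then gives that images are computed fiberwise. If that exactness is not available in the needed form, the fallback is to replace the image by Deligne's iterated formula $\tau_{\leq -d-1} j_{k*}$ along a stratification of $Z_0$ that is flat over a dense open of $S_0$. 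Each truncation is then an ordinary one, which commutes with pullback, and each pushforward is controlled by generic base change.
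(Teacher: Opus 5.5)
This is essentially the paper's argument. You define $N$ as the image in $\perv{X_{U_0}/U_0}$ of ${}^{\idp/U_0}\mathscr{H}^0(j_!M) \rightarrow {}^{\idp/U_0}\mathscr{H}^0(j_*M)$, use Theorem \ref{genbc_THM} to make $j_*M$ constructible and compatible with base change, and shrink once more with Theorem \ref{genula_THM} to make $N$ ULA. Your step (iii) is easier than you fear: restriction to fibers is already $t$-exact for the relative perverse $t$-structure, because that $t$-structure is characterized fiberwise in \cite{hansen2023relativeperversity}. So truncations and images commute with passage to fibers with no ULA hypothesis, and the extra ULA shrinkings and the Deligne-formula fallback are unnecessary; only $N$ needs to be made ULA at the end.
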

	\begin{proof}
		By generic base change (see Theorem \ref{genbc_THM}), there exists a constructible dense open subscheme $U_0 \subseteq S_0$ so that $j_*(M)$ is constructible over $U_0$. We define $N \in \perv{X_{U_0}/U_0}$ to be the relative perverse image of the morphism 
		\[
		{}^{\idp/U_0} \mathscr{H}^0(j_! M) \rightarrow {}^{\idp/U_0} \mathscr{H}^0(j_* M)
		\]
		By Theorem \ref{genbc_THM}, we can shrink $U_0$ so that $j_*(M)$ commutes with base change over $U_0$. For all $x \in U_0$, this implies that the complex $N_x$ coincides with $j_{!*}(M_x)$, since restriction to a fiber induces an exact functor from the relatively perverse category to the absolutely perverse category on the fiber. We can shrink $U_0$ again so that $N$ is ULA over $U_0$ by Theorem \ref{genula_THM}.
	\end{proof}	
	We need a lemma proving that the locus where the reduced geometric fibers are smooth admits a dense open subset.
	\begin{lemma}\label{relreduced_LEM}
		Let $S_0$ be an integral qcqs scheme with function field $k$. Let $X_0$ be an $S_0$-scheme. We assume the reduced generic geometric fiber $(X_{\overline{k}})^{\text{red}}$ is smooth over $\overline{k}$. There exists a universal homeomorphism $F\colon S_0 \rightarrow S_0$ and a dense open constructible subscheme $U_0 \subseteq S_0$ such that 
		\[
		(X_0\times_{S_0, F}U_0)^{\text{red}} \rightarrow U_0
		\]
		is smooth.
	\end{lemma}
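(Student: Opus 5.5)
The plan is to reduce to a statement over the generic point, descend it to a finite purely inseparable extension of $k$, and then spread out over a dense open of $S_0$. The universal homeomorphism $F$ is meant to be a power of absolute Frobenius when $k$ has characteristic $p>0$, and the identity in characteristic zero.

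First, work over the generic fiber $X_k$. The reduced subscheme $(X_{\overline{k}})^{\text{red}}$ is defined over a finite extension $k'/k$; we may take $k'$ purely inseparable over $k$. Indeed, the reduced structure over $k^{\text{sep}}$ commutes with separable base change, since reducedness is preserved by separable field extensions, and so it descends. A limit argument then lets us choose a finite purely inseparable subextension $k'/k$ such that $(X_{k'})^{\text{red}}$ is geometrically reduced. Its base change to $\overline{k}$ is then reduced, hence equals $(X_{\overline{k}})^{\text{red}}$, and is smooth by hypothesis. Smoothness descends along field extensions, so $(X_{k'})^{\text{red}}$ is smooth over $k'$.

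Second, realize $k'$ by a Frobenius twist. In characteristic $p$, a finite purely inseparable extension satisfies $k' \subseteq k^{1/p^m}$ for some $m$. Base changing $X_0$ along the $m$-th power of absolute Frobenius $F\colon S_0 \to S_0$ gives a scheme whose generic fiber is $X_k \otimes_{k,\mathrm{Frob}^m} k$. This is isomorphic to $X_{k^{1/p^m}}$, viewed over $k$ via the isomorphism $k \cong k^{1/p^m}$. Hence the reduced generic fiber of $X_0\times_{S_0,F}S_0$ is smooth over $k$: it is $(X_{k'})^{\text{red}}$ base changed to $k^{1/p^m}$, and this base change is still reduced because $(X_{k'})^{\text{red}}$ is geometrically reduced. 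Since absolute Frobenius is a universal homeomorphism, this $F$ is allowed. In characteristic zero we take $F=\mathrm{id}$, and the first step already gives smoothness of $(X_k)^{\text{red}}$.

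Third, spread out. Put $Y_0 := (X_0\times_{S_0,F}S_0)^{\text{red}}$. The formation of the reduced subscheme commutes with localization on $S_0$, so the generic fiber of $Y_0$ is $Y_k$, which is reduced and smooth over $k$. The morphism $Y_0\to S_0$ is of finite presentation after shrinking; when $S_0$ is not Noetherian, a limit argument handles the nilradical locally. Smoothness of the generic fiber then spreads out by standard limit results (EGA IV, 8 and 17.7.8) to a dense open constructible $U_0$ over which $Y_{U_0}\to U_0$ is smooth.

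It remains to check that $(X_0\times_{S_0,F}U_0)^{\text{red}} = Y_{U_0}$. This holds because reduction commutes with open immersions, and $U_0\subseteq S_0$ is open.

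The main obstacle is the first step: arranging that a finite purely inseparable extension, and not just $\overline{k}$, suffices for the reduced subscheme to become geometrically reduced. The approach is to write $(X_{\overline{k}})^{\text{red}}$ as the vanishing locus of finitely many nilpotent functions. These functions are defined over a finite extension. Applying Galois descent to its separable part leaves a purely inseparable extension.
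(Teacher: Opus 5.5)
Your proposal is correct and follows the paper's proof: pass to a finite purely inseparable extension $k^{p^{-n}}$ over which the reduction becomes geometrically reduced (hence smooth), realize this extension at the generic point by the $n$-fold Frobenius of $S_0$, and spread out smoothness of the reduced generic fiber to a dense constructible open. Your caution about finite presentation of the reduction over a non-Noetherian base is warranted, since the paper simply asserts it. The clean fix is to spread out a finitely generated ideal $I$ of nilpotents that agrees with the nilradical on the generic fiber, shrink so that the finitely presented quotient of $X_0\times_{S_0,F}U_0$ by $I$ is smooth over $U_0$, and note that smoothness over the reduced base $U_0$ forces this quotient to be reduced, so $I$ is the whole nilradical there.
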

	\begin{proof} If the characteristic of $k$ is zero, there is nothing to prove. Suppose the characteristic of $k$ is $p > 0$. There exists $n \geq 1$ such that $(X_{k^{p^{-n}}})^{\text{red}}$ is geometrically reduced over $k^{p^{-n}}$. Therefore, $(X_{k^{p^{-n}}})^{\text{red}}$ is smooth over $k^{p^{-n}}$. The $n$-fold Frobenius is a universal homeomorphism
		\[
		F\colon S_0 \rightarrow S_0
		\]
		which induces the extension $k^{p^{-n}}/k$ at the generic point.  In particular, the generic fiber of the morphism
		\[
		(X_0\times_{F, S_0}S_0)^{\text{red}} \rightarrow S_0
		\]
		is smooth. Since this morphism is finitely presented, it is smooth over a constructible dense open subscheme of $S_0$. 
	\end{proof}
	In the following theorem, we assume the supports of the irreducible subquotients of the relatively perverse sheaf on the generic fiber are geometrically irreducible. This is so we can apply the monodromy specialization theorem, for which we require that any dense open subset of the support is geometrically connected. This can always be achieved after a finite separable extension of the base field by applying Theorem \ref{geomsimple_THM} to the irreducible factors of a decomposition series and by noting that the support of a geometrically irreducible perverse sheaf is geometrically irreducible. 
		\begin{theorem}\label{filtration_THM}
		Let $S_0$ be a topologically Noetherian integral geometrically unibranch scheme with function field $k$. Let $X_0 \rightarrow S_0$ be a morphism of finite type and $M \in \text{Perv}(X_0/S_0)$ a relatively perverse sheaf over $S_0$. We assume the irreducible subquotients of $M_{k}$ have geometrically irreducible support.  There is a dense open subscheme $U_0 \subseteq S_0$ and relatively perverse sheaves $M_i \in \text{Perv}^{\text{ULA}}(X_{U_0}/U_0)$ for $0 \leq i \leq n$ which fit into a filtration
		\[
		0 = M_0 \subset M_1 \subset \ldots M_{n - 1} \subset M_n = M_{U_0}.
		\]
		Moreover, we have a closed subset $Z_i \subseteq X_{U_0}$ and a dense open subscheme $V_i \subseteq Z_i$ for all $1\leq i \leq n$. The following properties are satisfied:
		\begin{enumerate}[]
			\item The generic fiber $(M_i/M_{i - 1})_k$ is irreducible.
			\item The morphism $Z_i \rightarrow U_0$ has geometrically irreducible fibers. 
			\item The morphism $V_i \rightarrow U_0$ has geometrically connected fibers. The reduced geometric fibers of $V_i \rightarrow U_0$ are smooth.
			\item The restriction $(M_i/M_{i - 1})|_{V_i}$ is lisse.
			\item  The image of $\pi_1(V_{i, \overline{x}})$ in $\mathbf{GL}((M_i/M_{i -1})_{\overline{x}})$ does not depend on the geometric point $\overline{x} \rightarrow U_0$ up to conjugacy (see Definition \ref{conjugate_DEF}). 
			\item Denote by $j\colon V_i \rightarrow Z_i$ the inclusion. Let $x \in U_0$ be a point. We have
			\[
			j_{!*}(j^*((M_i/M_{i - 1})_x)) = (M_i/M_{i - 1})_x.
			\]
		\end{enumerate}
	\end{theorem}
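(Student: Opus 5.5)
We build the filtration on the generic fiber and then spread it out, shrinking the base a finite number of times. Since $S_0$ is topologically Noetherian and integral, every dense open we pass to remains dense.

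\emph{Step 1: a composition series on the generic fiber.} Choose a composition series
\[
0 = N_0 \subset N_1 \subset \ldots \subset N_n = M_k
\]
in $\perv{X_k}$. This is possible because $\perv{X_k}$ is Artinian and Noetherian. Each quotient $N_i/N_{i-1}$ is irreducible, and by hypothesis its support $Z_{i,k}$ is geometrically irreducible.

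\emph{Step 2: spreading out the series.} The category $\perv{X_0/S_0}$ is compatible with limits over open neighbourhoods of the generic point, so the inclusions $N_{i-1}\subset N_i \subset M_k$ spread out after shrinking. Concretely, we spread out the objects $N_i$ and the monomorphisms into $M$. A monomorphism remains a monomorphism after shrinking because its kernel is relatively perverse and has zero generic fiber, so it vanishes over a dense open. This produces $M_i \in \perv{X_{U_0}/U_0}$ with $M_n = M_{U_0}$, and (1) holds by construction. Theorem \ref{genula_THM}, applied to the finitely many $M_i$ and $M_i/M_{i-1}$, lets us shrink further so that all of them are ULA.

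\emph{Step 3: supports and smooth strata.} Take $Z_i$ to be the closure of $Z_{i,k}$, and choose a smooth geometrically connected dense open $V_{i,k}\subseteq (Z_{i,k})^{\text{red}}$ on which $N_i/N_{i-1}$ is lisse and is the intermediate extension of its restriction.

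Geometric irreducibility of the fibres of $Z_i\to U_0$ is a constructible condition (EGA IV 9.7.7), and it holds generically, so it holds after shrinking; this gives (2). Geometric connectedness of the fibres of $V_i$ is handled the same way. Smoothness of the reduced geometric fibres follows from Lemma \ref{relreduced_LEM}, possibly after passing to a Frobenius twist, which does not affect étale sheaves. This gives (3).

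The lisse locus of $M_i/M_{i-1}$ is spread out by generic constancy of the cohomology sheaves, which gives (4). For (6), apply Lemma \ref{intermed_LEM} to $j^*(M_i/M_{i-1})$. This produces a ULA object $N$ whose fibres are the intermediate extensions. The comparison map $N\to M_i/M_{i-1}$, or rather the canonical maps through ${}^{\idp/U_0}\mathscr{H}^0$, is an isomorphism generically and hence over a dense open. Restricting to fibres then gives (6).

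\emph{Step 4: monodromy, condition (5).} This is the main obstacle. The plan is to use the specialization theory of étale fundamental groups for $V_i\to U_0$: it is smooth after reduction, and its fibres are geometrically connected. After shrinking, we may also assume $V_i$ admits a good compactification with a relative normal crossings boundary, possibly after an alteration. For the prime-to-$p$ part, the specialization maps are then isomorphisms.

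The representation $L=(M_i/M_{i-1})|_{V_i}$ is tame after a finite étale cover. We arrange this by trivializing $L$ modulo $\ell^2$ on an integral model, so that the monodromy becomes pro-$\ell$. Pro-$\ell$ groups are governed by Orgogozo's result \cite[Cor.~4.5]{orgogozoFundamentalGroup} exactly as in Lemma \ref{lefschetzprinciple_LEM}. Consequently, for geometric points $\overline{x}$ specializing to $\overline{y}$, the images of $\pi_1(V_{i,\overline{x}})$ and $\pi_1(V_{i,\overline{y}})$ in $\mathbf{GL}$ agree up to conjugacy once we identify the fibres of $L$ along a specialization path.

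Geometric unibranchness of $U_0$ makes $\pi_1(U_0)$ act coherently: the strict henselizations are irreducible, so every geometric point specializes from the generic geometric point within a connected fundamental groupoid. This is what makes the comparison independent of $\overline{x}$.

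If the pro-$\ell$ reduction fails, I would first try to reduce to Zariski-closure monodromy groups. These are determined by invariant tensors, and the dimension of the invariants is upper semicontinuous and generically constant by generic base change (Theorem \ref{genbc_THM}) applied to $\pi_*$ of the tensor constructions.
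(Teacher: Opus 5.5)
Steps 1 and 3 are fine. Your argument for (6) is also fine and is a legitimate substitute for the paper's appeal to full faithfulness: with $Q:=M_i/M_{i-1}$, the cokernel of ${}^{\idp/U_0}\mathscr{H}^0(j_!j^*Q)\to Q$ and the kernel of $Q\to{}^{\idp/U_0}\mathscr{H}^0(j_*j^*Q)$ vanish on the generic fibre, hence over a dense open. However, Steps 2 and 4 have genuine gaps.

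\emph{Step 2.} The limit argument you invoke does not exist for $\Qbarl$-coefficients. The category $\text{D}^b_c(X_k,\Qbarl)$ is not the colimit of the categories $\text{D}^b_c(X_U,\Qbarl)$ over dense opens $U\subseteq S_0$. A $\Qbarl$-sheaf is a compatible system of torsion sheaves, and spreading out the successive reductions may force you to shrink $U$ indefinitely. Already for $X_0=S_0$, a representation of $G_k$ ramified along infinitely many divisors does not spread out to any dense open; overcoming exactly this is the purpose of Section~2. So you cannot "spread out the objects $N_i$".

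They do extend, but for a different reason. First shrink so that $M$ itself is ULA (Theorem \ref{genula_THM}). Then use \cite[Thm.~1.10]{hansen2023relativeperversity}: over a geometrically unibranch base, restriction $\text{Perv}^{\text{ULA}}(X_{U_0}/U_0)\to\text{Perv}(X_k)$ is fully faithful with essential image stable under subquotients. Hence the composition series lifts to a filtration of $M_{U_0}$ by ULA relatively perverse subsheaves. This is where the unibranch hypothesis is used, and it forces the opposite order to yours: make $M$ ULA first, then lift.

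\emph{Step 4.} As written this is a plan, not a proof. Passing to a finite étale cover $W\to V_i$ with pro-$\ell$ monodromy only controls the image of $\pi_1(W_{\overline{x}})$. It leaves untreated the image of $\pi_1(V_{i,\overline{x}})$ in the finite group $\text{Gal}(W/V_i)$, which is exactly what can jump. Moreover, \cite[Cor.~4.5]{orgogozoFundamentalGroup}, as used in Lemma \ref{lefschetzprinciple_LEM}, concerns extension of the algebraically closed base field, not specialization between fibres over different points of $U_0$. The invariant-tensor fallback only sees Zariski closures, and it involves infinitely many tensor constructions, so it yields no single dense open; (5) concerns the images themselves.

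The missing idea is Katz's argument \cite[Thm.~8.18.2]{KatzESDE}, which the paper packages as Theorem \ref{specializationmonodromy_THM} and applies after the universal homeomorphism of Lemma \ref{relreduced_LEM}. It has three parts.
\begin{enumerate}
\item Given $x\in U_0$, choose a valuation ring $V\to U_0$ with generic point over $\eta$ and closed point over $x$. Since $\pi_1$ of the generic fibre of $X_V$ surjects onto $\pi_1(X_V)$ (Lemma \ref{fundgroupsurjects_LEM}), $\Gamma(x)$ is conjugate into $\Gamma(\eta)$.
\item $\Gamma(\eta)\subseteq\mathrm{GL}_n(\CO)$ is a compact $\ell$-adic group. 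So $\Gamma_1=\Gamma(\eta)\cap\ker(\mathrm{GL}_n(\CO)\to\mathrm{GL}_n(\CO/\lambda))$ is a finitely generated pro-$\ell$ group with open Frattini subgroup. Consequently, for $N\gg0$, any closed subgroup of $\Gamma(\eta)$ with the same image in $\mathrm{GL}_n(\CO/\lambda^N)$ is all of $\Gamma(\eta)$.
\item The order of the image of $\pi_1(V_{i,\overline{x}})$ in $\mathrm{GL}_n(\CO/\lambda^N)$ is determined by the number of geometric connected components of the fibre over $x$ of the finite étale cover trivializing the lattice mod $\lambda^N$. That number is constructible in $x$ \cite[\S 9.7]{EGA4}, hence constant on a dense open.
\end{enumerate}
No compactifications, alterations, or tameness are needed.
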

	\begin{proof} Over the course  of the argument, we repeatedly shrink $U_0 \subseteq S_0$ without changing notation. By Theorem \ref{genula_THM}, we can shrink $S_0$ to a dense open subset $U_0\subseteq S_0$ so that $M$ is universally locally acyclic over $U_0$ by Theorem \ref{genula_THM}. By \cite[Thm.~1.10]{hansen2023relativeperversity}, we can lift a decomposition series of $M_k$ to a decomposition series 
		\[
		0 = M_0 \subset M_1 \subset \ldots \subset M_{n - 1} \subset M_n = M|_{U_0}
		\]
		of relatively perverse sheaves $M_i \in \text{Perv}^{\text{ULA}}(X_0/U_0, \Qbarl)$. 
		
		This filtration reproduces a decomposition series on the generic fiber by construction, thus (1) is satisfied.
		
		Let $1 \leq i \leq n$. Define $Z_i$ to be the support of the quotient $M_i/M_{i - 1}$. Because the support of the generic fiber of $M_i/M_{i - 1}$ is geometrically irreducible by assumption, the support $Z_i$ is geometrically irreducible over the generic fiber. We can shrink $U_0$, so that $Z_i$ has geometrically irreducible fibers over $U_0$. Thus we obtain property (2). 
		
		There is a dense open subset $j_i\colon V_i \rightarrow Z_i$, such that $((V_i)_{\overline{k}})^{\text{red}}$ is smooth, irreducible, and $M_i/M_{i - 1}$  is lisse over $V_i$. Thus we obtain (3) and (4). In particular, by shrinking $U_0$, the reduced geometric fibers of $V_i$ over $U_0$ are smooth and connected over $U_0$ by Lemma \ref{relreduced_LEM}.	In particular, by shrinking $U_0$, Lemma \ref{relreduced_LEM} allows us to assume there is a universal homeomorphism $U_0' \rightarrow U_0$ such that 
		\[
		(X_{U_0'})^{\text{red}} \rightarrow U_0'
		\]
		is smooth. Therefore, Theorem \ref{specializationmonodromy_THM} applies to the fibers of this morphism. This implies we can shrink $U_0'$ so that the image of the fundamental group does not depend on the geometric fiber over $U_0'$ up to conjugacy. In particular, we can shrink $U_0$ so that the image of the fundamental group does not depend on the chosen geometric fiber over $U_0$. Therefore, property (5) is satisifed.
		
		By shrinking $U_0$, Lemma \ref{intermed_LEM} permits us to assume that there are relatively perverse universally locally acyclic sheaves $N_i \in \text{Perv}^{\text{ULA}}(X_{U_0}/U_0, \Qbarl)$ such that $N_{i, x}$ is isomorphic to the intermediate extension $j_{i!*}((M_i/M_{i - 1})_x)$ for all $1 \leq i \leq n$. Since both $(M_i/M_{i -1})_k$ and $N_i$ are univerally locally acyclic and isomorphic to $j_{i!*}((M_i/M_{i - 1})_k)$ on the generic fiber,  \cite[Thm.~1.10]{hansen2023relativeperversity} shows that there is an isomorphism
		\[
		N_i \cong M_i/M_{i - 1}.
		\]  
		In particular, the fibers $(M_i/M_{i - 1})_x$ are intermediate extensions along the immersion $V_{i, x} \rightarrow Z_{i, x}$.
	\end{proof}
	We also note the following Lemma, describing how semisimplicity behaves under passage to the generic fiber: 
	\begin{lemma}\label{irred_LEM}
		Let $k_0$ be a field and $S_0$ a connected regular $k_0$-scheme with function field $k$ of dimension $d$. Let $X_0$ be an $S_0$-scheme and $M \in \perv{X_0}$.
		\begin{enumerate}[]\item Suppose $M$ is universally locally acyclic over $S_0$. Then $M_k[-d]$ is irreducible if and only if $M$ is irreducible.
		\item If $M$ is geometrically semisimple, then the restriction to the generic fiber $M_k[-d]$ is semisimple.
		\end{enumerate}
	\end{lemma}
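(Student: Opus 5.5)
Both parts rest on one standard fact. For a universally locally acyclic perverse sheaf on $X_0$ over a regular base $S_0$ of dimension $d$, the functor $M \mapsto M_k[-d]$ (pullback along the pro-open immersion of the generic fiber) behaves like restriction to a dense open. I will identify $X_k$ with the limit of the $X_{U}$ over dense opens $U \subseteq S_0$. The shift by $-d$ is exactly what makes $M_k[-d]$ perverse when $M$ is relatively perverse and ULA over the regular $d$-dimensional base. This uses the ULA condition together with the comparison in \cite{hansen2023relativeperversity} between absolute and relative perversity over regular bases.

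For part (1), assume first that $M_k[-d]$ is irreducible and let $N \subseteq M$ be a perverse subobject. Its generic fiber $N_k[-d]$ is a subobject of $M_k[-d]$, so it is either $0$ or everything. If $N_k = 0$, then $N$ is supported over a proper closed subset $T \subsetneq S_0$. But ULA perverse sheaves over a regular base have no nonzero perverse subobjects supported over $T$. Indeed, the ULA property gives $i_T^! M \in {}^p D^{\geq 1}$, by comparing with the relative purity $i_T^! \simeq i_T^*[-2c](-c)$ for ULA complexes. Hence $\text{Hom}(N, M) = \text{Hom}(N, i_{T*} i_T^! M) = 0$, so $N = 0$. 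If instead $N_k = M_k$, I apply the same argument to the quotient $M/N$, which is supported over some $T$. Using the dual statement ($i_T^* M \in {}^pD^{\leq -1}$), I get $\text{Hom}(M, M/N) = 0$, so $M = N$.

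For the converse, assume $M$ is irreducible and let $N' \subseteq M_k[-d]$ be a subobject. By a limit argument, $N'$ spreads out to a subobject $N_U$ of $M|_{X_U}$ over some dense open $U$. I then take the intermediate extension along $X_U \to X_0$, or equivalently the image of ${}^p\mathscr{H}^0 j_! N_U \to M$. Irreducibility of $M$ forces this image to be $0$ or $M$. Since everything commutes with restriction to $U$, this forces $N_U$ to be $0$ or $M|_U$, and hence $N'$ to be $0$ or $M_k[-d]$. The same argument gives $M|_{X_U}$ irreducible and ULA-rigid, which is all that is needed.

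For part (2), assume $M$ is geometrically semisimple. Then $M_{\bar k_0}$ is a direct sum of irreducible perverse sheaves $P_j$. By generic ULA (Theorem \ref{genula_THM}), after shrinking $S_0$ and passing to a finite extension of $k_0$, every $P_j$ descends and is ULA. Part (1) then shows that each $(P_j)_{\bar k_0 k}[-d]$ is irreducible, so $M_{k\bar k_0}[-d]$ is semisimple. Semisimplicity then descends along the Galois extension $k \bar k_0 / k$. Take the socle of $M_k[-d]$: it is Galois-stable and becomes everything after base change. By faithfully flat descent of subobjects, or by applying Galois averaging to the splitting in characteristic zero coefficients, it must already be everything.

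The main obstacle is the claim that a ULA perverse sheaf has no subobjects or quotients supported over a proper closed subset of $S_0$. This requires the precise $t$-exactness bounds for $i_T^!$ and $i_T^*$ on ULA objects over a regular base. I would derive them from the purity statement $i_T^! M \cong i_T^* M(-c)[-2c]$ in the case where $T$ is regular; a general closed $T$ can be reduced to this case by stratifying $T$ into regular strata.
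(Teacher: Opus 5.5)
\paragraph{Part (1).} Your argument for ``$M_k[-d]$ irreducible $\Rightarrow$ $M$ irreducible'' is sound. It is more hands-on than the paper, which simply cites \cite[Cor.~1.12, Thm.~1.10]{hansen2023relativeperversity}. The bounds $i_T^!M\in{}^pD^{\geq 1}$ and $i_T^*M\in{}^pD^{\leq -1}$ (ULA purity on regular strata of $T$, plus the comparison of absolute and relative perversity) do exclude nonzero subobjects and quotients of $M$ supported over a proper closed $T\subsetneq S_0$.

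The converse, however, hinges on ``by a limit argument, $N'$ spreads out to a subobject $N_U$ of $M|_{X_U}$''. No limit argument gives this with $\Qbarl$-coefficients. It only spreads out the reductions modulo $\lambda^n$ of an integral model of $N'$, over dense opens $U_n$ that may shrink as $n$ grows. In general a $\Qbarl$-sheaf on $X_k$ extends to no $X_U$ at all, which is exactly why the paper's extension theorems are needed. That subobjects of the generic fiber of a ULA relatively perverse sheaf do extend is the stability of the essential image under subquotients in \cite[Thm.~1.10]{hansen2023relativeperversity}; this is the input the paper uses.

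You can also avoid spreading altogether:
\begin{enumerate}
\item Write $M=j_{!*}(L[\dim V])$ with $L$ irreducible lisse on a normal irreducible locally closed $V\subseteq X_0$.
\item Your amplitude lemma applied to $N=M$ shows that $V$ dominates $S_0$.
\item Intermediate extension commutes with restriction to $X_k$ up to the shift $[-d]$, since $X_k\to X_0$ is a limit of open immersions.
\item $L|_{V_k}$ stays irreducible because $\pi_1(V_k)\to\pi_1(V)$ is surjective for $V$ normal.
\end{enumerate}

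\paragraph{Part (2).} The step ``semisimplicity descends along $k\overline{k}_0/k$'' is false, and so is (2) as literally stated, e.g.\ for $k_0$ finite. Take $S_0=X_0=\mathbb{A}^1_{\mathbb{F}_q}$ and $M=V[1]$, where $V$ is pulled back from a rank two representation of $G_{\mathbb{F}_q}$ on which Frobenius acts by a nontrivial unipotent Jordan block. Then $M$ is geometrically semisimple, but $M_k[-1]$ is the non-split extension $V|_{G_k}$, since $G_k\to G_{\mathbb{F}_q}$ is surjective.

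In your socle argument, the base change of the socle of $M_k[-d]$ is contained in, but need not equal, the socle of $M_{k\overline{k}_0}[-d]$. Galois averaging needs a finite group, whereas $\text{Gal}(k\overline{k}_0/k)$ is in general infinite and has non-semisimple $\ell$-adic representations.

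What your argument actually proves is that $M_{k\overline{k}_0}[-d]$ is semisimple. That is also all the paper's proof gives: it base changes to $\overline{k}_0$ at the outset, reduces to $M$ irreducible, makes $M$ ULA by Theorem \ref{genula_THM}, and applies (1). This geometric version is what is used in Theorem \ref{decomposition2_THM}, where $k_0$ is algebraically closed, and the literal statement does hold when $k_0$ is separably closed.

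Finally, you do not need to descend the $P_j$ to a finite extension $k_1/k_0$. Applying (1) there would only give irreducibility over $kk_1$, not over $k\overline{k}_0$. Instead, apply (1) to each $P_j$ directly over a regular dense open of an irreducible component of $((S_0)_{\overline{k}_0})^{\mathrm{red}}$, as the paper does.
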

	\begin{proof}
		Point (1) can be proven from \cite[Cor.~1.12]{hansen2023relativeperversity} and \cite[Thm.~1.10]{hansen2023relativeperversity}.
		
		Suppose $M$ is geometrically semisimple. We are free to base change to the algebraic closure of $k_0$ by passing to an irreducible component of $(S_0)_{\overline{k}_0}$. We can assume $M$ is irreducible. By Theorem \ref{genula_THM} we can assume $M$ is ULA over $S_0$ by passing to a dense open subset. By (1), the generic fiber $M_k[-d]$ is irreducible. An irreducible perverse sheaf is geometrically semisimple, hence the claim. 
	\end{proof}
	\section{Extension Theorem} The goal of this section is to prove the extension theorem. We begin by proving a preliminary extension theorem which gives a criterion for when a sheaf extends in terms of ramification after pulling back the sheaf along a section. We then apply this criterion to arithmetic sheaves to obtain the extension theorem for lisse sheaves. We then finish the section by constructing extensions for perverse sheaves. 
\subsection{Preliminary extension theorem} We go on to prove the extension theorem. We begin by proving the following extension theorem from the Zariski-Nagata purity theorem and Abhyankar's Lemma. For the extension theorem for lisse sheaves, we have to use integral models repeatedly. We therefore consider lisse $\CO$-sheaves over a ring of integers $\CO$ in a finite extension $K/\BQ_\ell$. We denote by $\lambda$ a uniformizer of $\CO$. The following definition is convenient for what follows.
\begin{definition}
	Let $X_0$ be a connected Noetherian regular scheme with function field $k$ and $D_0 \subset X_0$ an irreducible divisor on $X_0$ with generic point $x \in X_0$. Let $k'$ be the fraction field of the strict henselization of $X_0$ at $x$.  Let $F$ be a representation of the Galois group of $k$ with coefficients in $\CO$. We say $F$ is \textit{unramified along $D_0$}, if the Galois group $G_{k'}$ (the inertia) acts trivially on $F$. We say $F$ is \textit{unramified} if it is unramified along all irreducible divisors on $X_0$. We say it is \textit{generically unramified} if it is unramified at almost all divisors in $X_0$, i.e. if it ramifies only at finitely many irreducible divisors. 
\end{definition}
We can now state the preliminary extension theorem. 
\begin{theorem}\label{extensionpre_THM}
	Let $S_0$ be an connected regular Noetherian scheme. Let $X_0 \rightarrow S_0$ be a smooth finite type morphism with connected fibers. Let $S_0'\rightarrow S_0$ be a dominant etale neighborhood with a section $s\colon S_0' \rightarrow X_0$. Let $F$ be a lisse $\CO$-sheaf on the generic fiber such that $s^*(F)$ is generically unramified over $S_0'$.  There exists a dense open subset $U_0 \subseteq S_0$ and a unique lisse $\CO$-sheaf $F_0$ on $X_{U_0}$ such that the generic fiber of $F_0$ over $S_0$ is isomorphic to $F$.  
\end{theorem}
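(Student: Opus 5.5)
Since $S_0$ and $X_0$ are regular and $X_0 \to S_0$ is smooth, $X_0$ is regular, and the generic fiber $X_k$ is a dense open of $X_0$. We therefore view $F$ as a continuous representation of $\pi_1(X_k)$ with $\CO$-coefficients, i.e.\ a representation of the Galois group of the function field of $X_0$ that is unramified on $X_k$. The strategy is to extend $F$ across all but finitely many divisors of $X_0$ not meeting $X_k$. Such divisors are the irreducible components of preimages of irreducible divisors of $S_0$. We then use Zariski--Nagata purity: a representation of the function field of a regular scheme that is unramified along all codimension one points of an open extends uniquely to a lisse sheaf there. Uniqueness is immediate, since $X_{U_0}$ is normal and connected, so $\pi_1(X_k) \to \pi_1(X_{U_0})$ is surjective.

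\textbf{Step 1: reduce to $S_0'$.} After shrinking, we may replace $S_0'$ by a dense open that is finite \'etale over its image $U_0$. Lisse sheaves on $X_{U_0}$ are then obtained from lisse sheaves on $X_{S_0'}$ by \'etale descent, with the descent datum on the generic fiber coming from $F$. The descent datum over $X_{S_0'\times_{U_0} S_0'}$ extends uniquely because restriction to a dense open is fully faithful for lisse sheaves on normal schemes. So it suffices to treat $S_0 = S_0'$, where $s\colon S_0 \to X_0$ is a section and $s^*F$ is generically unramified. Shrink $S_0$ so that $s^*F$ is unramified everywhere; then $s^*F$ is a lisse sheaf on $S_0$ by purity.

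\textbf{Step 2: compare ramification along fibers with ramification along the section.} Let $D \subset S_0$ be an irreducible divisor with generic point $d$, and let $E = X_0 \times_{S_0} D$. Since the fibers are smooth and connected, $E$ is irreducible with generic point $e$ lying over $d$. The extension of strictly henselian DVRs $\CO^{sh}_{S_0,d} \to \CO^{sh}_{X_0,e}$ is essentially smooth with the same uniformizer. Hence the inertia groups $I_e$ and $I_d$ should have the same tame quotient, while the wild parts may differ. Write $F \otimes \CO/\lambda^m$ and find a finite \'etale cover of $X_k$ trivializing it. By Abhyankar's lemma, after a tame Kummer cover of $S_0$ along $D$ (rather, of a neighborhood of $d$), any tamely ramified cover of $X_k$ along $E$ becomes unramified. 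The key point is a comparison: the inertia action of $I_e$ on $F$ is determined by its restriction along $s$, i.e.\ by the action of $I_d$ on $s^*F$. One realizes this by restricting $F|_{\operatorname{Spec}\CO^{sh}_{X_0,e}\setminus E}$ along the section $s$ localized at $d$. If $s^*F$ is unramified at $d$, then $I_d$ acts trivially, so the tame inertia at $e$ acts trivially. The wild part of the monodromy of $F$ along $E$ must be ruled out separately, for instance by noting that the fiberwise (geometric) monodromy is unchanged, so ramification at $e$ is purely ``horizontal'' and is detected on the section.

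\textbf{Step 3: apply purity.} After Step 2, $F$ is unramified at $e$ for every divisor $D$ outside the finite ramification locus of $s^*F$. Removing that locus gives $U_0$. The representation $F$ is then unramified at every codimension one point of the regular scheme $X_{U_0}$, so by Zariski--Nagata purity it extends to a lisse $\CO$-sheaf $F_0$. To handle the $\lambda$-adic limit, this is done level by level modulo $\lambda^m$, compatibly.

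I expect the main obstacle to be the local comparison in Step 2. One must show that the inertia at $e$ acts through a quotient detected by the section, which requires a henselian splitting $\pi_1(\operatorname{Spec}\CO^{sh}_{X_0,e}[1/\pi]) \to I_d$ compatible with $s$, together with control of the wild part. My first attempt would be to localize at $s(d)$ rather than at $e$. The henselization of $X_0$ at $s(d)$ is smooth over the henselization of $S_0$ at $d$ with a section, and I would argue that $\pi_1$ of the complement of the special fiber is an extension of $I_d$ by the (tame-in-vertical-directions) geometric fundamental group, split by $s$. A lisse sheaf on that complement is then unramified along the special fiber as soon as its pullback along $s$ is, using $(X_0)^h_{s(d)} \setminus E$ and Abhyankar to kill the tame part.
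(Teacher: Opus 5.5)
Your outline handles the tame part correctly. Your last paragraph is essentially the paper's local argument: strictly henselize at $s(d)$, then use Abhyankar's lemma together with the section. But there is a genuine gap in the treatment of wild ramification, and because of it Step 3 is false as stated.

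You conclude that $F$ is unramified along $X_0\times_{S_0}D$ for every $D$ outside the ramification locus of $s^*F$. Here is a counterexample:
\begin{itemize}
\item take $S_0=\text{Spec}\,\mathbb{F}_p[t]$ with $p\neq\ell$;
\item take $X_0=\BA^1_{S_0}$ with coordinate $x$, and the section $x=0$;
\item take $F=\mathcal{L}_\psi(x/t)$, the Artin--Schreier sheaf on $X_k$, with $\CO$ containing the $p$-th roots of unity.
\end{itemize}
Then $s^*F$ is trivial, so its ramification locus is empty. However, $F$ is wildly ramified along $t=0$, because $x/t$ has a simple pole along that divisor (Swan conductor $1$). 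So the section cannot detect wild inertia. In particular, your claim that a lisse sheaf on the complement of the special fiber in the henselization at $s(d)$ is unramified as soon as its pullback along $s$ is, is false. Abhyankar's lemma identifies only the tame quotient $\pi_1^t(T_0)$ with $\pi_1^t(\Tilde{S}_0)$, not the full $\pi_1(T_0-D_0)$. Your suggestion of using ``unchanged fiberwise monodromy'' does not repair this. There is no sheaf on the special fiber to compare with, and the geometric monodromy of the generic fiber carries no information about vertical wild ramification.

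The missing idea, which is how the paper proceeds, is to kill the wild part using the residual representation.
\begin{enumerate}
\item By a limit argument (Lemma \ref{spreadsheaves_LEM}), $F\otimes\CO/\lambda\CO$ spreads out to a local system on $X_{U_0}$ for some dense open $U_0\subseteq S_0$. It is therefore unramified along every vertical divisor over $U_0$.
\item At such a divisor, after strictly henselizing at $s(d)$, the sheaf $F\otimes\CO/\lambda\CO$ becomes trivial on $T_0-D_0$. Hence the monodromy of $F$ there lands in $\ker\bigl(\text{GL}_n(\CO)\to\text{GL}_n(\CO/\lambda\CO)\bigr)$, which is pro-$\ell$.
\item Since $\ell$ is invertible, this forces the action to factor through $\pi_1^t(T_0)$.
\item Only then can you apply the Abhyankar isomorphisms $\pi_1^t(T)\cong\pi_1^t(T_0)\cong\pi_1^t(\Tilde{S}_0)$, compatible with the section, and conclude from the unramifiedness of $s^*F$.
\end{enumerate}
The exceptional locus is therefore the ramification locus of $s^*F$ together with the complement of the open set over which $F\otimes\CO/\lambda\CO$ extends. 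In the example above, the latter is exactly $t=0$, since $\psi$ takes values in $\mu_p$, which injects into $(\CO/\lambda\CO)^*$. Your Step 1 reduction and the final purity step are fine.
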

This amounts to proving that the sheaf $F$ is generically unramified on $X_0$. By the Zariski-Nagata purity theorem, the ramification of $F$ is along (irreducible) divisors. Because $\pi$ is flat, each irreducible divisor on $X_0$ is either horizontal, i.e. a divisor which intersects the generic fiber, or vertical, i.e. the pullback of a divisor from $S_0$ to $X_0$. We already know that $F$ does not ramify at the horizontal divisors because $F$ is lisse on the generic fiber. Hence $F$ can only ramify along vertical divisors. The plan is to detect the ramification along a vertical divisor after pulling $F$ back to $S_0'$ along the section $s$. 

We begin by stating the Zariski-Nagata theorem, which states that a sheaf extends from the generic point to the entire space if it is unramified. The next lemma shows that the ramification along a vertical divisor can be detected after pullback along an etale section if the ramification is trivial mod $\lambda$. We construct a generic extension of $F\otimes\CO/\lambda\CO$ by a limit argument, which shows that $F\otimes\CO/\lambda\CO$ is generically unramified. This is then sufficient to prove the preliminary extension theorem. 

The Zariski-Nagata theorem can be stated as follows.
\begin{lemma}\label{unramifiedext_LEM}
	Let $X_0$ be a Noetherian regular irreducible scheme with function field $k$. Suppose $F$ is an unramified $\CO$-representation of $G_k$. There exists a unique lisse sheaf $F_0$ with $\CO$-coefficients such that the Galois representation induced by $F_0$ at the generic point is canonically isomorphic to $F$.
\end{lemma}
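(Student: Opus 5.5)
We reduce to finite coefficients, extend over codimension one points, and then pass to all of $X_0$ by purity of the branch locus. Write $F = \varprojlim_n F/\lambda^n F$, where each $F_n := F/\lambda^n F$ is a finite $\CO/\lambda^n$-module with a continuous action of $G_k$. Since $F$ is a continuous $\CO$-representation and $\mathbf{GL}_r(\CO/\lambda^n)$ is finite, the action on $F_n$ factors through $\text{Gal}(k_n/k)$ for a finite Galois extension $k_n/k$. Let $X_n \rightarrow \text{Spec}(k)$ correspond to $k_n$, and let $Y_n \rightarrow X_0$ be the normalization of $X_0$ in $k_n$, which is finite because $X_0$ is regular and excellent.

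The key step is to show that $Y_n \rightarrow X_0$ is étale. First consider a codimension one point $x$ with local ring $\CO_{X_0,x}$, which is a DVR. The hypothesis that $F$ is unramified along the closure of $x$ means that the inertia $G_{k'}$ acts trivially on $F$, where $k'$ is the fraction field of $\CO^{sh}_{X_0,x}$. Consequently $k_n$ is linearly disjoint from the maximal unramified extension in the relevant sense: $k_n \otimes_k k'$ is a product of copies of $k'$. Hence $Y_n$ is étale over $\text{Spec}(\CO_{X_0,x})$, by the standard criterion for a normal finite extension of a DVR to be unramified. So $Y_n \rightarrow X_0$ is étale over every codimension one point.

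Next we apply the Zariski-Nagata purity theorem (SGA1 X.3.1, or SGA2 X.3.4) to the finite normal cover $Y_n \rightarrow X_0$ of the regular scheme $X_0$. Purity says that the branch locus is pure of codimension one, so if the cover is étale in codimension one it is étale everywhere. Thus $Y_n \rightarrow X_0$ is a finite étale Galois cover with group a quotient of $\text{Gal}(k_n/k)$. Therefore $F_n$ factors through $\pi_1(X_0,\bar\eta)$, where $\pi_1(X_0) = G_k/N$ and $N$ is the closed normal subgroup generated by all inertia groups. Equivalently, $G_k \rightarrow \pi_1(X_0)$ is surjective because $X_0$ is normal and connected, and $F_n$ is trivial on its kernel.

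Since every $F_n$ is killed by the same kernel, $F$ itself is a continuous representation of $\pi_1(X_0)$. It therefore defines a lisse $\CO$-sheaf $F_0 = (F_{0,n})_n$ whose generic stalk is $F$ with its given Galois action. Uniqueness follows because $G_k \rightarrow \pi_1(X_0)$ is surjective for normal connected $X_0$. Hence the functor from lisse sheaves on $X_0$ to representations of $G_k$ is fully faithful, so any two extensions are isomorphic, compatibly with the identification at the generic point.

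The main obstacle is the passage from codimension one to all of $X_0$, which rests entirely on Zariski-Nagata purity; it requires $X_0$ to be regular, not merely normal. A secondary point to check is that the unramifiedness hypothesis, phrased with the strict henselization at $x$, really yields étaleness of the normalization at $x$. This holds because the decomposition of the finite étale $k$-algebra $k_n$ over the strict henselization detects inertia.
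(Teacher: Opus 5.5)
Your proposal is correct and follows the same route as the paper. Uniqueness comes from the surjectivity of $G_k \rightarrow \pi_1(X_0)$ for normal $X_0$, then you reduce to the finite quotients $F/\lambda^n F$ and apply Zariski--Nagata purity, and you supply the details the paper leaves implicit: normalization in $k_n$, \'etaleness at codimension one points from triviality of inertia, then purity of the branch locus. One small correction: the normalization $Y_n \rightarrow X_0$ is finite because $k_n/k$ is finite separable and $X_0$ is normal Noetherian, not because regular schemes are excellent (they need not be).
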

\begin{proof} The morphism
	\[
	G_k \rightarrow \pi_1(X)
	\]
	is surjective because $X$ is normal. Hence the extension $F_0$ (if it exists) is unique up to unique isomorphism. By a limit argument, we can thus assume the representation $F$ has coefficients in $\CO/\lambda^n\CO$. In this case, it follows from the Zariski-Nagata purity theorem. 	
\end{proof}
We begin by proving that the ramification can be picked up by a section from Abhyankar's Lemma.
\begin{lemma}\label{pickup_LEM}
	Let $S_0$ be the spectrum of a discrete valuation ring and $X_0 \rightarrow S_0$ a smooth finite type morphism with connected fibers. Suppose we are given a section $s\colon S_0' \rightarrow X_0$ on a surjective etale neighborhood $S_0'\rightarrow S_0$. Let $F$ be lisse $\CO$-sheaf on the generic fiber such that $s^*(F)$ is unramified and $F\otimes \CO/\lambda\CO$ is unramified along the special fiber. Then $F$ is unramified along the special fiber of $\pi$. 
\end{lemma}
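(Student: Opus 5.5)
The plan is to reduce to a strictly henselian base, use the hypothesis mod $\lambda$ to show that the inertia at the special fiber acts through a pro-$\ell$, hence tame, quotient, and then use Abhyankar's Lemma together with the section $s$ to show that this tame action is trivial.

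\emph{Reductions.} First replace $S_0$ by the spectrum of its strict henselization. Being unramified along the special fiber of $\pi$ is a condition on the inertia at the generic point of the special fiber $X_{s}$. Since $X_0\to S_0$ is smooth, this point has a local ring which is a discrete valuation ring with uniformizer the uniformizer $\varpi$ of $S_0$. Its strict henselization does not change under étale base change of $S_0$. Because $S_0'\to S_0$ is surjective étale, the section $s$ induces a section of $X_0\to S_0$ after strict henselization. The hypothesis on $s^*(F)$ is likewise preserved. So we may assume $S_0$ is strictly henselian and $s\colon S_0\to X_0$ is a genuine section. Smoothness with connected fibers makes the special fiber $X_s$ an irreducible reduced divisor.

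\emph{Tameness.} It suffices to show that $F/\lambda^n$ is unramified along $X_s$ for every $n$, since $F=\varprojlim F/\lambda^n$. Fix $n$. Because $F\otimes\CO/\lambda\CO$ is unramified, the inertia group $I$ of the divisor $X_s$ acts on $F/\lambda^n$ through the kernel of $\mathbf{GL}_r(\CO/\lambda^n)\to\mathbf{GL}_r(\CO/\lambda)$, which is a finite $\ell$-group. Hence the action is tame of $\ell$-power order. By Abhyankar's Lemma, applied to the divisor $X_s=V(\varpi)$, it becomes trivial after pulling back along the Kummer cover $S_m:=S_0[\varpi^{1/\ell^m}]\to S_0$ for $m\gg0$. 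By Lemma \ref{unramifiedext_LEM}, $F/\lambda^n$ then extends to a lisse sheaf $G$ on $X_{S_m}$. The scheme $X_{S_m}$ is regular, being smooth over the discrete valuation ring $S_m$, and its special fiber is again $X_s$, which is connected.

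\emph{Using the section.} The group $\mu_{\ell^m}=\mathrm{Gal}(S_m/S_0)$ acts on $X_{S_m}$, and by uniqueness in Lemma \ref{unramifiedext_LEM} it acts compatibly on $G$. For a generator $\zeta$, the inertia action of a lift of $\zeta$ on $F/\lambda^n$ is recovered from the induced automorphism $\phi_\zeta$ of $G|_{X_s}$; here $\mu_{\ell^m}$ acts trivially on the scheme $X_s$. The main obstacle is to make this identification precise, namely that the tame inertia of the divisor $X_s$ maps isomorphically onto that of $S_0$, so that both are detected by the same $\phi_\zeta$. Granting this, the section $s$ base changes to a $\mu_{\ell^m}$-equivariant section of $X_{S_m}\to S_m$. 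The action of $\phi_\zeta$ on the stalk of $G$ at $s(\text{closed point})$ is the inertia action on $s^*(F)/\lambda^n$, which is trivial by hypothesis. So $\phi_\zeta$ is an automorphism of a lisse sheaf on the connected scheme $X_s$ that is the identity on one stalk, and therefore $\phi_\zeta=\mathrm{id}$. Hence inertia acts trivially on $F/\lambda^n$ for all $n$, and $F$ is unramified along the special fiber.
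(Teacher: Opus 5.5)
Your argument is correct, and it organizes the same key input differently from the paper. That input is that the tame inertia at the special fiber is identified with that of the base, and the section splits it.

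\textbf{The paper's route.} The paper works locally at $x=s(\text{closed point})$. It passes to the strict henselization $T_0$ of $X_0$ at $x$, with divisor $D_0$. It uses Lemma~\ref{unramifiedext_LEM} to extend $F\otimes\CO/\lambda\CO$ across $D_0$, so that this reduction is trivial on $T_0-D_0$. Hence $\pi_1(T_0-D_0)$ acts through a pro-$\ell$, hence tame, quotient. It then quotes Abhyankar's Lemma for the isomorphisms $\pi_1^t(T)\cong\pi_1^t(T_0)\cong\pi_1^t(\tilde S_0)$.

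\textbf{Your route.} You work with $F/\lambda^n$ and need Kummer theory only at the generic point $\xi$ of $X_s$. You kill the inertia by the finite cover $S_m$. You then replace the Abhyankar isomorphism at the closed point $x$ by a global device: Zariski--Nagata on the regular scheme $X_{S_m}$, plus rigidity of $\phi_\zeta$ on the special fiber, which transports triviality at $x$ to $\xi$.

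\textbf{Comparison.} The paper's route needs no connectedness of the special fiber and no equivariant descent. Yours is more elementary at the local level and makes the role of the section very transparent.

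\textbf{The step you grant.} This step is routine. Since $\varpi$ is a uniformizer of $A=\CO^{\mathrm{sh}}_{X_0,\xi}$, the ring $A_m=A\otimes_{\CO_{S_0}}\CO_{S_m}=A[\varpi^{1/\ell^m}]$ is a strictly henselian discrete valuation ring with the same residue field. The group $\mu_{\ell^m}$ acts on it compatibly with $S_m/S_0$. As $G$ is constant on $\operatorname{Spec}A_m$, the equivariant structure yields a single automorphism of $\Gamma(\operatorname{Spec}A_m,G)$. On the generic stalk it restricts to the action of a lift of $\zeta$ to the inertia at $\xi$; on the closed stalk it restricts to $\phi_\zeta$. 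The same check on $S_m$ handles the stalk at $x$.

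\textbf{One correction.} The lemma assumes only connected fibers. So after base change to the strict henselization, the special fiber $X_s\otimes_k k^{\mathrm{sep}}$ may be disconnected, and your claim that it is irreducible fails when $X_s$ is not geometrically connected. This is harmless: apply the rigidity argument on the connected component $Y$ containing $x$. The strict henselization at the generic point of $Y$ is a strict henselization of the original $X_0$ at $\xi$, so you still obtain triviality of the inertia at $\xi$.
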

\begin{proof} 
	Let $T_0$ be the strict henselization of $X_0$ at the image of the closed point of $S_0$ along the section $s$. The special fiber is a connected regular Cartier divisor on $X_0$. In particular, the preimage $D_0 \subset T_0$ is a regular Cartier divisor on $T_0$. Thus $D_0$ is connected and regular. We denote by $\pi_1^t(T_0)$ the fundamental group of finite etale covers of $T_0 - D_0$ tamely ramified along $D_0$.
	
	Because $F\otimes\CO/\lambda\CO$ is unramified along the special fiber, the local system $F\otimes \CO/\lambda\CO$ extends to a local system on $X_0$ by Lemma \ref{unramifiedext_LEM}. Any local system on $T_0$ is trivial because it is a strictly henselian local ring.  Thus $F\otimes\CO/\lambda\CO$ restricted $T_0 - D_0$ is trivial because it extends to a local system on $T_0$. This implies that the image of the morphism
	\[
	\pi_1(T_0 - D_0) \rightarrow \text{GL}(F_{\overline{k}})
	\]
	is a pro-$\ell$-group because it is trivial mod $\lambda$ (where $k$ is the function field of $S_0$). Note that the kernel of the morphism
	\[
	\pi_1(T_0 - D_0) \rightarrow \pi_1^t(T_0)
	\]
	admits no finite quotients of order co-prime to residual characteristic of the generic point of $D_0$. We have assumed $\ell$ to be invertible, which implies that the residual characteristic of the generic point of $D_0$ is not $\ell$. Now we obtain that the Galois group of the function field of $T_0$ acts through $\pi_1^t(T_0)$ on $F$.
	
	Let $T$ be the strict henselization of the generic point of $D_0$ in $X_0$ and $\Tilde{S}_0$ the strict henselization of $S_0$. The section lifts to a section $\tilde{s}\colon \Tilde{S}_0 \rightarrow T_0$, hence we have a diagram	
	\[
	\begin{tikzcd}
		\pi_1^t(T) \arrow[r] & {\pi_1^t(T_0)} \arrow[r] \arrow[d] & \pi_1^t(\Tilde{S}_0) \arrow[l, "\Tilde{s}"', bend right=30] \\
		& \text{GL}(F_{\overline{k}})                                 &                                            
	\end{tikzcd}
	\]
	By Abhyankar's Lemma, see \cite[Exp.~XII,~Cor.~5.3]{SGA1}, all arrows in the first row of this diagram are isomorphisms. In particular, to prove the action of the Galois group of the generic point of $T$ on $F$ is trivial, it is sufficient to prove the action of the generic point of $\Tilde{S}_0$ on $s^*(F)$ is trivial. This is satisfied by assumption.
\end{proof}

We construct models for our sheaves, which will be useful also when proving the full extension theorem. The following lemma follows from a limit argument.
\begin{lemma}\label{spreadsheaves_LEM}
	Let $S_0$ be an integral Noetherian scheme with function field $k$ and $X_0$ and $S_0$-scheme. Let $F$ be a constructible $\CO/\lambda^n\CO$-local system on $X_k$. There exists a dense open subscheme $U_0 \subseteq S_0$ and a constructible $\CO/\lambda^n\CO$-local system $F_0$ on $X_{U_0}$ such that $F_0$ restricted to $X_k$ is isomorphic to $F$. 
\end{lemma}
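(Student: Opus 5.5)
We will prove Lemma \ref{spreadsheaves_LEM} by a standard limit argument, trivializing $F$ on a finite étale cover and spreading out both the cover and the descent datum. A constructible $\CO/\lambda^n\CO$-local system $F$ on $X_k$ is a locally constant sheaf of finite $\CO/\lambda^n\CO$-modules. Since $\CO/\lambda^n\CO$ is finite, $F$ is represented by a finite étale $X_k$-scheme $E \rightarrow X_k$. This scheme carries the module structure as morphisms of $X_k$-schemes: the addition $E\times_{X_k}E \rightarrow E$, the zero section, and the scalar action of each element of the finite ring $\CO/\lambda^n\CO$. These data satisfy finitely many commutative diagrams, namely the module axioms.

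The key steps are as follows.

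\emph{Step 1: spread out $E$.} We write $\operatorname{Spec} k = \lim U_0$ over dense open affine $U_0 \subseteq S_0$; since $S_0$ is integral, these form a cofiltered system with affine transition maps. Then $X_k = \lim X_{U_0}$. By \cite[Thm.~8.8.2]{EGA4}, the finitely presented morphism $E \rightarrow X_k$ descends to a finitely presented $E_0 \rightarrow X_{U_0}$ for some $U_0$. By \cite[Thm.~17.7.8]{EGA4}, after shrinking $U_0$ we may assume $E_0$ is finite étale over $X_{U_0}$.

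\emph{Step 2: spread out the structure morphisms.} The morphisms defining the module structure are finitely many morphisms between finitely presented $X_k$-schemes. By \cite[Thm.~8.8.2]{EGA4} they descend to $X_{U_0}$ after shrinking. The module axioms are equalities of morphisms that hold after base change to $X_k$, so by the injectivity part of the same limit theorem they hold after further shrinking. Consequently the sheaf of sections of $E_0$ is a locally constant sheaf $F_0$ of finite $\CO/\lambda^n\CO$-modules on $X_{U_0}$, and its restriction to $X_k$ is $F$ by construction.

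We have not assumed $X_0$ is of finite type over a Noetherian base beyond what is needed. We do use that $X_0 \rightarrow S_0$ is finitely presented, which is part of the convention that $X_0$ is an $S_0$-scheme, together with the fact that $\operatorname{Spec} k$ is the limit of the dense opens of $S_0$. If one prefers working with a non-affine $S_0$, one first replaces $S_0$ by a dense open affine.

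The only point requiring care is Step 2, specifically that the relations hold after shrinking. This is the standard ``equalities of morphisms descend'' part of the limit formalism, so we do not expect any real obstacle. An alternative for Step 1 would be to trivialize $F$ on a finite étale Galois cover $Y \rightarrow X_k$, spread out $Y$ together with its group action, and spread out the finite-group descent datum. We favor the direct representability argument above because it is shorter.
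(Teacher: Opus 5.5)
Your argument is correct and is exactly the limit argument the paper invokes without writing out. You represent $F$ by a finite étale $X_k$-scheme with its module structure maps, descend the scheme, the structure maps and the module axioms to $X_{U_0}$ for a small enough dense open $U_0$, and observe that the resulting finite étale scheme represents a local system restricting to $F$. One minor citation fix: finiteness of $E_0$ after shrinking comes from \cite[8.10.5]{EGA4}, while \cite[17.7.8]{EGA4} supplies étaleness.
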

\begin{proof}[Proof~(Thm.~\ref{extensionpre_THM})]
	By Lemma \ref{spreadsheaves_LEM}, there exists a dense open subset $U_0\subseteq S_0$ over which $F\otimes\CO/\lambda\CO$ extends. The local system $F\otimes\CO/\lambda\CO$ is unramified over $X_{U_0}$. The pullback $s^*(F)$ is generically unramified by assumption. By Lemma \ref{pickup_LEM}, the Galois representation $F$ is unramified at almost all vertical divisors. Therefore, the Zariski-Nagata purity theorem, Lemma \ref{unramifiedext_LEM}, allows us to extend $F$ as desired. 
\end{proof}
\subsection{Lisse sheaves}\label{lissesheavesExtension_SUBSEC} In the previous section, we proved a criterion for extending lisse sheaves in terms of ramification after pullback along a section on an etale neighborhood of the base. In the following theorem, we apply this to sheaves with generically unramified determinant.
\begin{theorem}\label{simpson_THM}
Let $S_0$ be a connected regular Noetherian scheme and $X_0 \rightarrow S_0$ a smooth finite type morphism with geometrically connected fibers. Let $F$ be a geometrically irreducible lisse $\Qbarl$-sheaf on the generic fiber of this morphism. Suppose there is a dominant etale neighborhood $S_0' \rightarrow S_0$ with a section $s\colon S_0' \rightarrow X_0$ such that $\text{det}(s^*(F))$ is generically unramified on at least one connected component of $S_0'$.  There exists a dense open subscheme $U_0 \subseteq S_0$ such that $F$ extends to a lisse $\Qbarl$-sheaf $F_0$ on $X_{U_0}.$ The extension is unique up to unique isomorphism.
\end{theorem}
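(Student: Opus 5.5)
The approach is to reduce Theorem \ref{simpson_THM} to Theorem \ref{extensionpre_THM}. The reduction has two parts: first show that $s^*(F)$ itself becomes generically unramified after a twist, then show that this twist can be removed. Uniqueness is immediate from Lemma \ref{unramifiedext_LEM}. Indeed, $X_{U_0}$ is regular and connected (smooth over regular $S_0$ with connected fibers), so $G_{k(X)}\to\pi_1(X_{U_0})$ is surjective, and any two extensions are determined by the generic representation.

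\textbf{Step 1: shrink to a component with unramified determinant and fix an integral form.} Replace $S_0'$ by a connected component on which $\det(s^*F)$ is generically unramified; it is still a dominant etale neighborhood. Since $\pi_1$ of the generic fiber $X_k$ is compact, $F$ has an $\CO$-lattice for some finite $K/\BQ_\ell$. So we may treat $F$ as a lisse $\CO$-sheaf, and then invert $\ell$ at the end.

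\textbf{Step 2 (key): produce a twist.} Write $k'$ for the function field of $S_0'$. The plan is to show that $s^*(F)$, viewed as a representation of $G_{k'}$, becomes generically unramified after twisting by a character $\chi$ of $G_{k'}$, and then to correct the twist. Consider an inertia group $I_D\subset G_{k(X_{k'})}$ at a vertical divisor $D$, meaning a divisor of $X_{S_0'}$ lying over a divisor of $S_0'$. Since the fibers are geometrically connected, $\pi_1(X_{\overline{k}})$ is normal in the arithmetic group. The inertia $I_D$ maps into the arithmetic monodromy and normalizes the image of the geometric group. Because $F$ is geometrically irreducible, Schur's lemma shows that the elements of the arithmetic monodromy commuting with the geometric monodromy are scalars.

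So the first thing I would try is the following. Show that for almost all $D$ the image of $I_D$ commutes with the geometric monodromy. Commuting then forces $I_D$ to act on $F$ through scalars, that is, through a character $\psi_D$ with $\psi_D^{r}=\det|_{I_D}$, where $r=\operatorname{rk}F$. To get the commuting statement I would use Lemma \ref{pickup_LEM}-type arguments. Mod $\lambda$, $F\otimes\CO/\lambda$ extends by Lemma \ref{spreadsheaves_LEM}, so after shrinking $S_0$ the image of $I_D$ is pro-$\ell$. By Abhyankar, as in the proof of Lemma \ref{pickup_LEM}, that image factors through the tame quotient, which is identified via $s$ with the inertia of $S_0'$. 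The action of the geometric fundamental group near $D$ is unramified, so $I_D$ commutes with the image of $\pi_1$ of the geometric generic fiber of the henselized situation. Since that image is the whole geometric monodromy, $I_D$ acts by scalars.

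\textbf{Step 3: remove the twist.} Generic unramifiedness of $\det(s^*F)$ gives $\psi_D^r=1$ for almost all $D$, so $\psi_D$ takes values in the $r$-th roots of unity. These values are then trivial mod $\lambda$, unless $\ell\mid r$, in which case the values are $\ell$-power roots of unity of bounded order. On the other hand, the image of $I_D$ is pro-$\ell$ and trivial mod $\lambda$ after enlarging $\CO$ appropriately. This makes $\psi_D$ a character with values in $1+\lambda\CO$ of finite order dividing $r$. For $\lambda$-adically small values such a character is trivial: if necessary, first pass to a finite extension so that the $\ell$-power roots of unity in $1+\lambda\CO$ are excluded by the mod-$\lambda^{m}$ triviality obtained from spreading out $F\otimes\CO/\lambda^m$ via Lemma \ref{spreadsheaves_LEM} for suitable $m$. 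Hence $s^*(F)$ is generically unramified, and Theorem \ref{extensionpre_THM} yields the lisse extension $F_0$ over a dense open $U_0$.

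I expect Step 2 — that vertical inertia acts by scalars — to be the main obstacle. If the Abhyankar/section comparison does not directly give the commuting statement, I would fall back on a pure group-theoretic version: $I_D$ lies in the arithmetic monodromy, normalizes the geometric monodromy and acts trivially on it modulo $\lambda$, and then I would apply Schur's lemma to the centralizer.
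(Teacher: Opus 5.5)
Your overall plan matches the paper's: make inertia at almost all vertical divisors act by scalars, kill the scalars with $\det(s^*(F))$ and a mod-$\lambda^N$ spreading, then apply Theorem \ref{extensionpre_THM}. But Step 2 has a genuine gap. You argue that once $F\otimes\CO/\lambda$ extends, Abhyankar forces $I_D$ to commute with the full geometric monodromy $\Gamma$ of $F$. Abhyankar only identifies the image of $I_D$ with the image of the base inertia under $s$. It says nothing about how that element conjugates $\Gamma$, and a lift of inertia acts on $\pi_1(X_{\overline{k}})$ through the outer Galois action, which is generally nontrivial.

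What inertia does commute with is the image $\Gamma_x$ of $\pi_1(X_{\overline{x}})$ acting on the nearby cycles $\Psi_x(F)$, because it acts there by automorphisms of a lisse sheaf on $X_{\overline{x}}$. Since $X_0\to S_0$ is not proper, $\Gamma_x$ can be a proper, even reducible, subgroup of $\Gamma$.

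Here is an example. Take $S_0=\mathbb{A}^1_{\mathbb{C}}\setminus\{2\}$ with coordinate $t$, $X_0=\mathbb{A}^1_{S_0}\setminus\{x(x-t)=0\}$, and the section $x=2$. Over $t\neq0$ the total space is the complement of the concurrent lines $t=0$, $x=0$, $x=t$. Its fundamental group is $F_2\times\mathbb{Z}$, where $F_2=\langle\gamma_0,\gamma_t\rangle$ is the group of a fiber. The loop $\tau$ around $t=0$ along the section equals $\delta c^{\pm1}$, with $\delta$ central and $c=\gamma_0\gamma_t$. Let $\rho$ send $\delta$ to $1$, and $\gamma_0,\gamma_t$ to the upper and lower triangular unipotent matrices in $\mathrm{SL}_2(\mathbb{Z}_\ell)$ with off-diagonal entry $\ell$. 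Then:
\begin{itemize}
\item $F$ is geometrically irreducible and $\det(s^*(F))$ is trivial;
\item $F\otimes\CO/\lambda$ is trivial, so it extends over all of $X_0$;
\item inertia at the vertical divisor $t=0$ acts by $\rho(c)^{\pm1}$, which is not scalar and does not commute with $\rho(\gamma_0)$.
\end{itemize}
Here $\Gamma_0$ is generated by $\rho(c)$ alone, since the special fiber is $\mathbb{A}^1\setminus\{0\}$, so $\Psi_0(F)$ is reducible. Your group-theoretic fallback fails on the same example: $\rho(c)$ normalizes $\Gamma$ and is trivial mod $\ell$, but it is not central.

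The missing idea is the paper's Lemma \ref{bcnearbycycles_LEM}. After spreading out $F\otimes\CO/\lambda^n$, smooth base change and Nakayama give the inertia-equivariant isomorphism $s^*(\Psi_x(F))\cong\Psi_x(s^*(F))$. The specialization-of-monodromy argument (as in Theorem \ref{specializationmonodromy_THM}, following Katz) then shows that $\Gamma_x$ is conjugate to $\Gamma$ for all but finitely many codimension-one points $x$. At those points $\Psi_x(F)$ is irreducible, so Schur's lemma makes inertia act on it by scalars, and the base-change isomorphism transports this to $s^*(F)$. The ``almost all'' must come from this control of monodromy, not merely from the locus where $F\otimes\CO/\lambda$ extends; the point $t=0$ above lies in that locus.

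Once scalars are established, your Step 3 is essentially the paper's argument, but its first claim is backwards. Nontrivial roots of unity of order prime to $\ell$ are never $\equiv1$ mod $\lambda$. What kills the root-of-unity scalars is that inertia acts trivially mod $\lambda^N$, with $N$ chosen so that roots of unity of order at most $\mathrm{rk}(F)$ inject into $(\CO/\lambda^N\CO)^*$.
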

For arithmetic schemes, the condition on the determinant is always satisfied by the quasi-unipotent monodromy theorem.
\begin{theorem}\label{extensionthmWeak_THM}
	Let $S_0$ be an irreducible arithmetic scheme and $X_0 \rightarrow S_0$ a smooth finite type morphism with geometrically connnected fibers. Let $F$ be a geometrically irreducible lisse $\Qbarl$-sheaf on the generic fiber of this morphism. There exists a dense open subset $U_0 \subseteq S_0$ such that $F$ extends to a lisse $\Qbarl$-sheaf on $X_{U_0}$. 
\end{theorem}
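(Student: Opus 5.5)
The plan is to deduce the statement from Theorem \ref{simpson_THM}. We have to check two things: the existence of a dominant etale neighborhood $S_0'\rightarrow S_0$ with a section $s\colon S_0'\rightarrow X_0$, and the condition that $\det(s^*(F))$ is generically unramified on some connected component of $S_0'$.

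The first step is a reduction to a regular connected base. Since $S_0$ is irreducible and of finite type over $\BZ[1/\ell]$, its reduction is excellent. Hence the regular locus of $(S_0)^{\text{red}}$ is a dense open subset. Extending $F$ only concerns a dense open, and $X_0$ and $(X_0)^{\text{red}}$ have the same etale site, so we may replace $S_0$ by a connected regular dense open. If the generic fiber is empty, there is nothing to prove. Otherwise $X_k$ is smooth and nonempty, so it has a closed point with finite separable residue field $k'/k$. We spread out $\operatorname{Spec} k'$ to a finite etale cover of a dense open of $S_0$, shrink further, and spread out the point to a section. This gives the required dominant etale neighborhood $s\colon S_0'\rightarrow X_0$. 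We take $S_0'$ connected, regular, and of finite type over $\BZ[1/\ell]$, with function field $k'$.

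The second step is the main point. The character $\chi:=\det(s^*(F))$ is a continuous character $G_{k'}\rightarrow\Qbarl^\times$, where $k'$ is an arithmetic field. Here I would use the quasi-unipotent monodromy theorem in its rank one form: for every divisor $D$ of $S_0'$ with generic point $x$, the inertia at $x$ acts on $\chi$ through a finite quotient. I would then like to conclude that $\chi$ is unramified at almost all $D$. I expect this to be the main obstacle, since finiteness of the local inertia images alone does not give finiteness of the ramified set.

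The idea for this step is a global finiteness statement. By compactness, $\chi$ takes values in $\CO^\times$ for a finite extension $K/\BQ_\ell$. The group $\CO^\times$ is the product of a finite group and a pro-$\ell$ group isomorphic to $\BZ_\ell^r$. The finite-order part is cut out by a finite extension of $k'$, which is unramified over a dense open of $S_0'$. For the pro-$\ell$ part, I would restrict to a dense open $V$ over which $\chi$ mod $\lambda^2$ already extends (by Lemma \ref{spreadsheaves_LEM} and Zariski–Nagata). On $V$, the inertia image at each divisor is finite, since it is quasi-unipotent of rank one, and it lies in $1+\lambda^2\CO$, which is torsion free for suitable $\lambda$. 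So the inertia image is trivial. Shrinking $S_0'$ to $V$, the character $\chi$ is therefore generically unramified.

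For the torsion-freeness, I would rather state directly that $1+\lambda^m\CO$ has no nontrivial torsion for $m$ large, namely $m > e/(\ell-1)$. Then I take $V$ so that $\chi$ mod $\lambda^m$ extends, possibly after enlarging $k'$ to trivialize the residual part. Theorem \ref{simpson_THM} then applies and gives the extension $F_0$ of $F$ over a dense open $U_0\subseteq S_0$.
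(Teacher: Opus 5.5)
Your proposal is correct and follows the paper's proof. You pass to a connected regular dense open, produce the section over a dominant \'etale neighborhood (the paper simply cites EGA IV, 17.16.3(ii)), show that $\det(s^*(F))$ is generically unramified via quasi-unipotent monodromy (Lemma \ref{charactersunr_LEM}), and conclude with Theorem \ref{simpson_THM}. The one variation is inside that lemma: the paper notes that a finite inertia image in $\CO^\times\cong T\times\BZ_\ell^r$ must lie in the finite factor $T$, so $\chi$ ramifies exactly where its finite-order component does, whereas you detect the finite inertia image by reduction modulo $\lambda^m$ with $m>e/(\ell-1)$ (your first choice $m=2$ would not suffice in general), as in the proof of Theorem \ref{simpson_THM}. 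The paper's observation also means the step you flagged as the main obstacle is already settled by the splitting you wrote down.
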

We give a sketch of the argument. We first remark that we can transport many properties of $F$ to the nearby cycles at almost all codimension one points $x \in S_0$ (i.e. generic points of irreducible divisors) by reducing the sheaf $F$ mod $\lambda^N$ for sufficiently large $N \geq 1$. We can prove this way that for almost all codimension one points $x \in S_0$, the nearby cycles $\Psi_x(F)$ are lisse and geometrically irreducible. By Schur's Lemma, this implies that the action of inertia on $\Psi_x(F)$ is given by scalars at almost all codimension one points $x\in S_0$. 

Moreover, for almost all codimension one points $x \in S_0$, smooth base change implies the crucial inertia-equivariant base change isomorphism
\[
s^*(\Psi_x(F)) \xrightarrow{\sim} \Psi_x(s^*(F)).
\]
Note that the right hand side of this isomorphism essentially evaluates the action of inertia on $s^*(F)$. We know that this action is generically given by scalars by looking at the left-hand side. Thus we only have to control these scalars. This can be achieved by considering the determinant $\text{det}(s^*(F))$.

We begin by proving that each of the above named properties can be satisfied generically. This allows us to prove Theorem \ref{simpson_THM}. We then treat the arithmetic case. We show that a one-dimensional Galois representation of the generic point is always generically unramified from the quasi-unipotent monodromy theorem. Afterwards, we prove Theorem \ref{extensionthmWeak_THM}. 

We fix some further notation and preliminary reductions. We can pass to an irreducible component of $S_0'$ to and assume $S_0'$ is irreducible (and regular). For ease of notation, we base change to $S_0'$. We are allows to take this base change because we can verify the assumptions in Theorem \ref{extensionpre_THM} after the base change. Thus we have an honest section $s\colon S_0 \rightarrow X_0$. Moreover, let $F'$ be a lisse torsion free $\CO$-sheaf with $F'\otimes_\CO \Qbarl \cong F$. Such a sheaf exists by \cite[Lem.~6.8.13]{BhattSPE}. We define $F_n := F'\otimes \CO/\lambda^n\CO$ for all $n \geq 1$. We denote by $k$ the function field of $S_0$.

\begin{lemma}\label{bcnearbycycles_LEM}
	For almost all points of codimension one $x \in S_0$, the nearby cycles $\Psi_x(F)$ are lisse, irreducible, and the natural base change morphism
	\[
	s^*(\Psi_x(F)) \xrightarrow{\sim} \Psi_x(s^*(F))
	\]
	is an isomorphism.
\end{lemma}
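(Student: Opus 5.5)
The plan is to reduce everything to statements about the torsion sheaves $F_n$ over the generic fiber, spread these out over a dense open of $S_0$, and then read off the behaviour at codimension one points.

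\textbf{Spreading out the torsion sheaves.} By Lemma \ref{spreadsheaves_LEM}, for each fixed $n$ there is a dense open $U_n \subseteq S_0$ over which $F_n$ extends to a local system $F_{n,0}$ on $X_{U_n}$. Let $x\in U_n$ be a codimension one point. Since $\pi$ is smooth and $F_{n,0}$ is lisse near the special fiber over $x$, the nearby cycles $\Psi_x(F_n)$ are computed as the restriction of $F_{n,0}$ to the geometric special fiber. In particular they are lisse, concentrated in degree zero, and carry the trivial inertia action.

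\textbf{Base change along the section.} By smooth base change, or simply because pullback of a lisse sheaf commutes with restriction to fibers, we get
\[
s^*(\Psi_x(F_n)) \cong \Psi_x(s^*(F_n))
\]
for $x \in U_n$. The problem is that this controls only one $n$ at a time. Choosing $U_n$ for all $n$ simultaneously would force an infinite intersection, which is not allowed.

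\textbf{Passing to the $\lambda$-adic sheaf.} To get a statement for $F$ itself I would first use the general base change result for the $\lambda$-adic object, Theorem \ref{genbc_THM}. This applies with coefficients in $\CO$ (a complete Noetherian local ring), or directly with $\Qbarl$ via $F'$. It gives a dense open over which the formation of $Rj_*F'$ commutes with base change, where $j\colon X_k\to X_0$ is the inclusion. On the strict henselization at $x$, this identifies $\Psi_x(F')$ with the restriction to the special fiber of a constructible complex. Shrinking further using Theorem \ref{genula_THM}, the complex $Rj_*F'$ restricted to $X_{U_0}$ becomes ULA over $U_0$. Nearby cycles of a ULA complex commute with pullback along the section, since they commute with arbitrary base change. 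This yields
\[
s^*\Psi_x(F)\cong\Psi_x(s^*F)
\]
at all codimension one $x\in U_0$.

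\textbf{Lisseness and irreducibility.} Lisseness of $\Psi_x(F)$ follows because each $\Psi_x(F_n) = \Psi_x(F')\otimes^L\CO/\lambda^n$ is lisse, which I would deduce from the ULA property over $U_0$. Irreducibility is the main obstacle. I would first argue that geometric irreducibility of $F$ means the geometric generic monodromy is irreducible. Then I would apply Theorem \ref{specializationmonodromy_THM}, or property (5) of Theorem \ref{filtration_THM} with $M = F[\dim]$ and $V = X$, to the ULA extension: after shrinking, the image of $\pi_1$ of the geometric fibers in $\mathbf{GL}$ is independent of the geometric point up to conjugacy. Next I would identify the geometric monodromy of $\Psi_x(F)$ with the monodromy of the restriction of the extension to the geometric fiber over $\bar x$. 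This uses the specialization isomorphism on $\pi_1$ of the fibers, where $\Psi_x(F)$ is lisse on $X_{\bar x}$. Conjugate monodromy groups then transfer irreducibility from the generic geometric fiber, where $F$ is irreducible, to $\Psi_x(F)$.

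The delicate point is to make this identification rigorous while only knowing that $\Psi_x(F)$ is lisse and not yet knowing that $F$ extends. I expect to compare traces, or to use that the mod $\lambda^n$ reductions agree for every $n$ over $U_0$.
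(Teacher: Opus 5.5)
Your first two steps (spreading out $F_n$, smooth base change mod $\lambda^n$) agree with the paper, but the step \emph{Passing to the $\lambda$-adic sheaf} fails. Theorems \ref{genbc_THM} and \ref{genula_THM} need a constructible complex on a scheme finitely presented over the base. Here $F'$ lives only on $X_k$, and $j\colon X_k\to X_0$ is not of finite presentation. Unlike the torsion sheaves $F_n$ of Lemma \ref{spreadsheaves_LEM}, a $\lambda$-adic sheaf need not spread out to a constructible complex over any dense open of $S_0$; this is exactly the difficulty the section addresses.

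Your conclusion is in fact too strong. If $Rj_*F'$ were ULA over $U_0$, then at every codimension one $x\in U_0$ the nearby cycles would be $i_x^*Rj_*F'$ with \emph{trivial} inertia action. Then $F$ would be unramified along almost all vertical divisors and would extend by Lemma \ref{unramifiedext_LEM} with no hypothesis on $\det(s^*(F))$. This is false, e.g.\ for $X_0=S_0=\mathbb{A}^1_{\mathbb{C}}$ and $F$ a continuous character of $G_{\mathbb{C}(t)}$ ramified at infinitely many points, where the lemma itself holds trivially. Indeed, the proof of Theorem \ref{simpson_THM} is an analysis of a possibly nontrivial scalar inertia action on $\Psi_x(F)$.

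What you are missing is that one torsion level suffices, so the ``one $n$ at a time'' issue is not an obstruction. $\Psi_x$ and $s^*$ commute with $-\otimes^L_{\CO}\CO/\lambda$ and land in $\lambda$-adically derived complete complexes. So over the open where $F_1$ extends, $\Psi_x(F')\otimes^L_{\CO}\CO/\lambda=\Psi_x(F_1)\cong i_x^*F_1$ is a local system in degree $0$, hence $\Psi_x(F')$ is lisse. Likewise the base change map for $F$ is an isomorphism because its reduction mod $\lambda$ is (derived Nakayama).

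For irreducibility you again rely on an extension of $F$ that does not exist, and the identification you flag as delicate is the actual missing step. The paper needs no extension over $S_0$. For a codimension one $x\in U_0$, it picks a rank one absolutely integrally closed valuation ring $S\to S_0$ sending the generic point to the generic point and the closed point to $x$. Since $F_1$ is a local system on $X_S$, smooth base change gives $j_*j^*F_1=F_1$. Hence $j_*F$ is a lisse sheaf on $X_S$ whose generic fiber is $F$ over an algebraically closed field and whose special fiber is $\Psi_x(F)$.

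The surjection $\pi_1(X_\eta)\twoheadrightarrow\pi_1(X_S)$ of Lemma \ref{fundgroupsurjects_LEM} then shows that the monodromy of $\Psi_x(F)$ is conjugate to a subgroup of the geometric monodromy of $F$. The argument of \cite[Thm.~8.18.2]{KatzESDE} upgrades this containment to equality for almost all $x$, because it only uses the mod $\lambda^N$ monodromy for a single sufficiently large $N$. This is why the paper spreads out $F_1,\dots,F_N$ for one fixed $N$, which is a finite intersection of opens.
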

\begin{proof}
	Let $N \geq 1$. By Lemma \ref{spreadsheaves_LEM}, there exists $U_0 \subseteq S_0$ such that $F_n$ extends to a local system on $X_{U_0}$ for all $1 \leq n \leq N$. We denote the extended local system by $F_n$ as well.
	
	The smooth base change theorem implies that the specialization map
	\[
	i_x^*(F_1) \rightarrow \Psi_x(F_1)
	\]
	and
	\[
	i_x^*(s^*(F_1)) \rightarrow \Psi_x^*(s^*(F_1))
	\]
	are isomorphisms for all codimension one points $x \in U_0$. Note that $i_x^*(F_1)$ is a local system, hence $\Psi_x(F_1)$ is a local system. This implies $\Psi_x(F')$ is a lisse sheaf, hence $\Psi_x(F)$ is a lisse sheaf (see \cite[Lem.~6.6.8]{BhattSPE})
	
	We have a commutative diagram
	\[
	\begin{tikzcd}
		s^*(i_x^*(F_1)) \arrow[r] \arrow[d, equal] & s^*(\Psi_x(F_1)) \arrow[d] \\
		i_x^*(s^*(F_1)) \arrow[r]         & \Psi_x(s^*(F_1))          
	\end{tikzcd}
	\]
	In particular, the base change $s^*(\Psi_x(F_1)) \rightarrow \Psi_x(s^*(F_1))$ is an isomorphism for all codimension one points $x \in U_0$. Nakayama's Lemma shows that
	\[
	s^*(\Psi_x(F)) \rightarrow \Psi_x(s^*(F))
	\]
	is an isomorphism for all codimension one points $x \in U_0$ (because the morphism is an isomorphism after taking the derived tensor product with $\CO/\lambda$).
	
	To prove that the nearby cycles are irreducible we prove that the geometric monodromy of $F$ is conjugate to the monodromy of $\Psi_x(F)$ (see Definition \ref{conjugate_DEF}) at almost all codimension one points $x \in U_0$. Here, by (geometric) monodromy we mean the image of the (geometric) fundamental group in the general linear group of a fiber of $F$. Let $x \in U_0$ be a codimension one point. There exists a morphism $S \rightarrow S_0$ from the spectrum of an aic valuation ring of rank one $S$ to $S_0$ mapping the generic point to the generic point of $S_0$ and the special point to $y$. Let $j\colon X_{\eta} \rightarrow X_S$ be the inclusion of the generic fiber. We have
	\[
	j_*j^*(F_1) = F_1,
	\]
	by smooth base change.  Hence $j_*(F)$ is a lisse sheaf. We have a diagram on fundamental groups 
	\[
	\begin{tikzcd}
		& \pi_1^{}(X_{\overline{x}}) \arrow[d] &              \\
		\pi_1^{}(X_\eta) \arrow[r, two heads] & \pi_1(X_S) \arrow[r]             & \text{GL}(j_*(F))
	\end{tikzcd}.
	\]
	by Lemma \ref{fundgroupsurjects_LEM} and Psoposition \ref{topnot_PROP}. Therefore, the monodromy of $\Psi_x(F)$ is conjugate to a subgroup of the geometric monodromy of $F$. 
	
	We can now argue as in the proof of Theorem \ref{specializationmonodromy_THM}, i.e. as in \cite[Thm.~8.18.2]{KatzESDE}, to show that for almost all codimension one points $x \in U_0$ the monodromy of $\Psi_x(F)$ is conjugate to the geometric monodromy of $F$. In particular, $\Psi_x(F)$ is irreducible for almost all codimension one points $x \in U_0$. 
\end{proof}
\begin{proof}[Proof~(Thm.~\ref{simpson_THM}).]
	By Lemma \ref{bcnearbycycles_LEM}, the inertia acts by scalars on $\Psi_x(F)$ at almost all $x \in S_0$ of codimension one. The base change in Lemma \ref{bcnearbycycles_LEM} is equivariant for the inertia action. Thus the inertia acts by scalars on $s^*(F)$ at almost all points $x \in S_0$ of codimension 1. The character
	\[
	\text{det}(s^*(F))
	\] 
	is generically unramified by assumption. Hence the inertia groups at almost all points $x \in S_0$ of codimension 1 act by multiplication by roots of unity on $s^*(F)$ of order bounded by the rank of $F$. There exists $N \geq 1$ such that the roots of unity of order bounded by the rank of $F$ inject into $(\CO/\lambda^N\CO)^*$. By Lemma \ref{spreadsheaves_LEM}, we can find a dense open subset $U_0 \subseteq S_0$ such that $F_N$ extends to a local system over $X_{U_0}$. Thus the Galois representation $s^*(F)$ is generically unramified on $S_0$. We obtain the extension by Theorem \ref{extensionpre_THM}.
\end{proof}
For the arithmetic case, we require the following lemma. This lemma is presumably well-known but we provide it because we were unable to locate a reference. The special case $S_0 = \BZ[1/\ell]$ is treated in \cite[III,~Cor.2.2]{serreabelianladic}.
\begin{lemma}\label{charactersunr_LEM}Suppose $S_0$ is arithmetic. Let $\chi \colon G_k \rightarrow \Qbarl^*$ be a continuous character. The character $\chi$ is generically unramified on $S_0$.
\end{lemma}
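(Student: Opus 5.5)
We want to show that a continuous character $\chi\colon G_k \to \Qbarl^*$ ramifies at only finitely many codimension one points of $S_0$. Since $G_k$ is compact, $\chi$ takes values in $\CO^*$ for the ring of integers $\CO$ of some finite extension $K/\BQ_\ell$. First we normalise $S_0$. We may shrink $S_0$ to a dense open subset, because this discards only finitely many irreducible divisors. So we may assume $S_0$ is regular, and in particular normal; this is possible since arithmetic schemes are excellent, so the regular locus is open and dense.

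\textbf{Step 1: the residual part.} The group $\CO^*$ is an extension of the finite group $(\CO/\lambda)^*$ by the pro-$\ell$ group $1+\lambda\CO$. Let $\bar\chi := \chi \bmod \lambda$. This character has finite image, so it cuts out a finite separable extension $k'/k$. Let $S_0'$ be the normalization of $S_0$ in $k'$. It is finite over $S_0$ and étale over a dense open subset. Hence, after shrinking $S_0$, $\bar\chi$ is unramified, and $\chi|_{G_{k'}}$ takes values in $1+\lambda\CO$. Unramifiedness of $\chi$ at a divisor $D$ of $S_0$ over which $S_0'$ is étale can be checked on $\chi|_{G_{k'}}$ at the divisors of $S_0'$ above $D$. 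Indeed, the inertia groups of $k'$ at those divisors are the full inertia groups of $k$, because $S_0'\to S_0$ is étale there. So we may replace $S_0$ by $S_0'$ and assume $\chi$ has pro-$\ell$ image, with $\chi \equiv 1 \bmod \lambda$.

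\textbf{Step 2: the quasi-unipotent bound at every divisor.} Let $x$ be a codimension one point of $S_0$ with residue field $\kappa(x)$. Since $\ell$ is invertible on $S_0$, the residue field $\kappa(x)$ has characteristic $p\neq\ell$. On the inertia group $I_x$, a pro-$\ell$-valued character factors through the tame quotient, which is isomorphic to $\prod_{q\neq p}\BZ_q(1)$, and in fact through $\BZ_\ell(1)$. The decomposition group $D_x$ acts on $\BZ_\ell(1)$ via the cyclotomic character of $\kappa(x)$, while it acts trivially on $\chi(I_x)$ because $\Qbarl^*$ is abelian. So $\chi|_{I_x}$ factors through the coinvariants $\BZ_\ell(1)_{G_{\kappa(x)}}$. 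We then use the fact that $\kappa(x)$ is an arithmetic field, i.e. finitely generated over its prime field. For such a field the $\ell$-adic cyclotomic character has infinite image, so these coinvariants are finite. This is Grothendieck's quasi-unipotence argument. The conclusion is that $\chi(I_x)$ is a finite $\ell$-group of roots of unity in $1+\lambda\CO$.

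\textbf{Step 3: uniformity.} The group $1+\lambda\CO$ contains only finitely many roots of unity, so there is a single $N$ such that every such $\chi(I_x)$ injects into $(\CO/\lambda^N)^*$. Now apply Step 1 to $\chi \bmod \lambda^N$ in place of $\chi \bmod \lambda$. This character also has finite image, so after passing to a dense open subset of $S_0$ it is unramified. Hence, on that open subset, $\chi(I_x)$ maps trivially to $(\CO/\lambda^N)^*$, and by injectivity $\chi(I_x)=1$ for every remaining divisor. This proves the lemma.

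The main obstacle is Step 2. Its delicate points are making sure that the finiteness of the cyclotomic coinvariants holds at every codimension one point, including the characteristic-$p$ points of a mixed-characteristic $S_0$. It also requires verifying that the inertia and decomposition group formalism of the regular local ring $\mathcal{O}_{S_0,x}$ behaves as it does for discrete valuation rings. This is fine after normalisation, because $\mathcal{O}_{S_0,x}$ is then a discrete valuation ring.
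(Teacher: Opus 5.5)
Your proof is correct and has the same skeleton as the paper's. There are three ingredients: inertia at each divisor has finite image by quasi-unipotence; triviality on that finite image is detected by an auxiliary character with finite image; and a finite-image character is generically unramified. You differ from the paper in how you obtain the first two.

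\textbf{The auxiliary finite-image character.} The paper chooses a splitting $\CO^* = T\times F$, with $T$ finite and $F$ a free $\BZ_\ell$-module, and projects $\chi$ onto $T$. Any finite subgroup of $\CO^*$ lies in $T$, so $\chi$ and $\chi_{\text{fin}}$ ramify at the same divisors, and no choice of $N$ is needed. You instead reduce modulo $\lambda^N$, with $N$ large enough that the roots of unity in $1+\lambda\CO$ inject into $(\CO/\lambda^N)^*$. This avoids choosing a splitting, and it is exactly the device the paper itself uses in the proof of Theorem~\ref{simpson_THM}.

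\textbf{Finiteness of $\chi(I_x)$.} The paper simply cites Proposition~\ref{qunimon_PROP}. You reprove the rank-one case directly. Your Step~1 makes the image pro-$\ell$, which kills wild inertia. The cyclotomic character of the finitely generated field $\kappa(x)$ has open image, so the coinvariants $\BZ_\ell(1)_{G_{\kappa(x)}}$ are finite. This is more self-contained. It also makes explicit, at every codimension one point, the hypothesis that no finite extension of $\kappa(x)$ contains all $\ell$-power roots of unity; the paper's citation leaves this implicit.

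\textbf{Two small remarks.} First, Step~1 is only used to kill wild inertia in Step~2. The uniformity argument of Step~3 does not need it, since the torsion of $\CO^*$ is already finite. Second, at horizontal divisors of a flat $S_0$ the residue characteristic is $0$ rather than some $p\neq\ell$. There is then no wild inertia, and the same argument applies.
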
	
\begin{proof}
	We can assume, by taking the ring of integers $\CO$ in a sufficiently large extension of $\BQ_\ell$, that the character factors through a character $\chi'\colon G_k \rightarrow \CO^*$. The group $\CO^*$ is a finitely generated pro-finite abelian group. In particular, it has a finite torsion part $F$ and there is a splitting
	\[
	\CO^* = T\times F
	\]
	with a torsion-free profinite abelian group $F$ (a free $\BZ_\ell$-module of finite rank) and a finite group $T$. We denote by $\chi'_{\text{fin}}\colon  G_k \rightarrow T$ the finite part of $\chi'$. Let $x \in S_0$ be a codimension $1$ point. By the quasi-unipotent monodromy theorem (see Proposition \ref{qunimon_PROP}), the action on $\chi$ by the inertia group at $x$ is finite. In particular, the character $\chi$ is unramified at $x$ if and only if $\chi'_{\text{fin}}$ is unramified at $x$.
	
	Since the character $\chi_{\text{fin}}$ has finite monodromy, it is generically unramified. Thus $\chi$ is generically unramified.
\end{proof}
\begin{proof}[Proof~(Thm.~\ref{extensionthmWeak_THM})]
	We can pass to $S_0^{\text{red}}$ and assume $S_0$ is integral by the topological invariance of the proetale site (see (see \cite[Lem.~5.4.2]{BhattSPE})) Because $S_0$ is arithmetic and integral, it is generically regular. Hence we can assume $S_0$ is regular by passing to a dense open subscheme. The required section over an etale neighborhood of $S_0$ exists by \cite[Cor.~17.16.3~(ii)]{EGA4}. Therefore, Lemma \ref{charactersunr_LEM} allows us to apply Theorem \ref{simpson_THM} to obtain an extension of $F$ to a dense open subscheme of $S_0$. 
\end{proof}
\subsection{Perverse sheaves} We obtain the extension theorem for perverse sheaves by reducing to the case of lisse sheaves treated above. We treat the general case and then the arithmetic case. In the arithmetic case, one can discard the etale neighborhood and the twist by the character.
\begin{theorem}\label{extension_THM} Let $S_0$ be a connected Noetherian regular scheme with function field $k$. Let $X_0 \rightarrow S_0$ be a finite type morphism and $M \in \perv{X_k}$ a geometrically irreducible perverse sheaf on the generic fiber of this morphism. There exists a connected dominant etale neighborhood $S_0' \rightarrow S_0$ with function field $k'$, a character $\chi$ of the Galois group of $k'$, and a relatively perverse universally locally acylic sheaf $M_0 \in \text{Perv}^{\text{ULA}}(X_0'/S_0')$ such that the generic fiber of $M_0$ is isomorphic to $M_{k'}\otimes\chi$.

\end{theorem}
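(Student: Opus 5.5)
We reduce to the lisse case, Theorem \ref{simpson_THM}, and rebuild $M_0$ by relative intermediate extension, Lemma \ref{intermed_LEM}.

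\emph{Step 1: lisse locus.} Since $M$ is geometrically irreducible, its support $Z_k \subseteq X_k$ is geometrically irreducible. There is a dense open $V_k \subseteq Z_k$ such that:
\begin{itemize}
\item $(V_{\overline{k}})^{\text{red}}$ is smooth and connected;
\item $M|_{V_k} = F[d]$ with $F$ a lisse sheaf;
\item $M = j_{!*}(F[d])$.
\end{itemize}
Here $F$ is geometrically irreducible, because $j_{!*}$ commutes with base change to $\overline{k}$ and intermediate extension preserves irreducibility.

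Spread $Z_k$ and $V_k$ out to $Z_0, V_0 \subseteq X_{U_0}$ over a dense open $U_0 \subseteq S_0$. By Lemma \ref{relreduced_LEM}, after a universal homeomorphism and shrinking $U_0$, the map $V_0^{\text{red}} \to U_0$ is smooth with geometrically connected fibers. The universal homeomorphism does not affect étale sheaves, and the Frobenius twist can be absorbed into the étale neighbourhood bookkeeping. $U_0$ is still regular and connected.

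\emph{Step 2: section and twist.} By \cite[Cor.~17.16.3~(ii)]{EGA4}, there is a dominant étale neighbourhood $S_0'' \to U_0$, which we may take connected, with a section $s\colon S_0'' \to V_0$. Let $k''$ be its function field. The character $\det(s^*F)$ is a character of $G_{k''}$ and need not be generically unramified. We fix this by a twist.

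Choose $r = \operatorname{rk} F$ and a character $\chi$ with $\chi^{r} = \det(s^*F)^{-1}$. Such a root need not exist over $k''$, so pass to a further connected finite étale (hence dominant étale) neighbourhood $S_0' \to S_0''$ with function field $k'$. Then $\det(s^*(F_{k'}\otimes\chi)) = \det(s^*F)\chi^r$ is trivial, hence unramified.

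The delicate point is extracting an $r$-th root of an $\ell$-adic character. On the torsion-free $\mathbb{Z}_\ell$-part the root exists after enlarging $\mathcal{O}$. On the finite part, we may need to pass to a finite separable extension of $k''$, realized as a finite étale cover of a dense open of $S_0''$.

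An alternative avoids roots altogether. Take $\chi$ to be the $r$-th root of $\det$ only up to a finite-order character $\varepsilon$. Then $\det(s^*(F\otimes\chi)) = \varepsilon$ has finite monodromy and is therefore generically unramified, which is exactly what Theorem \ref{simpson_THM} requires. I expect this step to be the main obstacle.

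\emph{Step 3: extend and rebuild.} The sheaf $F_{k'}\otimes\chi$ is still geometrically irreducible. Apply Theorem \ref{simpson_THM} to $V_0\times_{U_0}S_0' \to S_0'$. Strictly this needs smoothness, so apply it to the reduction $V_0^{\text{red}}$, using topological invariance \cite[Lem.~5.4.2]{BhattSPE}. This produces, after shrinking $S_0'$, a lisse extension $F_0$.

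Then $F_0[d]$ lies in $\perv{V'_0/S'_0}$, since it is lisse on a smooth family. Lemma \ref{intermed_LEM} gives, after shrinking $S_0'$, a sheaf $M_0 \in \text{Perv}^{\text{ULA}}(X_0'/S_0')$ whose fibers are the intermediate extensions. On the generic fiber this is $j_{!*}(F_{k'}\otimes\chi[d]) = M_{k'}\otimes\chi$, because tensoring with a rank one sheaf pulled back from the base commutes with $j_{!*}$.

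Finally, replace $S_0'$ by the dense open found. It is still a connected dominant étale neighbourhood of $S_0$.
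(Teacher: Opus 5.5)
Your proposal is correct and follows essentially the same route as the paper: spread out the lisse locus, make $V_0$ smooth with geometrically connected fibers via Lemma \ref{relreduced_LEM} and topological invariance, take an \'etale-local section by \cite[Cor.~17.16.3~(ii)]{EGA4}, kill $s^*\det(F)$ with an $r$-th root character after a finite separable extension, then apply Theorem \ref{simpson_THM} and Lemma \ref{intermed_LEM}. The paper simply asserts that the $r$-th root exists, so your justification of that step is a useful addition; so is your variant that leaves a finite-order, hence generically unramified, determinant.
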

\begin{proof} There is an immersion $j\colon V_k \rightarrow X_k$ by a reduced scheme $V_{k}$, such that $(V_{\overline{k}})^{\text{red}}$ is connected and smooth of dimension $d$ and a lisse, geometrically $\Qbarl$-sheaf $F$ on $V_k$ such that $M = j_{!*}(F[d])$.  The locally closed subscheme $V_k$ is geometrically connected because $M$ is geometrically irreducible. The subscheme $V_{k}$ spreads out to a locally closed subscheme $V_0 \subseteq X_0$ by a limit argument. By Lemma \ref{relreduced_LEM}, there exists a universal homeomorphism $F\colon S_0 \rightarrow S_0$ such that $(V_0\times_{F, S_0}S_0)^{\text{red}}$ is generically smooth over $S_0$. The topological invariance of the proetale site (see \cite[Lem.~5.4.2]{BhattSPE}) allows us to base change along $F$ and pass to the reduced closed subscheme. Thus we may assume $V_0 \rightarrow S_0$ is smooth. By shrinking $S_0$ further, we can asssume $V_0 \rightarrow S_0$ has geometrically connected fibers because the generic fiber is geometrically connected.
	
	Because $V_0 \rightarrow S_0$ is smooth, \cite[Cor.~17.16.3~(ii)]{EGA4} shows that there exists an irreducible dominant etale neighborhood $S_0' \rightarrow S_0$ with a section $s\colon S_0' \rightarrow V_{S_0'}$. Let $k'$ be the function field of $S_0'$. Let $n$ be the rank of $F$. There exists a finite separable extension $k''/k'$ and a character $\chi$ of the Galois group of $k''$ such that 
	\[
	\chi^{-n} = s^*\text{det}(F).
	\]
	This implies
	\[
	s^*\text{det}(F\otimes\chi) = 1.
	\]
	Since the extension $k''/k'$ is separable, a limit argument implies that there exists a dominant etale neighborhood $S_0'' \rightarrow S_0'$ inducing the extension $k''/k'$ on function fields. Since $S_0'' \rightarrow S_0'$ is etale and $S_0'$ is regular, the scheme $S_0''$ is regular. Therefore, Theorem \ref{simpson_THM} shows, after shrinking $S_0''$, that $F\otimes\chi$ extends to a lisse $\Qbarl$-sheaf on $X_0''$. By Lemma \ref{intermed_LEM}, we can shrink $S_0''$ further and obtain $M_0 \in \text{Perv}^{\text{ULA}}(X_0''/S_0'')$ whose generic fiber is isomorphic to $M_{k''}\otimes\chi$. 
	
%
\end{proof}
\begin{theorem}\label{extensionari_THM}
	Let $S_0$ be an integral arithmetic scheme with function field $k$ and $X_0$ a $S_0$-scheme. Let $M \in \Perv{X_k}{\Qbarl}$ be a geometrically irreducible perverse sheaf.
	\begin{enumerate}
		\item  There exists a dense open subscheme $U_0 \subseteq S_0$ and $M_0 \in \text{Perv}^{\text{ULA}}(X_0/U_0)$ such that the restriction of $M_0$ to the generic fiber is isomorphic to $M$. 
		\item Suppose $U_0$ is geometrically unibranch. The extension $M_0$ is unique up to unique isomorphism. 
	\end{enumerate} 
\end{theorem}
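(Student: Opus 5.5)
Part (1) follows the proof of Theorem \ref{extension_THM}, but replaces the twist by a character with an arithmetic input. First we reduce the base. By topological invariance of the proetale site we may replace $S_0$ by $S_0^{\text{red}}$. Since $S_0$ is integral and arithmetic, it is generically regular, so we shrink $S_0$ and assume it is regular and connected.

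Next we reduce to a lisse sheaf. As in the proof of Theorem \ref{extension_THM}, write $M = j_{!*}(F[d])$ for an immersion $j\colon V_k \rightarrow X_k$ and a geometrically irreducible lisse sheaf $F$ on $V_k$. Here $V_k$ is geometrically connected and $(V_{\overline{k}})^{\text{red}}$ is smooth. Spread $V_k$ out to $V_0 \subseteq X_0$. Use Lemma \ref{relreduced_LEM}, together with topological invariance along the universal homeomorphism $F\colon S_0 \rightarrow S_0$ (which is a homeomorphism, so dense opens correspond). After shrinking, this lets us assume $V_0 \rightarrow S_0$ is smooth with geometrically connected fibers. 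Theorem \ref{extensionthmWeak_THM} then applies directly and extends $F$ to a lisse sheaf $F_0$ on $V_{U_0}$, with no étale neighborhood and no twist. Internally, the determinant condition of Theorem \ref{simpson_THM} is supplied by Lemma \ref{charactersunr_LEM}. Finally, Lemma \ref{intermed_LEM} applied to $F_0[d] \in \perv{V_{U_0}/U_0}$ gives, after shrinking $U_0$, an object $M_0 \in \text{Perv}^{\text{ULA}}(X_{U_0}/U_0)$. Its fiber at the generic point is $j_{!*}(F[d]) = M$.

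One point needs care: the base change along Frobenius. After pulling back along $F$, the sheaf $M$ lives on the Frobenius-twisted generic fiber. Because $F$ is a universal homeomorphism, the étale sites are identified, and $M_0$ transports back to an object over $X_{U_0}$ whose generic fiber is $M$. So this step is harmless.

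Part (2), uniqueness, is the step I expect to be the main obstacle. Suppose $M_0, M_0' \in \text{Perv}^{\text{ULA}}(X_0/U_0)$ both restrict to $M$, with $U_0$ geometrically unibranch. I would invoke \cite[Thm.~1.10]{hansen2023relativeperversity}, exactly as in the proof of Theorem \ref{filtration_THM}. That result says restriction to the generic fiber is fully faithful on ULA relatively perverse sheaves over a geometrically unibranch, topologically Noetherian base; Proposition \ref{topnot_PROP} supplies the topological Noetherianity. Full faithfulness makes the given generic isomorphism lift uniquely to an isomorphism $M_0 \cong M_0'$, so the extension is unique up to unique isomorphism.

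If that citation is only available for a restricted class of bases, the fallback is to prove full faithfulness by hand. Use Theorem \ref{genbchom_THM} to make $\text{Hom}(M_0, M_0')$ constructible with base change. ULA-ness should make its relevant cohomology lisse over $U_0$. Geometric unibranchness then guarantees that sections over $U_0$ inject into, and are determined by, the generic stalk, because $\pi_1$ of the generic point surjects onto $\pi_1(U_0)$.
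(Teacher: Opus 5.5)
Your proposal is correct and follows the paper's proof essentially verbatim. The paper reruns the argument of Theorem \ref{extension_THM} with Theorem \ref{extensionthmWeak_THM} in place of Theorem \ref{simpson_THM}, where Lemma \ref{charactersunr_LEM} supplies the determinant hypothesis so that no \'etale neighborhood or twist is needed, then applies Lemma \ref{intermed_LEM}, and takes uniqueness from \cite[Thm.~1.10]{hansen2023relativeperversity}, so your by-hand fallback for full faithfulness is not required.
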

\begin{proof}
	We can repeat the argument in Theorem \ref{extension_THM}, the only difference being that we apply Theorem \ref{extensionthmWeak_THM} instead of Theorem \ref{simpson_THM} to obtain the extension of the lisse sheaf. The unicity follows from \cite[Thm.~1.10]{hansen2023relativeperversity}.
\end{proof}
\newpage
	\section{Models} We prove existence theorems for schemes which are the bases we will apply the extension theorem to. Let us begin by stating a basic existence theorem.
	\begin{lemma}\label{modelbasic_LEM} Let $k$ be an arithmetic field. There exists a connected regular arithmetic scheme $S_0 \subseteq k$ with function field isomorphic to $k$. 
\end{lemma}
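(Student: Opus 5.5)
We will construct $S_0$ as an open subscheme of the spectrum of a finitely generated ring whose fraction field is $k$. (The notation "$S_0 \subseteq k$" in the statement we read as saying that $S_0$ is equipped with an identification of its generic point with $\operatorname{Spec} k$.)

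\emph{Step 1: an integral model.} Let $\mathbb{F}$ be the prime field of $k$, so $\mathbb{F} = \mathbb{Q}$ or $\mathbb{F} = \mathbb{F}_p$. Since $k$ is arithmetic, we may choose finitely many generators $a_1, \ldots, a_r$ of $k$ over $\mathbb{F}$. Set
\[
A := \mathbb{Z}[1/\ell][a_1, \ldots, a_r] \subseteq k.
\]
In characteristic $p$, the image of $\mathbb{Z}[1/\ell]$ in $k$ is $\mathbb{F}_p$, and $\ell \neq p$ holds automatically because $\ell$ is invertible on all schemes. Then $A$ is a finitely generated $\mathbb{Z}[1/\ell]$-algebra and a domain. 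Its fraction field contains $\mathbb{F}$ and all the $a_i$, so it equals $k$. Hence $T := \operatorname{Spec} A$ is an integral arithmetic scheme whose function field is $k$.

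\emph{Step 2: regularity on a dense open.} $A$ is of finite type over the excellent ring $\mathbb{Z}$, so it is excellent. In particular the regular locus $\operatorname{Reg}(T)$ is open. It contains the generic point, because the local ring there is the field $k$. Hence $\operatorname{Reg}(T)$ is a nonempty, hence dense, open subset of the irreducible scheme $T$.

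If one prefers to avoid excellence, the same conclusion follows from generic regularity of reduced schemes of finite type over $\mathbb{Z}$ or over $\mathbb{F}_p$. Alternatively, one can argue directly: choose a Noether normalization over a localization of $\mathbb{Z}[1/\ell]$ or of $\mathbb{F}_p$, so that $k$ becomes finite over a purely transcendental field $\mathbb{F}(t_1, \ldots, t_m)$. Then, after inverting an element, the generic smoothness of the reduced scheme in the separable part gives regularity on a dense open. We expect to rely on the standard fact about openness of the regular locus.

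\emph{Step 3: conclusion.} Choose a basic open $S_0 = \operatorname{Spec} A[1/f] \subseteq \operatorname{Reg}(T)$ with $f \neq 0$. Then $S_0$ is regular and of finite type over $\mathbb{Z}[1/\ell]$. It is integral, hence connected, and its function field is still $k$.

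The only step that carries real content is the openness of the regular locus (Step 2), and it is covered by the excellence of finitely generated $\mathbb{Z}$-algebras. No other obstacle is expected.
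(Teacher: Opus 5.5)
Your proof is correct and follows the paper's argument: take a finitely generated $\mathbb{Z}[1/\ell]$-subalgebra of $k$ that generates $k$, then shrink to a dense open regular subscheme. Your appeal to excellence (openness of the regular locus for rings of finite type over $\mathbb{Z}$) simply makes explicit the generic regularity that the paper attributes, very briefly, to $\mathbb{Z}[1/\ell]$ being a Dedekind domain.
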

\begin{proof}
	Any finitely generated $\BZ[1/\ell]$-subalgebra of $k$ which generates $k$ is generically regular because $\BZ[1/\ell]$ is a Dedekind domain. 
\end{proof}
The following lemma is similar: 
\begin{lemma}\label{modelfields_LEM}
	Let $k$ be a field with prime field $k_0 \subseteq k$. We can write
	\[
	k = \varinjlim R_i
	\]
	as a filtered colimit of smooth $k_0$-algebras $R_i \subseteq k$. The transition morphisms are given by the inclusions.
\end{lemma}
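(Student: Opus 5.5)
Write $k_0$ for the prime field, either $\BQ$ or $\BF_p$. The plan is to take the $R_i$ to be localizations of finitely generated $k_0$-subalgebras of $k$ at suitable nonzero elements. The index set is the set of finitely generated $k_0$-subalgebras $R \subseteq k$ that are smooth over $k_0$, ordered by inclusion. I will show that this family is filtered and that its union is $k$. Since all transition maps are inclusions, the colimit is then simply the union $\bigcup R_i = k$.

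The key input is generic smoothness of an integral finite type $k_0$-algebra. Let $A \subseteq k$ be a finitely generated $k_0$-subalgebra. It is a domain with fraction field $L = \text{Frac}(A) \subseteq k$. Because $k_0$ is perfect, $L$ is separably generated over $k_0$, so $\text{Spec}(A) \to \text{Spec}(k_0)$ is smooth at the generic point. The smooth locus is open, so there is a nonzero $f \in A$ with $A_f$ smooth over $k_0$. Since $f \neq 0$ in $k$, we have $A_f = A[1/f] \subseteq k$. Hence $A_f$ is again a finitely generated subalgebra of $k$ lying in the index set, and it contains $A$.

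The two required properties now follow. For exhaustion, given $a \in k$, take $A = k_0[a]$; then $a \in A \subseteq A_f$. For filteredness, given smooth $R_1, R_2 \subseteq k$ in the index set, take $A = k_0[R_1 \cup R_2]$, which is finitely generated. Then $A_f$ is a smooth member of the family containing both $R_1$ and $R_2$.

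The only point needing care is the perfectness of the prime field. This is what makes every finitely generated field extension of $k_0$ separably generated, and hence gives generic smoothness rather than merely generic regularity.
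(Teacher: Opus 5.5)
Your proof is correct and follows essentially the paper's route: localize each finitely generated $k_0$-subalgebra of $k$ at a nonzero element to make it smooth, using that the prime field is perfect. The paper phrases this as generic regularity of finite type domains plus ``regular implies smooth over a perfect field,'' whereas you use separable generation of the fraction field and openness of the smooth locus, which is the same observation; your explicit checks of filteredness and exhaustion fill in steps the paper leaves implicit.
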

\begin{proof}
	Note that, since $k_0$ is a prime field, it is perfect. In particular, if a finite type algebra over $k_0$ is regular, then it is smooth. Now we may apply the same argument as in Lemma \ref{modelbasic_LEM}.
\end{proof}
A ``geometric" discrete valuation ring is always a local ring of a closed point on a curve: 
\begin{lemma}\label{geomdvr_LEM}
	Let $S$ be a discrete valuation ring essentially of finite type over a field $k_0$. There exists a finitely generated extension $k_1/k_0$, a regular geometrically connected curve $C$ over $k_1$, and a closed point $0 \in C$ such that there exists an isomorphism
	\[
	S \cong \CO_{C,0}. 
	\]
\end{lemma}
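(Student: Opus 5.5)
The plan is to realize $S$ as a localization of a finite type $k_0$-algebra at a height one prime, and then to cut the dimension down to one by enlarging the base field. Since $S$ is essentially of finite type over $k_0$, we can write $S = A_{\mathfrak p}$ with $A$ a finitely generated $k_0$-domain and $\mathfrak p \subset A$ a prime. Because $S$ is a discrete valuation ring, $\mathfrak p$ has height one. After replacing $A$ by a localization $A_f$ with $f \notin \mathfrak p$, we may assume that $A$ is regular. This is possible because the regular locus of a finite type $k_0$-algebra is open (excellence) and contains $\mathfrak p$, since $A_{\mathfrak p}$ is regular.

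Next choose a transcendence basis of the residue field $\kappa(\mathfrak p)$ over $k_0$. Let $d = \dim A - 1 = \operatorname{trdeg}_{k_0}\kappa(\mathfrak p)$, and lift the basis to elements $t_1, \ldots, t_d \in A$. Their images in $A/\mathfrak p$ are algebraically independent, so $k_0[t_1,\ldots,t_d] \cap \mathfrak p = 0$. Consequently every nonzero element of $k_0[t_1,\ldots,t_d]$ is a unit in $A_{\mathfrak p}$. Set $k_1 := k_0(t_1, \ldots, t_d) \subseteq S$, which is finitely generated over $k_0$. Then $B := A \otimes_{k_0[t]} k_1$ is a localization of $A$, finitely generated over $k_1$, and it is regular. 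The ideal $\mathfrak p B$ is a prime of $B$ with $B_{\mathfrak pB} = A_{\mathfrak p} = S$, and its residue field $\kappa(\mathfrak p)$ is finite over $k_1$, so $\mathfrak p B$ is a maximal ideal.

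We then take $C$ to be the irreducible component of $\operatorname{Spec} B$ through the closed point $0$ corresponding to $\mathfrak pB$. After localizing we may take $C$ affine and integral. It is regular, and it has dimension one because $\dim \CO_{C,0} = \dim S = 1$ and a closed point of a finite type integral $k_1$-scheme has local ring of dimension equal to $\dim C$. This gives $S \cong \CO_{C,0}$.

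The main obstacle is geometric connectedness. A regular curve over $k_1$ need not be geometrically connected: the algebraic closure $k_1'$ of $k_1$ in the function field $K$ of $C$ may be a nontrivial finite extension. To fix this, replace $k_1$ by $k_1'$. The field $k_1'$ is finite over $k_1$ and is contained in the integrally closed ring $\CO_{C,0}$, since $C$ is normal. Therefore $C$ is naturally a $k_1'$-scheme, it is still of finite type and regular, and its function field has $k_1'$ algebraically closed in it. A normal integral scheme with this property is geometrically connected over $k_1'$ (EGA IV 4.5.9). The point $0$ and the local ring $\CO_{C,0} \cong S$ are unchanged, and $k_1'/k_0$ is finitely generated, which completes the argument.
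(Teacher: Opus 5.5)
Your proof is correct. It follows the same overall strategy as the paper: make a transcendence basis of $\kappa(\mathfrak{p})$ over $k_0$ into constants, so that $S$ becomes the local ring of a curve over a finitely generated extension $k_1$, and then replace $k_1$ by its algebraic closure in the function field to get geometric connectedness. The difference lies in how you construct the curve.

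The paper picks a system of parameters $x_1=\pi,x_2,\ldots,x_n$ at a maximal ideal containing $\mathfrak{p}=(\pi)$. It observes that $R_{\mathfrak{p}}$ is quasi-finite over $k_0(x_2,\ldots,x_n)[x_1]_{(x_1)}$ and appeals to Zariski's main theorem to see $R_{\mathfrak{p}}$ as a local ring of a normal curve. You instead invert $k_0[t_1,\ldots,t_d]\setminus\{0\}$ in $A$. This directly gives a finitely generated $k_1$-domain $B$ with $B_{\mathfrak{p}B}=S$. You then read off that $\mathfrak{p}B$ is maximal and that $\dim B=1$ from the dimension formula $\operatorname{ht}\mathfrak{p}+\dim A/\mathfrak{p}=\dim A$.

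Your route avoids Zariski's main theorem. It also makes explicit what the paper's quasi-finiteness claim tacitly uses, namely that the reductions of $x_2,\ldots,x_n$ form a transcendence basis of $\kappa(\mathfrak{p})$. The paper's route additionally exhibits $C$ as quasi-finite over $\mathbb{A}^1_{k_1}$ via $\pi$. You get this as well: a uniformizer chosen in $A\subseteq B$ is transcendental over $k_1$, since otherwise it would be a nonzero element of the field $k_1'\subseteq B$ lying in $\mathfrak{p}B$.

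Two small points of wording. First, choose the transcendence basis among the images of elements of $A$, so that it actually lifts. Second, what makes $C$ a $k_1'$-scheme is $k_1'\subseteq B=\Gamma(C,\mathcal{O}_C)$, not merely $k_1'\subseteq\mathcal{O}_{C,0}$. This containment holds because $B$ is integrally closed in its fraction field.
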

\begin{proof} By definition, we can find a normal domain $R$ of finite type over $k_0$ and a prime ideal $\idp \subset R$ of height one with $R_\idp = S$. After localizing $R$, we may assume there is $\pi \in R$ such that $\idp = (\pi)$. Let $\idm \subset R$ be a maximal ideal with $\idp \subset \idm$, then we may find a system of parameters $x_1, \ldots, x_n \in \idm$ with $x_1 = \pi$. We then consider the morphism
	\[
	k_0[x_1, \ldots,x_n] \rightarrow R.
	\]
We consider the prime ideal $\idq := (x_1) \subset k_0[x_1, \ldots, x_n]$. Then we obtain a quasi-finite morphism
\[
k_0[x_1, \ldots, x_n]_\idq \rightarrow R_\idp
\]
because $\idp$ is the only prime ideal lying over $\idq$ by assumption. We have
\[
k_0[x_1, \ldots, x_n]_{\idq} = k_0(x_2, \ldots,x_n)[x_1]_{(x_1)}.
\]
Since $R_\idp$ is quasi-finite over this local ring, we see that it is also the localization of a normal curve by Zariski's main theorem. We can then normalize the field $k_0(x_2, \ldots,x_n)$ in $R_\idp$ to obtain a geometrically connected regular curve over a field finitely generated over $k_0$.  
	
%
%
\end{proof}
\begin{corollary}\label{strhenscurves_COR}
	Let $S$ be the spectrum of the strict henselization of a discrete valuation ring essentially of finite type over field and $\pi \in \Gamma(S, \CO_S)$ a uniformizer. We can write
	\[
	S = \varprojlim C_i
	\]
	as a cofiltered limit of geometrically connected regular curves $C_i$ over an arithmetic field with an element $\pi \in \Gamma(C_i, \CO_{C_i})$.
\end{corollary}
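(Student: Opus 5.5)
Write $S=\operatorname{Spec}(R)$ with $R$ the strict henselization of a discrete valuation ring $A$ that is essentially of finite type over a field $k_0$. I would first replace $k_0$ by its prime field. This is allowed because a localization of a finitely generated $k_0$-algebra can be written as a filtered colimit of localizations of finitely generated algebras over the prime field, using Lemma \ref{modelfields_LEM}. Lemma \ref{geomdvr_LEM} then gives a finitely generated extension $k_1/k_0$, a regular geometrically connected curve $C$ over $k_1$, and a closed point $0\in C$ with $A\cong \CO_{C,0}$. Shrinking $C$ around $0$, we may assume the uniformizer $\pi$ (chosen in $A$, which is also a uniformizer of $R$ since $A\to R$ is unramified) is a regular function on $C$ that vanishes only at $0$.

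Next I would express $k_1$ as a limit. By Lemma \ref{modelfields_LEM}, $k_1=\varinjlim R_i$ with $R_i$ smooth over the prime field. So $C$ and $\pi$ spread out to a smooth family $\mathcal{C}\to \operatorname{Spec}R_i$ with geometrically connected fibers, together with a section $\pi$ and a point $0$. These do not yet give curves over arithmetic fields, though, because the base of the spreading is not a field. I would therefore not use that spreading directly. Instead I would keep the arithmetic field fixed at the finitely generated field $k_1$ (which is arithmetic, since $k_1$ is finitely generated over the prime field) and realize the strict henselization as a limit over étale neighborhoods.

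Concretely, $R=\varinjlim \CO(C_i)$, where $(C_i,\bar 0_i)\to (C,0)$ runs over affine étale neighborhoods of a geometric point $\bar 0$ above $0$. Each such $C_i$ is a regular curve over some field. The residue field of $R$ is the separable closure of $\kappa(0)$, so the constants must be allowed to grow. For each $C_i$, let $k_{1,i}$ be the algebraic closure of $k_1$ in the function field of the component of $C_i$ containing $\bar 0_i$. This is a finite separable extension of $k_1$, hence again an arithmetic field, and after replacing $C_i$ by that component it becomes a regular geometrically connected curve over $k_{1,i}$. The pulled-back function $\pi\in\Gamma(C_i,\CO_{C_i})$ supplies the required element. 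Connected pointed étale neighborhoods are cofinal and form a cofiltered system, so $S=\varprojlim C_i$.

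The step I expect to be the main obstacle is geometric connectedness together with cofinality: I must check that passing to the connected component through $\bar 0_i$, viewed over its field of constants, still gives a cofinal system. Cofinality should be fine because connected étale neighborhoods are cofinal and taking constants does not change the scheme. Regularity over the new constant field is automatic, since regularity is absolute. Geometric connectedness follows because a normal connected curve is geometrically connected over the algebraic closure of the base field in its function field.
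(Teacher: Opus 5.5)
You have a genuine gap in your first step. Being essentially of finite type over $k_0$ does not make $A$ essentially of finite type over the prime field. Writing $A$ as a filtered colimit of localizations of finitely generated algebras over the prime field does not change this; take $A=\mathbb{C}[x]_{(x)}$. So after Lemma \ref{geomdvr_LEM} the field $k_1$ is only finitely generated over the original $k_0$, and your claim that $k_1$ is arithmetic is unfounded.

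This is not cosmetic. For $k_0=\mathbb{C}$, every étale neighborhood $C_i$ of $C$ has $\mathbb{C}\subseteq\Gamma(C_i,\CO_{C_i})$, so all its residue fields are uncountable. A scheme of finite type over an arithmetic (hence countable) field has countable residue fields. Hence no étale neighborhood of the single curve $C$ can be a curve over an arithmetic field, whatever constant field you choose. The constants must shrink to arithmetic subfields, and that is the real content of the corollary.

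The missing step is the spreading out you discarded in your second paragraph, done over subfields rather than subrings. Write $k_1=\varinjlim k_\alpha$ as the filtered union of its arithmetic subfields, for instance the fraction fields of the $R_i$ in Lemma \ref{modelfields_LEM}. For $\alpha$ large, descend $(C,0)$ to regular, geometrically connected affine curves $C_\alpha$ over $k_\alpha$ with closed points $0_\alpha$; regularity descends along the faithfully flat map $C\to C_\alpha$. Then $\CO_{C,0}=\varinjlim_\alpha\CO_{C_\alpha,0_\alpha}$. Since strict henselization commutes with filtered colimits of local rings, $S$ is the limit of the doubly indexed system of connected affine étale neighborhoods of $0_\alpha$ in $C_\alpha$; this is the paper's proof.

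Your constant-field argument is then exactly what makes these neighborhoods geometrically connected, a point the paper passes over. Replace $k_\alpha$ by its algebraic closure in the function field of the neighborhood; this is a finite, hence arithmetic, extension. It need not be separable, since regular non-smooth curves can have purely inseparable constants, but you only use finiteness.

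Finally, the statement fixes a uniformizer $\pi\in\Gamma(S,\CO_S)$, which need not come from $A$. You get $\pi\in\Gamma(C_i,\CO_{C_i})$ by passing to the cofinal subsystem of neighborhoods lying over one on which $\pi$ is defined.
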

\begin{proof}
	Lemma \ref{geomdvr_LEM} implies $S$ is the strict henselization of the local ring of an affine regular geometrically connected curve $C$ over a field $k$ at a point $0 \in C$. We may write $k = \varinjlim k_i$ as the filtered union over all arithmetic subfields $k_i \subseteq k$. By passing to a subsystem, this permits us to write
	\[
	C = \varprojlim C_i
	\]
	as the cofiltered limit of geometrically connected regular affine curves $C_i$ over $k_i$ with distinguished points $0 \in C$. For each $i \in I$, we may write
	\[
	\text{Spec}(\CO_{C_i, 0}^{\text{sh}}) = \varprojlim S_{i, j} 
	\]
	as the cofiltered limit over all connected affine etale neighborhoods $S_{i, j} \rightarrow C_i$ of $0$. Since this limit is functorial and the strict henselization commutes with filtered colimits (see \cite[Prop.~2.8.19,~(ii)]{LeiFu}), we obtain
	\[
	S = \varprojlim_{i, j} S_{i, j}.
	\] 
	Since we have $\pi \in \Gamma(S, \CO_S)$, we may assume $\pi \in \Gamma(S_{i, j}, \CO_{S_{i, j}})$ by passing to a subsystem. Thus we obtain the claim.

\end{proof}
\newpage
	\section{Decomposition and hard Lefschetz} The goal of this section is to prove the decomposition theorem and the Hard Lefschetz theorem for arithmetic complexes. We prepare the proofs by giving a criterion for a complex to be semisimple on the geometric generic fiber in terms of its specializations at the geometric points supported at a dense set of points.
	\subsection{Specializing semisimplicity} We provide a criterion for the geometric semisimplicity of a complex in terms of its specializations. We begin by describing a criterion for nullity of a morphism of complexes on the geometric generic fiber in terms of its geometric specializations at closed points. We then apply the filtrations from Theorem \ref{filtration_THM} to obtain a criterion for semisimplicity in terms of its specializations.
	\begin{lemma}\label{morphismzero_LEM}
		Let $S_0$ be an integral qcqs scheme with function field $k$. Let $X_0$ be an $S_0$-scheme and $K, L \in \derCat{c}{X_0}{\Qbarl}$ complexes admitting a morphism $K \rightarrow L$. The morphism $K_{\overline{k}} \rightarrow L_{\overline{k}}$ is zero if and only if there exists a dense subset $U_0 \subseteq S_0$ such that $K_{\overline{x}} \rightarrow L_{\overline{x}}$ is zero for all points $x \in U_0$. Moreover, if $K_{\overline{k}} \rightarrow L_{\overline{k}}$ vanishes, there exists a dense open subset $V_0 \subseteq X_0$ such that $K_{\overline{x}} \rightarrow L_{\overline{x}}$ vanishes for all $x \in V_0$. 
	\end{lemma}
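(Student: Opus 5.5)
Our approach is to turn the vanishing of a morphism into a statement about a global section of a constructible internal Hom complex, and then use generic base change (Theorem \ref{genbchom_THM} together with Theorem \ref{genbc_THM}) to move that section between fibers. Write $\pi\colon X_0 \rightarrow S_0$. A morphism $f\colon K \rightarrow L$ is the same as a class in $H^0(X_0, \text{Hom}(K, L))$. For any geometric point $\overline{x}$ of $S_0$, the morphism $f_{\overline{x}}$ is likewise a class in $H^0(X_{\overline{x}}, \text{Hom}(K_{\overline{x}}, L_{\overline{x}}))$.

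\textbf{Step 1: a constructible Hom complex compatible with base change.} By Theorem \ref{genbchom_THM}, there is a constructible partition of $S_0$ on whose pieces $\text{Hom}(K, L)$ exists in $\derCat{c}{}{}$ and commutes with base change. Since $S_0$ is integral, one piece contains the generic point, and we replace $S_0$ by a dense open (constructible) $W_0$ inside that piece. Applying Theorem \ref{genbc_THM} to $H := \text{Hom}(K|_{W_0}, L|_{W_0})$ and shrinking $W_0$, we may also assume that $\pi_*(H)$ is constructible over $W_0$ and commutes with base change. After shrinking once more, the constructible sheaf $\mathcal{H}^0(\pi_* H)$ is lisse on $W_0$. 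Then for every geometric point $\overline{x}$ of $W_0$, including $\overline{k}$, we get
\[
\text{Hom}(K_{\overline{x}}, L_{\overline{x}}) = H^0(X_{\overline{x}}, H_{\overline{x}}) = (\mathcal{H}^0\pi_*H)_{\overline{x}}.
\]
The class of $f$ defines a global section $\sigma$ of the lisse sheaf $\mathcal{H}^0(\pi_* H)$ on $W_0$, and its stalk at $\overline{x}$ is $f_{\overline{x}}$. We must verify this compatibility, namely that the identification of stalks sends $\sigma_{\overline{x}}$ to the restricted morphism; it follows from the naturality of the base change maps.

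\textbf{Step 2: vanishing propagates.} A global section of a lisse $\Qbarl$-sheaf on the connected (irreducible) scheme $W_0$ vanishes at one geometric stalk if and only if it vanishes at every geometric stalk. This holds since $\Qbarl$-lisse sheaves correspond to representations of the fundamental group, or, working on the proetale site, because the vanishing locus of a section of a lisse sheaf is open and closed. Hence if $f_{\overline{k}}=0$, then $f_{\overline{x}}=0$ for all $x$ in the dense open $V_0 := W_0$, which gives the ``moreover'' statement (the $V_0$ there should be read as a dense open of $S_0$). Conversely, if $f_{\overline{x}}=0$ for all $x$ in a dense subset $U_0$, then $U_0 \cap W_0 \neq \emptyset$ because $W_0$ is a nonempty open and $U_0$ is dense. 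Applying Step 2 at such a point gives $\sigma = 0$, so $f_{\overline{k}} = 0$.

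\textbf{Main obstacle.} The hard part is Step 1: correctly identifying $\text{Hom}(K_{\overline{x}}, L_{\overline{x}})$ with the stalk of $\mathcal{H}^0\pi_*\text{Hom}(K,L)$. This needs both that internal Hom commutes with base change to the geometric fibre and that $\pi_*$ commutes with base change, and the $\Qbarl$-coefficient subtleties are absorbed by the appendix theorems. If lisseness of $\mathcal{H}^0$ is awkward to arrange, the fallback is to use only constructibility: a section of a constructible sheaf on an irreducible Noetherian base that vanishes at the generic geometric point vanishes on a dense open. For the converse, lisseness is needed anyway, and it can be arranged by shrinking further.
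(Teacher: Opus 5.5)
Your proposal is correct and is essentially the paper's proof. The paper also uses Theorems \ref{genbchom_THM} and \ref{genbc_THM} to find a dense open of $S_0$ over which $\text{Hom}(K,L)$ is constructible and commutes with base change, and over which $\pi_*\text{Hom}(K,L)$ commutes with base change and is lisse. It then transfers the vanishing between $\overline{k}$ and a point of $U_0 \cap V_0$ via the specialization isomorphism $\text{Hom}(K_{\overline{x}}, L_{\overline{x}}) \xrightarrow{\sim} \text{Hom}(K_{\overline{k}}, L_{\overline{k}})$, which is your Step 2 for the section of $\mathcal{H}^0(\pi_*\text{Hom}(K,L))$, and you correctly read the $V_0$ of the ``moreover'' clause as an open subset of $S_0$.
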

	\begin{proof}
		By Theorem \ref{genbc_THM} and Theorem \ref{genbchom_THM}, there exists a dense open subscheme $V_0 \subseteq Y_0$ such that
		\begin{enumerate}
			\item The complex $\text{Hom}(K, L)$ is constructible over $V_0$. 
			\item The formation of $\text{Hom}(K, L)$ commutes with base change $Y \rightarrow V_0$.
			\item The formation of the pushforward $\pi_{*}(\text{Hom}(K, L))$ commutes with base change $Y \rightarrow V_0$.
			\item The complex $\pi_*(\text{Hom}(K, L))$ is lisse over $V_0$. 
		\end{enumerate}
		For all points $x \in V_0$, the specialization morphism of $\pi_{*}(\text{Hom}(K, L))$ induces an isomorphism
		\[
		\text{Hom}_{\derCat{c}{X_{\overline{x}}}{\Qbarl}}(K_{\overline{x}}, L_{\overline{x}})  \rightarrow \text{Hom}_{\derCat{c}{X_{\overline{k}}}{\Qbarl}}(K_{\overline{k}}, L_{\overline{k}})  
		\]
		This isomorphism maps the morphism $K_{\overline{x}} \rightarrow L_{\overline{x}}$ to the morphism $L_{\overline{k}} \rightarrow K_{\overline{k}}$. 
		
		In particular, if $K_{\overline{k}} \rightarrow L_{\overline{k}}$ is zero, there exists the dense open subset $U_0 := V_0$ such that the fiber of the  morphism is zero over all closed points in $V_0$. 
		
		On the other hand, suppose we are given a dense subset $U_0 \subseteq S_0$ such that $K_{\overline{x}} \rightarrow L_{\overline{x}}$ is zero over all closed points $x \in U_0$. The subset $U_0\cap V_0 \subseteq S_0$ is non-empty, so we may choose $y \in U_0\cap V_0$. The morphism $K_{\overline{y}} \rightarrow L_{\overline{y}}$ is zero by assumption, so we can evaluate the specialization map to show  $K_{\overline{k}} \rightarrow L_{\overline{k}}$ is zero. 
	\end{proof}	
	For the proof of the Hard Lefschetz theorem, we note the following corollary of the above lemma. 
	\begin{corollary}\label{isocrit_COR}
		Let $S_0$ be an integral qcqs scheme with function field $k$ and $X_0$ an $S_0$-scheme. Let $K, L \in \derCat{c}{X_0}{}$ and suppose there is a morphism $K \rightarrow L$. The morphism $K_{{k}} \rightarrow L_{{k}}$ is an isomorphism if and only if there exists a dense subset $U_0 \subseteq S_0$ such that $K_{{x}} \rightarrow L_{x}$ is an isomorphism. If $K_k \rightarrow L_k$ is an isomorphism, there exists a dense open subset $U_0 \subseteq S_0$ such that $K_x \rightarrow L_x$ is an isomorphism for all $x \in U_0$. 
	\end{corollary}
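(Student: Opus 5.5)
We reduce the statement to Lemma \ref{morphismzero_LEM} by testing the isomorphism property through the cone. Complete $f\colon K \rightarrow L$ to a distinguished triangle $K \rightarrow L \rightarrow C \xrightarrow{+}$ in $\derCat{c}{X_0}{}$. Pullback to any fiber $X_x$ (or $X_k$, or a geometric fiber) is triangulated, so $C_x$ is a cone of $f_x$. Hence $f_x$ is an isomorphism if and only if $C_x = 0$, and likewise at the generic point. A complex is zero if and only if its identity endomorphism is zero. We therefore apply Lemma \ref{morphismzero_LEM} to the morphism $\text{id}_C\colon C \rightarrow C$, whose fiber at each point is $\text{id}_{C_x}$.

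The lemma is phrased over geometric points, so we first pass between $x$ and $\overline{x}$. A constructible complex $C_x$ on $X_x$ vanishes if and only if $C_{\overline{x}}$ vanishes: the pullback along $X_{\overline{x}} \rightarrow X_x$ is surjective on points, and vanishing of constructible complexes is detected on stalks at geometric points. The same holds at the generic point $k$. So $f_k$ is an isomorphism iff $\text{id}_{C_{\overline{k}}} = 0$, and $f_x$ is an isomorphism iff $\text{id}_{C_{\overline{x}}} = 0$.

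With this translation, both directions follow from Lemma \ref{morphismzero_LEM} applied to $\text{id}_C$. If $f_x$ is an isomorphism for all $x$ in a dense subset $U_0$, the first part of the lemma gives $\text{id}_{C_{\overline{k}}} = 0$, so $f_k$ is an isomorphism. Conversely, if $f_k$ is an isomorphism, the "moreover" part produces a dense open subset on which $\text{id}_{C_{\overline{x}}} = 0$, so $f_x$ is an isomorphism there. This gives both the equivalence and the dense open statement.

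The main point to check is that the lemma's dense open set really lies in the base $S_0$ rather than in $X_0$, as the typo "$V_0 \subseteq X_0$" suggests. The lemma's proof produces $V_0$ by generic base change for $\text{Hom}(C,C)$ and $\pi_*\text{Hom}(C,C)$ over $S_0$, so it is an open of the base. If one wanted to avoid invoking that lemma, there is a direct alternative: apply Theorem \ref{genbc_THM} to the structure map, shrinking to a dense open $U_0$ over which $C$ is constructible with support behaving well in fibers. The support of $C$ is a constructible subset of $X_0$ whose generic fiber is empty, so its image in $S_0$ is a constructible set missing the generic point. Its complement then contains a dense open, on which $C_x = 0$.
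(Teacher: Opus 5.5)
Your proposal is correct and is essentially the paper's own argument: the paper's proof is the single line ``apply Lemma~\ref{morphismzero_LEM} to the identity morphism of the cone of $K \rightarrow L$.'' Your extra details correctly fill in points the paper leaves implicit, namely passing between $x$ and $\overline{x}$ via stalks at geometric points, and reading the lemma's dense open as a subset of the base $S_0$ rather than of $X_0$.
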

	\begin{proof}
		This statement can be obtained by applying Lemma \ref{morphismzero_LEM} to the identity morphism of the cone of $K \rightarrow L$. 
	\end{proof}
	\begin{lemma}\label{critsemisimpleperv_LEM}
		Let $S_0$ be an integral geometrically unibranch topologically Noetherian scheme with function field $k$. Let $X_0 \rightarrow S_0$ be a morphism of finite type. Let $M \in \Perv{X_0/S_0}{\Qbarl}$. Then $M_{\overline{k}}$ is semisimple if and only if there is a dense subset $U_0 \subseteq S_0$ such that for all $x \in U_0$ the restriction to the geometric fiber $M_{\overline{x}}$ is semisimple. Moreover, if the the generic fiber $M_{\overline{k}}$ is semisimple, there exists a dense open subset $U_0 \subseteq X_0$ such that $M_{\overline{x}}$ is semisimple for all $x \in U_0$.
	\end{lemma}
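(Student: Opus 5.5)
We reduce the statement to Lemma \ref{morphismzero_LEM} via the relative decomposition series of Theorem \ref{filtration_THM}. First we normalize the situation. After a finite separable extension $k'/k$ we may assume that the irreducible subquotients of $M_k$ have geometrically irreducible support. This uses Theorem \ref{geomsimpleperverse_THM}, or Theorem \ref{geomsimple_THM} applied to the factors of a decomposition series. The extension $k'/k$ spreads out to a dominant étale (in particular quasi-finite, open) morphism $S_0'\to S_0$ over a dense open of $S_0$. Density of subsets, geometric fibers and the geometric unibranch property all transfer along this morphism, so we may replace $S_0$ by a component of $S_0'$. Theorem \ref{filtration_THM} then gives a dense open $U_0$ and a ULA filtration $0=M_0\subset\cdots\subset M_n=M_{U_0}$ with properties (1)--(6). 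Each graded piece $Q_i=M_i/M_{i-1}$ has irreducible generic fiber supported on $Z_i$, and is the intermediate extension from the lisse part on $V_i$ in every fiber.

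The key intermediate claim is that, after shrinking $U_0$, every fiber $(Q_i)_{\overline x}$ is irreducible, and $(Q_i)_{\overline x}\cong (Q_j)_{\overline x}$ holds if and only if $(Q_i)_{\overline k}\cong (Q_j)_{\overline k}$. By (6) and (3), $(Q_i)_{\overline x}$ is the intermediate extension of a lisse sheaf on the connected scheme $V_{i,\overline x}$, so it is irreducible iff that lisse sheaf is. By (5) its monodromy is conjugate to the monodromy over $\overline k$, and the latter is irreducible because $(Q_i)_{\overline k}$ is irreducible. Here we first replace $(Q_i)_{k}$ by a geometrically irreducible piece, as arranged above. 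Comparing isomorphism classes reduces to Hom-spaces, which Theorem \ref{genbchom_THM} together with generic base change make locally constant over a dense open. Hence $\dim\text{Hom}((Q_i)_{\overline x},(Q_j)_{\overline x})=\dim\text{Hom}((Q_i)_{\overline k},(Q_j)_{\overline k})$ there.

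Semisimplicity of a perverse sheaf with a given decomposition series amounts to the splitting of the successive extensions $0\to M_{i-1}\to M_i\to Q_i\to 0$. Concretely, $M_{\overline k}$ is semisimple iff each extension class $e_i\colon Q_i\to M_{i-1}[1]$ vanishes after base change to $\overline k$, and likewise in each geometric fiber. The condition must be taken inductively: $M_i$ semisimple iff $M_{i-1}$ is semisimple and $e_i=0$. Conversely, if $e_i\neq0$ but the fiber is semisimple, a Jordan–Hölder count of length together with $\dim\text{Ext}^1$ contradicts the condition; the vanishing statement is the cleaner route. Applying Lemma \ref{morphismzero_LEM} to each morphism $e_i$ in $\derCat{c}{X_{U_0}}{}$ yields both directions. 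If $M_{\overline k}$ is semisimple, all $e_i$ vanish over $\overline k$, hence over a dense open, and so every fiber there is semisimple; this gives the ``moreover'' clause. Conversely, if fibers over a dense set are semisimple, the $e_i$ vanish there, hence generically. We use that a dense set meets every dense open, and that a finite intersection of dense opens is dense open.

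The main obstacle is the claim that $M_{\overline x}$ semisimple forces each $(e_i)_{\overline x}=0$, since a semisimple object could a priori have nonsplit-looking filtration steps. We handle this inductively: once $M_{i-1,\overline x}$ is semisimple and $M_{i,\overline x}$ is a subobject of the semisimple $M_{\overline x}$, the sheaf $M_{i,\overline x}$ is semisimple, so the sequence splits and $(e_i)_{\overline x}=0$. Some care is needed to make the identification of $\text{Ext}^1$ compatible with base change; this again comes from Theorem \ref{genbchom_THM}.
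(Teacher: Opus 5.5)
Your overall route is the paper's. You pass to a dominant connected etale neighbourhood so that the supports become geometrically irreducible, and apply Theorem \ref{filtration_THM}. You control the graded pieces $Q_i=M_i/M_{i-1}$ fiberwise by properties (3), (5), (6), and then apply Lemma \ref{morphismzero_LEM} to the extension classes $e_i\colon Q_i\to M_{i-1}[1]$. The splitting argument in your last paragraph is also correct.

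\textbf{The step that fails.} Your ``key intermediate claim'' that $(Q_i)_{\overline k}$, and hence every $(Q_i)_{\overline x}$, is irreducible is false in general. Nothing you arranged gives it: the finite separable extension only makes the \emph{supports} geometrically irreducible. A graded piece of a filtration defined over the base cannot be ``replaced by a geometrically irreducible piece'', since the representations $\rho_i$ of Theorem \ref{geomsimpleperverse_THM} can stay irreducible of dimension $>1$ over every finite extension.

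Concretely, take $X_0=S_0$ a suitable open subscheme of $\mathrm{Spec}\,\mathbb{Z}[1/\ell]$, and $M$ the lisse sheaf attached to the $\ell$-adic Tate module of a non-CM elliptic curve over $\mathbb{Q}$. Then $M_{k'}$ is irreducible for every finite $k'/k$, so $n=1$ and $Q_1=M$. Yet $(Q_1)_{\overline x}$ is a $2$-dimensional vector space for every $\overline x$. You use this irreducibility in both directions, to conclude that $\bigoplus_i (Q_i)_{\overline x}$, resp.\ $\bigoplus_i (Q_i)_{\overline k}$, is semisimple once the $e_i$ vanish. So the argument as written does not close.

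\textbf{The fix.} What is needed, and true, is only that the graded pieces are semisimple on all geometric fibers over $U_0$, and this follows from the same properties:
\begin{enumerate}
\item The restriction of $(Q_i)_k$ to $V_{i,k}$ is an irreducible lisse sheaf on a geometrically connected scheme. By Clifford's theorem its restriction to the closed normal subgroup $\pi_1(V_{i,\overline k})\subseteq\pi_1(V_{i,k})$ is semisimple: the $\pi_1(V_{i,k})$-translates of an irreducible $\pi_1(V_{i,\overline k})$-subrepresentation span the whole fiber.
\item Semisimplicity depends only on the image subgroup of $\mathbf{GL}$, so property (5) transports it to every $\pi_1(V_{i,\overline x})$.
\item Property (6) then shows that $(Q_i)_{\overline x}$ is the intermediate extension of a semisimple lisse sheaf on the connected $V_{i,\overline x}$, hence semisimple.
\end{enumerate}
This is the statement about graded pieces that the paper extracts from (5) and (6). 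The paper then applies Lemma \ref{morphismzero_LEM} to the connecting morphisms exactly as you do, so with this replacement your proof coincides with the paper's.

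Several parts of your proposal can then be dropped:
\begin{enumerate}
\item The comparison of isomorphism classes of $(Q_i)_{\overline x}$ and $(Q_j)_{\overline x}$ via Hom-spaces, and the $\text{Ext}^1$ count, are not needed.
\item The inductive hypothesis on $M_{i-1,\overline x}$ is superfluous, because a subobject of a semisimple object is already semisimple.
\item The compatibility of $e_i$ with passage to fibers needs only $t$-exactness of restriction to fibers, not Theorem \ref{genbchom_THM}.
\end{enumerate}
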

	\begin{proof} There exists a finite separable extension $k'/k$ such that the irreducible components of $M_{k'}$ have geometrically irreducible support. By a limit argument, we can find a dominant connected etale neighborhood $S_0' \rightarrow S_0$ inducing the extension $k'/k$ on function fields.  We can base change to $S_0'$ without changing the statement. Thus we may assume the decomposition factors of $M_k$ have geometrically irreducible support.
		
		We apply Lemma \ref{filtration_THM}. Thus we obtain a dense open subset $V_0 \subseteq S_0$ and a filtration
		\[
		0 = M_0 \subset M_1 \subset \ldots \subset M_{n - 1} \subset M_n = M.
		\] 
		by relatively perverse sheaves $M_i \in \text{Perv}(X_{V_0}/V_0)$. The generic geometric fiber of a graded piece $M_i/M_{i - 1}$ for $1 \leq i \leq n$ is geometrically semisimple if and only if there exists a dense subset $U_0 \subseteq S_0'$ such that the fiber over a geometric point supported in $U_0$ is geometrically simple by properties (5) and (6) of the filtration. Moreover, the connecting morphisms
		\[
		M_{i}/M_{i - 1} \rightarrow M_{i - 1}[1]
		\]
		are zero on the geometric generic fiber if and only if there exists a dense subset such that the fiber is zero over all geometric points supported at the closed points by Lemma \ref{morphismzero_LEM}. The argument for the existence of a dense open subset such that semisimplicity holds at all geometric fibers is the same.
	\end{proof}
	\begin{proposition}\label{critsemisimple_PROP}
	Let $S_0$ be an integral geometrically unibranch topologically Noetherian scheme with function field $k$. Let $X_0 \rightarrow S_0$ be a finite type morphism. Let $K \in \derCat{c}{X_0}{}$. Then $K_{\overline{k}}$ is semisimple if and only if there is a dense subset $U_0 \subseteq S_0$ such that for all points $x \in U_0$ the restriction to the geometric fiber $K_{\overline{x}}$ is semisimple. If the generic fiber $K_{\overline{k}}$ is semisimple, then there exists a dense open subset $V_0 \subseteq S_0$ such that $K_{\overline{x}}$ is semisimple for all $x \in V_0$.
\end{proposition}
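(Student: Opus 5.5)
We reduce Proposition \ref{critsemisimple_PROP} to Lemma \ref{critsemisimpleperv_LEM} by working with perverse cohomology sheaves and the extension classes that glue them together. First, shrink $S_0$ to a dense open $W_0$ using Theorem \ref{genula_THM} and Theorem \ref{genbc_THM}, so that $K$ is universally locally acyclic over $W_0$. Then, by \cite[Thm.~1.1]{hansen2023relativeperversity}, the relative perverse truncations ${}^{\idp/W_0}\tau^{\leq n}K$ and the relative perverse cohomology sheaves ${}^{\idp/W_0}\mathscr{H}^n(K) \in \perv{X_{W_0}/W_0}$ exist. Their formation commutes with restriction to every fiber, because restriction to a fiber is $t$-exact for the relative perverse $t$-structure. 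Since $K$ is bounded, only finitely many $n$ occur. Hence for every $x \in W_0$ (including the generic point) we get $\pervCoh{n}{K_{\overline{x}}} = ({}^{\idp/W_0}\mathscr{H}^n(K))_{\overline{x}}$. The same holds for the truncations.

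Next, recall that a complex $L$ over a separably closed field is semisimple if and only if two conditions hold: each $\pervCoh{n}{L}$ is semisimple, and each connecting morphism
\[
\delta_n\colon \pervCoh{n}{\tau_{\geq n}L}[-n] \rightarrow ({}^{\idp}\tau^{\leq n-1}\tau_{\geq \ast}L)[1]
\]
vanishes. The second condition is equivalent to the splitting of the triangles
\[
\pvTRC{\leq n-1}(L) \rightarrow \pvTRC{\leq n}(L) \rightarrow \pervCoh{n}{L}[-n] \xrightarrow{+},
\]
i.e. to the vanishing of $\pervCoh{n}{L}[-n] \rightarrow \pvTRC{\leq n-1}(L)[1]$. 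We argue by induction on the number of nonzero perverse cohomology sheaves. $K_{\overline{k}}$ is semisimple if and only if three things hold: $\pvTRC{\leq n-1}$ of it is semisimple, $\pervCoh{n}{}$ is semisimple, and the connecting morphism vanishes. Each of these three conditions spreads relatively over $W_0$: the first two via the relative objects, the third via the relative connecting morphism ${}^{\idp/W_0}\mathscr{H}^n(K)[-n] \rightarrow {}^{\idp/W_0}\tau^{\leq n-1}K[1]$, which is a morphism in $\derCat{c}{X_{W_0}}{}$.

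For the perverse pieces we apply Lemma \ref{critsemisimpleperv_LEM}. This requires $W_0$ to be integral, geometrically unibranch and topologically Noetherian, and these properties survive passage to a dense open. For the connecting morphisms we apply Lemma \ref{morphismzero_LEM}. For the truncation we use the induction hypothesis. Both lemmas supply the two directions we need:
\begin{itemize}
\item if semisimplicity holds (or the morphism vanishes) on a dense set of geometric fibers, it holds on the geometric generic fiber;
\item conversely, if it holds generically, it holds on a dense open.
\end{itemize}
To get the "if" direction, intersect the given dense set $U_0$ with $W_0$ and with the opens produced by the lemmas; the intersection is still dense. To get the final statement, intersect the finitely many dense opens obtained, one for each $n$ and one for each connecting morphism.

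The main obstacle I expect is the use of a dense, non-open subset. A dense subset intersected with a dense open is nonempty, and nonemptiness is all that Lemma \ref{morphismzero_LEM} needs. Lemma \ref{critsemisimpleperv_LEM}, however, is stated with the hypothesis "dense," so the intersection must itself be dense. In a topologically Noetherian irreducible space, a dense subset meets every nonempty open in a set that is dense in that open, so this holds. A second delicate point is that semisimplicity of $\pvTRC{\leq n}$ on the fibers requires the connecting morphism to vanish on the geometric fibers, not just in the relative category. This is handled by the compatibility of relative perverse truncation with fibers established in the first step.
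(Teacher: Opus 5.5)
Your proposal is correct and follows essentially the same route as the paper: you apply Lemma \ref{morphismzero_LEM} to the connecting morphisms of the relative perverse truncation triangles and Lemma \ref{critsemisimpleperv_LEM} to the relative perverse cohomology sheaves, using $t$-exactness of restriction to geometric fibers. Two of your extra steps are harmless but not needed: shrinking so that $K$ is ULA (the relative perverse $t$-structure exists without it), and the density remark, which holds in any topological space since a dense set meets a dense open in a dense set.
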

\begin{proof}
	We can apply Lemma \ref{morphismzero_LEM} to the connecting morphisms
	\[
	{}^{\idp/S_0}\mathscr{H}^{i + 1}(K)[i + 1] \rightarrow {}^{\idp/S_0} \tau^{\leq i}(K)[1]
	\]
	for $i \in \BZ$ to prove that we have a decomposition
	\[
	K_{\overline{k}} = \bigoplus_{i \in \BZ} \pervCoh{i}{K_{\overline{k}}}[-i]
	\]
	over the generic point if and only if there exists a dense subset $U_0\subseteq S_0$ such that 
	\[
	K_{\overline{x}} = \bigoplus_{i \in \BZ} \pervCoh{i}{K_{\overline{x}}}[-i]
	\]
	for all points $x \in U_0$. We can then apply Lemma \ref{critsemisimpleperv_LEM} to the relative perverse cohomology sheaves to obtain the statement. The argument for the existence of a dense open subset such that semisimplicity holds at all points in the dense open subset is the same. 
\end{proof}
	\subsection{Decomposition theorem} We prove the decomposition theorem for arithmetic complexes. 
	\begin{theorem}\label{decomposition1_THM}
		Let $k$ be a separably closed field. Let $X$ and $Y$ be $k$-schemes and $f\colon X \rightarrow Y$ a proper morphism of $k$-schemes. Let $K \in \derCat{c}{X}{\Qbarl}$ be a semisimple arithmetic complex. The pushforward $f_*(K)$ is semisimple.
	\end{theorem}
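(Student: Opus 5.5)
We reduce to a model over an arithmetic field, spread it out over an arithmetic base, specialize to finite fields, and conclude by the known decomposition theorem there together with Proposition \ref{critsemisimple_PROP}.

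\textbf{Reductions.} Since $K$ is semisimple and the irreducible summands of its perverse cohomology are arithmetic by Theorem \ref{subquotientsarithmetic_THM}, and $f_*$ commutes with direct sums and shifts, we may assume $K = M$ is an irreducible arithmetic perverse sheaf on $X$. Base change to $\overline{k}$ does not affect semisimplicity of $f_*(M)$: by Lemma \ref{smoothbasechange_LEM}, Hom groups, and hence idempotents and vanishing of connecting maps, are unchanged along separably closed extensions, and $M_{\overline{k}}$ stays irreducible. So we may take $k$ algebraically closed. By Corollary \ref{models_COR} and Theorem \ref{modelsIrr_THM}, there is an arithmetic field $k_0 \subseteq k$, models $f'\colon X' \to Y'$ over $k_0$ (spreading out properness after enlarging $k_0$), and a perverse sheaf $M'$ on $X'$ with $M'_k \cong M$. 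The sheaf $M'$ is then geometrically irreducible. Since $f_*$ commutes with the base change $\overline{k}_0 \to k$ (proper base change) and, by Lemma \ref{smoothbasechange_LEM}, semisimplicity over $\overline{k}_0$ transfers to $k$, it suffices to show that $f'_*(M')_{\overline{k}_0}$ is semisimple.

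\textbf{Spreading out.} By Lemma \ref{modelbasic_LEM}, choose a connected regular arithmetic scheme $S_0$ with function field $k_0$, and spread out $f'$ to a proper morphism $f_0\colon X_0 \to Y_0$ of $S_0$-schemes. Theorem \ref{extensionari_THM} gives, after shrinking $S_0$, an extension $M_0 \in \text{Perv}^{\text{ULA}}(X_0/S_0)$ of $M'$. By Theorem \ref{genbc_THM}, after further shrinking, the formation of $f_{0*}(M_0)$ commutes with base change to every point of $S_0$. By Proposition \ref{critsemisimple_PROP} applied to $f_{0*}(M_0)$ on $Y_0$ (note $S_0$ is regular, hence geometrically unibranch), it suffices to show that $f_{0*}(M_0)_{\overline{x}} = f_{\overline{x}*}(M_{0,\overline{x}})$ is semisimple for all closed points $x$ in a dense open subset of $S_0$. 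Each such $x$ has a finite residue field.

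\textbf{Finite fields.} Via Theorem \ref{filtration_THM} applied to $M_0$, which has a single irreducible generic graded piece, and Lemma \ref{irred_LEM}, after shrinking $S_0$ the fiber $M_{0,x}$ is geometrically irreducible for all closed $x$. Properties (5) and (6) of Theorem \ref{filtration_THM} give constant geometric monodromy on the lisse part and identify the fiber with an intermediate extension, hence irreducible. By Lafforgue \cite[Cor.~VII.8]{LafforguePurity}, some twist $M_{0,x}\otimes\chi$ by a rank one character of the Galois group of $\kappa(x)$ is pure. The decomposition theorem over finite fields \cite{BBD} then shows $f_{\overline{x}*}(M_{0,\overline{x}})$ is semisimple, since the twist is invisible geometrically.

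\textbf{Main obstacle.} The delicate step is the geometric irreducibility of the closed fibers, needed to invoke Lafforgue. Theorem \ref{filtration_THM} requires geometrically irreducible support, which holds here because $M'$ is geometrically irreducible. We also need the monodromy-constancy statement (5) to apply at closed points rather than only at points with algebraically closed residue field; this is where I would check the details most carefully.
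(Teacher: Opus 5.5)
Your proposal is correct and follows essentially the paper's route. You reduce to an irreducible perverse sheaf, descend to an arithmetic field by Corollary \ref{models_COR}, spread out over a regular arithmetic base with Theorem \ref{extensionari_THM}, and pass from a dense set of closed fibers to the geometric generic fiber via Proposition \ref{critsemisimple_PROP}, using that $S_0$ is Jacobson, with Lafforgue and \cite{BBD} handling the finite fields.

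The only difference is cosmetic. The paper gets geometric semisimplicity of the closed fibers from the second half of Proposition \ref{critsemisimple_PROP} and then applies its finite-field case, whereas you get geometric irreducibility from properties (5) and (6) of Theorem \ref{filtration_THM}, which is what that proposition does internally anyway. Property (5) is stated for all geometric points, so your worry about closed points is unfounded, and Lemma \ref{irred_LEM} is not needed.
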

	\begin{proof} Note that $K$ splits into a direct sum of shifted irreducible arithmetic perverse sheaves by Theorem \ref{subquotientsarithmetic_THM}. Thus we can assume $K$ is an irreducible perverse sheaf. The schemes, the morphism, and the perverse sheaf descend to an arithmetic field by a limit argument and Corollary \ref{models_COR}. Hence we can assume the schemes $X$ and $Y$, the morphism $X \rightarrow Y$, and the complex $K$ are defined over an arithmetic field $k_0$.
	
	We first consider the case when $k$ is the algebraic closure of a finite field. In this case, $k_0$ is finite and $K$ is an irreducible perverse sheaf on $X$. By Lafforgue's theorem \cite[Cor.~VII.8]{LafforguePurity}, we can twist $K$ by a character of $k$ and assume the perverse sheaf $K$ is pure of weight zero. By \cite[Stab.~5.1.14]{BBD}, the pushforward $f_*(K)$ is pure of weight zero. Thus \cite[Cor.~5.4.6]{BBD} implies that $f_*(K)$ is geometrically semisimple.  
			
	We turn to the general case. There is an irreducible regular arithmetic scheme $S_0$ with function field isomorphic to $k_0$ by Lemma \ref{modelbasic_LEM}. After possibly shrinking $S_0$, we can find a diagram with Cartesian squares
		\[
		\begin{tikzcd}
			X \arrow[d] \arrow[r, "f"] & Y \arrow[r] \arrow[d] & k_0 \arrow[d] \\
			X_0 \arrow[r, "f"]         & Y_0 \arrow[r]         & S_0         
		\end{tikzcd}
		\] 
		such that $X_0 \rightarrow Y_0$ is proper, $k_0 \rightarrow S_0$ is the inclusion of the function field of $S_0$, and $Y_0$ is an $S_0$-scheme. After shrinking $S_0$,  Theorem \ref{extensionari_THM} shows there exists a complex $K_0 \in \derCat{c}{X_0}{\Qbarl}$  such that $K_{{k}}$ is isomorphic to $K$. After shrinking $S_0$, Proposition \ref{critsemisimple_PROP} shows we may assume $K_x$ is geometrically semisimple for all points $x \in S_0$.
		
		The residue field at a closed point in $S_0$ is finite.  Note that $K_x$ is defined over a finite field, hence $K_{\overline{x}}$ is arithmetic. Therefore, the pushforward $f_*(K_x)$ is geometrically semisimple for all closed points $x \in S_0$ by the case of finite fields already treated. By proper base change, Proposition \ref{critsemisimple_PROP} implies $f_*(K)$ is geometrically semisimple because the scheme $S_0$ is Jacobson.
	\end{proof}
	We note that the decomposition theorem can be generalized as follows using the extension theorem.
	\begin{theorem}\label{decomposition2_THM}
		Let $k$ be a field finitely generated over a separably closed field. Let $K \in \derCat{c}{X}{}$ be an arithmetic semisimple complex and $f\colon X \rightarrow Y$ a proper morphism. The pusforward $f_*(K)$ is semisimple.
	\end{theorem}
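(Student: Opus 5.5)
The plan is to reduce to Theorem \ref{decomposition1_THM} by spreading out over a base whose closed points have separably closed residue fields. Write $k$ as the function field of an integral variety over a separably closed field $k_s$. First reduce to the case where $K$ is a geometrically irreducible perverse sheaf. Over $\overline{k}$, $K_{\overline{k}}$ is a sum of shifted irreducible perverse sheaves, each arithmetic by Theorem \ref{subquotientsarithmetic_THM}. Semisimplicity of $f_*(K)$ is meant over $\overline{k}$, since $k$ is no longer separably closed, and may be checked after passing to a finite separable extension of $k$. Theorem \ref{geomsimpleperverse_THM} then lets us assume, after such an extension, that $K$ is a sum of shifts of $M_i\otimes\rho_i$ with $M_i$ geometrically irreducible and arithmetic. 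The $\rho_i$ become trivial over $\overline{k}$, so it suffices to treat a single geometrically irreducible arithmetic perverse sheaf $M$.

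Next I would spread out. Choose an integral smooth $k_s$-variety $S_0$ with function field $k$; this is possible because $k_s$ is separably closed. After shrinking $S_0$, choose a proper morphism $f\colon X_0\to Y_0$ of $S_0$-schemes whose generic fiber is $f\colon X\to Y$. The base $S_0$ is connected, Noetherian and regular, so Theorem \ref{extension_THM} applies. It gives a connected dominant étale neighborhood $S_0'\to S_0$, a character $\chi$ of $G_{k'}$, and $M_0\in\text{Perv}^{\text{ULA}}(X_0'/S_0')$ with generic fiber $M_{k'}\otimes\chi$. Twisting by $\chi$ does not change anything over $\overline{k}$, since $\chi$ is trivial on $G_{\overline{k}}$. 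We may therefore replace $S_0$ by $S_0'$, which is again smooth over $k_s$ and geometrically unibranch, and replace $M$ by $M_0$. Since $S_0$ is of finite type over $k_s$, it is Jacobson and its closed points have residue fields purely inseparable over $k_s$, hence separably closed.

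The key step is to show that for a dense set of closed points $x\in S_0$, the fiber $M_x$ is semisimple and arithmetic on $X_x$, a scheme over a separably closed field. Semisimplicity on a dense open set follows from Proposition \ref{critsemisimple_PROP}, because $M_{\overline{k}}$ is irreducible. Arithmeticity of $M_x$ is the main obstacle. I would handle it by descending the whole situation to an arithmetic subfield. By Theorem \ref{modelsIrr_THM}, after a finitely generated extension and a twist, $M$ has a model over a field $k_1$ finitely generated over an arithmetic field $k_2 \subseteq k_s$. Spread $X_0\to Y_0\to S_0$ and the model of $M$ over an integral $k_2$-variety $T$ whose base change to $k_s$ recovers $S_0$, possibly after shrinking. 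Then $M_0$ can be chosen, via the uniqueness in Theorem \ref{extensionari_THM}(2) and Lemma \ref{lefschetzprinciple_LEM}, so that it comes up to twist from a complex on $X_T$. Each closed point $x\in S_0$ lies over a point $t \in T$ whose residue field is finitely generated over $k_2$, hence arithmetic. It follows that $M_x$ has a model over an arithmetic field up to a twist, and a character twist does not affect arithmeticity over $\overline{x}$, so $M_x$ is arithmetic by Remark \ref{independence_RMK}.

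Finally, Theorem \ref{decomposition1_THM} applied over the separably closed residue field $\kappa(x)$ shows that $f_*(M_x)$ is semisimple for all closed points $x$ in a dense open subset. By generic proper base change (Theorem \ref{genbc_THM}), $f_*(M_0)_x=f_*(M_x)$ for $x$ in a dense open subset. Since $S_0$ is Jacobson, these closed points are dense, so Proposition \ref{critsemisimple_PROP} gives that $f_*(M)_{\overline{k}}$ is semisimple.
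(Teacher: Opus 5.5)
Your proposal has a genuine gap at the start, where you decide that ``semisimplicity of $f_*(K)$ is meant over $\overline{k}$''. In this paper \emph{semisimple} means semisimple over the base field itself: $f_*(K)$ should be a direct sum of shifted irreducible perverse sheaves on $Y$ over $k$. \emph{Geometrically semisimple} is a separate notion.

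Under your reading the theorem would follow in one line from Theorem~\ref{decomposition1_THM}, for an arbitrary field $k$. Indeed, $K_{\overline{k}}$ is semisimple and still arithmetic (with the same model), so $(f_*K)_{\overline{k}}=f_*(K_{\overline{k}})$ is semisimple by proper base change. The hypothesis that $k$ is finitely generated over a separably closed field is what the stronger statement over $k$ requires; it is a Deligne-type semisimplicity of the Galois action.

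Your argument cannot reach that statement. Proposition~\ref{critsemisimple_PROP} and specialization to closed points only control $f_*(M_0)_{\overline{k}}$. Likewise, discarding the $\rho_i$ because ``they become trivial over $\overline{k}$'' is only legitimate geometrically. Over $k$ you must keep them. One notes that $\rho_i$ is semisimple, because every subobject of $M_i\otimes\rho_i$ has the form $M_i\otimes\rho'$. Then one uses $f_*(M_i\otimes\rho_i)=f_*(M_i)\otimes\rho_i$ and the fact that tensoring with a semisimple Galois representation preserves semisimplicity.

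The missing idea is to apply Theorem~\ref{decomposition1_THM} on the total space of a family, not on its closed fibers.
\begin{enumerate}
\item Take a separably closed subfield $k_0\subseteq k$, made algebraically closed after a harmless purely inseparable base change.
\item Using Theorem~\ref{modelsIrr_THM}, descend $K$ up to a twist to a geometrically irreducible $M$ over a field $k_i'$. Here $k_i'$ is finitely generated over an arithmetic subfield $k_i\subseteq k_0$, and $k$ is recovered from $k_i'$ by base change to $k_0$.
\item Spread $M$ out by Theorem~\ref{extensionari_THM} to a ULA relatively perverse sheaf $M_i$ over a smooth $k_i$-scheme $S_i$.
\item Base change to $k_0$. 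This gives $M_0$ on $X_0$, ULA over a smooth $k_0$-variety $S_0$ of dimension $d$.
\end{enumerate}
Lemma~\ref{irred_LEM}(1) makes $M_0[d]$ an irreducible perverse sheaf on the total space $X_0$. It is arithmetic because it comes from $k_i$. So Theorem~\ref{decomposition1_THM}, applied to $f\colon X_0\to Y_0$ over the separably closed field $k_0$, shows that $f_*(M_0)$ is semisimple. Lemma~\ref{irred_LEM}(2) and proper base change then show that its restriction to the generic point of $S_0$, namely $f_*(K)$, is semisimple over $k$.

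The mechanism is that irreducible ULA objects on the total space restrict to irreducible objects over the function field $k$ itself. A closed-point specialization cannot see this. Your treatment of the arithmeticity of the fibers $M_x$ is fine in spirit: the twist discrepancy is a rank-one character that extends generically and dies at separably closed points. But that is not where the difficulty lies.
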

	\begin{proof} 
		We may assume $K$ is an irreducible perverse sheaf since $K$ is semisimple. This implies $K_{\overline{k}}$ is semisimple. After passing to a finite extension of $k$, Theorem \ref{geomsimpleperverse_THM} implies that there exist geometrically irreducible arithmetic perverse sheaves $M_1, \ldots, M_n \in \perv{X_{k'}}$, and continuous $\Qbarl$-representations $\rho_1, \ldots,\rho_n$ of the Galois group of $k$ such that
		\[
		K \cong (M_1\otimes\rho_1) \oplus \ldots \oplus (M_n\otimes \rho_n).
		\]
		Note that $M_i\otimes \rho_i$ is a direct factor of a semisimple perverse sheaf, hence it is semisimple. Moreover, any perverse subsheaf $M \subseteq M_i\otimes\rho_i$ is given by $M = M_i\otimes \rho'$ for 
		\[
		\rho' = \text{Hom}(M_i, M).
		\]
		Thus we obtain that $\rho_i$ is semisimple. This also shows that the tensor product of a semisimple perverse sheaf with a semisimple representation of the Galois group of $k$ is semisimple. By the projection formula, we may pass to $M_i$ for some $1 \leq i \leq n$ and assume $K$ is a geometrically irreducible perverse sheaf. 
		
		Let $k_0 \subset k$ be a separably closed subfield, such that $k$ is finitely generated over $k_0$ and $k_0$ admits no finite algebraic extension in $k$ (i.e. any polynomial irreducible in $k_0$ remains irreducible in $k$). This implies $\overline{k}_0\otimes_{k_0}k$ is a field. Since $\overline{k}_0/k_0$ is purely inseparable, we may base change to $\overline{k}_0k$ and assume $k_0$ is algebraically closed. Our goal now is to write $k$ as the function field of a smooth $k_0$-scheme and then spread out $K$ over this smooth $k_0$-scheme. However, note that we want the spreading of $K$ to be arithmetic, so that we may apply the decomposition theorem for arithmetic complexes to the pushforward. This is why we will first descend $K$ to a scheme over an arithmetic field, spread it out there, and then pull the spread out complex back to $k_0$. This way we ensure that the spread out complex is arithmetic.		
				
		We may write
		\[
		k_0 = \varinjlim k_i
		\]
		as a filtered colimit over all arithmetic subfields $k_i \subset k_0$. By passing to a subsystem, we may assume that there are finitely generated extensions $k_i'/k_i$ with
		\[
		k_i'\otimes_{k_i}k_0 = k
		\] 
		and $k_i' \subset k$. After twisting $K$ by a character of the Galois group of $k$, Theorem \ref{modelsIrr_THM} allows us to assume there are schemes $X_i$ and $Y_i$ over $k'_i$ such that $X_i\times_{k'_i}k = X$, $Y_i\times_{k'_i} k = Y$, a perverse sheaf $M \in \perv{X_i}$ with $M_k = K$, and a proper morphism
		\[
		X_i \rightarrow Y_i
		\]
		reproducing the morphism $X \rightarrow Y$ after base change. Since $K$ is a geometrically irreducible perverse sheaf, we see that $M$ is a geometrically irreducible perverse sheaf. 
		
		Since $k_0$ is algebraically closed, the extension $k/k_0$ is separable. Thus $k'_i/k_i$ is separable. In particular, there is a smooth $k_i$-scheme $S_i$ with function field $k'_i$. After shrinking $S_i$, we may find a proper morphism of $S_i$-schemes $X_{ii} \rightarrow Y_{ii}$ with generic fiber isomorphic to $X_i \rightarrow Y_i$. After shrinking $S_i$, Theorem \ref{extensionari_THM} shows there exists a relatively perverse universally locally acyclic sheaf $M_i \in \text{Perv}^{\text{ULA}}(X_{ii}/S_{ii})$ with generic fiber $M_{k'_i} = M$. 
		
		We put $X_0 := X_{ii}\times_{k_i} k_0$, $Y_0 := Y_{ii}\times_{k_i} k_0$, $S_0 := S_{ii}\times_{k_i} k_0$, and $M_0 := (M_i)_{k_0}$. These schemes fit into a diagram
		\[
		\begin{tikzcd}
			X \arrow[d] \arrow[r, "f"] & Y \arrow[r] \arrow[d] & k \arrow[d] \\
			X_0 \arrow[r, "f"]         & Y_0 \arrow[r]         & S_0         
		\end{tikzcd}
		\]
		with Cartesian squares. Let $d$ be the dimension of $S_0$. Since $S_0$ is smooth over $k_0$, Lemma \ref{irred_LEM} implies $M_0[d]$ is an irreducible perverse sheaf on $X_0$. Thus $M_0$ is a semisimple complex. Note that $M_0$ descends to the arithmetic field $k_i$, so $M_0$ is arithmetic. Therefore, Theorem \ref{decomposition1_THM} implies $f_*(M_0)$ is a semisimple complex. Lemma \ref{irred_LEM} and proper base change imply that $f_*(K)$, the restriction of $f_*(M_0)$ to the generic fiber over $S_0$, is semisimple. 

	\end{proof}
	\subsection{Hard Lefschetz theorem} We prove the Hard Lefschetz theorem for geometrically semisimple arithmetic perverse sheaves.
	\begin{theorem}\label{hardlefschetz_THM}
		Let $f\colon X \rightarrow Y$ be a projective morphism of $k$-schemes. Let $\eta \in H^2(X, \Qbarl(1))$ be the Chern class of a $f$-ample line bundle on $X$ and $M \in \perv{X}$ a geometrically semisimple arithmetic perverse sheaf. The morphism
		\[
		\eta^i\colon \pervCoh{-i}{f_*(M)} \rightarrow \pervCoh{i}{f_*(M)}(i)
		\]
		is an isomorphism.
	\end{theorem}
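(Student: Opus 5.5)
We follow the pattern of the decomposition theorem: reduce to a geometrically irreducible arithmetic perverse sheaf, descend it to an arithmetic field, spread it out over an arithmetic base with Theorem \ref{extensionari_THM}, and specialize to finite fields, where Hard Lefschetz is known for pure perverse sheaves \cite[Thm.~5.4.10]{BBD}. Corollary \ref{isocrit_COR} then transports the isomorphism back to the generic fiber.

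\emph{Reductions.} Whether $\eta^i$ is an isomorphism can be checked after base change to $\overline{k}$. Perverse cohomology and the cup product with $\eta$ commute with this flat base change, so we may assume $k$ is separably closed and $M$ is semisimple. Since $\eta^i$ is additive in $M$, we may assume $M$ is irreducible. Being a subquotient of an arithmetic sheaf, $M$ is then arithmetic by Theorem \ref{subquotientsarithmetic_THM}. By Corollary \ref{models_COR} and a limit argument, $X$, $Y$, $f$, the line bundle $\mathcal{L}$ and $M$ all descend to an arithmetic field $k_0$; call the descents $f'\colon X'\to Y'$, $\mathcal{L}'$ and $M'$. Since $M$ is irreducible over a separably closed field, $M'$ is geometrically irreducible. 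Applying Theorem \ref{geomsimpleperverse_THM} and a further finite extension of $k_0$ keeps us in this situation.

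\emph{Spreading out.} By Lemma \ref{modelbasic_LEM}, choose a regular integral arithmetic scheme $S_0$ with function field $k_0$. After shrinking $S_0$, $f'$ spreads out to a projective morphism $f_0\colon X_0\to Y_0$ of $S_0$-schemes, with an $f_0$-ample line bundle $\mathcal{L}_0$ whose Chern class $\eta_0$ restricts to $\eta$. Theorem \ref{extensionari_THM} gives $M_0\in\text{Perv}^{\text{ULA}}(X_0/U_0)$ extending $M'$. Further shrinking via generic base change (Theorem \ref{genbc_THM}) makes $f_{0*}M_0$ commute with base change, and makes its relative perverse cohomology sheaves ${}^{\idp/U_0}\mathscr{H}^{\pm i}(f_{0*}M_0)$ restrict on each fiber to the absolute perverse cohomology sheaves. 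The cup product gives a morphism $\eta_0^i$ between these relative perverse sheaves, and its fiber at each $x$ is the Lefschetz map for $f_x$, $M_{0,x}$ and $\eta_x$.

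\emph{Closed fibers and the main obstacle.} Applying Theorem \ref{filtration_THM} with $n=1$, after shrinking $U_0$ the fiber $M_{0,\overline{x}}$ is irreducible for every $x\in U_0$. The inputs are properties (5) and (6) of that theorem: equal monodromy, hence irreducible lisse restriction, together with intermediate extension. For closed $x$ the residue field is finite, so by Lafforgue's theorem \cite[Cor.~VII.8]{LafforguePurity} some twist of $M_{0,x}$ by a character of the residue field is pure. The hard Lefschetz theorem of \cite[Thm.~5.4.10]{BBD} then shows that $\eta_x^i$ is an isomorphism, and twisting does not affect this. The closed points are dense in $U_0$ because $U_0$ is Jacobson, so Corollary \ref{isocrit_COR}, applied to $\eta_0^i$ as a morphism of complexes on $Y_{U_0}$, shows that $\eta_0^i$ is an isomorphism on the generic fiber. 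Base change to $k$ recovers the original statement.

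The delicate step is this compatibility check. We must confirm that the relative perverse cohomology of $f_{0*}M_0$ and the map $\eta_0^i$ specialize to the fiberwise perverse cohomology and Lefschetz maps on a dense open. The plan is to use generic ULA-ness of the relative perverse truncations (Theorem \ref{genula_THM}) together with exactness of restriction to fibers on ULA relatively perverse sheaves.
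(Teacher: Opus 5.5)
Your proposal is correct and follows the paper's proof. Both reduce to an irreducible arithmetic perverse sheaf over a separably closed field, spread it out with the $f$-ample bundle over an arithmetic base via Theorem \ref{extensionari_THM}, and use Corollary \ref{isocrit_COR} to transport Hard Lefschetz at closed points (from Lafforgue purity and \cite[Thm.~5.4.10]{BBD}) back to the generic fiber; the only differences are that you get fiberwise irreducibility from Theorem \ref{filtration_THM} directly rather than through Proposition \ref{critsemisimple_PROP}, and that the compatibility you call delicate is automatic, since restriction to fibers is $t$-exact for the relative perverse $t$-structure by definition, so no ULA argument on the truncations is needed.
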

	\begin{proof}
		We can pass to the algebraic closure of $k$ without changing the statement, thus we can assume $k$ to be algebraically closed. Because $M$ is semisimple, we can assume $M$ is irreducible by Theorem \ref{subquotientsarithmetic_THM}. 
		
		As in the proof of the decomposition theorem, we can arrive at a diagram
		\[
		\begin{tikzcd}
			X \arrow[d] \arrow[r, "f"] & Y \arrow[r] \arrow[d] & k_0 \arrow[d] \\
			X_0 \arrow[r, "f"]         & Y_0 \arrow[r]         & S_0         
		\end{tikzcd}
		\] 
		such that there exists $M_0 \in \text{Perv}^{\text{ULA}}(X_0/S_0)$ over $S_0$ with restriction to the generic fiber isomorphic to $M$, the $f$-ample line bundle on $X$ spreads out to a $f$-ample line bundle on $X_0$. The $f$-ample line bundle gives rise to a morphism $\eta\colon \Qbarl \rightarrow \Qbarl(1)[2]$ in the derived category of proetale sheaves on $X_0$ such that the fiber of this morphism over each point of $S_0$ equals the Chern class of the $f$-ample line bundle. We consider the induced morphism on relative perverse cohomology sheaves
		\[
		\eta^i\colon ^{\idp/S_0}\mathscr{H}^{-i}(f_*(M_0)) \rightarrow{} ^{\idp/S_0}\mathscr{H}^i(f_*(M_0))(i).
		\]
		By Corollary \ref{isocrit_COR}, this morphism is an isomorphism on the generic fiber if and only if it is an isomorphism at a dense set of closed points. Thus we can reduce the Hard Lefschetz theorem to the case of finite fields (see \cite[Thm.~5.4.10]{BBD}) as in the proof of the decomposition theorem.
	\end{proof}
\newpage
	\section{The Arithmetic Kashiwara Conjecture} We prove the arithmetic Kashiwara conjecture for ``geometric"  traits. The proof consists of three steps.
	
	We first treat the arithmetic Kashiwara conjecture for perverse sheaves with models over an arithmetic field. In this case, we can spread out the perverse sheaf to a family over an arithmetic base scheme using the extension theorem. We then construct nearby cycles in families. The generic base change and constructibility theorem for generalized coefficient rings plays a crucial role because it allows us to show that nearby cycles vary ``constructibly" in families (see Theorem \ref{prolfamily_THM}). Similarly to the previous proofs, we can then reduce to a finite base field by passing to a fiber over a closed point. 

	\subsection{Nearby cycles in families} 
	
	The goal of this section is to study pro-$\ell$ nearby cycles in depth. This construction allows us to consider nearby cycles in families in Theorem \ref{prolfamily_THM}.
	\begin{definition}
		We define the affine pro-algebraic group scheme
		\[
		I_\ell := \varprojlim \mu_{\ell^n}
		\]
		over $\BZ[1/\ell]$. We denote by $\underline{I_\ell}$ the proetale sheaf representing $I_\ell$. 
		
		Let $X_0$ be a qcqs scheme and $R$ a sheaf of rings in the proetale topology on $X_0$. Let $K \in \text{D}(X_0, R)$. We say $I_\ell$ \textit{acts} on $K$ if there is a group morphism
		\[
		\underline{I_\ell} \rightarrow \text{Aut}_R(K)
		\]
		of proetale sheaves on $X_0$. We say the action by $I_\ell$ on $K$ is \textit{quasi-unipotent} if for each weakly etale neighborhood $X_0' \rightarrow X_0$, and each $t \in I_\ell(X_0')$, the action by $t$ on $K_{X_0'}$ is quasi-unipotent.
	\end{definition}
	\begin{remark}
		Although the sheaf $\underline{I_\ell}$ is naturally isomorphic to $\BZ_\ell(1)$, we prefer to keep the notation above to indicate an inertia group.
		
		Furthermore, we remark that an action by $I_\ell$ is automatically continuous. Roughly speaking, any proetale sheaf is also a sheaf of condensed sets. The action by $I_\ell$ is always an action by a condensed sheaf of group on a condensed sheaf. Thus it is continuous. 
	\end{remark}
	We prove the following variant of the quasi-unipotent monodromy theorem: 
	\begin{proposition}\label{qunimon_PROP}
		Let $X_0$ be a topologically Noetherian scheme. We assume that no finite extension of any residue field of any point of $X_0$ contains all $\ell$'th roots of unity. Let $K \in \consder{X_0}$ be a complex with an $I_\ell$-action.
		\begin{enumerate}
			\item The action by $I_\ell$ is quasi-unipotent. 
			\item There exists a functorial nilpotent monodromy operator
			\[
			N\colon K(1) \rightarrow K.
			\]
		\end{enumerate}
	\end{proposition}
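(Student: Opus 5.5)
The plan is to reduce to the case of a point with the reference to Grothendieck's classical argument. The key observation is that for $t \in I_\ell(X_0')$ the action of $t$ commutes with the Galois/Frobenius-type twisting, because $I_\ell \cong \BZ_\ell(1)$ canonically. Concretely, the structure of an $I_\ell$-action gives, for any automorphism $\sigma$ coming from the arithmetic of a residue field, the relation $\sigma t \sigma^{-1} = t^{\chi(\sigma)}$, where $\chi$ is the $\ell$-adic cyclotomic character. This is built into the functoriality of the morphism $\underline{I_\ell} \rightarrow \text{Aut}(K)$ of proetale sheaves.

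\emph{Reduction to points.} Since $X_0$ is topologically Noetherian and $K$ is constructible, there is a finite stratification on whose strata the cohomology sheaves of $K$ are lisse. Quasi-unipotence of an endomorphism of a constructible complex can be checked on stalks at geometric points, stratum by stratum, and on cohomology sheaves: an automorphism acting quasi-unipotently on each $\mathscr{H}^i$ acts quasi-unipotently on $K$, after raising to a power bounded by the amplitude. For a weakly etale $X_0' \rightarrow X_0$ and $t \in I_\ell(X_0')$, I would work locally and replace $X_0'$ by the spectrum of the strict henselization at a point $x$. Then $t$ becomes an element of $\BZ_\ell(1)$ of the separable closure of $k(x)$, acting on the finite-dimensional stalk $V = \mathscr{H}^i(K)_{\overline{x}}$.

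\emph{Grothendieck's argument at a point.} The Galois group $G_{k(x)}$ acts on $V$, and the $I_\ell$-action is equivariant in the sense that $\sigma t \sigma^{-1} = t^{\chi(\sigma)}$. Consequently the eigenvalues $\alpha$ of $t$ satisfy $\{\alpha\} = \{\alpha^{\chi(\sigma)}\}$ as multisets. The hypothesis that no finite extension of $k(x)$ contains all $\ell$-power roots of unity means exactly that $\chi(G_{k'})$ is infinite for every finite $k'/k(x)$, and in particular is open in $\BZ_\ell^*$. Choose $\sigma$ with $\chi(\sigma)$ of infinite order. Permuting the finitely many eigenvalues, some power $\chi(\sigma)^m$ fixes each of them, so $\alpha^{c - 1} = 1$ with $c = \chi(\sigma)^m \neq 1$. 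Hence every $\alpha$ is a root of unity, and $t$ is quasi-unipotent. Uniformity in $t$ follows because $t$ ranges over the procyclic group $\BZ_\ell(1)$: it suffices to treat a topological generator $t_0$ with $t_0^{e}$ unipotent. Then for every $t = t_0^{a}$ the element $t^{e}$ is unipotent, which gives (1).

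\emph{Monodromy operator.} Let $e$ be as above; it can be chosen globally by Noetherianity, via finitely many strata and bounded ranks. For the canonical generator $t$ of $\underline{I_\ell} \cong \BZ_\ell(1)$, define
\[
N := \tfrac{1}{e}\log(t^{e}) \colon K(1) \rightarrow K,
\]
where the logarithm of the unipotent automorphism $t^{e}$ is a finite sum, since $t^{e} - 1$ is nilpotent of order bounded by the amplitude times the maximal rank. Interpreting $t$ as a map $\BZ_\ell(1) \otimes K \rightarrow K$ produces the Tate twist, and independence of the choice of $e$ and of the generator gives functoriality and nilpotence.

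The main obstacle will be making the relation $\sigma t \sigma^{-1} = t^{\chi(\sigma)}$ precise for a sheaf-theoretic $I_\ell$-action in the proetale topology. I would derive it from naturality of $\underline{I_\ell} \rightarrow \text{Aut}(K)$ with respect to the Galois action on the weakly etale cover $\text{Spec}\, k(x)^{\text{sep}} \rightarrow \text{Spec}\, k(x)$. A second technical step is passing from quasi-unipotence on stalks to nilpotence of $t^{e} - 1$ on the complex $K$ itself in $\consder{X_0}$.
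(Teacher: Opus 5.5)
Your proposal follows essentially the same route as the paper. The steps are dévissage to lisse pieces, reduction to a residue field $k(x)$ of a point of $X_0$, and Grothendieck's argument via $\sigma t\sigma^{-1}=t^{\chi(\sigma)}$ with $\chi(G_{k(x)})$ open in $\BZ_\ell^*$; the paper takes $\sigma$ with $\chi(\sigma)=1+\ell^n$. Finally, $N$ is a normalized logarithm which is $\BZ_\ell$-linear in $t$ and hence gives $K(1)\to K$. (Take the point $x$ in $X_0$, not in $X_0'$: residue fields of a weakly \'etale $X_0'$ may contain all $\ell$-power roots of unity, so the Galois group doing the work must be that of the image point in $X_0$.)

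The one step that fails as you justify it is the choice of a uniform exponent $e$. Bounded ranks bound the nilpotence order, but not $e$. The pointwise argument only shows that the eigenvalues at $\bar x$ are $\ell$-power roots of unity whose order is controlled by the index of $\chi(G_{k(x)})$ in $\BZ_\ell^*$. That index is unbounded as $x$ varies, for example at closed points with residue field $\mathbb{F}_p$, $p\equiv 1 \bmod \ell^N$, $N$ large. Uniformity is genuinely needed for two reasons: quasi-unipotence is required on $K_{X_0'}$ for arbitrary weakly \'etale $X_0'$, and $\log(t^e)$ must be a finite sum on $K$ itself, not only on stalks.

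The missing input is the rigidity the paper uses. On a connected stratum where the cohomology sheaf is lisse, pass to a connected weakly \'etale torsor trivializing $\underline{I_\ell}$ by adjoining all $\ell$-power roots of unity. Lisse sheaves there are classified by the pro\'etale fundamental group, so an endomorphism such as $(t_0^e-1)^m$ vanishes as soon as it vanishes at one point. Hence the $e$ found at a single point works on the whole stratum. Finitely many strata, each with finitely many connected components by topological Noetherianity, then give a global $e$, and with this your argument goes through.
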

	\begin{proof}
		
		By devissage, we may assume $K$ is a locally constant sheaf $F$ on $X_0$. By a further devissage, we may assume $X_0$ is integral (see \cite[Cor.~2.3.4]{hansen2025arithmeticpropertieselladicetale} for more details on the devissage). By \cite[Rem.~7.4.8]{BhattSPE}, lisse proetale sheaves on $X_0$ are classified by the proetale fundamental group. We can find a connected weakly etale torsor
		\[
		X_0' \rightarrow X_0
		\] 
		which trivializes the lisse sheaf $\underline{I_\ell}$ by adding all the $\ell$'th roots of unity to $X_0$ and passing to a connected component.\footnote{One could also show that $X_0'$ is topologically Noetherian. If $X_0$ is Noetherian, then $X_0'$ is Noetherian as well (see \cite[\href{https://stacks.math.columbia.edu/tag/05KG}{Tag 05KG}]{stacks-project}).} Since $X_0'$ is a torsor over a topologically Noetherian scheme, \cite[Rem.~7.4.8]{BhattSPE} implies that lisse $\Qbarl$-sheaves on $X_0'$ are classified by the proetale fundamental group. In particular, a morphism of lisse sheaves on $X_0'$ is zero if and only if it is zero at a point. Thus we may pass to a fiber of $F$ over a point in $X_0$ and assume $X_0$ is a field. 
				
		In this case, $F$ is given by a continuous finite-dimensional Galois representation $\rho$ of $k$ such that no finite extension of $k$ admits all $\ell$'th roots of unity and we are given an morphism of proetale sheaves
		\[
		\underline{I_\ell} \rightarrow \text{Aut}(\rho). 
		\]
		We add all the $\ell$'th roots of unity to $k$ and obtain an extension $k'/k$. We have an injection
		\[
		\text{Gal}(k'/k) \rightarrow (\BZ_\ell)^*
		\]
		The image is open: the group $(\BZ_\ell)^*$ is a $\BZ_\ell$-module of rank one and finite torsion part. The image of the Galois group is infinite and closed, by compactness of the Galois group. Thus it must be a $\BZ_\ell$-submodule of $(\BZ_\ell)^*$. Because the image is infinite, it must have rank one as a $\BZ_\ell$-module. This implies that the image is open. In particular, we note that there is $n \geq 1$ such that $t := 1 + \ell^n$ lies in the Galois group of $k'/k$. 
		
		We choose a trivialization $I_\ell|_{k'} \cong \BZ_\ell$. This allows us to construct a morphism of groups
		\[
		\alpha\colon \BZ_\ell \rightarrow \text{Aut}(\rho|_{k'}).
		\]
		We have a commutative diagram
		\[
		\begin{tikzcd}
			\BZ_\ell \arrow[d, "t"] \arrow[r] & \text{Aut}(\rho|_{k'}) \arrow[d, "t"] \\
			\BZ_\ell \arrow[r]                & \text{Aut}(\rho|_{k'}).               
		\end{tikzcd}
		\]
		The morphism on the right is given by conjugation by $t$ and the morphism on the left is multiplication by $t$. Thus we obtain the relation
		\[
		t\alpha(1) = \alpha(t)t. 
		\]
		Let $\lambda \in \Qbarl^*$ be an eigenvalue of $\alpha(1)$ and $x \in \rho$ an eigenvector. Then we have
		\[
		\lambda (tx) = \alpha(t)(tx).
		\] 
		We see that $\lambda$ is an eigenvalue of $\alpha(t) = \alpha(1)^{1 + \ell^n}$. Thus $\lambda^{1/1 + \ell^n}$ is an eigenvalue of $\alpha(1)$. Since $\rho$ is finite-dimensional, there are $k, k'> 0$ with
		\[
		\lambda^{1/(1 + \ell^n)^k} = \lambda^{1/(1 + \ell^n)^{k + k'}}
		\] 
		Thus $\lambda$ is a root of unity. Since the action of $I_\ell$ is continuous, we see that all elements of $\BZ_\ell$ act by roots of unity by considering eigenspaces in an isotypical decomposition of $\rho$ under $I_\ell$. 
		
		We define a morphism of proetale sheaves
		\[
		\Tilde{N}\colon \underline{I_\ell} \rightarrow \text{End}_R(K, K)
		\]
		as follows: Let $X_0' \rightarrow X_0$ be a weakly etale neighborhood of $X_0$. Let $t \in \underline{I_\ell}(X_0')$. The action by $\rho(t)$ is quasi-unipotent on $K|_{X_0'}$ by (1), so we can put
		\[
		\Tilde{N}(t) := \frac{\text{log}(\rho(nt))}{n}
		\]
		for a sufficiently large natural number $n \geq 1$ such that $\rho(nt)$ is unipotent. This defines a group morphism.
		
		We can now tensor the above morphism
		\[
		\underline{I_\ell}\otimes_{\BZ_\ell} \text{Hom}_{\BZ_\ell}(\underline{I_\ell}, \BZ_\ell) \rightarrow \text{End}_{\Qbarl}(K, K)\otimes_{\BZ_\ell} \text{Hom}_{\BZ_\ell}(\underline{I_\ell}, \BZ_\ell). 
		\]
		Tensor product functoriality induces an isomorphism
		\[
		\text{End}_{\Qbarl}(K, K)\otimes_{\BZ_\ell} \text{End}_{\BZ_\ell}(\underline{I_\ell}, \BZ_\ell) \rightarrow \text{End}_{\Qbarl}(K\otimes_{\BZ_\ell} \underline{I_\ell}, K)
		\]
		Note that we have an isomorphism
		\[
		\underline{I_\ell} = \text{End}_{\BZ_\ell}(\BZ_\ell, \underline{I_\ell}).
		\]
		Thus we obtain
		\begin{align*}
		\underline{I_\ell}\otimes_{\BZ_\ell} \text{Hom}_{\BZ_\ell}(\underline{I_\ell}, \BZ_\ell) &= 	\text{End}_{\BZ_\ell}(\BZ_\ell, \underline{I_\ell})\otimes_{\BZ_\ell} \text{Hom}_{\BZ_\ell}(\underline{I_\ell}, \BZ_\ell) \\
		&= \text{Hom}_{\BZ_\ell}(\underline{I_\ell}, \underline{I_\ell})
		\end{align*}
		We may compose and obtain the morphism
		\[
		\Tilde{\Tilde{N}}\colon \text{Hom}_{\BZ_\ell}(\underline{I_\ell}, \underline{I_\ell})\rightarrow \text{Hom}_{\Qbarl}(K\otimes_{\BZ_\ell}\underline{I_\ell}, K).
		\]
		We define
		\[
		N := \Tilde{\Tilde{N}}(X_0)(\text{id}).
		\]
		This defines the nilpotent monodromy operator
		\[
		N\colon K(1) \rightarrow K
		\]
		because $K\otimes_{\BZ_\ell}\underline{I_\ell} = K(1)$ by definition of the Tate twist. 
	\end{proof}
	We can find objects with $I_\ell$-actions by forming pro-$\ell$ nearby cycles: 
	\begin{definition}		
		Let $S_0$ be the spectrum of a discrete valuation ring and $\pi \in S_0$ a uniformizer. We define
		\[
		\Tilde{S}_0 := S_0\big[\pi^{1/\ell^n}~|~ n \geq 1\big].
		\]
		Let $\Tilde{i}\colon s \rightarrow \Tilde{S}_0$ be the inclusion of the special fiber and $\Tilde{j}\colon \Tilde{\eta} \rightarrow \Tilde{S}_0$ the inclusion of the complement. 
		
		Let $R$ be either a complete Noetherian $I$-adic ring or $R = \Qbarl$. Let $X_0$ be a $S_0$-scheme and $K \in \text{D}(X_\eta, R)$ an $I$-derived complete complex of $R$-modules on the generic fiber. We define the \textit{pro-$\ell$ nearby cycles}
		\[
		\Psi^\ell_{\pi}(K) := \tilde{i}^*\tilde{j}_*\tilde{j}^*(K_{\Tilde{V}_0}) \in \text{D}(X_s, R).
		\]
		The pro-$\ell$ nearby cycles $\Psi^\ell_{\pi}(K)$ come with an action by $I_\ell$. Note that they are automatically $I$-derived complete because we take the completed pullback $\tilde{i}^*$ (see Remark \ref{genbcprep_REM}).
	\end{definition}
	\begin{remark}\label{prolnc_REM}
		When $S_0$ is a strict trait, the above definition computes the part of the nearby cycles on which the inertia acts through the maximal pro-$\ell$ quotient. The Kummer map provides an isomorphism $I_{S_0}^\ell = I_\ell(s)$ since $s$ is separably closed. The $I_\ell$-action on the pro-$\ell$ nearby cycles identifies with the action of the maximal pro-$\ell$ quotient of inertia on the nearby cycles along this isomorphism.
				
		In general, the formation of the pro-$\ell$ nearby cycles commutes with base change along weakly etale morphisms by weakly etale base change. In particular, the pro-$\ell$ nearby cycles compare to the pro-$\ell$ nearby cycles on the strict henselization. This relates the pro-$\ell$ nearby cycles to the usual nearby cycles. For example, this implies that the pro-$\ell$ nearby cycles are constructible when the nearby cycles are constructible over the strict henselization. 
	\end{remark}
	We have the following base-change:
	\begin{lemma}\label{prolncbc_LEM}Let $S$ be the spectrum of a discrete valuation ring. We assume one of the following two conditions on the coefficient ring $R$:
		\begin{enumerate}
			\item The ring $R$ is Noetherian and complete with respect to an ideal $I \subset R$. The characteristic of $R/I$ is invertible on $S$.
			\item The ring $R$ is $R = \Qbarl$ for a prime $\ell$ invertible on $S$.
		\end{enumerate}
		Let $S' \rightarrow S$ be a morphism of spectra of discrete valuation rings. We assume there is a uniformizer $\pi \in \Gamma(S, \CO_S)$ mapping to a uniformizer $\pi' \in \Gamma(S', \CO_{S'})$. Let $X$ be an $S$-scheme and $K \in \text{D}^b_c(X_\eta, R)$ a constructible complex on the generic fiber of $X$ over $S$. We denote by $\pi\colon X_{S'} \rightarrow X$ the projection induced by base change. We have an $I_\ell$-equivariant natural isomorphism
		\[
		\pi^*(\Psi^\ell_{\pi}(K)) = \Psi^\ell_{\pi'}(\pi^*(K)). 
		\]
	\end{lemma}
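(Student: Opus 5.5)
We will reduce the statement to a base change property of the ordinary derived pushforward along the tower of Kummer extensions. The first step is to compare the towers for $S$ and $S'$. Write $\tilde S := S[\pi^{1/\ell^n} \mid n\geq 1]$ and $\tilde S' := S'[\pi'^{1/\ell^n}\mid n \geq 1]$. Since $\pi$ maps to $\pi'$, we have $\tilde S' = \tilde S\times_S S'$, and hence a Cartesian diagram relating $X_{\tilde S'}$ to $X_{\tilde S}$. This diagram is compatible with the generic and special fibers, because $\pi'$ generates the maximal ideal of $S'$: the closed fiber of $\tilde S'$ over $\tilde S$ is $s'$ over $s$, and the complements match.

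The same compatibility gives equivariance. The $I_\ell$-action on both sides comes from the $\mu_{\ell^\infty}$-action on the towers ($\pi^{1/\ell^n}\mapsto \zeta\pi^{1/\ell^n}$), and the map $\tilde S'\to\tilde S$ is $I_\ell$-equivariant. Consequently, every natural transformation built from these base change maps will be equivariant.

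We therefore obtain a natural base change morphism
\[
\pi^*\tilde i^*\tilde j_*\tilde j^*K \to \tilde i'^*\pi^*\tilde j_*\tilde j^* K \to \tilde i'^*\tilde j'_*\tilde j'^*\pi^*K,
\]
and it remains to show that it is an isomorphism. The plan is to write $\tilde S=\varprojlim S_n$ with $S_n = S[\pi^{1/\ell^n}]$, each $S_n$ a trait finite over $S$, and similarly $S'_n = S_n\times_S S'$. Each $S'_n$ is again a trait with uniformizer $\pi'^{1/\ell^n}$, since $S_n\to S$ is totally ramified Kummer and $\pi'$ is a uniformizer.

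The key step is then to compare nearby cycles at finite level. The finite-level complexes $i_n^*j_{n*}K$ are constructible, by Deligne's finiteness theorem for traits (or Theorem \ref{genbc_THM} applied over the trait). Because the tower consists of finite morphisms, continuity of the proétale/étale cohomology in cofiltered limits of qcqs schemes with affine transition maps shows that $\tilde i^*\tilde j_*\tilde j^*K$ is the colimit, before completion, of the pullbacks of the $i_n^*j_{n*}K$. The same holds for $S'$. Thus it suffices to show that for each $n$,
\[
\pi_n^* i_n^* j_{n*} K \to i_n'^* j'_{n*}\pi_n^* K
\]
is an isomorphism after passing to the colimit. For torsion coefficients one reduces $R$ modulo powers of $I$ via derived completeness, and the case $R=\Qbarl$ follows from the integral case by \cite[Lem.~6.8.13]{BhattSPE}.

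This finite-level comparison is the main obstacle. For an arbitrary morphism of traits $S'\to S$ with $\pi\mapsto\pi'$, i.e.\ unramified index one, $i^*j_*$ need not commute with base change on the nose. The maps $S'_n\to S_n$ have index one, so the inertia of $S'$ surjects onto the tame $\ell$-part of the inertia of $S$. The approach I would try is to first pass to strict henselizations; by Remark \ref{prolnc_REM}, pro-$\ell$ nearby cycles commute with weakly étale base change. Over strict traits, the tower $\tilde S$ kills the pro-$\ell$ part of inertia. By Abhyankar and the argument of Lemma \ref{pickup_LEM}, the pro-$\ell$ nearby cycles are then the homotopy invariants for the prime-to-$\ell$ and wild parts of the inertia acting on the full nearby cycles $\Psi(K)$. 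Deligne's theorem that nearby cycles commute with base change along traits where a uniformizer maps to a uniformizer (SGA 7, XIII; compare \cite{LeiFu}) would then identify $\pi^*\Psi(K)$ with $\Psi(\pi^*K)$ as representations of the relevant inertia quotient. Since taking invariants under a profinite group of order prime to $\ell$ is exact, this yields the claim.

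If that invariance statement is not available in this generality, I would fall back on Gabber's theorem that nearby cycles over traits commute with arbitrary base change of traits after allowing the tame extension. The completed pullback is compatible with all of these steps, by Remark \ref{genbcprep_REM}.
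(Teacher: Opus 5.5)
Your final argument is the same as the paper's: pass to strict henselizations, write $\Psi^\ell_\pi(K)$ as the $I^{(\ell)}_S$-invariants of $\Psi(K)$ with $I^{(\ell)}_S=\ker(I_S\to I^\ell_S)$, apply the change-of-trait isomorphism, and take invariants. The finite-level tower argument you begin with is never completed and is not needed. However, the decisive step, which the paper also asserts in a single line, is a genuine gap.

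Change of trait identifies $\Psi(\pi^*K)$ with $\pi^*\Psi(K)$, where $I_{S'}$ acts through $\alpha\colon I_{S'}\to I_S$. Hence $\Psi^\ell_{\pi'}(\pi^*K)=(\pi^*\Psi(K))^{\alpha(I^{(\ell)}_{S'})}$. This agrees with $\pi^*(\Psi(K)^{I^{(\ell)}_S})$ only if $\alpha(I^{(\ell)}_{S'})$ and $I^{(\ell)}_S$ have the same image in the finite quotient through which $I^{(\ell)}_S$ acts. The surjectivity you record is onto the pro-$\ell$ quotient, which is precisely the part you are \emph{not} taking invariants of; it gives the $I_\ell$-equivariance and nothing more. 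Note also that change of trait does not depend on the hypothesis $\pi\mapsto\pi'$, so it cannot supply this control. The hypothesis $\pi\mapsto\pi'$ does give, via Eisenstein polynomials, an isomorphism on tame inertia of strict traits, and surjectivity of $I_{S'}\to I_S$ when the residue field of $S$ is perfect. It gives no control over wild inertia in general.

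In fact the statement fails without a further hypothesis. Let $k$ be separably closed of characteristic $p\neq\ell$, choose $a\in k\setminus k^p$, put $k'=\overline{k}$, and let $b\in k'$ satisfy $b^p=a$. Let $\phi\colon k[[t]]\to k'[[t]]$ be the local homomorphism with $\phi(t)=t$ and $\phi(a)=a-bt^{p-1}$; it is defined on $\mathbb{F}_p(a)$ and extended to $k$ by Hensel's lemma. Take $X=S=\text{Spec}(k[[t]])$, $\pi=t$, $R=\Qbarl$, and $K=\chi$ a nontrivial character of the Artin--Schreier extension $y^p-y=a/t^p$ of $k((t))$; this extension is nontrivial because $a\notin k^p$. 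Since $\chi$ has order $p$ and $I_S/I^{(\ell)}_S$ is pro-$\ell$, $\chi$ is nontrivial on $I^{(\ell)}_S$, so $\Psi^\ell_t(\chi)=0$. On the other hand $\phi(a/t^p)=(b/t)^p-b/t$, so $\pi^*\chi$ is trivial and $\Psi^\ell_t(\pi^*\chi)=\Qbarl$.

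So an extra hypothesis is needed, for instance surjectivity of $I_{S'}\to I_S$, and your proof has to invoke it exactly at this step. A smaller point: in case (1), when the characteristic of $R/I$ is different from $\ell$, the group $I^{(\ell)}$ is not of pro-order invertible in $R$. The pro-$\ell$ nearby cycles are then the derived invariants $R\Gamma(I^{(\ell)},\Psi(K))$, and your exactness remark does not apply. Given surjectivity of inertia, the comparison still goes through via Hochschild--Serre with respect to wild inertia.
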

	\begin{proof} Note that we can always construct a natural morphism 
		\[
		\pi^*(\Psi^\ell_{\pi}(K)) \rightarrow \Psi^\ell_{\pi'}(\pi^*(K)).
		\]
		To prove this morphism is an isomorphism, we may pass to the strict henselization of $S$ and $S'$ by smooth base change. Thus we may assume $S$ and $S'$ are strict traits.
	
	Let $I^{(\ell)}_{S}$ and $I^{(\ell)}_{S'}$ be the kernel of the morphism $I \rightarrow I^\ell$ from the inertia group to the maximal pro-$\ell$ quotient for the inertia of $S$ and $S'$ respectively. We can now write
	\begin{align*}
	\Psi^\ell_{\pi}(K) &= \Psi(K)^{I^{(\ell)}_{S}} \\
	\Psi^\ell_{\pi'}(\pi^*(K)) &= \Psi(\pi^*(K))^{I^{(\ell)}_{S'}}.
	\end{align*}
	It follows from the change of trait isomorphism that we have an isomorphism
	\[
	\pi^*(\Psi(K))^{I^{(\ell)}_{S}} = \Psi(\pi^*(K))^{I^{(\ell)}_{S'}}
	\]
	This isomorphism identifies with the morphism constructed above. Thus we obtain the desired isomorphism.
	\end{proof}
	\begin{theorem}\label{prolfamily_THM}
		Let $S_0$ be an irreducible qcqs scheme. We assume one of the following two conditions on the coefficient ring $R$:
		\begin{enumerate}
			\item The ring $R$ is a Noetherian ring complete with respect to an ideal $I \subset R$. We assume the characteristic of $R/I$ is invertible on $S_0$.
			\item The ring $R$ is $R = \Qbarl$ for a prime $\ell$ invertible on $S_0$. 
		\end{enumerate}
		Let $C_0$ be a finitely presented smooth $S_0$-scheme with geometrically connected fibers of dimension one and $\pi \in \Gamma(C_0, \CO_{C_0})$ a global section of the structure sheaf of $C_0$. We define $Y_0 :=V(\pi)$ to be the zero-locus of $\pi$. We assume $Y_0$ is an relative normal crossing divisor with connected fibers over $S_0$. Let
		\[
		f\colon X_0 \rightarrow C_0
		\]
		be a morphism of $S_0$-schemes. We consider a constructible complex $K_0 \in \text{D}^b_c(X_0, R)$. There exists a constructible dense open subscheme $U_0 \subseteq S_0$ and a complex $$\Psi^{\ell}_\pi(K) \in \text{D}^b_c(X_0, R)$$ with an action by $I_\ell$ such that such that we have a natural $I_\ell$-equivariant isomorphism
		\[
		\Psi^{\ell}_{\pi}(K)_x = \Psi^{\ell}_{\pi_x}(K_{x})
		\]
		for all $x \in U_0$. 
	\end{theorem}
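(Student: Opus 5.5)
The idea is to build the pro-$\ell$ nearby cycles globally over $C_0$ and then use generic base change (Theorem \ref{genbc_THM}) to make their formation commute with passage to fibers over a dense open of $S_0$.

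\textbf{Step 1: the global Kummer tower.} Set
\[
\tilde C_0 := C_0\big[\pi^{1/\ell^n}~|~n\ge 1\big] = \varprojlim_n C_0[T]/(T^{\ell^n}-\pi),
\]
and $\tilde X_0 := X_0\times_{C_0}\tilde C_0$. Let $\tilde j\colon \tilde X_0 - V(\pi) \to \tilde X_0$ and $\tilde i\colon X_{Y_0}\to \tilde X_0$ be the inclusions. On the reduced preimage of $Y_0$ the tower is a universal homeomorphism. Define
\[
\Psi^\ell_\pi(K) := \tilde i^*\tilde j_*\tilde j^*(K_{\tilde X_0}).
\]
Viewed on $X_0$ through $X_{Y_0}\subseteq X_0$, this carries an $I_\ell$-action because $\underline{I_\ell}=\varprojlim\mu_{\ell^n}$ acts on the tower over $C_0$. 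Since $\tilde C_0$ is a cofiltered limit of finite flat covers, the functor is a colimit over $n$ of $\tilde i_n^*\tilde j_{n*}$ on finite levels. This is the same recipe as in the trait definition, but now over all of $S_0$.

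\textbf{Step 2: fiberwise comparison.} For a point $x\in S_0$, the relevant trait is the local ring of $C_x$ at the generic points of the divisor $Y_x$, with uniformizer $\pi_x$. The fibers of $Y_0$ are connected and normal crossing in a curve, hence $0$-dimensional, so this is just a finite set of points. The claim $\Psi^\ell_\pi(K)_x=\Psi^\ell_{\pi_x}(K_x)$ then reduces to commuting $\tilde j_*$ with base change along $x\to S_0$ at each finite level $n$. Lemma \ref{prolncbc_LEM} handles the change of trait compatibility with uniformizers; $I_\ell$-equivariance is automatic because both sides are built from the same $\mu_{\ell^n}$-tower and the base change maps are natural.

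\textbf{Step 3: genericity uniform in $n$ (the main obstacle).} By Theorem \ref{genbc_THM} applied to $\tilde j_n$ and $K$, each $\tilde j_{n*}$ is constructible and commutes with base change over a dense constructible open of $S_0$. The difficulty is that this open a priori depends on $n$, and there are infinitely many levels.

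I would handle this with tameness. After passing to the strict henselization in the normal direction, the pro-$\ell$ nearby cycles are computed by invariants under the prime-to-$\ell$ part of inertia (as in Remark \ref{prolnc_REM}). On torsion coefficients $R/I^m$, the action of the pro-$\ell$ part $I_\ell$ factors through a finite quotient $\mu_{\ell^{N}}$, and $N$ is controlled by the monodromy of $K_0$ along the horizontal divisor $Y_0$. This monodromy can be made uniform over a dense open using Theorem \ref{genula_THM}: after shrinking, $K_0$ is ULA over $S_0$ relative to the pair $(C_0,Y_0)$, so its restriction to a punctured tubular neighbourhood is locally constant along $S_0$ with uniformly bounded ramification. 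Hence, for each $m$, the system stabilizes at level $N(m)$.

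For $R=\Qbarl$ or $I$-adic coefficients, one then reduces to a single lattice and uses derived completeness (Remark \ref{genbcprep_REM}) together with Nakayama to pass to the limit over $m$.

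The likely approach to this step is, alternatively, to apply Theorem \ref{genbc_THM} once to the morphism $\tilde j$ itself, treated as a limit of finitely presented morphisms via the weakly étale / proétale formalism, if the appendix allows it. Otherwise, use the tame-ramification uniformity argument above, which I expect to be the technical heart of the proof.
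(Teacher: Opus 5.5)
Your Steps 1 and 2 set things up as the paper does: the same object $\tilde i^*\tilde j_*\tilde j^*(K_{\tilde C_0})$, with the $I_\ell$-action coming from the Kummer tower. You also locate the difficulty correctly: Theorem \ref{genbc_THM} applied level by level gives dense opens $U_n\subseteq S_0$ that depend on $n$, and their intersection need not contain a dense open.

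Step 3 does not resolve this. Theorem \ref{genula_THM} applied to $X_0\to S_0$ only makes $K_0$ universally locally acyclic over $S_0$. Applied to $f$, it only gives local acyclicity over strata of $C_0$, which says nothing across $Y_0$, where the nearby cycles live. The level at which the tower stabilizes is governed by the geometry of $f$ along $f^{-1}(Y_0)$, not by a monodromy of $K_0$. For instance, take $K_0$ constant and $f\colon\BA^1_{S_0}\to\BA^1_{S_0}$, $w\mapsto w^{\ell^N}$, with $\pi$ the coordinate: then $K_0$ is ULA over $S_0$, yet the level is $N$. Nothing in your sketch produces a bound uniform in $x\in U_0$; the ``punctured tubular neighbourhood'' step is not a consequence of anything available.

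Your alternative, applying Theorem \ref{genbc_THM} directly to $\tilde j$, also fails as stated. The base change you need is over $S_0$, and $\tilde X_0$ is not finitely presented over $S_0$, so the Noetherian approximation behind that theorem is unavailable.

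The missing idea is to absorb the tower into the coefficient ring.
\begin{enumerate}
\item Adjoin all $\ell$-power roots of unity to $S_0$. This is an integral weakly \'etale base change, so it is harmless by weakly \'etale base change. Then fix $I_\ell\cong\BZ_\ell$.
\item Now $q\colon\tilde C_0\to C_0$ is a $\BZ_\ell$-torsor off $Y_0$ with $q^{-1}(Y_0)^{\mathrm{red}}=Y_0$. Hence $q_*R$ is a constructible sheaf of modules over the Iwasawa algebra $R[[\BZ_\ell]]\cong R[[t]]$.
\item By the projection formula for the integral morphism $q$, one gets $\Psi^\ell_\pi(K)\cong i^*j_*(j^*K_0\otimes_R f^*q_*R)$, where $i,j$ are the inclusions of $f^{-1}(Y_0)$ and its complement. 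This is a pushforward along a finitely presented open immersion of a constructible complex with coefficients in $R[[t]]$. That ring is Noetherian and $(I,t)$-adically complete, with $R[[t]]/(I,t)=R/I$.
\end{enumerate}
This is why the appendix proves Theorem \ref{genbc_THM} for arbitrary complete Noetherian coefficients. A single application gives one dense open $U_0$ over which $\Psi^\ell_\pi(K)$ is $R[[t]]$-constructible and commutes with base change, for the whole tower at once.

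The uniform finite level you were after is then an output rather than an input:
\begin{enumerate}
\item Reduce to $\Lambda=R/I$.
\item Stratify so that $\Psi^\ell_\pi(K)\otimes_{\Lambda[[t]]}\Lambda[[t]]/t$ has locally constant cohomology with perfect stalks.
\item Show stratum by stratum that $\Psi^\ell_\pi(K)\otimes\Lambda[[t]]/t^{n+k}\to\Psi^\ell_\pi(K)\otimes\Lambda[[t]]/t^{n}$ is an isomorphism. Via the base change just obtained, this reduces to the case of a field, where it is the constructibility of pro-$\ell$ nearby cycles.
\item Conclude by $t$-derived completeness that $\Psi^\ell_\pi(K)$ is constructible over $\Lambda[[t]]/t^n$, hence over $\Lambda$.
\end{enumerate}
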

	\begin{proof} By Remark \ref{genbcqcqs_REM}, we may assume $R$ is a Noetherian ring complete with respect to an ideal $I \subset R$. We denote by $U_0 \subseteq C_0$ the complement of $Y_0$. We define the scheme
		\[
		\Tilde{C}_0 := C_0[\pi^{1/\ell^n}~|~ n \geq 1].
		\]
	Note that $\Tilde{C}_0$ admits an action by $I_\ell$ over $C_0$. Let $\tilde{i}\colon Y_0 \rightarrow \Tilde{C}_0$ be the closed inclusion of the reduced preimage of $Y_0$ in $\Tilde{C}_0$ and $\tilde{j}\colon \Tilde{U}_0 \rightarrow \Tilde{C}_0$ the inclusion of the complement. We put
	\[
	\Psi_\pi^\ell(K) := \Tilde{i}^*\Tilde{j}_*\Tilde{j}^*(K_{\Tilde{C}_0}).
	\]
	This object naturally comes with an action by $I_\ell$ because $\Tilde{C}_0$ admits an action by $I_\ell$ over $C_0$. Our goal is to show that this complex is constructible and that the formation of the above functor commutes with base-change generically. Once this is achieved, the definition of the pro-$\ell$ nearby cycles implies that this functor is naturally isomorphic to the pro-$\ell$ nearby cycles formed over each fiber. 
	
	To prove those assertions, we may assume $S_0$ contains all $\ell$'th roots of unity by adjoining all $\ell$'th roots of unity to $S_0$ and passing to an irreducible component. The resulting integral morphism to $S_0$ is weakly etale because $\ell$ is invertible on $S_0$. By weakly etale base change, it is sufficient if we prove the claim assuming $S_0$ admits all $\ell$'th roots of unity.
	
	This allows us to choose a trivialization $I_\ell \cong \BZ_\ell$. Then $\Tilde{C}_0$ is a $\BZ_\ell$-torsor over $U_0$. Let us denote by $q\colon \Tilde{C}_0 \rightarrow C_0$ the natural projection. Then we have
	\[
	q^{-1}(Y_0)^{\text{red}} = Y_0.
	\]
	We see that $$\SL := q_*(R)$$ is a constructible $R[[\BZ_\ell]]$-module over the Iwasawa algebra $R[[\BZ_\ell]]$.
	
	We identify the Iwasawa algebra $R[[\BZ_\ell]] \cong R[[t]]$ with a power series ring over $R$. The ring $R[[t]]$ is a Noetherian ring, complete with respect to the ideal $(I, t) \subset R[[t]]$. Therefore, Theorem \ref{genbc_THM} and the projection formula for integral morphisms implies we may shrink $S_0$ and assume the formation of $\Psi_\pi^\ell(K)$ commutes with base change. Loc. cit. also permits us to assume $\Psi_\pi^\ell(K)$ is constructible over $R[[t]]$. Since we know the formation of $\Psi^\ell_\pi(K)$ commutes with base-change, we have
	\[
	\Psi^\ell_\pi(K\otimes_R R/I) = \Psi^\ell_\pi(K)\otimes_R R/I
	\]
	by the projection formula for pro-$\ell$ nearby cycles over a field and constructibility of the pro-$\ell$ nearby cycles over the Iwasawa algebra. This permits us to reduce to the discrete Noetherian coefficient ring $\Lambda := R/I$. 
	
	There exists a finite constructible partition of $X_0$ such that the cohomology sheaves of $\Psi_\pi^\ell(K)\otimes_{\Lambda[[t]]} \Lambda[[t]]/t$ are locally constant with perfect stalks over each stratum. 
	 We want to prove there is $n \geq 1$ such that the natural projection
	\[
	\Psi_\pi^\ell(K)\otimes_{\Lambda[[t]]} \Lambda[[t]]/t^{n + k}\rightarrow \Psi_\pi^\ell(K)\otimes_{\Lambda[[t]]} \Lambda[[t]]/t^n
	\]
	is an isomorphism for all $k \geq 0$. To prove this isomorphism, it is sufficient if we prove it for one point on each stratum. Since the formation of the above complexes commutes with base-change, we may prove this isomorphism assuming $S_0$ is a field. In this case, it follows from the constructibility of the pro-$\ell$ nearby cycles. 
	
	Since $\Psi_\pi^\ell(K)$ is $t$-derived complete, we obtain an isomorphism
	\[
	\Psi_\pi^\ell(K) = \Psi^\ell_\pi(K)\otimes_{\Lambda[[t]]} \Lambda[[t]]/t^n.
	\]
	The complex on the right is a constructible complex with coefficients in $\Lambda[[t]]/t^n$. Since this ring is a finite free module over $\Lambda$, we see that the complex $\Psi^\ell_f(K)$ has locally constant cohomology sheaves with perfect stalks over $\Lambda$ over the same partition as above. Note that we may find a finite filtration of $\Psi_\pi^\ell(K)$ with subquotients isomorphic to $\Psi_\pi^\ell(K)\otimes_{\Lambda[[t]]}\Lambda[[t]]/t$. This complex is classical, so the complex $\Psi^\ell_f(K)$ is classical by \cite[Cor.~5.2.9]{BhattSPE} and \cite[Prop.~5.2.6]{BhattSPE}.

	\end{proof}
	\subsection{Geometrically irreducible perverse sheaves} The goal of this section is to consider the special case when the arithmetic perverse sheaf is geometrically irreducible and descends to an arithmetic field. The following definitions also appear in	 \cite[Def.~7.2,~Def~7.4]{esnault2025semistablelefschetzpencils}.
	\begin{definition}
		Let $X$ be a $k$-scheme and $M \in \perv{X}$ a perverse sheaf with a nilpotent operator $N\colon M(1) \rightarrow M$. Let $f \colon X \rightarrow Y$ be a morphism of $k$-schemes. We say \textit{$f$ satisfies the monodromy property with respect to $M$} if the filtration induced by the spectral sequence
		\[
		E_1^{pq} := \pervCoh{p + q}{f_*(\text{Gr}_{-q}^M(M))} \Rightarrow \pervCoh{p + q}{f_*(M)}
		\]
		equals the monodromy filtration on $\pervCoh{i}{f_*(M)}$ for all $i \in \BZ$. 
		
		We assume $k$ is finite. We say $M$ is \textit{monodromy pure of weight $w \in \BZ$} if the graded pieces of the monodromy filtration $\text{Gr}_i^M(M)$ are pure of weight $w + i$ for all $i \in \BZ$. 
	\end{definition}
We prove the arithmetic Kashiwara conjecture for geometrically irreducible perverse sheaves admitting a model over an arithmetic field tensored by an indecomposable unipotent representation of the tame inertia. The reason we allow for a perverse sheaf tensored by a unipotent representation of inertia is the monodromy property: without the theory of weights, we do not know how to show that the monodromy property passes from a perverse sheaf to a perverse sheaf tensored with a representation of the inertia. 
	\begin{theorem}\label{weightmon_THM}
		Let $S$ be the strict henselization of a discrete valuation ring essentially of finite type over a field with uniformizer $\pi \in S$. Let $X$ be an $S$-scheme and $M \in \Perv{X_{{\eta}}}{}$ a geometrically irreducible perverse sheaf which admits a model over an arithmetic field (see Definition \ref{model_DEF}). Let $\rho\colon I^\ell_S \rightarrow \mathbf{GL}(\rho)$ be an indecomposable unipotent representation of the pro-$\ell$ inertia of $S$. We have:
		\begin{enumerate}
			\item The action of $I_\ell$ on $\Psi^\ell_{\pi}(M\otimes \rho)$ is quasi-unipotent.
			\item The graded pieces of the monodromy filtration on $\text{Gr}^M_i(\Psi^\ell_{\pi}(M\otimes \rho))$ are geometrically semisimple for all $i \in \BZ$. 
			\item Let $f\colon X \rightarrow Y$ be a proper morphism of $S$-schemes. The monodromy spectral sequence
			\[
			E_1^{pq} := \pervCoh{p + q}{f_*(\text{Gr}_{-q}^M(\Psi^\ell_{\pi}(M\otimes\rho)))} \Rightarrow \pervCoh{p + q}{f_*(\Psi^\ell_{\pi}(M\otimes\rho))}
			\]
			degenerates on the second page. Moreover, the morphism $f$ satisfies the monodromy property with respect to $\Psi^\ell_{\pi}(M\otimes\rho)$. 
		\end{enumerate}
	\end{theorem}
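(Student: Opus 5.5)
The plan is to spread the whole situation out over an arithmetic base, specialize to a closed point with finite residue field, and prove all three statements there with the theory of weights. The statements then descend back to $S$ through the base change properties of pro-$\ell$ nearby cycles.

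\textbf{Step 1: spreading out.} By Corollary \ref{strhenscurves_COR}, write $S = \varprojlim C_i$, with $C_i$ geometrically connected regular curves over arithmetic fields $k_i$, each carrying $\pi$. Since $M$ has a model over an arithmetic field, a limit argument (Remark \ref{uniquenessmodels_REM}) lets us descend $X$, $Y$, $f$ and $M$ to the generic point of some $C_i$, after possibly enlarging $k_i$. We then view $C_i$ as a smooth curve over a regular arithmetic scheme $S_0$ with function field $k_i$ (Lemma \ref{modelbasic_LEM}), and descend $\pi$ to $C_0 \rightarrow S_0$. After shrinking, $V(\pi)$ is a relative normal crossing divisor with connected fibers, namely a section. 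The descended $M$ is geometrically irreducible on the generic fiber of $X_0 \rightarrow C_0$, so Theorem \ref{extensionari_THM} spreads it to a ULA relatively perverse $M_0$ over a dense open of $C_0$. Shrinking further gives the hypotheses of Theorem \ref{prolfamily_THM}, and that theorem yields a family $\Psi^\ell_\pi(M_0\otimes\rho)$ whose fibers over $x \in U_0 \subseteq S_0$ are $\Psi^\ell_{\pi_x}((M_0)_x\otimes\rho)$. The unipotent representation $\rho$ of $I_\ell$ is realized geometrically by pulling back along a Kummer-type cover, or equivalently via the pushforward of $\SL$ modulo $t^n$. By Lemma \ref{prolncbc_LEM} and weakly etale base change (Remark \ref{prolnc_REM}), the fiber at the generic point of $S_0$ recovers $\Psi^\ell_\pi(M\otimes\rho)$ after pulling back to $S$.

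\textbf{Step 2: quasi-unipotence and the finite field case.} Item (1) follows from Proposition \ref{qunimon_PROP} applied over $X_0$, because arithmetic residue fields have no finite extension containing all $\ell$-power roots of unity. This also gives the monodromy operator $N$ in the family, and hence a relative monodromy filtration $\text{Gr}^M_i$. After shrinking $U_0$, this filtration is compatible with base change to fibers, using Lemma \ref{morphismzero_LEM} on kernels and images of powers of $N$. At a closed point $x$, the residue field is finite, and by Theorem \ref{filtration_THM} we may assume $(M_0)_x$ is geometrically irreducible. Lafforgue's theorem \cite[Cor.~VII.8]{LafforguePurity} then makes $(M_0)_x$ pure after a twist, and $\rho$ is monodromy pure. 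Gabber's purity theorem, the weight-monodromy theorem for nearby cycles of pure perverse sheaves over a curve over a finite field, then shows that $\Psi^\ell$ is monodromy pure. So the $\text{Gr}^M_i$ are pure, hence geometrically semisimple by \cite[Cor.~5.4.6]{BBD}. For (3) over a finite field, $f_*$ of pure complexes is pure by \cite[Stab.~5.1.14]{BBD}. Degeneration at $E_2$ follows by weights, and the weight filtration on the abutment coincides with both the spectral sequence filtration and the monodromy filtration, as in \cite{kashiwara98conjectures}.

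\textbf{Step 3: back to $S$.} Semisimplicity of the $\text{Gr}^M_i$ at a dense set of closed points lifts to the geometric generic fiber by Proposition \ref{critsemisimple_PROP}, using that $S_0$ is Jacobson. Pulling back to $S$ via Lemma \ref{smoothbasechange_LEM} then gives (2). For (3), degeneration means that certain differentials $d_r$ vanish, and the monodromy property is an equality of filtrations. After proper base change, both are statements about morphisms of relative perverse cohomology sheaves over $U_0$. The first is handled by Lemma \ref{morphismzero_LEM}, and the second by Corollary \ref{isocrit_COR} applied to the comparison maps between the two filtrations.

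\textbf{Main obstacle.} I expect the main difficulty to be making the monodromy filtration and the monodromy spectral sequence commute with base change in the family, and proving the identification at closed points of the fiber of the spread-out $\Psi^\ell$ with genuine nearby cycles over a strict trait, on which Gabber's theorem applies. The latter requires comparing the henselization of $C_x$ at the point of $V(\pi)$ with the trait used there.
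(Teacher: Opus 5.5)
Your overall architecture is the paper's. You descend to the strict henselization of a curve over an arithmetic field (Corollary \ref{strhenscurves_COR}, Lemma \ref{prolncbc_LEM}), spread out with Theorem \ref{extensionari_THM} and Theorem \ref{prolfamily_THM}, obtain $N$ in the family from Proposition \ref{qunimon_PROP}, and transport (2) and (3) from closed points via Proposition \ref{critsemisimple_PROP}, Lemma \ref{morphismzero_LEM} and Corollary \ref{isocrit_COR}.

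\textbf{The gap is in the finite-field step.} Gabber's theorem (Theorem \ref{weightmonodromyfinite_THM}) needs a \emph{pure} perverse sheaf. The sheaf you propose to realize $\rho$ (the Kummer pushforward modulo $t^n$, like the paper's $\SL_0$) is a successive extension of consecutive Tate twists. So $(M_0)_x\otimes\SL_0$ is mixed, not pure. Your sentence ``$(M_0)_x$ is pure and $\rho$ is monodromy pure, hence $\Psi^\ell$ is monodromy pure'' is therefore not a consequence of Gabber. Gabber gives monodromy purity of $\Psi^\ell_0((M_0)_x)$ only, and you must combine this with the monodromy purity of $\rho$.

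The paper does the combination in two steps:
\begin{enumerate}
\item It uses the inertia-equivariant projection formula $\Psi^\ell_0((M_0)_x\otimes\SL_0)\cong\Psi^\ell_0((M_0)_x)\otimes\Psi^\ell_0(\SL_0)$, with operator $N\otimes 1+1\otimes N'$.
\item It uses the tensor product formula $\text{Gr}^M_k(A\otimes V)\cong\bigoplus_{i+j=k}\text{Gr}^M_i(A)\otimes\text{Gr}^M_j(V)$ for a perverse sheaf $A$ and a vector space $V$ with nilpotent operators. This formula is not automatic in $\perv{X}$. The paper proves it by building a faithful exact $\Qbarl$-linear functor $\perv{X}\to\Qbarl\text{-fdVec}$ (Theorem \ref{fibfuncperv_THM}) and reducing to Deligne's formula (Proposition \ref{tensorformula_PROP}, Lemma \ref{puretensor_LEM}).
\end{enumerate}
You would also need to check that your realization of $\rho$ has a monodromy pure decomposition-group representation at closed points after a twist; it does, since it has a single primitive part.

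Alternatively, you could realize $\rho$ by a \emph{pure} lisse sheaf, for example a symmetric power of $R^1$ of a family of elliptic curves with multiplicative reduction along $\pi=0$. Then $(M_0)_x\otimes\SL_0$ is pure and Gabber applies directly.

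\textbf{The same ingredient is missing in (3).} Degeneration at $E_2$ by weights is fine once the $\text{Gr}^M$ are pure, and it shows that the abutment filtration is the weight filtration. But identifying this filtration with the monodromy filtration requires each $\pervCoh{n}{f_*\Psi^\ell_0(M_0\otimes\SL_0)}$ to be monodromy pure. Writing ``as in \cite{kashiwara98conjectures}'' does not supply this.

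You obtain it from the following chain:
\begin{enumerate}
\item proper base change;
\item $t$-exactness of $\Psi^\ell[-1]$;
\item purity of $\pervCoh{n}{f_*M_0}$ \cite[Stab.~5.1.14]{BBD};
\item Gabber's theorem for that pure perverse sheaf;
\item once more the tensor formula, or Gabber directly if your $\SL_0$ is pure.
\end{enumerate}
The paper then concludes the monodromy property with \cite[Prop.~7.7]{esnault2025semistablelefschetzpencils}. This combination step is exactly why the statement carries the factor $\rho$. Without weights, one does not know how the monodromy property passes through tensoring with a representation of inertia.
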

	We recall two lemmas from the theory of weights. They will play a role in the second step of the proof, where we consider nearby cycles over a curve over a finite field. We recall Gabber's monodromy purity theorem: 
	\begin{theorem}\label{weightmonodromyfinite_THM}
		Let $k$ be a finite field. Let $C$ be a smooth curve over $k$ and $0 \in C$ a point. Let $X$ be a $C$-scheme. Let $M \in \perv{X}$ be pure of weight $w \in \BR$. The pro-$\ell$ nearby cycles $\Psi^\ell_{0}(M)$ are monodromy pure of weight $w$.
	\end{theorem}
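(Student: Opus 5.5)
The plan is to deduce the statement from Gabber's theorem in its classical form: the full nearby cycles of a pure perverse sheaf over a henselian trait with finite residue field are monodromy pure (Beilinson--Bernstein, Illusie). The only work is to pass from the classical nearby cycles to the pro-$\ell$ nearby cycles $\Psi^\ell_0$ and to check that the normalizations match.

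\textbf{Localizing at $0$.} Replace $C$ by the henselization $S$ of $C$ at $0$, with generic point $\eta$. This changes neither the pro-$\ell$ nearby cycles (weakly \'etale base change, Remark \ref{prolnc_REM}) nor the weights. The restriction $M_\eta$ to $X_\eta$ is the pullback along the pro-open immersion $X_\eta \to X$. Since $j^*$ is $t$-exact and preserves weights (\cite[Stab.~5.1.14]{BBD}), $M_\eta$ is perverse and pure of weight $w$.

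\textbf{Gabber's theorem for the full nearby cycles.} Let $\Psi(M_\eta)$ denote the nearby cycles on $X_{\bar 0}$. I normalize them so that they are perverse, i.e.\ $R\Psi[-1]$ with the usual convention. They carry an action of $\text{Gal}(\bar\eta/\eta)$, hence of the inertia $I$ and of a chosen Frobenius lift. Gabber's theorem then gives:
\begin{enumerate}
	\item the inertia acts quasi-unipotently;
	\item for the logarithm of the unipotent part, $N\colon \Psi(M_\eta)(1)\to\Psi(M_\eta)$, the graded pieces $\text{Gr}^M_i\Psi(M_\eta)$ of the monodromy filtration are pure of weight $w+i$.
\end{enumerate}
I will cite this (e.g.\ \cite{BBD}, together with Gabber's purity result as written up by Beilinson--Bernstein). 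I will also check that the shift in the normalization is exactly the one that makes the weight equal to $w$ rather than $w\pm1$.

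\textbf{Passing to the pro-$\ell$ part.} Let $I^{(\ell)}\subset I$ be the kernel of $I\to I^\ell$. It is pro-prime-to-$\ell$ and normal in the whole Galois group, so it is stable under conjugation by the Frobenius lift. As in the proof of Lemma \ref{prolncbc_LEM},
\[
\Psi^\ell_0(M)=\Psi(M_\eta)^{I^{(\ell)}} .
\]
Because $I^{(\ell)}$ acts through a finite quotient on the constructible $\Qbarl$-complex $\Psi(M_\eta)$ (a consequence of the quasi-unipotence in Gabber's theorem), the invariants are a direct summand. The projector is averaging over that finite quotient. This projector commutes with $N$, since $N$ is built from the pro-$\ell$ part of inertia, which commutes with $I^{(\ell)}$ modulo the finite action. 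It also commutes with Frobenius, by normality of $I^{(\ell)}$. Under the Kummer identification of Remark \ref{prolnc_REM}, the $I_\ell$-action and the operator $N$ of Proposition \ref{qunimon_PROP} match the restriction of the inertia action.

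\textbf{Conclusion and main obstacle.} The monodromy filtration is functorial and is characterized by $N$, so it is compatible with direct sums of $N$-stable summands. Hence $\text{Gr}^M_i\Psi^\ell_0(M)$ is a Frobenius-stable direct summand of $\text{Gr}^M_i\Psi(M_\eta)$, and is therefore pure of weight $w+i$. I expect the main obstacle to be bookkeeping rather than content:
\begin{itemize}
	\item checking that the perverse shift convention and the Tate twist in $N\colon K(1)\to K$ agree with those in the cited form of Gabber's theorem;
	\item checking that the projector onto $I^{(\ell)}$-invariants genuinely commutes with the Frobenius-semilinear structure.
\end{itemize}
If the cited form is stated only for $\Psi$ of a sheaf rather than a perverse sheaf, I would first reduce to that form using exactness of $\Psi[-1]$.
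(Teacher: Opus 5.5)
Your proposal is correct and follows the paper's proof: cite Gabber's theorem \cite[Sec.~5]{beilinsonbernsteinjantzen} for the full nearby cycles, identify $\Psi^\ell_0(M)$ with $\Psi(M)^{I^{(\ell)}}$ via Remark \ref{prolnc_REM}, and use that $I^{(\ell)}$ acts through a finite quotient, so that the graded pieces of the monodromy filtration of $\Psi^\ell_0(M)$ are direct factors of pure objects. The only difference is cosmetic: the paper gets the finiteness of the $I^{(\ell)}$-action directly from $I^{(\ell)}$ being pro-prime-to-$\ell$, rather than deducing it from quasi-unipotence, and your explicit check that the averaging projector commutes with $N$ and Frobenius fills in a step the paper leaves implicit.
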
 
	\begin{proof}
		The nearby cycles $\Psi(M)$ are monodromy pure of weight $w$ by \cite[Sec.~5]{beilinsonbernsteinjantzen}. Let $I^{(\ell)}$ be the kernel of the natural projection $I \rightarrow I^\ell$ from the inertia of $S$ to the maximal pro-$\ell$ quotient of the inertia of $S$. We have an $I^\ell$-equivariant isomorphism
		\[
		\Psi^\ell_0(M) = \Psi(M)^{I^{(\ell)}}.
		\]
		by Remark \ref{prolnc_REM}. All finite quotients of $I^{(\ell)}$  are of order co-prime to $\ell$. Thus $I^{(\ell)}$ acts on $\Psi(M)$ through a finite quotient. In particular, the pro-$\ell$ nearby cycles are a direct factor of the nearby cycles. This implies that the graded pieces of the monodromy filtration of $\Psi^{\ell}_0(M)$ are a direct factor of the graded pieces of the monodromy filtration of $\Psi(M)$. Thus the monodromy filtration on $\Psi^{\ell}_0(M)$ is monodromy pure of weight $w$. 
	\end{proof}
	The following lemma is a consequence of a formula for the monodromy filtration of a tensor product. We prove this formula in the appendix. The proof is based on constructing a "fiber functor'' from the category of perverse sheaves to the category of finite dimensional $\Qbarl$ vector spaces. 
	\begin{lemma}\label{puretensor_LEM}
		Let $k$ be a finite field and $X$ a $k$-scheme. Let $M \in \perv{X}$ and $N\colon M(1) \rightarrow M$ a nilpotent operator. Let $\rho$ be a representation of the Galois group of $k$, equipped with a nilpotent operator $N\colon \rho(1) \rightarrow \rho$. We assume the monodromy filtrations on $M$ and $\rho$ are monodromy pure of weight $w, w' \in \BZ$ respectively. The tensor product $M\otimes \rho$ is monodromy pure of weight $w + w'$. 
	\end{lemma}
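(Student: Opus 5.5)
The plan is to reduce the statement to a computation on a single graded vector space with two commuting structures, using the ``fiber functor'' on perverse sheaves constructed in the appendix (Theorem \ref{fibfuncperv_THM}). The monodromy filtration of the tensor product $M\otimes\rho$ with respect to the operator $N_M\otimes 1 + 1\otimes N_\rho$ should be given by the convolution formula
\[
M_k(M\otimes\rho) = \sum_{i + j = k} M_i(M)\otimes M_j(\rho),
\]
as for tensor products of nilpotent endomorphisms of vector spaces (Deligne's formula for $\mathfrak{sl}_2$-representations). Granting this formula, the graded pieces satisfy
\[
\text{Gr}^M_k(M\otimes\rho) \cong \bigoplus_{i+j=k} \text{Gr}^M_i(M)\otimes\text{Gr}^M_j(\rho).
\]
Here $\text{Gr}^M_i(M)$ is pure of weight $w+i$ and $\text{Gr}^M_j(\rho)$ is pure of weight $w'+j$. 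A tensor product of a pure perverse sheaf of weight $a$ with a pure Galois representation of weight $b$ (pulled back from $\text{Spec}(k)$) is pure of weight $a+b$, since tensoring with a lisse sheaf from the base shifts Frobenius eigenvalues multiplicatively on stalks and costalks. So each summand is pure of weight $w+w'+k$, and the conclusion follows.

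The main work is establishing the convolution formula in $\perv{X}$, which is only an abelian category. First, $M\otimes\rho$ is perverse, because $\rho$ is lisse on $\text{Spec}(k)$. Next, define the candidate filtration $F_k := \sum_{i+j=k} M_i(M)\otimes M_j(\rho)$; sums and images make sense in the abelian category. By the uniqueness of the monodromy filtration, it suffices to check that $N(F_k)\subseteq F_{k-2}(-1)$ and that $N^k\colon \text{Gr}^F_k \to \text{Gr}^F_{-k}(-k)$ is an isomorphism. The first property is immediate. For the second, apply a faithful exact functor $\omega$ from $\perv{X_{\overline{k}}}$ to finite-dimensional $\Qbarl$-vector spaces, as provided by Theorem \ref{fibfuncperv_THM}. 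Exactness makes $\omega$ commute with images, sums and graded pieces, and $\omega(M\otimes\rho) = \omega(M)\otimes\rho$. Faithfulness plus exactness means a morphism is an isomorphism exactly when its image under $\omega$ is. This reduces the isomorphism property to linear algebra: for two vector spaces with nilpotent endomorphisms, the convolution of their monodromy filtrations is the monodromy filtration of $N_1\otimes 1 + 1\otimes N_2$, which follows from the Clebsch–Gordan decomposition of $\mathfrak{sl}_2$-modules via Jacobson–Morozov. Weights are not visible to $\omega$, but they are not needed for this step; Frobenius-equivariance of the filtration holds because it is defined canonically.

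I expect the main obstacle to be making the fiber functor interact correctly with the tensor product and with the Tate twists. One must check that $\omega(M\otimes\rho)\cong \omega(M)\otimes\rho$ functorially, and compatibly with $N$, so that the linear-algebra statement really applies. I would handle this by viewing $\rho$ geometrically as a constant sheaf $\Qbarl^{\dim\rho}$ on $X_{\overline{k}}$ with an endomorphism $N_\rho$. On $X_{\overline{k}}$, $M\otimes\rho$ is then literally $M^{\oplus\dim\rho}$, and additivity of $\omega$ gives the identification. The Tate twists can be ignored after base change to $\overline{k}$, since they only record the Frobenius action.
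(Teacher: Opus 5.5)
Your proposal is correct and is essentially the paper's argument. The paper likewise invokes Proposition \ref{tensorformula_PROP}, which combines the faithful exact $\Qbarl$-linear functor of Theorem \ref{fibfuncperv_THM}, the compatibility of monodromy filtrations with exact functors, and Deligne's tensor product formula to get $\text{Gr}^M_k(M\otimes\rho)=\bigoplus_{i+j=k}\text{Gr}^M_i(M)\otimes\text{Gr}^M_j(\rho)$, and then adds weights; your extra steps (base change to $X_{\overline{k}}$ and checking the defining properties of the candidate filtration via uniqueness) only spell out details the paper leaves implicit.
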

	\begin{proof}
		Let $k \in \BZ$. Proposition \ref{tensorformula_PROP} shows
		\[
		\text{Gr}^M_k(M\otimes V) = \bigoplus_{i + j = k} \text{Gr}^M_i(M)\otimes\text{Gr}^M_j(V)
		\]
		Tensoring a pure sheaf of weight $w + i$ with a pure Galois representation of weight $w' + j$ gives a pure perverse sheaf of weight $$w + w' + i + j = w + w' + k,$$ hence the claim.
	\end{proof}

	\begin{proof}[Proof~(Thm.~\ref{weightmon_THM}).]
		
	We begin by constructing an auxiliary sheaf on a geometrically connected regular curve $C$ over a field $k$ with a morphism $S \rightarrow C$ such that the closed point maps to a rational point $0 \in C(k)$ and the induced map on the pro-$\ell$ inertia
	\[
	I^t_S \rightarrow I_0^t
	\] 
	is an isomorphism. We choose a morphism
	\[
	C \rightarrow \BA^1_{k}
	\]
	which is unramified at $0$. Let $k_0 \subset k$ be the prime field, then we compose this morphism with the natural projection
	\[
	\BA^1_k \rightarrow \BA^1_{k_0}.
	\]
	Let $\eta$ be the generic point of $S$, then we have the composition
	\[
	\eta \rightarrow \BG_{m, \overline{k}_0}.
	\]
	This map induces an isomorphism
	\[
	I^\ell_{S} \xrightarrow{\sim} \pi_1^{\ell}(\BG_{m, \overline{k}_0}).
	\]
	By composing the inverse of this isomorphism with $\rho$, we obtain a lisse $\Qbarl$-sheaf $\Tilde{\SL}_0$ on $\BG_{m, \overline{k}_0}$. Since the representation $\rho$ is unipotent and indecomposed, it is isomorphic to a Jordan block of some dimension. One can check from this that the conjugation action by the Galois group of the base field $k_0$ does not change the isomorphism type of $\Tilde{\SL}_0$. Thus we find that the sheaf $\Tilde{\SL}_0$ descends to a sheaf over a finite extension of the prime field $k_0$. We replace $k_0$ by a finite extension so that $\Tilde{\SL}_0$ descends to the new base field. Let $j\colon \BG_{m, k_0} \rightarrow \BA^1_{k_0}$ be the inclusion, then we denote by $\SL_0$ the pullback of $j_!(\Tilde{\SL}_{0})$ to $C$. This gives a constructible etale sheaf on $C$, lisse outside the fiber of $C \rightarrow \BA^1_{k}$ over 0, such that the inertia representation at $0$ is isomorphic to $\rho$. Note that the sheaf $\SL_0$ admits a model over an arithmetic field. Moreover, when the base field $k$ is finite, we see that the representation of the decomposition group at 0 is monodromy pure, at least after a twist, because it only has only one one-dimensional primitive part.
	
%
%
	We can write
	\[
	S = \varprojlim C_i
	\]
	as a cofiltered limit over regular geometrically connected curves over arithmetic fields (which increase with $i$). Since $X$ is finitely presented over $S$, we may write
	\[
	X = \varprojlim X_i
	\]
	as a cofiltered limit of $C_i$-schemes. The uniqueness of models noted in Remark \ref{uniquenessmodels_REM} allows us to assume $M$ descends to the generic fiber of $X_i$. The morphisms
	\[
	S \rightarrow \text{Spec}(\CO_{C_i, 0})
	\]
	induce an isomorphism on pro-$\ell$ inertia. Since the formation of pro-$\ell$ nearby cycles commutes with base-change along such morphisms by Lemma \ref{prolncbc_LEM}, we may assume $S$ is the strict henselization of a local ring of a regular geometrically connected curve $C$ over an arithmetic field $k_0$. Moreover, we may assume the perverse sheaf $M$ exists on the generic fiber of the morphism $X \rightarrow C$. After a finite extension of $k_0$, we may assume the curve $C$ is smooth and the point $0 \in C$ is rational over $k_0$. Since $k_0$ is arithmetic, we see that the pro-$\ell$ nearby cycles $\Psi^\ell_0(M)$ satisfy the quasi-unipotent monodromy theorem by Proposition \ref{qunimon_PROP}. Thus we obtain (1).
	
	By Lemma \ref{modelfields_LEM}, there exists an arithmetic regular connected base-scheme $S_0$ with function field $k_0$. After shrinking $S_0$, we may assume there exists a smooth $S_0$-scheme $C_0$ with geometrically connected fibers of dimension 1, a section $S_0 \rightarrow C_0$, $S_0$-schemes $X_0$ and $Y_0$, and a proper morphism $X_0 \rightarrow Y_0$, such that these objects reproduce the corresponding previous objects over the generic fiber of $S_0$. By Theorem \ref{extensionari_THM}, we may assume the perverse sheaf $M$ extends to a relatively perverse universally locally acycic sheaf $M_0 \in \text{Perv}^{\text{ULA}}(X_0/S_0)$, after shrinking $S_0$. By shrinking $S_0$ further, Proposition \ref{critsemisimple_PROP} permits us to assume the fiber of $M_0$ over each point in $S_0$ is geometrically irreducible.
	
	By the generic constructibility theorem proven above, we may shrink $S_0$ and assume there exists a constructible complex
	\[
	\Psi^{\ell}_{\pi}(M_0\otimes \SL_0) \in \consder{X_{S_0}}
	\] 
	with an action by $I_\ell$ such that $\Psi^{\ell}_{\pi}(M_0)_{k_0}$ is $I_\ell$-equivariantly isomorphic to $\Psi^{\ell}_{\pi}(M)$. By generic universal local acyclicity, we may shrink $S_0$ and assume $\Psi^{\ell}_{\pi}(M_0)$ is ULA over $S_0$.
	
	Since the scheme $S_0$ is arithmetic, the quasi-unipotent monodromy theorem from Proposition \ref{qunimon_PROP} implies that there is a nilpotent monodromy operator
	\[
	N\colon \Psi^{\ell}_{\pi}(M_0\otimes\SL_0)(1) \rightarrow \Psi^{\ell}_{\pi}(M_0\otimes\SL_0).
	\]
	Since this is a morphism in the abelian category of relatively perverse universally locally acyclic sheaves over $S_0$, we can form the monodromy filtration over $S_0$. Note that this filtration reproduces the monodromy filtration over each point in $S_0$ because the passage to a fiber is $t$-exact by definition of the relatively perverse $t$-structure.
	
	Proposition \ref{critsemisimple_PROP} shows that the monodromy filtration has geometrically semisimple graded pieces over the generic fiber if and only if the monodromy filtration has geometrically semisimple graded pieces over a dense subset of closed points of $S_0$. Since the fiber of $M_0$ over each closed point of $S_0$ is geometrically irreducible, we may assume $S_0$ is a finite field to prove point (2).
	
	We consider the spectral sequence
	\[
	E_1^{pq} := {}^{\idp/S_0}\mathscr{H}^{p + q}(f_*(\text{Gr}_{-q}^M(\Psi^\ell_\pi(M_0)))) \Rightarrow {}^{\idp/S_0}\mathscr{H}^{p + q}(f_*(\Psi^\ell_\pi(M_0)))
	\]
	The monodromy filtration and the pushforward commute with passage to a fiber. Therefore, the formation of this spectral sequence commutes with base-change.
	
	In particular, Lemma \ref{morphismzero_LEM} shows that the spectral sequence over the generic fiber degenerates on the second page if and only if the spectral sequence degenerates on the second page over a dense subset of closed points. To prove point (3), we may therefore assume $S_0$ is a finite field.
	
	Moreover, we can consider the filtration induced by the above spectral sequence on the relative perverse cohomology sheaves of $f_*(\Psi^\ell_{\pi}(M_0))$. The formation of these images commutes with passage to fibers by $t$-exactness of passage to fibers. Moreover, we may utilize the existence of the nilpotent operator on $\Psi^\ell_{\pi}(M_0)$ to construct a monodromy operator on the relative perverse cohomology sheaves of the pushforward. We can then form the monodromy filtration in the relative perverse category over $S_0$. By $t$-exactness of passage to fibers, the formation of the relative monodromy filtration commutes with passage to fibers. 
	
	Thus we have two filtrations on each relatively perverse cohomology sheaf of the pushforward of the pro-$\ell$ nearby cycles. By taking intersections, we can appeal to Corollary \ref{isocrit_COR} to show that the two filtrations agree on the generic fiber if and only if they agree at a dense subset of closed points. To prove point (4), we may therefore assume $S_0$ is a finite field. Thus we have reduced the proof of points (2), (3), and (4) to the case when $S_0$ is a finite field. 
	
	In fact, the reduction shows that we are in the following situation: we have a finite field $k_0$, two $k_0$-schemes $X_0$ and $Y_0$, a smooth geometrically connected curve $C_0$, a point $0 \in C_0$, and morphisms
	\[
	X_0 \xrightarrow{f} Y_0 \rightarrow C_0.
	\]
	Furthermore, we have a geometrically irreducible perverse sheaf $M_0 \in \perv{X_0}$. After twisting $M_0$, we may assume $M_0$ is pure of weight zero by \cite[Cor.~VII.8]{LafforguePurity}. Morever, we also have the sheaf $\SL_0$ on $C_0$. Since the monodromy filtration of $\SL_0$ at $0$ only has one primitive part, we may twist $\SL_0$ and assume it is monodromy pure of weight zero.
	
	Then, the projection formula shows that we have an inertia equivariant isomorphism
	\[
	\Psi^{\ell}_0(M_0\otimes\SL_0) \cong \Psi^{\ell}_0(M_0)\otimes \Psi^{\ell}_0(\SL_0)
	\] 
	Note that $\Psi^{\ell}_0(\SL_0)$ is monodromy pure of weight zero, because the representation of the decomposition group of $C_0$ at $0$ induced by $\SL_0$ is monodromy pure of weight zero. Therefore, Theorem \ref{weightmonodromyfinite_THM} and Lemma \ref{puretensor_LEM} show that $$\text{Gr}^M_n(\Psi^{\ell}_0(M_0\otimes \SL_0))$$ is pure of weight $n \in \BZ$ for all $n \in \BZ$. Thus we see that the graded pieces are geometrically semisimple, i.e. we obtain (2).
	
	We consider the monodromy spectral sequence
	\[
	E_1^{pq} := \pervCoh{p + q}{f_*(\text{Gr}_{-q}^M(\Psi^\ell_0(M\otimes\rho)))} \Rightarrow \pervCoh{p + q}{f_*(\Psi^\ell_0(M\otimes\rho))}.
	\]
	Note that the monodromy filtration on the pushforward is monodromy pure of weight zero by \cite[Stab.~5.1.14]{BBD}, Lemma \ref{puretensor_LEM}, and the projection formula. Therefore, the maps of this spectral sequence must vanish on the second page, i.e. the spectral sequence degenerates on the second page. To prove the monodromy property, we can now apply \cite[Prop.~7.7]{esnault2025semistablelefschetzpencils}.

	\end{proof}
\subsection{Arithmetic Kashiwara conjecture} We can now give the main application of this paper: 
	\begin{theorem}\label{kashiwara_THM}
		Let $S$ be the strict henselization of a discrete valuation ring essentially of finite type over a field. Let $X$ be an $S$-scheme and $M \in \perv{X_{{\eta}}}$ a geometrically semisimple arithmetic perverse sheaf on the generic fiber.
		\begin{enumerate}
			\item The graded piece of the monodromy filtration $\text{Gr}^M_n(\Psi^{\text{uni}}(M))$ is geometrically semisimple for all $n \in \BZ$. 
			\item Let $f\colon X \rightarrow Y$ be a proper morphism of $S$-schemes. The monodromy spectral sequence
			\[
			E_1^{pq} := \pervCoh{p + q}{f_*(\text{Gr}_{-q}^M(\Psi^{\text{uni}}(M)))} \Rightarrow \pervCoh{p + q}{f_*(\Psi^{\text{uni}}(M))}
			\]
			degenerates on the second page. Moreover, the morphism $f$ satisfies the monodromy property with respect to $\Psi^{\text{uni}}(M)$. 
		\end{enumerate} 
	\end{theorem}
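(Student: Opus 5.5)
The plan is to reduce Theorem \ref{kashiwara_THM} to Theorem \ref{weightmon_THM}, in three steps.

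\emph{Step 1: reduce to geometrically irreducible summands with arithmetic models.} Since $S$ is strictly henselian, the generic point $\eta$ has a field $k$ whose separable closure is reached through the inertia. Apply Theorem \ref{geomsimpleperverse_THM} to $M$ over $k$. After a finite separable extension $k'/k$, this gives
\[
M_{k'} \cong \bigoplus_i M_i\otimes\rho_i,
\]
where the $M_i$ are geometrically irreducible and arithmetic and the $\rho_i$ are Galois representations. A finite extension of $k$ corresponds to a finite extension of the trait $S'\to S$, again strictly henselian and of geometric origin. Passing to $S'$ changes the unipotent nearby cycles only by restricting inertia to an open subgroup. So after replacing the uniformizer and using Lemma \ref{prolncbc_LEM}, we may work over $S'$. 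Everything in the statement is additive, since the monodromy filtration, $\Psi^{\text{uni}}$ and the spectral sequence all commute with direct sums. We may therefore treat a single summand $M_i\otimes\rho_i$.

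By Theorem \ref{modelsIrr_THM}, after a further finite extension and a twist by a character $\chi$, the sheaf $M_i\otimes\chi$ admits a model over an arithmetic field. Absorb $\chi^{-1}$ into $\rho_i$. By Proposition \ref{qunimon_PROP}, applied with a model of $S$ over an arithmetic curve as in Corollary \ref{strhenscurves_COR}, the inertia action on $\rho_i$ is quasi-unipotent. This uses that $\chi$ and $\rho_i$ may be chosen to come from the arithmetic model, which I expect needs some care.

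\emph{Step 2: pass from $\Psi^{\text{uni}}$ to $\Psi^\ell_\pi$.} Restrict $\rho_i$ to inertia, which is all that matters for nearby cycles over a strict trait. After a further finite extension we may assume it is unipotent, so it factors through $I^\ell_S$ and decomposes as a sum of indecomposable Jordan blocks $\rho$. We identify $\Psi^{\text{uni}}(M_i\otimes\rho)$ with the unipotent part of $\Psi^\ell_\pi(M_i\otimes\rho)$, using Remark \ref{prolnc_REM}: the prime-to-$\ell$ part of inertia acts through a finite quotient, so $\Psi^{\text{uni}}$ is a direct factor. This direct factor is stable under $N$ and compatible with $f_*$. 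Therefore the statements for $\Psi^\ell_\pi$ in Theorem \ref{weightmon_THM} restrict to the unipotent summand:
\begin{enumerate}
\item semisimplicity of the graded pieces $\text{Gr}^M_n$;
\item degeneration of the monodromy spectral sequence at $E_2$;
\item the monodromy property for $f$.
\end{enumerate}
The unipotent summand is cut out by the generalized eigenvalue-one part of a quasi-unipotent operator, and all three properties pass to direct summands of this kind.

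\emph{Step 3: recombine.} Summing over the Jordan blocks and over the $i$ gives the theorem over $S'$. Strict henselianity makes the extension $S'\to S$ harmless: the geometric semisimplicity in (1) and the statements in (2) are insensitive to it, because $\Psi^{\text{uni}}$ over $S$ and over $S'$ agree by the change-of-trait isomorphism.

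The main obstacle is Step 1. There, the representations $\rho_i$ and the twist $\chi$ mix the arithmetic Galois action with the inertia action. We must arrange that only the inertia restriction matters and that this restriction is unipotent after a finite base change. I would first try to do this by descending everything to an arithmetic curve model and invoking Proposition \ref{qunimon_PROP}.
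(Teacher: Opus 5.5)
Your outline follows the paper's route: Theorem \ref{geomsimpleperverse_THM} and Theorem \ref{modelsIrr_THM}, a finite base change, Jordan blocks for the pro-$\ell$ inertia, then Theorem \ref{weightmon_THM}. But the step you flag as the main obstacle is not merely delicate. The claim that inertia acts quasi-unipotently on $\rho_i$ is false, so ``after a further finite extension we may assume it is unipotent'' fails. Arithmeticity (Definition \ref{arithmeticcomplex_DEF}) only constrains $M_{\overline{\eta}}$. Since $S$ is strictly henselian, $\rho_i$ is an arbitrary continuous representation of the inertia group and need not extend to the Galois group of any arithmetic model. So Proposition \ref{qunimon_PROP} says nothing about it, and $\chi$, $\rho_i$ cannot be ``chosen to come from the arithmetic model''.

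A counterexample: take $X = S$ and $M$ the rank one sheaf on $\eta$ given by a character $I \to I^\ell_S \cong \BZ_\ell \to \Qbarl^*$ sending a topological generator to $1+\ell$. It is geometrically trivial, hence geometrically semisimple and arithmetic. Yet for any splitting $M = M_1\otimes\rho_1$ with $M_1$ having an arithmetic model, $\rho_1$ is not quasi-unipotent. Indeed, $\Psi^\ell_\pi(M_1)$ is quasi-unipotent by Theorem \ref{weightmon_THM}(1), while inertia acts on $\Psi(M)$ through this non-quasi-unipotent character.

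The missing idea is a dichotomy on the tame character. After a finite extension, write each summand as $M'\otimes\rho\otimes\chi$. Here $M'$ is geometrically irreducible with an arithmetic model, $\rho$ is an indecomposable unipotent representation of $I^\ell_S$, and $\chi$ is a character of $I^\ell_S$. Quasi-unipotence must be used for $\Psi^\ell_\pi(M')$, not for $\rho_i$; this is Theorem \ref{weightmon_THM}(1), where the arithmetic model and Proposition \ref{qunimon_PROP} legitimately enter. By the projection formula, $\Psi^\ell_\pi(M'\otimes\rho\otimes\chi)\cong\Psi^\ell_\pi(M')\otimes\rho\otimes\chi$.
\begin{itemize}
\item If $\chi$ has infinite order, no eigenvalue of inertia is a root of unity. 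The summand then contributes $0$ to $\Psi^{\text{uni}}$, and the assertions are vacuous for it.
\item If $\chi$ has finite order, it has an arithmetic model, so $M'\otimes\chi$ does too, and Theorem \ref{weightmon_THM} applies to $(M'\otimes\chi)\otimes\rho$.
\end{itemize}

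A smaller point: your Step 3 claim that $\Psi^{\text{uni}}$ over $S$ and over $S'$ agree is wrong. A Kummer sheaf of order $e$ has $\Psi^{\text{uni}}_S = 0$, but $\Psi^{\text{uni}}_{S'}\neq 0$ after adjoining $\pi^{1/e}$. Only the statement that $\Psi^{\text{uni}}_S(M)$ is a direct factor of $\Psi^{\text{uni}}_{S'}(M)$ holds, and that suffices. The paper sidesteps this by working with the quasi-unipotent part, using $\Psi^{\text{quni}}_S(M) = \Psi^{\text{quni}}_{S'}(M)^P$ for the wild part $P$ of the Galois group of $S'/S$.
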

	\begin{proof} We first note that it is sufficient to prove the statements for the quasi-unipotent nearby cycles $\Psi^{\text{quni}}(M)$, by which we mean the direct factor of $\Psi^t(M)$ on which inertia acts quasi-unipotently. Indeed, the perverse sheaf $\Psi^{\text{uni}}(M)$ is a direct factor of $\Psi^{\text{quni}}(M)$.
		
	We begin by proving that it is sufficient to provide the statement after passing to the normalization in a separable Galois extension $S' \rightarrow S$. Let $G$ be the Galois group of the extension. Let $P$ be the wild inertia subgroup of $G$. Note that $M$ pulled back to $S'$ has an action by $G$. This induces a (strict) $P$-action on the tame nearby cycles $\Psi^t_{S'}(M)$. We have
	\[
	\Psi^t_{S}(M) = \Psi^t_{S'}(M)^P.
	\]
	The action of $P$ commutes with the action of inertia on $\Psi^t_{S'}(M)$ by functoriality of the nearby cycles. Thus we obtain
	\[
	\Psi^{\text{quni}}_S(M) =  \Psi^{\text{quni}}_{S'}(M)^P.
	\]
	Thus it is sufficient to prove the statement after restricting along $S' \rightarrow S$ because $\Psi^{\text{quni}}_{S'}(M)^P$ is a direct factor of $\Psi^{\text{quni}}_{S'}(M)$.
			
	After passing to a finite separable Galois extension of the function field of $S$, Theorem \ref{geomsimpleperverse_THM} and Theorem \ref{modelsIrr_THM} show that we may assume the perverse sheaf $M$ to be of the form $$M = M'\otimes\rho\otimes \chi$$ for a geometrically irreducible arithmetic perverse sheaf $M'$ admitting a model over an arithmetic field, an indecomposable unipotent representation $\rho\colon I^\ell \rightarrow \mathbf{GL}(\rho)$ of the pro-$\ell$ inertia of $S$, and a continuous character $\chi\colon I^\ell \rightarrow \Qbarl^*$. Moreover, we can assume the action of inertia on $\Psi(M)$ is pro-$\ell$ because the sheaves have coefficients in $\Qbarl$. In particular, it is sufficient if we prove the statement for the pro-$\ell$ nearby cycles for some uniformizer $\pi \in \Gamma(S, \CO_S)$. 
	
	We remark that action of inertia on the nearby cycles $\Psi^{\ell}_{\pi}(M')$ is quasi-unipotent by Theorem \ref{weightmon_THM}. Since we have the isomorphism
	\[
	\Psi^{\ell}_{\pi}(M) = \Psi^{\ell}_{\pi}(M')\otimes \rho \otimes\chi
	\]
	by the projection formula, we see that $\Psi^{\ell}_{\pi}(M)$ has quasi-unipotent inertia if and only if $\chi$ has finite order. 
	
	If $\chi$ does not have finite order, we obtain
	\[
	\Psi^{\text{quni}}(M) = 0.
	\]
	Thus we may assume $\chi$ has finite order. In this case, we obtain
	\[
	\Psi^{\text{quni}}(M) = \Psi^{\ell}_{\pi}(M).
	\]
	Since $\chi$ has finite order, $\chi$ admits a model over an arithmetic field. Thus the perverse sheaf $M'\otimes\chi$ admits a model ver an arithmetic field. Note that we can write
	\[
	\Psi^{\ell}_{\pi}(M) = \Psi^{\ell}_{\pi}((M'\otimes\chi)\otimes\rho). 
	\]
	Thus the statement follows from Theorem \ref{weightmon_THM}.
	\end{proof}
	\newpage
	\section{Appendix} The goal of the appendix is to prove theorems which otherwise do not fit the text. 
	\subsection{Specialization of monodromy}	To prove the specialization theorem on monodromy groups, we require a certain morphism on fundamental groups to be surjective. We prepare the comparison of the proetale fundamental group with the etale fundamental group. We have the following variant of the Hilbert basis theorem for topologically Noetherian schemes:
\begin{proposition}\label{topnot_PROP}
	Let $S_0$ be a topologically Noetherian scheme. Let $X_0 \rightarrow S_0$ be a morphism of finite type. Then $X_0$ is topologically Noetherian. 
\end{proposition}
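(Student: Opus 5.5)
We have to show: if $S_0$ is topologically Noetherian and $X_0 \rightarrow S_0$ is of finite type, then the underlying space of $X_0$ is Noetherian. We do not have a Hilbert basis theorem for the rings themselves, so the plan is to control $|X_0|$ fibrewise over $S_0$ by Noetherian induction on $S_0$.

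\emph{Reductions.} Finite type means quasi-compact and locally of finite type, so $X_0$ is a finite union of affine opens, each of finite type over an affine open of $S_0$. A finite union of Noetherian subspaces is Noetherian, and an open subspace of a Noetherian space is Noetherian. Hence we may assume $S_0 = \text{Spec}(A)$ and $X_0 = \text{Spec}(A[x_1,\dots,x_n]/J)$. Since $X_0$ is then closed in $\BA^n_{S_0}$, and a closed subspace of a Noetherian space is Noetherian, it suffices to treat $X_0 = \BA^n_{S_0}$. Factoring through $\BA^1$ repeatedly, it suffices to treat $X_0 = \BA^1_{S_0}$. Replacing $A$ by $A_{\text{red}}$ changes no topology, so we may assume $A$ is reduced.

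\emph{Noetherian induction.} Suppose $\BA^1_{S_0}$ is not Noetherian. Then among closed subsets $Z \subseteq S_0$ such that $\BA^1_Z$ is non-Noetherian there is a minimal one, since $S_0$ is Noetherian; replace $S_0$ by this $Z$ with its reduced structure. Now $S_0$ has finitely many irreducible components, and $\BA^1_{S_0}$ is the union of the finitely many closed sets $\BA^1_{S_i}$. By minimality we may therefore assume $S_0$ is irreducible, so $A$ is a domain with fraction field $K$, and that $\BA^1_Z$ is Noetherian for every proper closed $Z \subsetneq S_0$.

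\emph{The generic piece, which is the main step.} Let $C \subseteq \BA^1_{S_0}$ be a closed subset. We want to show that, after shrinking to a nonempty open $U \subseteq S_0$, the set $C \cap \BA^1_U$ is one of two kinds: all of $\BA^1_U$, or $V(g)$ for some $g \in A_f[x]$ whose leading coefficient is a unit. In the second case $V(g) \to U$ is finite and flat of bounded degree. Then $|\BA^1_U|$ behaves like a space of dimension at most one relative to $U$, and a descending chain $C_1 \supseteq C_2 \supseteq \cdots$ in $\BA^1_U$ stabilises once one tracks two things along the chain: the generic fibre, which lives in $\BA^1_K$ and is a Noetherian space, and the complements of shrinking opens, which are handled by the induction hypothesis.

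To carry this out on a chain, the plan is as follows. The images of the $C_i$ in $\BA^1_K$ stabilise, so for large $i$ the generic fibres are equal. Choose $g$ cutting out this common generic fibre (or note that it is all of $\BA^1_K$), and choose $f$ such that $g$ is monic up to a unit over $A_f$. Over $D(f)$, each $C_i$ then differs from $\overline{C_i \cap \BA^1_K}$ only inside a closed subset that does not meet the generic fibre. Such a subset lies over a proper closed subset of $D(f)$, because it is constructible; the constructibility holds because $V(g)$ is finite over $D(f)$ and images of closed sets under finite maps are closed. It therefore lies in $\BA^1_{Z}$ for a proper closed $Z$, where everything is Noetherian by induction. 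Finally $\BA^1_{V(f)}$ is Noetherian, again by induction.

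The delicate point is that the exceptional loci might not be uniformly contained in a single proper closed $Z$ along the whole chain. I would handle this by taking $Z$ to be the image in $S_0$ of $C_N \setminus \overline{C_N \cap \BA^1_K}$ for a fixed large $N$. Every later $C_i$ is contained in $C_N$, so its exceptional part is contained in that of $C_N$ and hence lies over $Z$. Since $\BA^1_Z$ is Noetherian, the chain $(C_i \cap \BA^1_Z)$ stabilises. Combined with the stabilisation of the generic parts, this gives stabilisation of the chain $(C_i)$ and contradicts the choice of $S_0$.
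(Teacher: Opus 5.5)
Your architecture is sound. You stabilise the generic fibres $C_i\cap\mathbb{A}^1_K$. You then show that, for one fixed large $N$, the set $C_N\setminus\overline{G}$ (with $G:=C_N\cap\mathbb{A}^1_K$) lies over a proper closed $Z\subsetneq S_0$. Since $\overline{G}\subseteq C_i\subseteq C_N$ for $i\geq N$, this gives $C_i=\overline{G}\cup(C_i\cap\mathbb{A}^1_Z)$, and the induction hypothesis finishes.

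\textbf{The gap.} The gap is in the one step that carries all the content: that $C_N\setminus\overline{G}$ lies over a proper closed subset of $S_0$. Your justification does not apply to this set, for three reasons.
\begin{enumerate}
\item The set is only open in $C_N$, not closed.
\item Over $D(f)$ you have not shown it lies in $V(g)$. A priori $C_N$ may contain entire fibres $\mathbb{A}^1_s$ over a closed subset of $D(f)$. So ``images of closed sets under finite maps are closed'' says nothing about it.
\item Constructibility is not available either. A spectral space is Noetherian if and only if all its closed subsets are constructible, so constructibility of subsets of $\mathbb{A}^1_{S_0}$ is essentially what you are trying to prove.
\end{enumerate}
For a \emph{closed} set missing the generic fibre the conclusion is easy, since its ideal contains a nonzero constant. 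The real point is to produce such a closed set containing $C_N\setminus\overline{G}$. Your claim that $C_i$ differs from $\overline{G}$ only inside one is asserted, not proved. The same goes for your normal form ``after shrinking, $C\cap\mathbb{A}^1_U$ is $\mathbb{A}^1_U$ or $V(g)$.''

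\textbf{The fix.} The missing input is algebraic: you must spread out the generic equation.
\begin{enumerate}
\item Write $C_N=V(I)$. If $G=\mathbb{A}^1_K$, then $C_N$ contains the generic point of $\mathbb{A}^1_{S_0}$ and so is everything.
\item Otherwise $\sqrt{IK[x]}=gK[x]$ for some $g\in A[x]$, with $g=1$ if $G=\emptyset$. Since $g^m\in IK[x]$ for some $m$, clearing denominators gives $0\neq d\in A$ with $dg^m\in I$. Hence $C_N\subseteq V(g)\cup\mathbb{A}^1_{V(d)}$.
\item Let $c$ be the leading coefficient of $g$. Then $A_c[x]/(g)$ is free over $A_c$. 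By going-down, every point of $V(g)$ over $D(c)$ generises to a point of $V(g)\cap\mathbb{A}^1_K=G$. Hence $V(g)\cap\mathbb{A}^1_{D(c)}\subseteq\overline{G}$, and so $C_N\setminus\overline{G}\subseteq\mathbb{A}^1_{V(cd)}$.
\end{enumerate}
With $Z:=V(cd)$ your last paragraph goes through as written, and the separate treatment of $\mathbb{A}^1_{V(f)}$ becomes unnecessary.

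\textbf{Comparison with the paper.} Once repaired, your proof takes a genuinely different and more elementary route. The paper picks one nonzero polynomial vanishing on the first member of the chain and inverts its leading coefficient, so the whole chain lives in a finite flat $V(f)$. It then needs a Frobenius twist to make $V(f)^{\mathrm{red}}$ generically \'etale, plus \cite[Lem.~6.6.10~(3)]{BhattSPE} for finite \'etale covers. Beyond the shared Noetherian induction, your argument uses only that $K[x]$ is Noetherian and going-down for a free extension.
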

\begin{proof}
	
	We may assume $X_0$ and $S_0$ are affine, because the problem is Zariski local on $X_0$ (since $X_0$ is quasi-compact). By embedding $X_0$ in affine space, we may assume $X_0 = \BA^n_{S_0}$. By induction on $n$, we may assume $X_0 = \BA^1_{S_0}$. By Noetherian induction on $S_0$, we may assume $\BA^{1}_{Z_0}$ is topologically Noetherian for any closed subscheme supported on a proper closed subset $Z_0 \subset S_0$. Thus we may assume $S_0$ irreducible. By passing to a dense open subscheme of $S_0$, we may assume $S_0$ is affine. By passing to the reduction of $S_0$, we may assume $S_0 = \text{Spec}(R)$ is the spectrum of a topologically Noetherian domain $R$.
	
	Let 
	\[
	Z_0 \supseteq \ldots \supseteq Z_n \supseteq \ldots
	\]
	be a descending chain of closed subsets $Z_n \subseteq \BA^1_{S_0}$ indexed by $n \in \BN$. We want to show the chain stabilizes. Thus we may assume $Z_0 \neq \BA^1_{S_0}$. Then there is a non-zero polynomial $f \in R[x]$ with $Z_0 \subseteq V(f)$. We write the polynomial as
	\[
	f  = a_nx^n + \ldots + a_0
	\]
	with coefficients $a_i \in R$ and $a_n \neq 0$ for $n \geq 0$. Noetherian induction implies $\BA^{1}_{V(a_n)}$ is topologically Noetherian, thus the above chain stabilizes when intersected with $\BA^1_{V(a_n)}$. Thus we may pass to the complement $\BA^1_{D(a_n)}$ and assume $f$ is monic. 
	
	Suppose $n = 0$. In this case, we have $f = 1$ and hence $V(f) = \emptyset$. Thus $Z_n = \emptyset$ for all $n \geq 0$ which shows that the chain has stabilized. 
	
	Suppose $n > 0$. We consider the finite finitely presented flat morphism
	\[
	V(f) \rightarrow S_0.
	\]
	Either the generic point of $S_0$ has characteristic zero (and we set $F := \text{id}_{S_0}$) or we can find an iterate of Frobenius $F\colon S_0 \rightarrow S_0$ such that
	\[
	(V(f)\times_{S_0, F} S_0)^{\text{red}} \rightarrow S_0
	\]
	is generically etale. The natural projection
	\[
	(V(f)\times_{S_0, F} S_0)^{\text{red}} \rightarrow V(f)
	\]
	is a universal homeomorphism. Thus it is sufficient if we show that the preimages of the closed subsets $Z_i$ in $(V(f)\times_{S_0, F} S_0)^{\text{red}}$ stabilize. By the Noetherian induction hypothesis, we may restrict $S_0$ to a dense open subscheme. Thus we may assume the above morphism is finite etale. The reference \cite[Lem.~6.6.10~(3)]{BhattSPE} shows that $(V(f)\times_{S_0, F} S_0)^{\text{red}}$ is topologically Noetherian. This implies that the chain of closed subsets $Z_n$ stabilizes, thus $\BA^{1}_{S_0}$ is topologically Noetherian.
\end{proof}
\begin{corollary}\label{loctopnot_COR}
	Let $S_0$ be a locally topologically Noetherian scheme and $X_0 \rightarrow S_0$ a morphism locally of finite type. Then $X_0$ is locally topologically Noetherian.
\end{corollary}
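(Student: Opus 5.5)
The plan is to reduce to the affine, finite type situation of Proposition \ref{topnot_PROP}, using that being topologically Noetherian is a local property that can be tested on an open cover. By definition, a locally topologically Noetherian scheme admits a cover by open subschemes that are topologically Noetherian. So I would fix a point $x \in X_0$ and aim to produce an open neighborhood of $x$ that is topologically Noetherian.

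First, let $s \in S_0$ be the image of $x$. By the definition of local topological Noetherianity, I choose an open neighborhood $V_0 \subseteq S_0$ of $s$ that is topologically Noetherian, and I may shrink $V_0$ to be affine. Any open subspace of a Noetherian topological space is Noetherian, because a descending chain of closed subsets of the open subspace comes from closures in the ambient space. Next, since $X_0 \rightarrow S_0$ is locally of finite type, I choose an affine open neighborhood $W_0 \subseteq X_0 \times_{S_0} V_0$ of $x$ such that $W_0 \rightarrow V_0$ is of finite type; this is possible because affine opens of a morphism locally of finite type map finitely. Proposition \ref{topnot_PROP}, applied to $W_0 \rightarrow V_0$, then shows that $W_0$ is topologically Noetherian. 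The sets $W_0$ obtained this way cover $X_0$, so $X_0$ is locally topologically Noetherian.

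The only point needing care is the step that makes the open neighborhood of $s$ both topologically Noetherian and affine, so that $W_0 \rightarrow V_0$ is genuinely of finite type (quasi-compact) as Proposition \ref{topnot_PROP} requires. This is handled by the observation above that open subspaces of Noetherian spaces are Noetherian. I expect no real obstacle beyond this bookkeeping.
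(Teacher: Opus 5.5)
Your proposal is correct, and it is exactly the local reduction the paper leaves implicit when it says the corollary "follows immediately" from Proposition \ref{topnot_PROP}. You take a topologically Noetherian affine open around the image point (open subspaces of Noetherian spaces are Noetherian), choose an affine open around $x$ that is of finite type over it, and apply the proposition.
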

\begin{proof}
	This follows immediately from Proposition \ref{topnot_PROP}.
\end{proof}
\begin{remark}\label{etalegroupclassifies_REM}
	Let $X_0 \rightarrow S_0$ be a smooth morphism with connected fibers over a connected geometrically unibranch locally topologically Noetherian scheme $S_0$. Then $X_0$ is geometrically unibranch and, hence, it is connected. In particular, Corollary \ref{loctopnot_COR} and \cite[Lem.~7.4.10]{BhattSPE} show that lisse proetale sheaves on $X_0$ are classified by the etale fundamental group.
\end{remark}
	The following property is crucial for the specialization theory.
	\begin{lemma}\label{fundgroupsurjects_LEM}
	Let $S_0$ be an irreducible geometrically unibranch scheme with function field $k$ and $X_0$ a smooth $S_0$-scheme. If $X_0$ is connected, the generic fiber $X_k$ is connected. In particular, if $X_0$ is connected, then the morphism
		\[
		\pi_1(X_k) \rightarrow \pi_1(X_0)
		\]
	on etale fundamental groups is surjective.
	\end{lemma}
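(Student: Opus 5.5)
Since $X_0 \rightarrow S_0$ is smooth, hence flat and locally of finite presentation, it is open. Let $\eta$ be the generic point of $S_0$; we want to show $X_k = X_0\times_{S_0}\eta$ is connected. The plan is to argue that every connected (equivalently, here, every irreducible) component of $X_0$ meets the generic fiber. Then a disconnection of $X_k$ spreads out to a disconnection of $X_0$.

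\emph{Irreducible components are open and dominate $S_0$.} Since $S_0$ is geometrically unibranch and $X_0 \rightarrow S_0$ is smooth, $X_0$ is geometrically unibranch as well (smooth morphisms preserve this property, cf.\ \cite[Prop.~6.15.7]{EGA4}, or via the local structure of smooth morphisms as étale over affine space). A geometrically unibranch scheme is in particular locally irreducible, so its irreducible components are pairwise disjoint and open. Each such component $W$ is open in $X_0$, so its image in $S_0$ is a nonempty open subset. Because $S_0$ is irreducible, that image contains $\eta$. Hence $W_k \neq \emptyset$, and the generic point of $W$ lies in $X_k$.

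\emph{Connectedness of $X_k$.} Suppose $X_k = A \sqcup B$ with $A$ and $B$ nonempty, open and closed. Points of $X_k$ generalize only to points of $X_k$, so each irreducible component of $X_k$ is the generic fiber of a unique irreducible component of $X_0$; namely the one containing its generic point, using the previous step together with local irreducibility. We need the generic fiber $W_k$ of an irreducible component $W$ to be irreducible and nonempty. It is nonempty by the previous step. It is irreducible because $W_k$ is the localization of the integral scheme $W^{\text{red}}$ at generic points of the base, hence a nonempty open-in-a-limit subset of an irreducible space. Thus $A$ and $B$ are unions of sets of the form $W_k$, which partitions the components of $X_0$ into two nonempty families. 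Their unions are open (components are open) and disjoint, and they cover $X_0$. This contradicts the connectedness of $X_0$.

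\emph{Surjectivity on fundamental groups.} Since $X_0$ is connected and geometrically unibranch, hence irreducible and, after passing to $X_0^{\text{red}}$ (harmless for $\pi_1$), integral and unibranch, the map from the Galois group of the function field of $X_0$ to $\pi_1(X_0)$ is surjective. Equivalently, for a connected finite étale cover $Y \rightarrow X_0$, the scheme $Y$ is again smooth over $S_0$ and connected. Applying the first part to $Y$ shows that $Y_k$ is connected. This is exactly the surjectivity of $\pi_1(X_k) \rightarrow \pi_1(X_0)$. I expect the main subtlety to be the unibranch and local irreducibility bookkeeping in the non-Noetherian setting, since $S_0$ is not assumed Noetherian. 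There I would work locally with the fact that geometrically unibranch implies that the strict henselizations are irreducible, so irreducible components cannot meet.
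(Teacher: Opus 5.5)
\textbf{The gap: openness of the irreducible components.} Your outline works, but one step is asserted with a justification that fails at the generality of the lemma. You write that a geometrically unibranch scheme is locally irreducible, ``so its irreducible components are pairwise disjoint and open''. Unibranchness only gives that every local ring $\mathcal{O}_{X_0,x}$ has a unique minimal prime. That yields disjointness of distinct components, but openness also needs the components to be locally finite. This can fail for non-Noetherian schemes: $\mathrm{Spec}$ of an infinite product of fields is geometrically unibranch, its irreducible components are its points, and the points corresponding to non-principal ultrafilters are not open. Your proposed remedy via strict henselizations again only gives disjointness. Since $S_0$ is not assumed Noetherian, an argument is needed, and flatness supplies one.
\begin{itemize}
\item By going-down for the flat morphism $X_0 \to S_0$, the generic point of every irreducible component of $X_0$ maps to the generic point $\eta$ of $S_0$.
\item Generalizations of points of $X_k$ stay in $X_k$, so these are exactly the generic points of the irreducible components of $X_k$.
\item $X_k$ is of finite type over $k$, and quasi-compact because $X_0 \to S_0$ is of finite presentation, so it is Noetherian.
\item Hence $X_0$ has finitely many irreducible components. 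They are closed, pairwise disjoint and cover $X_0$, so they are open.
\end{itemize}
This also gives $W_k \neq \emptyset$ directly, without using openness of $W$ and openness of the smooth morphism.

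\textbf{Simplification and comparison with the paper.} Once this is fixed, your second paragraph collapses. A connected scheme whose irreducible components are open and pairwise disjoint is irreducible. Then $X_k$ contains the generic point of $X_0$, so it is irreducible, hence connected. This is exactly the paper's argument; only the source of local irreducibility differs. The paper replaces $S_0$ by its normalization, which is a universal homeomorphism because $S_0$ is geometrically unibranch, so connectedness and etale fundamental groups are unchanged. Then $X_0$ is normal, being smooth over a normal base, and the paper concludes ``normal and connected, hence irreducible''. Your appeal to permanence of geometric unibranchness under smooth morphisms is correct, but one standard proof of it is precisely this normalization trick, so the two routes are essentially the same. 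The paper's ``normal, hence irreducible'' tacitly uses the same finiteness of irreducible components that the flatness argument supplies.

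\textbf{The fundamental group step.} Your final step matches the paper. The only thing needed is the criterion that every connected finite etale $Y \to X_0$ has connected $Y_k$, obtained by applying the first part to $Y$, which is again smooth over $S_0$. Your preceding sentence about the Galois group of the function field of $X_0$ surjecting onto $\pi_1(X_0)$ is not equivalent to this criterion and can be dropped.
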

	\begin{proof} 
		We may assume $S_0$ is normal by passing to the normalization (because the normalization of $S_0$ induces a universal homoeomorphism). Then the scheme $X_0$ is normal, hence it is irreducible. Thus the generic fiber $X_k$ is irreducible. In particular, the generic fiber is connected. This applies to all connected finite etale neighborhoods of $X_0$, thus the morphism on etale fundamental groups is surjective.
	\end{proof}

	To state the following theorem, it is convenient to make the following definition.
	\begin{definition}\label{conjugate_DEF}
		Let $M_1$ and $M_2$ be modules over a commutative ring $R$. Let $H_1 \subseteq \text{GL}(M_1)$ and $H_2 \subseteq \text{GL}(M_2)$ be subgroups. We say $H_2$ is \textit{conjugate to a subgroup of $H_1$} if there is an isomorphism $M_1 \xrightarrow{\sim} M_2$ of $R$-modules such that the image of $H_2$ along the induced map $\text{GL}(M_2) \rightarrow\text{GL}(M_1)$ is contained in $H_1$. We say $H_1$ is \textit{conjugate to $H_2$} if there exists an isomorphism as above such that the image of $H_2$ along the induced isomorphism is equal to $H_1$. 
	\end{definition}
	The following theorem generalizes \cite[Thm.~8.18.2]{KatzESDE} to more general base schemes.
	\begin{theorem}\label{specializationmonodromy_THM}
		Let $S_0$ be a topologically Noetherian scheme. Let $X_0 \rightarrow S_0$ be a smooth morphism with geometrically connected fibers. Let $F$ be a lisse $\CO$-sheaf on $X_0$. Let $x \in S_0$ be a point. We define the group (well-defined up to conjugacy)
		\[
		\Gamma(x) := \text{im}(\pi_1(X_{\overline{x}}) \rightarrow \text{GL}(F)),
		\]	 
		where $\overline{x} \rightarrow S_0$ is any geometric point supported at $x$.
		\begin{enumerate}[]\item Let $x_0, x_1 \in S_0$ be points such that $x_0 \rightsquigarrow x_1$. Then $\Gamma(x_1)$ is conjugate to a subgroup of $\Gamma(x_0)$.
		\item There exists a finite constructible partition
		\[
		S_0 = \bigsqcup_{i = 1}^n S_i
		\]
		such that for all $1 \leq i \leq n$ and for all $x, y \in S_i$, the group $\Gamma(x)$ is conjugate to $\Gamma(y)$.
		\end{enumerate}
	\end{theorem}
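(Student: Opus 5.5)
Both parts will follow Katz's argument in \cite[Thm.~8.18.2]{KatzESDE}, adapted to a topologically Noetherian base by means of Proposition \ref{topnot_PROP} and Lemma \ref{fundgroupsurjects_LEM}.

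For (1), I would first reduce to the case where $S_0$ is the spectrum of a strictly henselian valuation ring, or more simply a strict trait obtained from $\overline{\{x_0\}}$. Choose a morphism $S\to S_0$ from the spectrum of an absolutely integrally closed valuation ring of rank one, sending the generic point to $x_0$ and the closed point to $x_1$. By base change we may replace $S_0$ by $S$ and $X_0$ by $X_S$. Since $S$ is normal, hence geometrically unibranch, and $X_S\to S$ is smooth with geometrically connected fibers, Remark \ref{etalegroupclassifies_REM} shows that $F$ corresponds to a representation of $\pi_1(X_S)$. Lemma \ref{fundgroupsurjects_LEM} shows that $\pi_1(X_{\eta})\to\pi_1(X_S)$ is surjective, and $\eta$ is already algebraically closed. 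The closed fiber inclusion gives a map $\pi_1(X_{\overline{x}_1})\to\pi_1(X_S)$. Composing with $\pi_1(X_S)\to \mathrm{GL}(F)$, the image of $\pi_1(X_{\overline{x}_1})$ lands in the image of $\pi_1(X_S)$, which equals the image of $\pi_1(X_\eta)$, namely $\Gamma(x_0)$. The identification of the fibers of $F$ at $\overline{x}_1$ and $\eta$ via specialization in the lisse sheaf supplies the isomorphism of modules required in Definition \ref{conjugate_DEF}.

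For (2), I would use Noetherian induction on $S_0$, which is legitimate because $S_0$ is topologically Noetherian. It suffices to find a dense open $U$ in each irreducible closed subset on which $\Gamma$ is constant up to conjugacy, so we may assume $S_0$ integral with generic point $\eta$. By (1), $\Gamma(x)$ is conjugate to a subgroup of $\Gamma(\eta)$ for every $x$. For the reverse inclusion I would follow Katz. Since $\Gamma(\eta)$ is a compact $\ell$-adic analytic group, it is topologically finitely generated, and it has an open normal subgroup $\Gamma'$ with the following property: a closed subgroup $H\subseteq \Gamma(\eta)$ equals $\Gamma(\eta)$ as soon as $H$ surjects onto the finite group $\Gamma(\eta)/\Phi$, where $\Phi$ is the Frattini-type subgroup $\Gamma'^{\ell}[\Gamma',\Gamma']$ intersected appropriately (for instance, take $\Gamma'=\ker(\Gamma(\eta)\to \mathrm{GL}(F/\ell^2))$, which is pro-$\ell$, and use the fact that a closed subgroup of a pro-$\ell$ group surjecting onto its Frattini quotient is everything). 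Thus equality of images can be tested on the finite-level local system $F/\ell^{N}F$ for a fixed $N$.

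This reduces the problem to a finite étale cover $Y\to X_0$ trivializing $F/\ell^N$ with group $G$. The task is then to show that the geometric monodromy $\mathrm{im}(\pi_1(X_{\overline{x}})\to G)$ is generically constant. Equivalently, the number of geometric connected components of $Y_{\overline{x}}$, or of the intermediate covers $Y/H$, must be generically constant. I would get this by applying generic base change (Theorem \ref{genbc_THM}) to $\pi_*$ of the finite constant sheaves on the finitely many intermediate covers $Y/H$. Over a dense open set the degree-$0$ cohomology $H^0(Y/H_{\overline{x}},\BZ/\ell)$ specializes isomorphically, so the component counts equal those at $\eta$, and hence the image in $G$ equals the generic one. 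The resulting pieces are constructible, and induction on the complement finishes the argument.

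The main obstacle is the group-theoretic step: choosing $N$ so that detecting surjectivity mod $\ell^N$ suffices for the full image, uniformly in the subgroups $\Gamma(x)\subseteq\Gamma(\eta)$. Ensuring that the conjugating isomorphisms coming from (1) are compatible with this reduction is also delicate. I would first try Katz's lemma that a closed subgroup of an open pro-$\ell$ subgroup $\Gamma'$ surjecting onto $\Gamma'/\Phi(\Gamma')$ is all of $\Gamma'$, with $\Phi(\Gamma')$ open because $\Gamma'$ is finitely generated.
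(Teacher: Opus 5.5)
Your proposal is correct and follows the paper's proof. Part (1) is the paper's argument verbatim: a valuation ring realizing $x_0\rightsquigarrow x_1$, Remark \ref{etalegroupclassifies_REM}, and the surjectivity of Lemma \ref{fundgroupsurjects_LEM}. For (2) the paper only cites \cite[Thm.~8.18.2]{KatzESDE}, and your Frattini-quotient and geometric-component-count argument is that proof. The compatibility issue you flag is harmless: once (1) places a conjugate of $\Gamma(x)$ inside $\Gamma(\eta)$, equal numbers of connected components of $Y_{\overline{x}}$ and $Y_{\overline{\eta}}$ (equivalently, equal indices in $G$) force the two images mod $\ell^N$ to coincide, and the Frattini lemma then gives equality.

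One correction: over a non-Noetherian base, a rank-one valuation ring realizing $x_0\rightsquigarrow x_1$ need not exist. For example, take $S_0$ the spectrum of a rank-two valuation ring, with $x_0$ generic and $x_1$ closed. So you must allow aic valuation rings of higher rank. This costs nothing: no other step uses rank one, and because $F$ has $\CO$-coefficients, $F|_{X_S}$ is a $\pi_1(X_S)$-representation without any Noetherian hypothesis on $S$.
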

	\begin{proof}
		Because $S_0$ is topologically Noetherian, we can find an aic valuation ring of finite rank $V_0$ and a morphism $V_0 \rightarrow S_0$ mapping the generic point of $V_0$ to $x_0$ and the special point to $x_1$. The etale fundamental group of the scheme $X_{V_0}$ classifies lisse etale sheaves by Remark \ref{etalegroupclassifies_REM}. Thus we obtain the diagram
		\[
		\begin{tikzcd}
			& \pi_1^{\text{}}(X_{\overline{x}_1}) \arrow[d] &              \\
			\pi_1^{\text{}}(X_{\overline{x}_0}) \arrow[r, two heads] & \pi_1^{\text{}}(X_{V_0}) \arrow[r]             & \text{GL}(F)
		\end{tikzcd}.
		\] 
		The arrow $\pi_1(X_{\eta}) \rightarrow \pi_1^{}(X_{V_0})$ is surjective by Lemma \ref{fundgroupsurjects_LEM}. Thus $\Gamma(x_0)$ is conjugate to $\Gamma(x_1)$. To prove (2), we can argue as in \cite[Thm.~8.18.2]{KatzESDE}.
	\end{proof}
	\subsection{Descent lemma} The goal of this section is to prove the descent lemma. The argument is based on the following lifting lemma. We first make a general remark:
	\begin{remark}\label{continuousgroupmorphism_REM}
		The ring $\Qbarl$ admits a unique valuation extending the $\ell$-adic valuation of $\BQ_\ell$. We equip $\Qbarl$ with the valuation topology. For any scheme $X$ locally of finite type over $\Qbarl$, this allows us to functorially topologize $X(\Qbarl)$ by locally embedding it into affine space. 
		
		Let $G$ be a profinite group and $\mathbf{G}$ a locally algebraic group over $\Qbarl$. A \textit{continuous group homomorphism $G \rightarrow \mathbf{G}$} means a continuous group homomorphism
		\[
		G \rightarrow \mathbf{G}(\Qbarl),
		\]
		where $\mathbf{G}(\Qbarl)$ is equipped with the topology described above.
		
	\end{remark}
	\begin{lemma}\label{liftinglemma_LEM}
		 Let $G$ be a profinite group and $\mathbf{G}$ an algebraic group over $\Qbarl$. Suppose $\mathbf{H} \subseteq \mathbf{G}$ is a normal algebraic subgroup of $\mathbf{G}$ and we are given a continuous map $G \rightarrow \mathbf{G}/\mathbf{H}$. We assume $\mathbf{H}^0$ is a torus. There exists an open subgroup $U \subseteq G$ and a continuous  map $U \rightarrow \mathbf{G}$ fitting into the diagram 
				\[
				\begin{tikzcd}
						U \arrow[r] \arrow[d, dashed] & G \arrow[d] \\
						\mathbf{G} \arrow[r]                   & \mathbf{G}/\mathbf{H}.         
					\end{tikzcd}
				\]
		If $G \rightarrow \mathbf{G}/\mathbf{H}$ is a group morphism, then we can choose $U \rightarrow \mathbf{G}$ to be a group morphism. 
	\end{lemma}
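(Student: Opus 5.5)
Write $p\colon \mathbf{G} \rightarrow \mathbf{G}/\mathbf{H}$ for the quotient and $\phi\colon G \rightarrow (\mathbf{G}/\mathbf{H})(\Qbarl)$ for the given map. The plan is to treat the finite part $\mathbf{H}/\mathbf{H}^0$ by shrinking $G$, and to treat the torus $\mathbf{H}^0$ with a local section of $p$ near the identity, exploiting that $G$ is profinite and thus has arbitrarily small open subgroups.

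\emph{Step 1 (image in the rational points).} Because $\phi(G)$ is compact, it lies in a compact subgroup of $(\mathbf{G}/\mathbf{H})(K)$ for some finite extension $K/\BQ_\ell$, by the usual Baire category argument that a profinite group mapping into $\Qbarl$-points lands in $K$-points. We may also enlarge $K$ so that $\mathbf{G}$, $\mathbf{H}$ and $p$ are defined over $K$ and the torus $\mathbf{H}^0$ splits over $K$. By Hilbert 90 for split tori, $H^1(K', \mathbf{H}^0) = 0$ for every finite extension $K'/K$. Consequently the cokernel of $\mathbf{G}(K) \rightarrow (\mathbf{G}/\mathbf{H})(K)$ is controlled by $H^1(K, \mathbf{H}/\mathbf{H}^0)$, which is finite because $K$ is a local field and $\mathbf{H}/\mathbf{H}^0$ is finite. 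Hence the image $p(\mathbf{G}(K))$ is a finite-index subgroup. It is open, since $p\colon \mathbf{G}(K) \rightarrow (\mathbf{G}/\mathbf{H})(K)$ is a submersion of $K$-analytic groups in characteristic zero and so has open image near the identity.

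\emph{Step 2 (local analytic section).} Since $p$ is smooth in characteristic zero, the $K$-analytic implicit function theorem gives an open neighbourhood $W$ of $1$ in $(\mathbf{G}/\mathbf{H})(K)$ and a continuous analytic section $\sigma\colon W \rightarrow \mathbf{G}(K)$ with $\sigma(1)=1$. Let $U$ be an open subgroup of $\phi^{-1}(W)$; it exists because $G$ is profinite. Then $\sigma\circ\phi|_U$ is the required continuous lift, which settles the first assertion.

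\emph{Step 3 (group morphisms).} This is the main obstacle: the section $\sigma$ need not be multiplicative. I would first shrink $W$ so that it is a uniform pro-$\ell$ open subgroup on which $\log$ and $\exp$ are defined and compatible with $p$. If $\mathbf{G}/\mathbf{H}$ were unipotent or the map were abelian, lifting via the Lie algebra would suffice: choose a linear splitting $\mathfrak{g}/\mathfrak{h} \rightarrow \mathfrak{g}$ and use $\exp$. In general I would instead use a cohomological argument. The pulled-back extension
\[
1 \rightarrow \mathbf{H}(K) \rightarrow p^{-1}(\phi(U)) \rightarrow \phi(U) \rightarrow 1
\]
has a continuous set-theoretic section. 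Its obstruction to being split by a homomorphism lies in $H^2_{\mathrm{cont}}(U, \mathbf{H}(K))$, a torsor under the centre, after first passing to $U$ acting trivially on the finite group $\pi_0(\mathbf{H})$ and on the (abelian) torus. The torus $\mathbf{H}^0(K) \cong (K^*)^r$ is, up to a finite group, isomorphic to $\CO_K^{*r}\times \BZ^r$. Continuous cocycles from the compact group $U$ take values in the compact part, which is $\BZ_\ell^r\times(\text{finite})$ after shrinking. Shrinking $U$ further kills the finite contributions, and the remaining obstruction is a class in $H^2(U, \BZ_\ell^r)$. I expect this class to be removable after passing to an open subgroup: its image in rational cohomology is torsion-free, while a $\BZ_\ell$-cocycle coming from a small analytic neighbourhood can be made trivial by the Lazard-type vanishing of the obstruction on sufficiently small uniform subgroups, using the $\log$ coordinates to write the cocycle as a coboundary. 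This last vanishing is the step I am least sure of, and the one I would check most carefully. Once the obstruction vanishes, adjusting $\sigma$ by an $\mathbf{H}^0(K)$-valued cochain yields a continuous group homomorphism $U \rightarrow \mathbf{G}(K) \subseteq \mathbf{G}(\Qbarl)$ lifting $\phi|_U$.
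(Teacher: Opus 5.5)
Your Steps 1--2 give the continuous lift by a route different from the paper's: you use a local analytic section of the smooth map $p$, where the paper reduces to an isogeny. This part is repairable. For a merely continuous $\phi$, the Baire argument does not place $\phi(G)$ in $(\mathbf{G}/\mathbf{H})(K)$ for a single finite $K$, and $\phi^{-1}(W)$ need not contain $1$. But the implicit-function section is a convergent power series over $K$, so it can be evaluated on $\Qbarl$-points near the identity, and you can recenter at $\phi(1)$.

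\textbf{The genuine gap is Step 3.} That is the only place the torus hypothesis should matter, and your argument never uses it. There is no Lazard-type principle that kills a class in $H^2_{\mathrm{cont}}(U,\BZ_\ell^r)$ on an open subgroup. For $U=\BZ_\ell^2$ one has $H^2_{\mathrm{cont}}(U,\BZ_\ell)\cong\BZ_\ell$, and after identifying $\ell^nU\cong U$ the restriction map is multiplication by $\ell^{2n}$, hence injective.

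Worse, your bookkeeping goes through essentially verbatim for the Heisenberg group $\mathbf{G}=U_3$ with $\mathbf{H}\cong\mathbf{G}_a$ its centre, since compact subgroups of $\mathbf{G}_a(K)=K$ are also $\BZ_\ell$-lattices. There the statement is false. Take $\phi$ to be the inclusion $\BZ_\ell^2\subseteq(\mathbf{G}/\mathbf{H})(\BQ_\ell)=\BQ_\ell^2$. In any open subgroup, lifts of two independent elements have non-trivial commutator in the centre, while the subgroup is abelian, so no homomorphic lift exists. Hence whatever kills the obstruction cannot be seen in the abstract topological group $\mathbf{H}^0(K)$. It must come from the algebraic origin of the extension together with $\mathbf{H}^0$ being a torus.

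The missing input is structural, and it is what the paper uses.
\begin{enumerate}
\item First lift through $\mathbf{G}/\mathbf{H}^0\to\mathbf{G}/\mathbf{H}$, which has finite kernel. A small compact open subgroup of the source meets the kernel trivially and maps isomorphically onto an open subgroup of the target, so the inverse is a homomorphism.
\item Then, by \cite[Prop.~5.3,~Step~1]{conradgrpcohomology}, there is an algebraic subgroup $\mathbf{G}'\subseteq\mathbf{G}$ such that $\mathbf{G}'\to\mathbf{G}/\mathbf{H}^0$ is an isogeny. Lift through $\mathbf{G}'$ by the same finite-kernel argument.
\end{enumerate}
This is where the torus hypothesis enters. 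By rigidity, $\mathbf{H}^0$ is central in $\mathbf{G}^0$, so it lies in $Z(\mathbf{L})^0$ for a Levi subgroup $\mathbf{L}$. Choose a subtorus $\mathbf{S}$ with $Z(\mathbf{L})^0=\mathbf{H}^0\mathbf{S}$ and $\mathbf{H}^0\cap\mathbf{S}$ finite. Then $\mathbf{S}\,[\mathbf{L},\mathbf{L}]\,R_u(\mathbf{G}^0)$ maps isogenously onto $\mathbf{G}^0/\mathbf{H}^0$; nothing like this exists for a unipotent kernel.

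If you want to keep an analytic flavour, the correct replacement for your $H^2$-vanishing is that $\mathrm{Lie}(\mathbf{G})\to\mathrm{Lie}(\mathbf{G}/\mathbf{H})$ splits by a Lie subalgebra, namely $\mathrm{Lie}(\mathbf{G}')$. You then integrate this splitting on a sufficiently small uniform subgroup. A merely linear complement followed by $\exp$ is not multiplicative in general.
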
  
	\begin{proof}
		We first treat the case where $\mathbf{H}$ is finite. There exists an open subgroup $U \subseteq \mathbf{G}$ such that $U\cap\mathbf{H} = \{1\}$. If we restrict the morphism $G \rightarrow \mathbf{G}/\mathbf{H}$ to the open subgroup $U$, we obtain the desired lifting because the morphism $U \rightarrow \mathbf{G}/\mathbf{H}$ is an open injection. This lifting is automatically a group morphism if $G \rightarrow \mathbf{G}/\mathbf{H}$ is a group morphism 
		
		We consider the general case. By the first step, we can lift the morphism to $\mathbf{G}/\mathbf{H}^0$. To obtain the lifting to $\mathbf{G}$, note that \cite[Prop.~5.3,~Step~1]{conradgrpcohomology} says that there is an algebraic subgroup $\mathbf{G'}  \subseteq\mathbf{G}$ such that $\mathbf{G'}\rightarrow \mathbf{G}/\mathbf{H}^0$ is an isogeny. We can lift the morphism to $\mathbf{G'}$ by the first step. By composition, we obtain a lifting of the morphism to $\mathbf{G}$. 
	\end{proof}
	We can now deduce a preliminary descent lemma.
	\begin{lemma}\label{prellift_LEM}
		Let $\rho\colon N \rightarrow \mathbf{GL}_n$ be a continuous irreducible $\Qbarl$-representation of a profinite group $N$. Let $G = N\ltimes H$ be the continuous product with a profinite group $H$. Suppose we have $\rho^g\cong \rho$ for all $g \in N$. There exists an open subgroup $U \subseteq H$ and a continuous lifting $\Tilde{\rho} \colon N\ltimes U \rightarrow \mathbf{GL}_n$ of $\rho$. 
	\end{lemma}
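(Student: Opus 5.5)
The plan is to extend $\rho$ to $N\ltimes U$ by sending $u\in U$ to an intertwiner between $\rho$ and its conjugate by $u$, using Schur's lemma and the lifting Lemma \ref{liftinglemma_LEM} with $\mathbf{G}=\mathbf{GL}_n$ and $\mathbf{H}=\mathbf{G}_m$. I read the hypothesis $\rho^g\cong\rho$ as holding for all $g\in H$, where $H$ acts on the normal subgroup $N$ by conjugation and $\rho^g(x):=\rho(gxg^{-1})$.

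First I construct a projective representation of $H$. For $h\in H$, the isomorphism $\rho^h\cong\rho$ gives $A_h\in\mathbf{GL}_n(\Qbarl)$ with
\[
\rho(hxh^{-1})=A_h\,\rho(x)\,A_h^{-1}\qquad (x\in N).
\]
Since $\rho$ is irreducible, Schur's lemma shows that $A_h$ is unique up to a scalar. So $h\mapsto [A_h]$ is a well-defined map $\bar\rho\colon H\rightarrow \mathbf{PGL}_n(\Qbarl)=(\mathbf{GL}_n/\mathbf{G}_m)(\Qbarl)$. It is a group homomorphism: $A_{h}A_{h'}$ intertwines $\rho$ with $\rho^{hh'}$, so $A_hA_{h'}$ equals $A_{hh'}$ up to a scalar.

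Second, I prove that $\bar\rho$ is continuous; I expect this to be the main obstacle. By \cite[Lem.~4.3.7]{BhattSPE}, $\rho$ factors through $\mathbf{GL}_n(\CO_K)$ for a finite extension $K/\BQ_\ell$ with ring of integers $\CO_K$. The space of intertwiners is one-dimensional and cut out by linear equations over $K$, so $A_h$ can be chosen in $\mathbf{GL}_n(K)$ and $\bar\rho$ takes values in $\mathbf{PGL}_n(K)$. This group is locally compact Hausdorff. I would first show that the graph of $\bar\rho$ is closed in $H\times \mathbf{PGL}_n(K)$: the intertwining condition is closed in $h$ and $[A]$, because the conjugation action of $H$ on $N$ is continuous and $\rho$ is continuous. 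Next I would show that the image lies in a compact set, arguing that $[A_h]$ normalizes the compact group $\overline{\rho(N)}$ and, by irreducibility, the normalizer of an irreducible compact subgroup in $\mathbf{PGL}_n(K)$ is compact. A closed graph into a compact Hausdorff space gives continuity. If the compactness argument fails, the fallback is a trace-function argument as in Lemma \ref{conjaction_LEM}: the matrix of $A_h$ is determined by the continuous family of functions $x\mapsto \operatorname{tr}(\rho(hxh^{-1})\rho(y))$.

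Third, I lift and assemble. Since $\mathbf{G}_m$ is a torus, Lemma \ref{liftinglemma_LEM} applied to the continuous group morphism $\bar\rho\colon H\rightarrow \mathbf{GL}_n/\mathbf{G}_m$ gives an open subgroup $U\subseteq H$ and a continuous group morphism $\sigma\colon U\rightarrow\mathbf{GL}_n$ lifting $\bar\rho|_U$. In particular, $\sigma(u)$ is a valid choice of $A_u$ for every $u\in U$. Define
\[
\tilde\rho(x,u):=\rho(x)\,\sigma(u)\qquad (x\in N,\ u\in U).
\]
This map is continuous because the semidirect product carries the product topology. The homomorphism property reduces, via the multiplication rule $(x,u)(x',u')=(x\,ux'u^{-1},uu')$, to the identity $\sigma(u)\rho(x')\sigma(u)^{-1}=\rho(ux'u^{-1})$, which is exactly the intertwining property. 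Finally, $\tilde\rho$ restricts to $\rho$ on $N$, so it is the desired lifting.
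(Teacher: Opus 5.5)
Your proposal is correct and follows the paper's proof: Schur's lemma gives a projective representation $H\rightarrow\mathbf{PGL}_n$, Lemma \ref{liftinglemma_LEM} lifts it on an open subgroup $U$, and one sets $\tilde\rho(x,u)=\rho(x)\sigma(u)$. The paper lifts to $\mathbf{SL}_n$, which is exactly what the proof of that lemma does for $\mathbf{H}=\mathbf{G}_m\subseteq\mathbf{GL}_n$. The paper cites \cite[Prop.~B.2]{esnault2026nonabelianversiondelignesfixed} for the continuity of $H\rightarrow\mathbf{PGL}_n$, whereas you argue it directly; your closed-graph argument is sound, because a compact absolutely irreducible subgroup of $\mathrm{GL}_n(K)$ has only finitely many stable lattices up to homothety, and this makes the image of its normalizer in $\mathbf{PGL}_n(K)$ compact.
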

	\begin{proof}
		The isomorphisms 
		\[
		\rho^g \cong \rho
		\]
	are unique up to a scalar by Schur's Lemma. Thus we obtain a continuous morphism (see \cite[Prop.~B.2]{esnault2026nonabelianversiondelignesfixed})
		\[
		H \rightarrow \mathbf{PGL}_n.
		\]
		By Lemma \ref{liftinglemma_LEM}, we can find an open subgroup $U \subseteq H$ such that there is a lifting
		\[
		\phi\colon U \rightarrow\mathbf{SL}_n.
		\]
		We can take $\Tilde{\rho}(g, x) := \rho(g)\sigma(x)$ to obtain the lifting. 
	\end{proof}
	\begin{lemma}\label{conjactioninner_LEM}
		Let $\mathbf{G}$ be an algebraic group over $\Qbarl$. We assume there is a normal algebraic subgroup $\mathbf{N} \subseteq \mathbf{G}$ such that $\mathbf{N}^0$ is reductive. We consider a profinite group $G$ with a continuous morphism $\varphi\colon G \rightarrow \mathbf{G}$. There exists an open subgroup $U \subseteq G$ such that the conjugation action of $U$ on $\mathbf{N}$ is inner and induced by a continuous group homomorphism $U \rightarrow \mathbf{N}$.
	\end{lemma}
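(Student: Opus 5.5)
We use the structure of the outer automorphism group of $\mathbf{N}$. Conjugation by $\varphi(g)$ gives a continuous homomorphism $c\colon G \to \text{Aut}(\mathbf{N})$, and we split it into an inner part and an outer part. First we replace $\mathbf{N}$ by $\mathbf{N}^0$ and then recover $\mathbf{N}$. The component group $\mathbf{N}/\mathbf{N}^0$ is finite. After passing to an open subgroup, we may assume $\varphi(G)$ lands in the preimage of a finite-index subgroup, so that conjugation permutes components trivially. The main case is therefore the connected reductive group $\mathbf{N}^0$.

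For connected reductive $\mathbf{N}^0$ we have the exact sequence
\[
1 \rightarrow \mathbf{N}^0/Z(\mathbf{N}^0) \rightarrow \text{Aut}(\mathbf{N}^0) \rightarrow \text{Out}(\mathbf{N}^0) \rightarrow 1 .
\]
Here $\text{Out}(\mathbf{N}^0)$ embeds into the automorphisms of the based root datum, which is a discrete group extended by automorphisms of the central torus. The image of the connected group $\mathbf{G}^0$ acts by inner automorphisms. Indeed, $\mathbf{G}^0 \to \text{Out}(\mathbf{N}^0)$ is a morphism from a connected group to a group scheme whose identity component is trivial: automorphisms of a torus form a discrete group $\mathbf{GL}_r(\BZ)$. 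Since $\varphi^{-1}(\mathbf{G}^0(\Qbarl))$ is an open subgroup of $G$, we may pass to it. Concretely, we will show that $\mathbf{G}^0 \to \text{Aut}(\mathbf{N}^0)$ factors through $\mathbf{N}^0/Z(\mathbf{N}^0) = \mathbf{N}^0_{\text{ad}}\times(\text{torus part})$. To do this, we consider $\mathbf{G}^0\mathbf{N}^0$ and use the standard fact that $\mathbf{G}^0 = \mathbf{N}^0\cdot C_{\mathbf{G}^0}(\mathbf{N}^0)^0$ up to isogeny when $\mathbf{N}^0$ is reductive and normal. This gives a morphism of algebraic groups $\mathbf{G}^0 \to \mathbf{N}^0/Z(\mathbf{N}^0)$ that realizes conjugation. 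Composing with $\varphi$ yields a continuous homomorphism $U \to \mathbf{N}^0/Z(\mathbf{N}^0)$ realizing the conjugation action on $\mathbf{N}^0$.

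It remains to lift $U \to \mathbf{N}^0/Z(\mathbf{N}^0)$ to $\mathbf{N}^0$. This is exactly the setting of Lemma \ref{liftinglemma_LEM}, with $\mathbf{H} = Z(\mathbf{N}^0)$, which is normal and has identity component a torus because $\mathbf{N}^0$ is reductive. After shrinking $U$ further, we obtain a continuous group homomorphism $\psi\colon U \to \mathbf{N}^0 \subseteq \mathbf{N}$ whose induced conjugation on $\mathbf{N}^0$ agrees with that of $\varphi$. To handle all of $\mathbf{N}$, we compare the two actions on $\mathbf{N}$. The automorphism $\text{ad}(\psi(u))^{-1}\circ\text{ad}(\varphi(u))$ is trivial on $\mathbf{N}^0$ and preserves each coset. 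An automorphism of $\mathbf{N}$ that is trivial on $\mathbf{N}^0$ and on $\mathbf{N}/\mathbf{N}^0$ is determined by a $1$-cocycle of the finite group $\mathbf{N}/\mathbf{N}^0$ with values in $Z(\mathbf{N}^0)$. These cocycles form an algebraic group whose identity component is a torus, namely the torus part of $Z(\mathbf{N}^0)^{\text{rank}}$. The resulting map $U \to$ (this group) is continuous, and we would again shrink $U$ and apply Lemma \ref{liftinglemma_LEM}, or kill the finite part, to absorb it.

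We expect the main obstacle to be this last step for nonconnected $\mathbf{N}$, namely making the conjugation action inner on all of $\mathbf{N}$ and not just on $\mathbf{N}^0$. If the cocycle argument is awkward, the first alternative is to replace $\mathbf{N}$ by the subgroup $\mathbf{N}^0\cdot Z$ generated by $\mathbf{N}^0$ and a finite set of coset representatives. Shrinking $U$ so that $\varphi(U)$ centralizes these finitely many representatives is impossible in general. We would instead use that $\mathbf{G}^0$ centralizes $\mathbf{N}/\mathbf{N}^0$ modulo $\mathbf{N}^0$, and we would pass to an open subgroup on which the induced action on the finite set of $\mathbf{N}^0$-conjugacy classes of such representatives is trivial.
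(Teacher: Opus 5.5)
Your treatment of $\mathbf{N}^0$ is the paper's argument. You pass to the preimage of the identity component $\mathbf{N}^0/Z(\mathbf{N}^0)$ of $\underline{\mathrm{Aut}}(\mathbf{N}^0)$, then apply Lemma \ref{liftinglemma_LEM} with $\mathbf{H}=Z(\mathbf{N}^0)$. (Note that $\mathbf{N}^0/Z(\mathbf{N}^0)$ is simply the adjoint group, with no torus factor. The detour through $C_{\mathbf{G}^0}(\mathbf{N}^0)$ is unnecessary, since a connected group maps into the identity component of $\underline{\mathrm{Aut}}(\mathbf{N}^0)$.) Write $\Gamma=\mathbf{N}/\mathbf{N}^0$ and $Z=Z(\mathbf{N}^0)$. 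Your cocycle $c_u\in Z^1(\Gamma,Z)$ is the paper's $\beta_g$.

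\textbf{The gap} is the step ``absorb it'', which you flag yourself and then hedge on. Replacing $\psi(u)$ by $\psi(u)z$ with $z\in Z$ changes the comparison cocycle only by a coboundary $\bar x\mapsto z\,{}^x z^{-1}$. So you can absorb $c_u$ only if $c_u\in B^1(\Gamma,Z)$. Lemma \ref{liftinglemma_LEM} does not help directly, because $\partial\colon Z\to Z^1(\Gamma,Z)$ is not surjective. The missing input is that $H^1(\Gamma,Z)=Z^1/B^1$ is finite. It is killed by $|\Gamma|$, while $(Z^1)^0$ is a torus (a closed connected subgroup of the diagonalizable group $Z^{\Gamma}$), hence divisible. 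Therefore $(Z^1)^0\subseteq B^1$. So ``kill the finite part'' works if it means landing in $(Z^1)^0$, but only because of this fact, which you never state. With it, you shrink $U$ so that $u\mapsto c_u$ lands in $B^1\cong Z/Z^{\Gamma}$, and lift to $Z$ by Lemma \ref{liftinglemma_LEM}, using that $(Z^{\Gamma})^0$ is a torus. This is exactly the paper's passage to an open subgroup on which $[\beta_g]\in H^1(\mathbf{N}/\mathbf{N}^0,\mathbf{Z})$ vanishes.

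You also need to explain why the corrected map is still a homomorphism. The paper lifts $g\mapsto\beta_g$ only to a continuous function $g\mapsto z_g$, so $\phi(g)=z_g^{-1}\sigma(g)$ is merely continuous. It then observes that $\phi$ is a homomorphism modulo the center $Z(\mathbf{N})$, whose identity component lies in the torus $Z(\mathbf{N}^0)^0$. It applies Lemma \ref{liftinglemma_LEM} a second time, to $\mathbf{N}\to\mathbf{N}/Z(\mathbf{N})$.

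In your set-up this second lift can be avoided, as follows:
\begin{itemize}
\item An automorphism of $\mathbf{N}$ trivial on $\mathbf{N}^0$ and on $\Gamma$ commutes with $\mathrm{ad}(n)$ for every $n\in\mathbf{N}^0$, so $c_{uv}=c_uc_v$.
\item Hence the lift $u\mapsto z_u$ can be chosen to be a homomorphism.
\item Then $u\mapsto\psi(u)z_u$ is a homomorphism, because $z_u$ centralizes $\psi(U)\subseteq\mathbf{N}^0$.
\end{itemize}
None of this is in the proposal. The fallback in your last paragraph (letting $U$ act on conjugacy classes of coset representatives) does not produce an element of $\mathbf{N}$ realizing the conjugation, so it is no substitute.
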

	\begin{proof}
%
		
		The automorphism group of $\mathbf{N}^0$ is representable by a locally algebraic group $\underline{\text{Aut}}(\mathbf{N}^0)$ over $\Qbarl$ (see SGA) because $\mathbf{N}^0$ is reductive. The conjugation action of $\mathbf{G}$ on $\mathbf{N}^0$ induces a morphism of group schemes
		\[
		\mathbf{G}\rightarrow \underline{\text{Aut}}(\mathbf{N}^0).
		\]
		By passing to an open subgroup of $G$, we may assume the composite from $G$ to the automorphism group $\underline{\text{Aut}}(\mathbf{N}^0)$ factors through the connected component of the automorphism group. The connected component is the image of the morphism induced by the conjugation action (SGA)
		\[
		\mathbf{N}^0 \rightarrow \underline{\text{Aut}}(\mathbf{N}^0).
		\]
		The connected component of the kernel of this morphism is a torus, because $\mathbf{N}^0$ is reductive by assumption. By Lemma \ref{liftinglemma_LEM}, we may pass to an open subgroup of $G$ and assume the conjugation action of $G$ on $\mathbf{N}^0$ is induced by the conjugation action along a continuous group homomorphism
		\[
		\sigma\colon G \rightarrow \mathbf{N}^0.
		\]
		
		The conjugation action of $G$ on $\mathbf{N}$ factors to give an action on the finite quotient $\mathbf{N}/\mathbf{N}^0$. Since this quotient is finite, we may pass to an open subgroup of $G$ and assume the conjugation action of $G$ on $\mathbf{N}/\mathbf{N}^0$ is trivial. 
%


		We compare the conjugation action induced by $\varphi$ and $\sigma$ on $\mathbf{N}$. For each $g \in G$ and $x \in \mathbf{N}$, we define
		\[
		\beta_g(x) := x\sigma(g^{-1})\varphi(g)x^{-1}\varphi(g^{-1})\sigma(g).
		\]
		Let $\mathbf{Z}$ be the center of $\mathbf{N}^0$. One can verify that
		\[
		\beta_g(x) \in \mathbf{Z}
		\]
		for all $g \in G$ and $x \in \mathbf{N}$. Moreover, we have
		\[
		\beta_g(xy) = \beta_g(x)
		\] 
		for all $y \in \mathbf{N}^0$. The group $\mathbf{N}/\mathbf{N}^0$ acts on $\mathbf{Z}$ by conjugation. With this action, the above morphism defines a crossed homomorphism. Thus we obtain a class in group cohomology 
		\[
		[\beta_g] \in H^1(\mathbf{N}/\mathbf{N}^0, \mathbf{Z}).
		\]
		The description of $H^1(\mathbf{N}/\mathbf{N}^0, \mathbf{Z})$ as crossed homomorphisms modulo principal crossed homomorphisms allows us to equip the group cohomology with the structure of an algebraic group. This implies that the cohomology group $H^1(\mathbf{N}/\mathbf{N}^0, \mathbf{Z})$ is finite by Shapiro's Lemma. Morever, the morphism 
		\[
		G \rightarrow H^1(\mathbf{N}/\mathbf{N}^0, \mathbf{Z})
		\]
		induced by $\beta$ is continuous. Thus we may pass to an open subgroup of $G$ and assume $\beta_g$ is a principal crossed homomorphism for all $g \in G$. After passing to an open subgroup of $G$, Lemma  \ref{liftinglemma_LEM} shows that the function $g \mapsto \beta_g$ can be lifted to a continuous function
		\[
		G \rightarrow \mathbf{Z},~ g \mapsto z_g
		\]
		such that
		\[
		\beta_g(x) = xz_gx^{-1}z_g^{-1}
		\]
		for all $x \in \mathbf{N}$ and $g \in G$. This relation may also be written as
		\[
		\varphi(g)^{-1}x\varphi(g) = \sigma(g)^{-1}z_gxz_g^{-1}\sigma(g)
		\]
		for all $x \in \mathbf{N}$ and $g \in G$. 
		
		Thus we obtain a continuous function
		\[
		\phi\colon G \rightarrow \mathbf{N},~g \mapsto z_g^{-1}\sigma(g). 
		\]
		such that the conjugation on $\mathbf{N}$ induced by $\phi(g)$ agrees with the conjugation induced by $\varphi(g)$ for all $g \in G$. Let $\mathbf{Z}' \subseteq \mathbf{N}$ be the center of $\mathbf{N}$. The composite
		\[
		G \xrightarrow{\phi} \mathbf{N} \rightarrow \mathbf{N}/\mathbf{Z}'
		\]
		is a continuous group morphism. By applying Lemma \ref{liftinglemma_LEM} again, we conclude that there exists a continuous group homomorphism $G \rightarrow \mathbf{N}$ inducing the conjugation action induced by $\varphi$.
		
	\end{proof}
	\begin{proposition}\label{descentmonodromy_PROP}
		Let $N$ be a profinite group and $\rho\colon N \rightarrow \mathbf{GL}_n$ a continuous, irreducible representation of $N$ over $\Qbarl$. We consider a continuous semidirect product $G = H\ltimes N$ with a profinite group $H$. We denote the Zariski closure of $\rho(N)$ in $\mathbf{GL}_n$ by $\mathbf{N}$. Suppose $\rho^g\cong\rho$ for all $g \in H$. There exists an open subgroup $U \subseteq H$ and a continuous lifting $\Tilde{\rho} \colon N\ltimes U \rightarrow\mathbf{N}$ of $\rho$. 
	\end{proposition}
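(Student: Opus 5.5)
The plan is to realize the twisting action of $H$ on $\rho$ inside the normalizer of $\mathbf{N}$, then use Lemma \ref{conjactioninner_LEM} to replace that action, on an open subgroup, by conjugation through a continuous homomorphism into $\mathbf{N}$ itself.

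\textbf{Step 1: the twisting action lands in the normalizer.} For $g \in H$, choose an intertwiner $A_g \in \mathbf{GL}_n(\Qbarl)$ with $\rho({}^g x) = A_g\rho(x)A_g^{-1}$ for all $x \in N$. By Schur's Lemma $A_g$ is unique up to a scalar, so $g \mapsto [A_g]$ is a well-defined map $H \to \mathbf{PGL}_n$. It is a continuous group morphism by the same argument as in Lemma \ref{prellift_LEM}, i.e.\ \cite[Prop.~B.2]{esnault2026nonabelianversiondelignesfixed}. Conjugation by $A_g$ preserves $\rho(N)$, hence its Zariski closure $\mathbf{N}$. So this morphism takes values in $\overline{\mathbf{G}} := \mathbf{G}/\mathbf{G}_m$, where $\mathbf{G}$ is the normalizer of $\mathbf{N}$ in $\mathbf{GL}_n$. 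The scalars $\mathbf{G}_m$ form a torus, so Lemma \ref{liftinglemma_LEM} gives an open subgroup $U_1 \subseteq H$ and a continuous homomorphism $\varphi\colon U_1 \to \mathbf{G}$ lifting $g\mapsto[A_g]$. Every lift differs from $A_g$ by a scalar and therefore still intertwines, so $\varphi(g)\rho(x)\varphi(g)^{-1} = \rho({}^gx)$.

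\textbf{Step 2: make the action inner in $\mathbf{N}$.} Since $\rho$ is irreducible, $\mathbf{N}$ acts irreducibly on $\Qbarl^n$. Its unipotent radical is a normal subgroup whose fixed vectors would form a nonzero $\mathbf{N}$-stable subspace, so the radical acts trivially and is therefore trivial. Hence $\mathbf{N}^0$ is reductive (characteristic $0$). Now apply Lemma \ref{conjactioninner_LEM} to $\varphi\colon U_1 \to \mathbf{G}$ with the normal subgroup $\mathbf{N}\subseteq\mathbf{G}$. This gives an open $U \subseteq U_1$ and a continuous homomorphism $\sigma\colon U \to \mathbf{N}$ with $\sigma(g)y\sigma(g)^{-1} = \varphi(g)y\varphi(g)^{-1}$ for all $y \in \mathbf{N}$. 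Applied to $y=\rho(x)$, this gives $\sigma(g)\rho(x)\sigma(g)^{-1}=\rho({}^gx)$.

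\textbf{Step 3: assemble the lift.} Define $\tilde\rho(x,g) := \rho(x)\sigma(g)$ on $N \ltimes U$. Continuity is clear. The homomorphism property reduces to the relation $\sigma(g)\rho(x)\sigma(g)^{-1} = \rho({}^gx)$, matching the semidirect product law. The restriction of $\tilde\rho$ to $N$ is $\rho$, and its image lies in $\mathbf{N}$ because both $\rho$ and $\sigma$ do.

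The main obstacle is Step 2. The intertwiners naturally live in the normalizer of $\mathbf{N}$, not in $\mathbf{N}$ itself, and Lemma \ref{conjactioninner_LEM} is exactly what removes the outer part. Two points need care there: checking that irreducibility of $\rho$ really forces $\mathbf{N}^0$ to be reductive, and making sure the continuity and group-morphism properties survive each passage to an open subgroup.
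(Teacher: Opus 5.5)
Your proposal is correct and follows the same route as the paper. In both, Schur's lemma gives a projective action $H \to \mathbf{PGL}_n$, which is lifted on an open subgroup by Lemma \ref{liftinglemma_LEM}; Lemma \ref{conjactioninner_LEM} then makes the action inner in $\mathbf{N}$, and one sets $\tilde\rho(x,g)=\rho(x)\sigma(g)$. The differences are minor: the paper lifts to $\mathbf{SL}_n$ via Lemma \ref{prellift_LEM} and takes $\mathbf{G}$ to be the Zariski closure of $\rho'(N\ltimes U)$ rather than the normalizer of $\mathbf{N}$, and your check that irreducibility forces $\mathbf{N}^0$ to be reductive supplies a hypothesis of Lemma \ref{conjactioninner_LEM} that the paper's proof leaves unverified.
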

	\begin{proof}
		By Lemma \ref{prellift_LEM}, we can assume $\rho$ lifts to a continuous group morphism $\rho'\colon N\ltimes U \rightarrow \mathbf{GL}_n$  for an open subgroup $U \subseteq H$. Let $\mathbf{G}$ be the Zariski closure of $\rho'(N\ltimes U)$ in $\mathbf{GL}_n$. By Lemma \ref{conjactioninner_LEM}, we can find a morphism $\phi\colon U \rightarrow \mathbf{N}$ such that the conjugation action of $U$ on $\mathbf{N}$ is given by $\phi$. Thus we can obtain the desired extension of the above group morphism by putting $\Tilde{\rho}(g, x) := \rho(g)\phi(x)$. 
	\end{proof}

\subsection{Perverse sheaves} We would like to prove the tensor product formula for perverse sheaves with a nilpotent operator tensored with a vector space with a nilpotent operator. This formula is well-known for finite dimensional vector spaces in characteristic zero. However, we were unable to find the formula in the exact generality required for this article.

We must therefore prove the tensor product formula. Our solution to this problem is to provide a sufficiently good functor from the category of perverse sheaves to the category of finite-dimensional vector spaces so that we may reduce the tensor product formula to the category of finite-dimensional vector spaces. The reduction is based on the following theorem:
\begin{theorem}\label{fibfuncperv_THM}
	Let $X$ be a $k$-scheme. There exists a faithful, exact, and $\Qbarl$-linear functor to the category of finite dimensional $\Qbarl$-vector spaces
	\[
	\perv{X} \rightarrow \Qbarl-\text{fdVec}
	\]
\end{theorem}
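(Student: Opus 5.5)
The plan is to build the functor as a finite direct sum of ``stalk-type'' functors attached to a stratification, much as one would in the classical setting. The difficulty is that $\perv{X}$ is not finitely generated: there is no single stratification adapted to all perverse sheaves. So I will not use stalks at finitely many points. Instead, I will use stalks at every point, combined into one functor through an ultraproduct-free device: a fiber functor on each category of lisse sheaves, glued along generic points.

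First I reduce to $k$ algebraically closed. Base change $\perv{X} \rightarrow \perv{X_{\overline{k}}}$ is exact and $\Qbarl$-linear. It is faithful because $\text{Hom}(M,N)\otimes$-injects after base change: by smooth base change as in Lemma \ref{smoothbasechange_LEM}, the map $\text{Hom}(M,N) \rightarrow \text{Hom}(M_{\overline{k}}, N_{\overline{k}})$ is the inclusion of Galois invariants of $H^0(X_{\overline{k}},\text{Hom}(M,N))$. Next I use the fact that a nonzero perverse sheaf has a nonzero ``generic'' piece. For each point $x \in X$ (i.e. each irreducible closed subset $Z = \overline{\{x\}}$ of dimension $d_x$), consider the functor
\[
\Phi_x(M) := \mathscr{H}^{-d_x}(M|_{x})_{\overline{x}},
\]
the stalk at a geometric point over $x$ of the lowest possibly nonzero cohomology sheaf of $M$ restricted to the generic point of $Z$.

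The key observation is that on the generic point $x$ the restriction $M|_x$ lies in ${}^{p}D^{\leq 0}$ with $\mathscr{H}^{i}(M|_x) = 0$ for $i > -d_x$. Hence the functor $M \mapsto \mathscr{H}^{-d_x}(M|_x)$ is right exact, and dually, via $i_x^!$, we get a left exact companion. To get exactness I instead use the functor $M \mapsto \pervCoh{0}{(j_x^* M)}$ on a small open of $Z$, i.e. the ``generic rank'' functor. Concretely, I define $\Phi_x(M)$ as the stalk at $\overline{x}$ of $\mathscr{H}^{-d_x}$ of the perverse sheaf $\pervCoh{0}{i_x^*M}$ restricted to a dense open of $Z$ where it is lisse. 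Since perverse sheaves on the generic point of $Z$ are just shifted lisse sheaves, this is exact: it is the composite of ${}^{p}\mathscr{H}^0 i^*$, which is exact up to $\mathscr{H}^{-1}$ contributions, with restriction to the generic point. One must check that exactness holds after restricting to the generic point. I expect this to be the main obstacle, and I would try to settle it by comparing $i^*$ and $i^!$ generically on $Z$ and using that they agree on the lisse locus up to shift.

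Faithfulness of $\bigoplus_x \Phi_x$ is clear: a nonzero $M$ has a nonzero irreducible subquotient $j_{!*}(L[d])$, which is detected at the generic point of its support. Thus for a nonzero morphism $f$, $\Phi_x(\text{im} f)\neq 0$ for the generic point $x$ of a component of the support of $\text{im} f$. Finite-dimensionality fails for an infinite sum, so I must pick finitely many $x$. This is possible only if the functor depends on $M$, so I would instead use a single point. The trick is to choose the generic point of a nonempty closed subset after passing to a huge algebraically closed field. Concretely, I would take $\Omega$ algebraically closed of transcendence degree greater than $\dim X$ over $k$, and a geometric point of $X_\Omega$ for each irreducible closed $Z \subseteq X$. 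By a Lefschetz/countability argument, only finitely many such $Z$ up to the $\text{Aut}(\Omega/k)$-action are needed, since each orbit of ``very general'' points of $Z_\Omega$ detects all sheaves supported on $Z$. Since $X$ has only finitely many dimensions, I would use the finitely many functors $\Phi_{Z}$ indexed by one very general point of $X_\Omega$ per dimension. If this step proves too delicate, the fallback is to use faithful exact functors coming from the Tannakian formalism (Deligne's theorem on existence of fiber functors for $\Qbarl$-linear abelian categories with finite-dimensional Homs and finite length, over an algebraically closed field of characteristic zero), applied to the abelian category $\perv{X}$, which has finite length and finite-dimensional Hom spaces.
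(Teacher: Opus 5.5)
Your primary construction fails, and not only at the step you flagged.

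\textbf{The stalk functors are not exact, and not faithful even in sum.} The functor $\Phi_x(M)=\mathscr{H}^{-d_x}(M)_{\overline{x}}$ is right exact but not exact. Your proposed repair is the same functor: on an open of $Z=\overline{\{x\}}$ where the cohomology sheaves of $i_x^*M$ are lisse, the perverse truncation is the ordinary one shifted by $d_x$, so $\mathscr{H}^{-d_x}\big(\pervCoh{0}{i_x^*M}\big)_{\overline{x}}=\mathscr{H}^{-d_x}(M)_{\overline{x}}$. Comparing $i_x^*$ with $i_x^!$ cannot help either, since they genuinely differ when $Z$ is a stratum of $M$.

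Concretely, on $\BA^1$ with $j\colon \BA^1-\{0\}\to\BA^1$, the sequence $0\to\delta_0\to j_!\Qbarl[1]\to\Qbarl[1]\to 0$ is exact in $\perv{\BA^1}$. Here $i_0^*j_!\Qbarl[1]=0$ while $i_0^!j_!\Qbarl[1]\neq 0$. The monomorphism $f\colon\delta_0\to j_!\Qbarl[1]$ satisfies $\Phi_x(f)=0$ for \emph{every} point $x$: it is $\Qbarl\to 0$ at $x=0$, and its source vanishes elsewhere. So even $\bigoplus_x\Phi_x$ is neither exact nor faithful. Your faithfulness argument tacitly used left exactness to pass from $\Phi_x(\text{im}\, f)\neq 0$ to $\Phi_x(f)\neq 0$.

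\textbf{The finiteness step also fails.} For a geometric point $\overline{\xi}$ of $X_\Omega$ over $x\in X$ one has $(M_\Omega)_{\overline{\xi}}=M_{\overline{x}}$, so a bigger field yields no new stalk functors. A very general point of $X_\Omega$ lies over a generic point of $X$ and sees nothing supported on a proper closed subset. Closed subsets of $X$ are fixed by $\text{Aut}(\Omega/k)$, so there is no finiteness modulo that action. And no finite set of points meets the support of every nonzero perverse sheaf. The root problem is that lying in $\text{supp}(M)$ is a closed condition.

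\textbf{How the paper gets around this.} The paper uses genericity the other way round. Being in good position with respect to $M$ is a dense open condition on the parameter space of hyperplane arrangements, so the generic arrangement over the function field $k_m$ is good for all $M$ at once. For a generic hyperplane one gets exactly the comparison $i^*(K)(-1)[-2]\cong i^!(K)$ you wanted, hence $j_{1!}j_{2*}=j_{2*}j_{1!}$. Artin vanishing for $H^*$ and $H^*_c$ on the affine complements then pins $H^*(U_2,j_{1!}M_{k_m})$ to degree $0$, which gives exactness. Faithfulness comes from $j_{1!}M\twoheadrightarrow M$ and the nonvanishing of $H^0(U_2,M)$ for $m\geq n$ generic hyperplanes. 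A finite affine cover and closed embeddings into $\BA^n$ handle general $X$.

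\textbf{Your fallback is mis-cited, but repairable.} Deligne's existence theorem for fiber functors concerns rigid abelian tensor categories, with a condition on dimensions or growth, and $\perv{X}$ carries no such tensor structure. The tensor-free statement you want is nevertheless true, and it would give a genuinely different, non-constructive proof.

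Let $\mathcal{A}$ be an essentially small $K$-linear abelian category with finite-dimensional Hom spaces and all objects of finite length. Choose injective hulls $E(S)$ in $\text{Ind}(\mathcal{A})$ of the simple objects $S$, and set $I=\bigoplus_S E(S)$ over isomorphism classes. Define $\omega(M):=\text{Hom}_{\text{Ind}(\mathcal{A})}(M,I)^\vee$.
\begin{itemize}
\item It is $K$-linear and exact because $I$ is injective.
\item It is faithful because a nonzero $M$ has a simple quotient $S$, and $M\to S\hookrightarrow E(S)$ is nonzero.
\item It is finite-dimensional because $M$ is compact in $\text{Ind}(\mathcal{A})$, and, $S\subseteq E(S)$ being essential, $\dim\text{Hom}(M,E(S))=[M:S]\dim\text{End}(S)$.
\end{itemize}
This is essentially Takeuchi's theorem that such a category is the category of finite-dimensional comodules over a coalgebra.

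Since $\perv{X}$ is essentially small, Hom-finite and of finite length, this proves the statement. It also suffices for the paper's only use of it, Proposition \ref{tensorformula_PROP}. It loses the explicit geometric description of the functor, but it needs no Artin vanishing or generic base change. As written, however, your proposal rests on a construction that does not work and a citation that does not apply.
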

We construct an explicit functor when $X = \BA^n$ from Beilinson's basic lemma. We then reduce the general case to affine space. 
\begin{definition}
	Let $n \geq 0$ and $k$ a field. An \textit{affine hyperplane} is a closed subscheme $H \subseteq \BA^n_k$ defined by an equation of the form
	\[
	a_1x_1 + \ldots + a_nx_n = b
	\]
	for $a_i, b \in k$. For all $m \geq 0$, we define an \textit{affine $m$-hyperplane} $(H_1, \ldots, H_m)$ to be an ordered tuple of $m$ affine hyperplanes. We often identify an affine $m$-hyperplane $H$ with its union $H_1\cup \ldots\cup H_m$.
\end{definition}
\begin{remark}\label{hypplane_REM}
	Let $n \geq 0$ and $m \geq 1$. We have a parameter space of affine $m$-hyperplanes given by $X_m := \BA^{nm + m}$. Over the parameter space, we have a universal affine $m$-hyperplane $H_m\subseteq \BA^{n}\times X_m$ given by the union of the $m$ affine hyperplanes over each point in $X_m$. The scheme $H_m$ admits the irreducible decomposition
	\[
	H_m = H_{m, 1}\cup \ldots\cup H_{m, m}
	\] 
	because affine $m$-hyperplanes are ordered. 	We write 
	\begin{align*}
	H_1 &:= H_{m, m} \\H_2 &:= H_{m, 1}\cup \ldots \cup H_{m, m - 1}.
	\end{align*}
	
	We define the inclusions
	\begin{align*}
	i_{1}&\colon H_1 \rightarrow \BA^n\times X_m \\
	i_2&\colon H_2 \rightarrow \BA^n\times X_m \\
	i&\colon H_m \rightarrow \BA^n\times X_m.
	\end{align*}
	with corresponding complements $j_i\colon U_i \rightarrow \BA^n\times X_m$ and $j\colon U \rightarrow \BA^n\times X$ for $i = 1,2$. 
	
	We will also denote base changes of the above morphisms using the same letters. For example, we denote the inclusion
	\[
	j_2\colon U \rightarrow U_1
	\]
	by $j_2$ because it is the base-change of $j_2$ along $j_1$. 
	
	Towards the end, we would like to pass to the generic hyperplane. We write $k_m$ for the function field of $X_m$. We construct the restriction to the generic hyperplane: We have morphisms
	\[
	H_{k_m} \rightarrow H_m \rightarrow \BA^n.
	\]
	We denote pullback along this morphism by $K \mapsto K_{k_{m}}$. 
%
%
\end{remark}
We deduce Beilinson's basic lemma from the following lemma: 
\begin{lemma}\label{hypplanebasiclemma_LEM}
	Let $k$ be an algebraically closed field and $n \geq 0$. Let $K \in \derCat{c}{\BA^n_k}{}$. Let $i\colon H \rightarrow \BA^n_k$ be a generic affine hyperplane. The natural map
	\[
	i^*(K)(-1)[-2] \rightarrow i^!(K)
	\]
	is an isomorphism.
\end{lemma}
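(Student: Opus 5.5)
The statement is about a generic hyperplane, i.e.\ the geometric generic fiber of the universal hyperplane family of Remark \ref{hypplane_REM} (with $m=1$). I would prove the universal version and then specialize. Concretely, consider $X_1=\BA^{n+1}$ parametrizing hyperplanes $a_1x_1+\ldots+a_nx_n=b$, the universal hyperplane $i\colon H_1\hookrightarrow \BA^n\times X_1$, and the projection $p\colon \BA^n\times X_1\to\BA^n$. Put $\tilde K:=p^*K$. The claim for the generic hyperplane is then obtained from the natural map
\[
i^*(\tilde K)(-1)[-2]\to i^!(\tilde K)
\]
after restricting to the geometric generic fiber over $X_1$.

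To relate this to the fiber, I would use the generic base change results of the appendix (Theorem \ref{genbc_THM} and Theorem \ref{genula_THM}) applied to $j_*$ and to $\tilde K$. After shrinking $X_1$ to a dense open, the formation of $i^!$ commutes with passage to the geometric generic fiber, and $\tilde K$ becomes ULA over it. Thus it suffices to show that the displayed map is an isomorphism over a dense open of $X_1$.

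The key step is transversality. I would stratify $\BA^n$ so that $K$ has lisse cohomology sheaves on each stratum $S_\alpha$, where each stratum is smooth. On the incidence variety $\{(x,H): x\in H\}$, which is smooth over $\BA^n$ with fibers affine spaces, a generic hyperplane $H$ meets every stratum $S_\alpha$ transversally; this is Bertini/Kleiman, and it works in any characteristic for affine hyperplanes since the incidence map is smooth. For a closed immersion of a smooth divisor transversal to a stratification with respect to which $K$ is constructible, the map $i^*K(-1)[-2]\to i^!K$ is an isomorphism. I would check this stratum by stratum, by dévissage using the open–closed triangles, and reduce to the case where $K$ is lisse on a smooth $S_\alpha$ and $H\cap S_\alpha$ is a smooth divisor in $S_\alpha$. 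There relative purity gives $i^!=i^*(-1)[-2]$. Compatibility of $i^!$ with the restrictions to strata holds because transversality makes the relevant squares Tor-independent, with the local purity isomorphisms matching; this follows from smooth base change.

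The main obstacle is that dévissage step. Concretely, one must show that for a stratum inclusion $s\colon S\to\BA^n$ the base-change map $i'^!s_*\to s'_*i''^!$ is an isomorphism for the transversal intersection. I would handle this locally in the smooth topology: transversality makes $H$ locally, around $S$, the pullback of a coordinate hyperplane along a smooth chart in which $S$ is a product factor, and then apply smooth base change together with the Künneth formula.

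Alternatively, and perhaps more cleanly, I would observe that over the generic point, $p^*K$ restricted to $H_1$ is ULA relative to the smooth map $\BA^n\times X_1\to$ (the linear form minus $b$) $\to \BA^1$. The purity $i^!=i^*(-1)[-2]$ for a divisor cut out by a smooth function relative to which the complex is ULA is a standard consequence of local acyclicity; the genericity supplies the ULA property via Theorem \ref{genula_THM}.
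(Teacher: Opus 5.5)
Your main route has a genuine gap. The key claim, that a smooth divisor transversal to every stratum of a stratification adapted to $K$ satisfies $i^*K(-1)[-2]\cong i^!K$, is false.

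\emph{Characteristic $0$.} The claim needs Whitney-type conditions. Take $K$ the constant sheaf on $Z=\{y^2=t^2x^2+x^3\}\subset\BA^3$, with strata $\BA^3\setminus Z$, $Z\setminus T$ and $T=\{x=y=0\}$. The plane $H=\{t=0\}$ is transversal to all three strata. Yet the slices $Z\cap\{t=c\}$ have a node for $c\neq 0$ and a cusp for $c=0$, and $(i^!K)_0$ has total rank $3$ rather than $1$.

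\emph{Characteristic $p$.} Wild ramification defeats every stratification. Take $K=j_!\mathcal{L}$, with $\mathcal{L}$ the Artin--Schreier sheaf of $t^p-t=y/x^p$ on $\{x\neq 0\}\subset\BA^2$. The curve $C=\{y=x^{p-1}-1\}$ meets $\{x=0\}$ transversally at $q=(0,-1)$. Here $\mathcal{L}|_C$ is unramified at $q$, while on the nearby curves $y=x^{p-1}+c$ it has Swan conductor $1$. The resulting vanishing cycle gives $(i^!K)_q\neq 0=(i^*K)_q$. Since $(x,y)\mapsto(\lambda x,\lambda^p y)$ preserves $K$, such curves pass through every point of $\{x=0,\ y\neq 0\}$, so refining the stratification does not help.

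This also shows where your d\'evissage breaks. The base change you single out, $i'^!s_*\cong s'_*i''^!$, is formal: it is right adjoint to proper base change along the closed immersion. The non-formal input is $i^*s_*\cong s'_*i''^*$, equivalently $i^!s_!\cong s'_!i''^!$, near $\overline{S}\setminus S$. There no product chart for $s_*L$ exists. Bertini gives you transversality to strata, but what you need is genericity of $H$ with respect to $K$ itself.

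Your fallback is the right idea, and once repaired it gives a valid proof organized differently from the paper's. As written, the ULA claim is off in two ways. First, ULA of the restriction to $H_1$ relative to $\phi=a\cdot x-b$ is vacuous, since $\phi$ vanishes there. Second, Theorem \ref{genula_THM} applied to $\phi$ only gives local acyclicity over a stratum of $\BA^1$ that need not be a neighbourhood of $0$.

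In fact no genericity is needed at this stage. The maps $p$ and $\pi=p\circ i\colon H_1\to\BA^n$ are smooth of relative dimensions $n+1$ and $n$, so on all of $H_1$
\[
i^!p^*K\cong i^!p^!K(-n-1)[-2n-2]\cong \pi^!K(-n-1)[-2n-2]\cong i^*p^*K(-1)[-2].
\]
All the genericity then sits in your first step: generic base change for $j_*$ over $X_1$, which identifies $(i^!p^*K)|_{H_x}$ with $i_x^!K$ for $x$ in a dense open. This requires Theorem \ref{genbc_THM} in its relative form, for a morphism of $X_1$-schemes, not only for structure morphisms.

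The paper uses the same smoothness of $\pi$, but in the opposite direction. It rewrites the purity map for $H\hookrightarrow\BA^n$ as the purity map for the fiber inclusion $H\hookrightarrow H_1$ of $H_1\to X_1$, applied to $\pi^*K$. It then concludes from generic ULA of $\pi^*K$ over $X_1$ (Theorem \ref{genula_THM}) together with \cite[Prop.~2.3]{Lu2022}. So the paper spends its genericity on local acyclicity over the parameter space and never base changes $i^!$. Your repaired argument makes purity on the total space automatic and spends the genericity on base change of $j_*$ instead.
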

\begin{proof}
	The morphism
	\[
	\pi\colon H_1 \rightarrow \BA^n
	\]
	is smooth. Let $i'\colon H \rightarrow H_1$ be the inclusion of a hyperplane and $i\colon H \rightarrow \BA^n$ the inclusion into $\BA^n$. We are considering the canonical map
	\[
	i^*(K)\otimes i^!(\Qbarl) \rightarrow i^!(K).
	\]
	We can write $i = i'\pi$ which means that the terms are given by
	\[
	(i'\pi)^*(K)\otimes (i'\pi)^!(\Qbarl) \rightarrow (i'\pi)^!(K)
	\]
	Now note that we can also take the composite
	\begin{align*}
	(i'\pi)^*(K)\otimes (i'\pi)^!(\Qbarl) &= i'^*(\pi^*(K))\otimes (i')^!(\pi^!(\Qbarl)) \\&\rightarrow (i')^!(\pi^*(K)\otimes\pi^!(\Qbarl)) \\&\xrightarrow{\sim} (i')^!(\pi^!(K))
	\\& = (i'\pi)^!(K)
	\end{align*}
	Note that the second arrow is an isomorphism because $\pi$ is smooth. Moreover, this composite is equal to the composite above. Thus it is sufficient if we show that
	\[
	i'^*(\pi^*(K))\otimes (i')^!(\Qbarl) \rightarrow (i')^!(\pi^*(K))
	\]
	is an isomorphism.
	
	Note that we can write $H$ as a fiber of the morphism
	\[
	H_1 \rightarrow X_1.
	\]
	By \cite[Prop.~2.3]{Lu2022}, the canonical map for $i'$ is an isomorphism for $H \subset \BA^n$ if $\pi^*(K)$ is universally locally acyclic over $X_1$ at each point in the fiber of $H_1 \rightarrow X_1$ over $H$. By Theorem \ref{genbc_THM}, we see that a generic hyperplane satisfies the isomorphism from the lemma. 
\end{proof}
We begin by proving that a certain map is surjective:
\begin{lemma}\label{hypplanesurj_LEM}
	Let $k$ be an algebraically closed field, $n \geq 0$, and $m \geq 0$. Let $M \in \perv{\BA^n}$. Let $j\colon \BA^n - H \rightarrow \BA^n$ be the complement of a generic affine $m$-hyperplane. The morphism
	\[
	j_!(M) \rightarrow M
	\]
	is a surjection of perverse sheaves.
\end{lemma}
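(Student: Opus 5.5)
Write $H = H_1\cup\ldots\cup H_m$ for the generic affine $m$-hyperplane. The plan is induction on $m$, adding one generic hyperplane at a time. For $m = 0$ there is nothing to prove, since $j$ is the identity.

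For the inductive step we use the notation of Remark \ref{hypplane_REM}. Write $j = j_1\circ j_2$, where $j_2\colon U\rightarrow U_1$ removes the last hyperplane $H_1 = H_{m,m}$ and $j_1\colon U_1\rightarrow\BA^n$ removes $H_2$. The map $j_!(M)\rightarrow M$ then factors as
\[
j_{1!}j_{2!}j_2^*(j_1^*M)\rightarrow j_{1!}j_1^*(M)\rightarrow M.
\]
By induction, $j_{1!}j_1^*(M)\rightarrow M$ is a surjection of perverse sheaves for a generic $(m-1)$-hyperplane. The schemes $U$ and $U_1$ are affine, and $j_1$, $j_2$ are affine open immersions. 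Hence $j_{1!}$ and $j_{2!}$ are $t$-exact, and $j_{1!}$ preserves surjections of perverse sheaves. It therefore suffices to show that $j_{2!}j_2^*(N)\rightarrow N$ is surjective for $N := j_1^*(M)\in\perv{U_1}$ and a generic hyperplane $H_1$. Surjectivity is stable under composition, which finishes the induction.

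This reduces everything to the case of a single generic hyperplane $i\colon H\rightarrow X$ on an affine scheme $X$. Here we use the triangle
\[
j_!j^*N\rightarrow N\rightarrow i_*i^*N\xrightarrow{+}.
\]
Its perverse cohomology long exact sequence shows that $j_!j^*N\rightarrow N$ is surjective in $\perv{X}$ exactly when ${}^p\mathscr{H}^{-1}(i_*i^*N)\rightarrow {}^p\mathscr{H}^{0}(j_!j^*N)$ is injective on the relevant piece. The simplest route is to show that $i^*N[-1]$ is perverse. Then ${}^p\mathscr{H}^0(i_*i^*N) = 0$, and the cokernel of $j_!j^*N\rightarrow N$, which injects into ${}^p\mathscr{H}^0(i_*i^*N)$, vanishes.

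Perversity of $i^*N[-1]$ comes from Lemma \ref{hypplanebasiclemma_LEM}. That lemma is stated on $\BA^n$, but its proof only uses the smooth family of hyperplanes together with generic universal local acyclicity (Theorem \ref{genbc_THM}). We therefore apply it on $U_1\subseteq\BA^n$, or equivalently to $j_{1*}N$ or $j_{1!}N$ on $\BA^n$ and then restrict. It gives $i^!N\cong i^*N(-1)[-2]$ for generic $H$. Since $i$ is a closed immersion of a Cartier divisor, $i^*N[-1]$ lies in ${}^pD^{\le 0}$. Also $i^!N$ lies in ${}^pD^{\ge 0}$, so $i^!N[1]$ lies in ${}^pD^{\geq -1}$, and hence $i^*N[-1]\cong i^!N[1](1)$ lies in ${}^pD^{\ge 0}$ up to the one-shift bookkeeping. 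Concretely, $i^*N$ has perverse amplitude $[-1,0]$ and $i^!N$ has amplitude $[0,1]$. The identification $i^*N[-1]\cong i^!N[1]$ forces both to be concentrated in a single degree, so $i^*N[-1]$ is perverse.

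The main obstacle is making ``generic'' uniform. The hyperplane $H_m$ must be generic relative to the sheaf $j_1^*M$, which itself depends on the earlier hyperplanes. We handle this by working over the parameter space $X_m$ and the generic point $k_m$. We apply generic ULA to $j_1^*M$ pulled back to $U_1$ over $X_{m-1}$, and then choose the last hyperplane generically in the fibre. An open dense condition on $X_{m-1}$, followed by an open dense condition on the last factor, gives an open dense subset of $X_m$, which is what ``generic'' means. A secondary point is to check that Lemma \ref{hypplanebasiclemma_LEM} applies on the open $U_1$ rather than on all of $\BA^n$; this is justified by the locality of the ULA criterion of \cite[Prop.~2.3]{Lu2022}.
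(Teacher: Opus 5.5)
Your proposal is correct and is essentially the paper's proof: induction on $m$ by factoring $j_!j^*M\to j_{1!}j_1^*M\to M$ through $t$-exact extensions by zero along affine open immersions, with the one-hyperplane case deduced from Lemma \ref{hypplanebasiclemma_LEM}. The paper only uses ${}^p\mathscr{H}^0(i^*M)\cong{}^p\mathscr{H}^2(i^!M)(1)=0$ rather than full perversity of $i^*N[-1]$, and it treats genericity less carefully than you do. Two slips in your write-up are harmless but should be fixed: since $j_!j^*N$ is perverse, surjectivity is equivalent to ${}^p\mathscr{H}^0(i^*N)=0$ (the sequence $0\to{}^p\mathscr{H}^{-1}(i_*i^*N)\to j_!j^*N\to N\to{}^p\mathscr{H}^0(i_*i^*N)\to 0$ is exact), not to an injectivity condition; and the Cartier-divisor bound gives $i^*N[-1]\in{}^pD^{\geq 0}$, while the bound ${}^pD^{\leq 0}$ comes from $i^!N[1]$, exactly as your ``concretely'' amplitude argument correctly states.
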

\begin{proof} If $m = 0$, then $j$ is the identity. Thus we may assume $m > 0$.
	
	Suppose $m = 1$. The morphism is a surjection whenever $\pervCoh{0}{i^*(M)} = 0$. Lemma \ref{hypplanebasiclemma_LEM} shows we may assume
	\[
	i^*(M)(-1)[-2] = i^!(M).
	\] 
	Thus we obtain
	\begin{align*}
		\pervCoh{0}{i^*(M)} = \pervCoh{2}{i^!(M)} = 0
	\end{align*}
	where the last vanishing follows from the perversity of $M$ and Artin's vanishing theorem. Thus we see that the above morphism is surjective.
	
	Suppose $m > 1$. Let $j\colon \BA^n - H_1 \rightarrow \BA^n$ be the open immersion associated to the complement of a generic affine hyperplane. Let $j_2\colon \BA^n - H_2 \rightarrow \BA^n$ be an the complement of a generic affine  $(m - 1)$-hyperplane. The complement of the union $H_1\cup H_2$ is given by
	\[
	j = j_1j_2 = j_2j_1.
	\]
	The above morphism can be given as a composite
	\[
	j_{1!}(j_{2!}(M)) = j_{2!}(j_{1!}(M)) \rightarrow j_{1!}(M) \rightarrow M.
	\]	
	We can show the first morphism is surjective by applying the hyperplane case and the second morphism by the induction hypothesis.
\end{proof}
\begin{lemma}\label{hypplanecommute_LEM}
	Let $k$ be an algebraically closed field and $n \geq 0$. Let $K \in \derCat{c}{\BA^n}{}$ and $j_1\colon U_1 \rightarrow \BA^n$ an open immersion. Let $i_2\colon H_2 \rightarrow \BA^n$ be the inclusion of a generic affine hyperplane. We put $U_2 := \BA^n - H_2$. 
	Then we have 
	\[
	j_{1!}j_{2*}(K) = j_{2*}j_{1!}(K)
	\]
	via the natural base change map.
\end{lemma}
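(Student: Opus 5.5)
The plan is to check that the base change map
\[
j_{1!}j_{2*}(K) \rightarrow j_{2*}j_{1!}(K)
\]
is an isomorphism separately on the open $U_2$ and on the closed complement $H_2$. On the left, $j_2$ denotes the base change $U_1\cap U_2 \rightarrow U_1$ and $K$ is restricted to $U_1\cap U_2$. Over $U_2$ there is nothing to do: both sides restrict to $j_{1!}(K|_{U_1\cap U_2})$, because $j_{2*}$ followed by restriction to $U_2$ is the identity, and restriction to an open commutes with $j_{1!}$. It therefore suffices to show that $i_2^*$ of the base change map is an isomorphism.

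For the restriction to $H_2$ I would use the localization triangle $i_{2*}i_2^! L \rightarrow L \rightarrow j_{2*}j_2^* L \xrightarrow{+}$, applied both on $\BA^n$ and on $U_1$. It identifies $i_2^*j_{2*}j_2^*L$ with the cone of the natural map $i_2^!L \rightarrow i_2^*L$. I apply this on $\BA^n$ to $L = j_{1!}(K)$, and on $U_1$ to $L = K|_{U_1}$. The key input is Lemma \ref{hypplanebasiclemma_LEM}. Since $H_2$ is generic, the canonical map
\[
i_2^*(L)(-1)[-2] \cong i_2^*(L)\otimes i_2^!(\Qbarl) \rightarrow i_2^!(L)
\]
is an isomorphism for each of the finitely many complexes involved. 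This uses that the proof of that lemma only requires universal local acyclicity of the pullback to the universal hyperplane, so the same generic choice works on the open $U_1$. Hence $i_2^*j_{2*}j_2^*L$ is the cone of the Gysin map $i_2^*L(-1)[-2] \rightarrow i_2^*L$, that is, of multiplication by the class of the (trivial) normal bundle. This is a functor of $i_2^*L$ built only from tensor operations.

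Next I would use that $i_2^*$ commutes with extension by zero: $i_2^*j_{1!} = j'_{1!}i_2'^*$, where $j'_1\colon U_1\cap H_2 \rightarrow H_2$ and $i'_2\colon U_1\cap H_2\rightarrow U_1$. The functor $j'_{1!}$ is exact and commutes with Tate twists, shifts and tensoring with $i_2^!(\Qbarl)$. So it carries the cone computed on $U_1$ for $K|_{U_1}$ to the cone computed on $\BA^n$ for $j_{1!}(K)$. Both sides of $i_2^*$ of the base change map are thus identified with $j'_{1!}$ of the same cone.

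The main obstacle is checking that the identification is induced by the base change map itself, not merely that the two sides are abstractly isomorphic. For this I would verify that the canonical map $i^*(-)\otimes i^!(\Qbarl)\rightarrow i^!(-)$ and the localization triangle are natural with respect to $j_{1!}$, i.e.\ compatible with the base change isomorphisms $i_2^*j_{1!}\cong j'_{1!}i_2'^*$ and $i_2^!j_{1!}\cong j'_{1!}i_2'^!$ (the latter holds by smooth, here open, base change). Once this naturality is established, the five lemma applied to the morphism of triangles finishes the proof.
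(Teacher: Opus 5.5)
Your strategy matches the paper's. You compare the localization triangle of $j_{1!}K$ on $\BA^n$ with $j_{1!}$ of the localization triangle of $K|_{U_1}$ (restricted to $H_2$ in your version). You then use Lemma \ref{hypplanebasiclemma_LEM} together with $i_2^*j_{1!}\cong j'_{1!}i_2'^*$, and conclude by the five lemma.

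The gap is in the step you use to resolve the main obstacle. There is no isomorphism $i_2^!j_{1!}\cong j'_{1!}i_2'^!$ coming from open or smooth base change. The formal exchanges available are $i_2^*j_{1!}\cong j'_{1!}i_2'^*$, $i_2^!j_{1*}\cong j'_{1*}i_2'^!$ and $j_1'^*i_2^!\cong i_2'^!j_1^*$. Moving $i_2^!$ past $j_{1!}$ is not among them, and it fails in general. Take $n=1$, $U_1=\BA^1\setminus\{0\}$ and $H_2=\{0\}$. Then $j'_{1!}i_2'^!=0$, while $i_2^!j_{1!}\Qbarl\cong i_2^*j_{1*}\Qbarl[-1]\neq 0$; correspondingly the lemma fails there, since $j_{1!}\Qbarl\neq j_{1*}\Qbarl$. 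Moreover, in your morphism of triangles on $H_2$ the middle map is an isomorphism. So the natural map $j'_{1!}i_2'^!K\rightarrow i_2^!j_{1!}K$ is an isomorphism if and only if the base change map of the lemma is. Invoking it as a formal base change therefore assumes the conclusion.

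The repair uses only your own ingredients and is exactly the paper's chain $j_{1!}i^!K\cong j_{1!}i^*K(-1)[-2]\cong i^*j_{1!}K(-1)[-2]\cong i^!j_{1!}K$. The natural map $j'_{1!}i_2'^!K\rightarrow i_2^!j_{1!}K$ is an isomorphism because, via the projection formula and $i_2^*j_{1!}\cong j'_{1!}i_2'^*$, it is intertwined with the two purity isomorphisms (for $K|_{U_1}$ and for $j_{1!}K$). That is precisely the naturality you wanted to check, and once you have it your cone/Gysin computation is superfluous.

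You can also bypass every compatibility check, which the paper's proof leaves implicit as well:
\begin{enumerate}
\item The cone $C$ of the base change map vanishes on $U_2$ and on $U_1$ (on $U_1$ both sides restrict to $j_{2*}(K|_{U_1\cap U_2})$). Hence $C$ is supported on $Z:=H_2\setminus U_1$.
\item Since anything of the form $j_{1!}(-)$ vanishes on $Z$, the restriction of $C$ to $Z$ is the restriction of $j_{2*}j_{1!}K$. That is the restriction to $Z$ of the cone of $i_2^!j_{1!}K\rightarrow i_2^*j_{1!}K$.
\item Both terms of that cone vanish on $Z$. The second does because $j_{1!}K$ is zero off $U_1$. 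The first does by the mere existence of the purity isomorphism $i_2^!j_{1!}K\cong i_2^*j_{1!}K(-1)[-2]$.
\end{enumerate}
Hence $C=0$.
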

\begin{proof}
	Note that we have
	\begin{align*}
		i^!(K) &= i^*(K)(-1)[-2] \\
		i^!j_!(K) & =i^*j_!(K)(-1)[-2].
	\end{align*}
	because the hyperplane $H$ is generic. We have the exact triangle
	\[
	i^!(K) \rightarrow K \rightarrow k_{*}(K) \xrightarrow{+}.
	\]
	We apply $j_!$ to obtain the triangle
	\[
	j_!i^!(K) \rightarrow j_!(K) \rightarrow j_!k_*(K) \xrightarrow{+}.
	\]
	Now we have
	\[
	j_!i^!(K) = j_!i^*(K)(-1)[-2] = i^*j_!(K)(-1)[-2] = i^!j_!(K).
	\]
	We can plug this into the triangle above and obtain
	\[
	i^!j_!(K) \rightarrow j_!(K) \rightarrow j_!k_*(K)\xrightarrow{+}
	\]
	Thus we obtain an isomorphism
	\[
	j_!k_*(K) = k_*j_!(K) 
	\]	
	from the five lemma. 
%
%
	
\end{proof}
\begin{lemma}\label{hypplanenonvanish_LEM}
	Let $k$ be an algebraically closed field and $n \geq 0$. Let $M \in \perv{\BA^n}$ be non-zero. For a generic $m$-hyperplane $H \subset \BA^n$, where $m \geq n$, we have 
	\[
	H^0(\BA^n - H, M) \neq 0.
	\]
\end{lemma}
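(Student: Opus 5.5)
I will argue by induction on $n$, using Lemma \ref{hypplanesurj_LEM}, Lemma \ref{hypplanecommute_LEM} and Lemma \ref{hypplanebasiclemma_LEM} to peel off one generic hyperplane at a time. Write $H = H_1\cup H_2$, with $H_1$ a generic affine hyperplane and $H_2$ a generic affine $(m-1)$-hyperplane, and put $U := \BA^n - H$. The case $n = 0$ is trivial: a nonzero perverse sheaf on a point is a nonzero vector space in degree $0$, and $H$ is empty there. For $n \geq 1$ I would aim for the stronger, inductively useful statement that $H^0(\BA^n - H, M) \neq 0$ and $H^i(\BA^n - H, M) = 0$ for $i \neq 0$ whenever $m \geq n$. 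Exactness of the functor $M \mapsto H^0(\BA^n - H, M)$ will be needed later anyway.

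\emph{Vanishing outside degree $0$.} The open $U$ is affine, so Artin vanishing gives $H^i(U, M) = 0$ for $i > 0$. For $i < 0$ I would use $H^i(U, M) = H^i_c(\BA^n, j_*j^*M)$ twisted by duality, or more directly compare $R\Gamma(U,M)$ with $R\Gamma_c(U,M)$. For a generic arrangement with $m \geq n$, the map $j_!j^*M \rightarrow j_*j^*M$ should have a cone whose global sections vanish. I would check this with Lemma \ref{hypplanecommute_LEM} by iterating $j_{1!}j_{2*} = j_{2*}j_{1!}$, and then apply Artin vanishing for compact supports in degrees $< 0$.

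\emph{Nonvanishing.} I would use the localization triangle
\[
j_{1!}j_1^*(N) \rightarrow N \rightarrow i_{1*}i_1^*(N) \xrightarrow{+}
\]
applied to $N := j_{2*}j_2^*(M)$ on $\BA^n$. Lemma \ref{hypplanecommute_LEM} lets the two extensions commute, which gives
\[
R\Gamma(U, M) \cong R\Gamma(\BA^n - H_2, M) \ \text{``minus''}\ R\Gamma(H_1 - H_2, i_1^*M)
\]
in the form of a triangle
\[
R\Gamma(U,M) \to R\Gamma(\BA^n - H_2, M) \to R\Gamma(H_1 - (H_1\cap H_2), i_1^!M)[1]\xrightarrow{+}.
\]
By Lemma \ref{hypplanebasiclemma_LEM}, $i_1^!M = i_1^*M(-1)[-2]$, and $i_1^*M[-1]$ is perverse on $H_1 \cong \BA^{n-1}$ by genericity. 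Moreover $H_1 \cap H_2$ is a generic $(m-1)$-hyperplane in $H_1$ with $m - 1 \geq n - 1$. The induction hypothesis then concentrates the third term in a single degree. Taking the long exact sequence together with the vanishing statements, we get
\[
\chi(U, M) = \chi(\BA^n - H_2, M) - \chi(H_1 - H_1\cap H_2, i_1^*M[-1]).
\]
So $\dim H^0(U, M)$ equals an alternating count, and it suffices to show $R\Gamma(U,M) \neq 0$.

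\emph{Showing $R\Gamma(U,M)\neq 0$ (the main obstacle).} Here I would use Lemma \ref{hypplanesurj_LEM}: the map $j_!j^*M \rightarrow M$ is surjective in $\perv{\BA^n}$, so $j^*M \neq 0$. I would then induct on the support of $M$. If $\dim \mathrm{supp}(M) = d < n$, choose a generic linear projection $p\colon \BA^n \rightarrow \BA^d$ that is finite on the support. Generic hyperplanes in $\BA^d$ pull back to hyperplanes, so the claim reduces to $p_*M$, which is perverse and nonzero. If $d = n$, $M$ has a generic lisse part $F[n]$ on a dense open set. The hardest point is to prove that the Euler characteristic of $F[n]$ on the complement of a generic arrangement of $m \geq n$ hyperplanes is nonzero. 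I would try to obtain this from the recursion above by a triangular-matrix argument: the top term contributes $\mathrm{rk}(F)$ times the count of bounded chambers of the arrangement, which is positive once $m \geq n$. For lower-dimensional subquotients, the induction hypothesis together with additivity of $\chi$ over a Jordan–Hölder series gives nonzero contributions of the same sign.
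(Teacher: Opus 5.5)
\textbf{There is a genuine gap, and it is the strengthened statement you carry through the induction.} The claim $H^i(\BA^n - H, M)=0$ for $i\neq 0$ is false. For $M=\Qbarl[n]$ with $n\geq 1$ one has $H^{-n}(\BA^n-H,M)=H^0(\BA^n-H,\Qbarl)\neq 0$.

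Artin vanishing on the affine $U$ only kills degrees $i>0$. Vanishing in degrees $i<0$ is a statement about compact supports, and $j_!j^*M\to j_*j^*M$ does not become an isomorphism on global sections. Already for $n=m=1$ its cone is $i_*i^*j_*\Qbarl[1]$, whose global sections are $\Qbarl[1]\oplus\Qbarl(-1)$.

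So the reduction to $R\Gamma(U,M)\neq 0$, and from there to Euler characteristics, has no basis. In fact $\chi$ cannot detect the statement at all. For $m=n$, a generic arrangement becomes the coordinate hyperplanes after an affine change of coordinates, so $U\cong\BG_m^n$. Then $\chi(U,\Qbarl[n])=0$, while $H^0(U,\Qbarl[n])=H^n(\BG_m^n,\Qbarl)\neq 0$. Correspondingly, your count of bounded chambers is zero for $m=n$ (the chambers are orthants), not positive.

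The remaining steps are heuristic. No sign control is given for the contributions of subquotients. The projection step also fails: pulling back $m>d$ hyperplanes along a projection $\BA^n\to\BA^d$ never yields a generic $m$-hyperplane of $\BA^n$, since all their normal vectors lie in a $d$-dimensional subspace.

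\textbf{What you need is already in your triangle, read in degree $0$ instead of through $\chi$.} Correctly oriented, the triangle is
\[
R\Gamma(H_1 - H_1\cap H_2, i_1^!M) \rightarrow R\Gamma(\BA^n - H_2, M) \rightarrow R\Gamma(U, M) \xrightarrow{+}.
\]
Artin vanishing on the affine $\BA^n - H_2$ gives $H^1(\BA^n - H_2, M)=0$. Together with Lemma \ref{hypplanebasiclemma_LEM}, this yields a surjection
\[
H^0(U, M) \twoheadrightarrow H^1(H_1 - H_1\cap H_2, i_1^!M) = H^0(H_1 - H_1\cap H_2, i_1^*M[-1])(-1).
\]
The target is nonzero by the original, unstrengthened induction hypothesis, provided $i_1^*M\neq 0$. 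Here it is applied to the perverse sheaf $i_1^*M[-1]$ on $H_1\cong\BA^{n-1}$ and the generic $(m-1)$-hyperplane $H_1\cap H_2$. This is exactly the paper's proof. The paper phrases it as the short exact sequence of perverse sheaves $0\to j_{2*}M\to j_*M\to j_{2*}i_{1*}(i_1^*M[-1])(-1)\to 0$, followed by Artin vanishing on $\BA^n$.

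You also omit the case $i_1^*M=0$ for a generic hyperplane, where induction gives nothing. In that case $M$ has finite support. Then $H^0(\BA^n - H, M)$ contains the stalk of $M$ at any support point not on $H$, and such a point exists for $H$ in a dense open subset of the parameter space.
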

\begin{proof}
	Note that once we have established the case $m = n$, the other cases follow trivially by considering the pushforward to the complement of a $n$-hyperplane.
	
	We argue by induction on $n$. If $n = 0$, the statement is trivial. Let $i_1\colon H_1 \rightarrow \BA^n$ be a generic hyperplane and $i_2\colon H_2 \rightarrow \BA^n$ a generic $n$ hyperplane. We denote by $j_1$ and $j_2$ the inclusions of the complement into $\BA^n$. We denote by $j$ and $i$ the open immersion/closed immersion associated to the union of $H_1\cup H_2$. We distinguish two cases.
	
	Suppose $i_1^*(M) = 0$. Then $M$ is punctual. In this case, the statement is true whenever the $n$-hyperplane avoids one point in the support of $M$. This is true for all hyperplanes outside a proper closed $X_m$, since not containing a point is a condition which gives a dense open subset of $X_m$.
	
	Thus we may assume $i_1^*(M) \neq 0$. We have
	\[
	i_1^*(M)(-1)[-2] = i_1^!(M).
	\]
	Artin's vanishing theorem implies $i_1^*(M)[-1]$ is perverse. We have an exact sequence of perverse sheaves
	\[
	0 \rightarrow M \rightarrow j_{1*}(M) \rightarrow \pervCoh{-1}{i_1^*(M)} \rightarrow 0.
	\]
	The functor $j_{2*}$ is $t$-exact by Artin's vanishing theorem, so we obtain a short exact sequence of perverse sheaves
	\[
	0 \rightarrow j_{2*}(M) \rightarrow j_*(M) \rightarrow {j_2}_*\big(\pervCoh{-1}{i_1^*(M)}\big) \rightarrow 0.
	\]
	Artin's vanishing theorem implies that we obtain a surjection
	\[
	H^0(U, M) \rightarrow H^0\big(H_1, j_{2*}\big(\pervCoh{-1}{i_1^*(M)}\big)\big)
	\]
	We know the term on the right is non-zero for a generic hyperplane $H_2$ by induction on dimension, hence the claim. 
\end{proof}
\begin{lemma}\label{hypplanecollect_LEM}
	Let $k$ be a field, $n \geq 0$, and consider $M \in \perv{\BA^n}$. Let $m \geq 1$. Let $H_1$ be a generic affine hyperplane and $H_2$ a generic affine $(m - 1)$-hyperplane. 
	\begin{enumerate}
		\item 
		We have
		\[
		j_{1*}j_{2!}(M) = j_{2!}j_{1*}(M) .
		\]
		\item If $m \geq n$ and $M$ is non-zero, then we have
		\[
		H^0(U_2, M) \neq 0.
		\]
		\item The morphism
		\[
		{j_{1!}}(M) \rightarrow M
		\]
		is a surjection of perverse sheaves. 
	\end{enumerate}
\end{lemma}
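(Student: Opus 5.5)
The plan is to reduce all three statements to the algebraically closed case, where they are exactly Lemma \ref{hypplanecommute_LEM}, Lemma \ref{hypplanenonvanish_LEM}, and Lemma \ref{hypplanesurj_LEM}. Here ``generic'' means that we base change to the function field $k_m$ of the parameter space $X_m$ and use the universal hyperplane of Remark \ref{hypplane_REM}. Each assertion compares two constructible complexes, or asks that a morphism or cohomology group be nonzero or an isomorphism, after restricting to $\BA^n_{k_m}$. Formation of $j_{1*}$, $j_{2!}$, and $H^0(U_2,-)$ over the parameter space commutes with base change generically, by Theorem \ref{genbc_THM}. So it suffices to check each statement after base change to $\overline{k_m}$, or equivalently over geometric points $\overline{x}$ in a dense open of $X_m$.

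For (1), there is a natural base change morphism $j_{2!}j_{1*}(M) \rightarrow j_{1*}j_{2!}(M)$ on $\BA^n \times X_m$. By generic base change for $j_{1*}$ and $j_{2*}$, its restriction to the generic point of $X_m$ is computed fiberwise. Its cone vanishes if it vanishes on the geometric generic fiber, by Corollary \ref{isocrit_COR}. On the geometric generic fiber we are over an algebraically closed field. There $H_1$ is a generic hyperplane for the pulled-back complex, which is the setting of Lemma \ref{hypplanecommute_LEM} with the roles of the two immersions as stated. We apply that lemma, after first checking that its genericity hypothesis holds on a dense open by the ULA argument in Lemma \ref{hypplanebasiclemma_LEM}.

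For (2), generic base change applied to the projection of $U_2$ to the parameter space gives a dense open of $X_{m-1}$ over which $H^0(U_{2},M)$ is lisse and compatible with base change. Non-vanishing over $k_m$ then follows from non-vanishing at one, equivalently every, geometric point of that open, and Lemma \ref{hypplanenonvanish_LEM} supplies this for generic geometric points. One wrinkle is that Lemma \ref{hypplanenonvanish_LEM} asks for $m \geq n$ hyperplanes while $H_2$ has $m-1$. I would either read the statement with the indexing of that lemma or enlarge $m$. This index bookkeeping needs checking against the intended application, and I expect it to be the main place where care is needed.

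For (3), surjectivity means the cokernel ${}^p\mathscr H^0$ of the cone vanishes. Relative perverse cohomology over the parameter space commutes with passage to fibers, because restriction to fibers is $t$-exact on relatively perverse ULA sheaves. So vanishing of the cokernel on the geometric generic fiber, which is Lemma \ref{hypplanesurj_LEM} with $m=1$, gives vanishing over $k_m$. If $k$ is not perfect, we first pass to a perfection using the topological invariance of the proetale site, which is harmless.
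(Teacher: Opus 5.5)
Your proposal is essentially the paper's proof, which simply cites Lemma \ref{hypplanecommute_LEM}, Lemma \ref{hypplanenonvanish_LEM} and Lemma \ref{hypplanesurj_LEM} for (1), (2), (3). Your passage to the geometric generic point $\overline{k_m}$ only makes explicit what the paper leaves implicit; for this, pullback along $\overline{k_m}/k_m$ being conservative, perverse $t$-exact and compatible with $j_*$, $j_!$ already suffices, so Corollary \ref{isocrit_COR} and the perfection step are not needed.

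Your suspicion about the indexing in (2) is justified, and it cannot be repaired by enlarging $m$. For $m=n\geq 1$, the $n-1$ generic hyperplanes of $H_2$ give $U_2\cong \BA^1\times\BG_m^{n-1}$, so $H^0(U_2,\Qbarl[n])=H^n(U_2,\Qbarl)=0$. The hypothesis must therefore be read as $m>n$ (i.e. $m-1\geq n$), which is exactly the range used in Theorem \ref{hypplanefunctor_THM}.
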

\begin{proof}
	The first point follows from Lemma \ref{hypplanecommute_LEM}, the second point follows from Lemma \ref{hypplanenonvanish_LEM}, and the third point is given by Lemma \ref{hypplanesurj_LEM}.
\end{proof}
\begin{theorem}\label{hypplanefunctor_THM}
	Let $k$ be a field and $n \geq 1$. For each $m > n$, there exists an exact, $\Qbarl$-linear, and faithful functor
	\[
	\alpha_m\colon \perv{\BA^n_k} \rightarrow \Qbarl-\text{fdVec}.
	\]
	The functor is given by
	\[
	\alpha_m(M) := H^0(U_2, {j_1}_!(M_{k_{m}})).
	\]
\end{theorem}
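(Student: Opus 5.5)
We construct $\alpha_m$ in three steps: first realise the generic configuration geometrically, then check exactness, then faithfulness. Base change is harmless here. Pullback $M \mapsto M_{k_m}$ along $\BA^n_{k_m} \rightarrow \BA^n_k$ is $t$-exact and faithful, being pullback along a field extension. After passing to an algebraic closure of $k_m$ if needed, we may therefore work over an algebraically closed field. There $H_1$ is a single generic affine hyperplane and $H_2$ a generic affine $(m-1)$-hyperplane. Since they come from the universal family over $X_m$, genericity is what lets us invoke Lemma \ref{hypplanecollect_LEM}: the statements there hold for $H_{k_m}$ because they hold over a dense open of $X_m$, by Theorem \ref{genbc_THM} and Lemma \ref{hypplanebasiclemma_LEM}. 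Finite dimensionality of $\alpha_m(M)$ follows from constructibility of cohomology over a field.

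For exactness, write $\alpha_m(M) = H^0(\BA^n, j_{2*}j_{1!}(M))$, where $j_{2*}$ is pushforward from $U_2$. The functor $j_{1!}$ is $t$-exact because $j_1$ is an affine open immersion (Artin vanishing). By Lemma \ref{hypplanecollect_LEM}(1) we have $j_{2*}j_{1!} = j_{1!}j_{2*}$, up to the harmless interchange of roles, and $j_{2*}$ is also $t$-exact by Artin vanishing, since $U_2$ is affine. Hence $N := j_{2*}j_{1!}(M)$ is perverse, and $N$ is the $!$-extension along $j_1$ of a perverse sheaf on $U_1 \cap U_2$. It remains to show that $H^i(\BA^n, N) = 0$ for $i \neq 0$, because then $H^0$ is exact on short exact sequences. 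Concretely, $H^*(\BA^n, j_{1!}j_{2*}M) = H^*_c$ along $H_1$ of the affine scheme $U_2$. Artin vanishing gives the bound in positive degrees, since $N$ is perverse on affine space. The negative-degree bound comes from the dual statement: $j_{1!}$ of a sheaf on an affine piece, in a generic direction, has cohomology bounded below, by Lemma \ref{hypplanebasiclemma_LEM} and Verdier duality. This is exactly Beilinson's basic lemma setup, so $N$ is $H^0$-acyclic. I expect this cohomological-amplitude check to be the main technical point. I would first try to reduce it to compactly supported Artin vanishing on the affine $U_2$ relative to the generic hyperplane $H_1$, using the $i^* \cong i^!(1)[2]$ identity.

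For faithfulness, let $\phi\colon M \rightarrow M'$ be nonzero. Since $\alpha_m$ is exact, it suffices to show $\alpha_m(I) \neq 0$ for the nonzero image $I$ of $\phi$. By Lemma \ref{hypplanecollect_LEM}(3), $j_{1!}(I) \rightarrow I$ is surjective, so $j_{1!}(I)$ is nonzero. It is also not supported on $H_1$. Apply Lemma \ref{hypplanecollect_LEM}(2), with $m-1 \geq n$ (this is where $m > n$ is used), to $j_{1!}(I)$ viewed as a nonzero perverse sheaf on $\BA^n$. Here we need the genericity of $H_2$ relative to the already-chosen $H_1$, which is automatic for the universal ordered configuration over $k_m$. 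The lemma gives $H^0(U_2, j_{1!}(I)) \neq 0$, i.e. $\alpha_m(I) \neq 0$. Finally, $\Qbarl$-linearity is clear since all the functors involved are $\Qbarl$-linear.
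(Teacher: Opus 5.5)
\paragraph{The gap.} The gap is the negative-degree half of exactness. Put $Q:=j_{2*}M$, so that $\alpha_m(M)=H^0(\BA^n,j_{1!}Q)$. You need $H^i(\BA^n,j_{1!}Q)=0$ for $i<0$. You flag this as the main point but do not prove it, and the tools you name cannot give it.
\begin{itemize}
\item Lemma \ref{hypplanebasiclemma_LEM} is a statement local along $H_1$.
\item Verdier duality only converts the claim into the equally hard statement $H^j_c(\BA^n,j_{1*}\mathbb{D}Q)=0$ for $j>0$.
\item Artin vanishing on $U_2$ yields only upper bounds. Dual Artin vanishing on $U_2$ concerns $H^*_c(U_2,-)$, which is not the group you have.
\end{itemize}
The lower bound is an affine Lefschetz statement, governed by how $H_1$ sits at infinity. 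Here is an example showing that nothing local along $H_1\subset\BA^n$ suffices. Take $L=\{y=0\}\subset\BA^2$, $P=i_{L*}\Qbarl[1]$, $H_1=\{y=1\}$, and $i\colon H_1\hookrightarrow\BA^2$. Then $i^*P$ and $i^!P$ both vanish, so $i^*P\cong i^!P(1)[2]$ holds trivially. Yet $j_{1!}j_1^*P=P$ because $L\cap H_1=\emptyset$, and $H^{-1}(\BA^2,P)=H^0(L,\Qbarl)\neq 0$. The obstruction is that the closures of $L$ and $H_1$ meet on the line at infinity.

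\paragraph{The missing idea.} It is the one in the paper's proof. Use $j_{2*}j_{1!}\cong j_{1!}j_{2*}$, then identify $H^*(\BA^n,j_{1!}Q)$ with compactly supported cohomology of a perverse sheaf on an affine scheme, and apply dual Artin vanishing. The paper writes this as $H^*_c(U_1,j_{2*}M)$. Since $\BA^n$ is not proper, the way to make the step precise is to compactify:
\begin{enumerate}
\item Let $\overline{j}\colon\BA^n\hookrightarrow\mathbb{P}^n$. The closure $\overline{H}_1$ is a generic hyperplane of $\mathbb{P}^n$, generic over a field of definition of $H_2$.
\item Run the argument of Lemmas \ref{hypplanebasiclemma_LEM} and \ref{hypplanecommute_LEM} on $\mathbb{P}^n$ for $Q$ and $\overline{j}_*Q$. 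With $\overline{j}_H\colon H_1\hookrightarrow\overline{H}_1$, this gives the base change isomorphism $i_{\overline{H}_1}^*\overline{j}_*Q\cong\overline{j}_{H*}i_{H_1}^*Q$.
\item Apply $i_{\overline{H}_1}^*\overline{j}_*$ to the triangle $j_{1!}j_1^*Q\to Q\to i_{H_1*}i_{H_1}^*Q$. This shows $i_{\overline{H}_1}^*\overline{j}_*j_{1!}Q=0$.
\item Set $\overline{j}_1\colon\mathbb{P}^n-\overline{H}_1\hookrightarrow\mathbb{P}^n$ and $\overline{j}'\colon U_1\hookrightarrow\mathbb{P}^n-\overline{H}_1$. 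The previous step gives $\overline{j}_*j_{1!}Q\cong\overline{j}_{1!}\overline{j}'_*(Q|_{U_1})$, hence $H^*(\BA^n,j_{1!}Q)\cong H^*_c(\mathbb{P}^n-\overline{H}_1,\overline{j}'_*(Q|_{U_1}))$.
\item Here $\mathbb{P}^n-\overline{H}_1\cong\BA^n$ is affine, and $\overline{j}'_*(Q|_{U_1})$ is perverse because $\overline{j}'$ is an affine open immersion. Dual Artin vanishing now gives the bound.
\end{enumerate}
Equivalently, you may cite Beilinson's basic lemma in its standard form, with genericity taken in a projective compactification of the affine $U_2$.

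\paragraph{Faithfulness.} Your faithfulness step is fine and differs only mildly from the paper. You apply Lemma \ref{hypplanecollect_LEM}(2) to $j_{1!}(I)$, which needs $H_2$ generic over a field of definition of $H_1$. The paper instead applies it to $I$ and uses Artin vanishing on $U_2$ to get $H^0(U_2,j_{1!}I)\twoheadrightarrow H^0(U_2,I)$. In both versions faithfulness is deduced from $\alpha_m(I)\neq 0$ via exactness, so the gap above carries over to faithfulness.
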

\begin{proof}
	Suppose $M \in \perv{\BA^n}$ is non-zero. The morphism
	\[
	{j_1}_!(M_{k_{m}}) \rightarrow M_{k_m}
	\]
	is surjective by point (3) in Lemma \ref{hypplanecollect_LEM}. Therefore, Artin vanishing implies that we have a surjection
	\[
	H^0(U_2, {j_1}_!(M_{k_m})) \rightarrow H^0(U_2, M_{k_m}).
	\]
	The term on the right does not vanish by point (2) of Lemma \ref{hypplanecollect_LEM}, so we obtain $\alpha_m(M) \neq 0$. Therefore, the above functor is faithful. Note that we have
	\[
%
	H^*(U_2, j_{1!}(M))= H^*(\BA^n, j_{2*}j_{1!}(M)) = H^*(\BA^n, j_{1!}j_{2*}(M)) = H^*_c(U_1, j_{2*}(M))
	\]
	by point (1) in Lemma \ref{hypplanecollect_LEM}. Artin's vanishing theorem implies that both sides have to be concentrated in degree zero. Therefore, the functor $\alpha_m$ is exact.
\end{proof}

\begin{proof}[Proof. (Thm. \ref{fibfuncperv_THM})]
	We can find a cover $X = U_1\cup \ldots \cup U_n$ by affine opens with inclusions $j_i\colon U_i\rightarrow X$. Given functors $\alpha_i\colon \perv{U_i} \rightarrow \Qbarl-\text{fdVec}$ for $1 \leq i \leq n$ as in the theorem, we can form the direct sum
	\[
	\alpha(M) := \bigoplus_{i = 1}^n \alpha_i(j^*_i(M)).
	\]
	Since a perverse sheaf $M$ is zero if and only if $j_i^*(M)$ is zero for all $1\leq i\leq n$, this functor is faithful. It is exact because the functor $j^*_i$ is exact for $1\leq i \leq n$. 
	
	Thus we may assume $X$ is affine. We can find a closed immersion $i\colon X \rightarrow \BA^n$ for some $n \geq 0$ because $X$ is of finite type over $k$. We form the composite with the functor for $\BA^n$ constructed in Theorem \ref{hypplanefunctor_THM}
	\[
	\perv{X} \xrightarrow{i_*} \perv{\BA^n} \xrightarrow{\alpha_{n + 1}} \Qbarl-\text{fdVec}.
	\]
	Since $i_*$ and $\alpha_{n + 1}$ are exact, faithful, and $\Qbarl$-linear, the composite satisfies these conditions as well. 
\end{proof}

	\subsection{Monodromy filtration}
	We often make use of the following lemma.
	\begin{lemma}
		Let $\CA$ be an abelian category and $N\colon M \rightarrow M$ a nilpotent endomorphism of an object $M \in \CA$. Let $F\colon \CA\rightarrow \CB$ be an exact functor of abelian categories. The formation of the monodromy filtration commutes with $F$, i.e. we have
		\[
		F(\text{Fil}^M_i(M)) = \text{Fil}^M_i(F(M))
		\]
		and
		\[
		F(\text{Gr}^M_i(M)) = \text{Gr}^M_i(F(M)).
		\]
	\end{lemma}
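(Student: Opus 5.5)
The monodromy filtration is characterized by a universal property, and I plan to transport that characterization along $F$ and conclude by uniqueness. Recall the definition: for a nilpotent $N$ on $M$ (ignoring the Tate twist, which an exact functor carries along anyway), the monodromy filtration is the \emph{unique} finite increasing filtration $\text{Fil}^M_\bullet(M)$ such that
\[
N(\text{Fil}^M_i(M)) \subseteq \text{Fil}^M_{i-2}(M),
\]
and such that $N^i\colon \text{Gr}^M_i(M) \rightarrow \text{Gr}^M_{-i}(M)$ is an isomorphism for all $i \geq 0$. Existence and uniqueness in an arbitrary abelian category is classical (Deligne, Weil II, 1.6). Concretely, the filtration is built by induction on the nilpotency index $r$ with $N^{r+1}=0$. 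One sets $\text{Fil}_{r}=M$, $\text{Fil}_{-r-1}=0$, $\text{Fil}_{-r}=\text{im}(N^r)$ and $\text{Fil}_{r-1}=\ker(N^r)$. The induced nilpotent endomorphism on $\ker(N^r)/\text{im}(N^r)$ has smaller index, and one recurses.

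First step: observe that every ingredient of this construction is built from kernels, images, quotients and preimages of subobjects under the compositions $N^j$. Since $F$ is exact (and additive, so $F(N^j)=F(N)^j$), $F$ preserves each of these operations. For instance, $F(\ker N^r)=\ker F(N)^r$ and $F(\text{im} N^r)=\text{im} F(N)^r$. Also, $F(N)$ is nilpotent of index at most $r$.

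Second step: argue by induction on $r$. The base case $r=0$ gives $N=0$, and both filtrations are the trivial one jumping at $0$. For the inductive step, exactness gives
\[
F(\ker N^r/\text{im}N^r)=\ker F(N)^r/\text{im} F(N)^r,
\]
compatibly with the induced endomorphisms. By induction, $F$ carries the monodromy filtration of the smaller piece to that of its image. Pulling back along the quotient map commutes with $F$ by exactness, so $F(\text{Fil}^M_i(M))=\text{Fil}^M_i(F(M))$ for all $i$.

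Alternatively, one can avoid tracking the recursion. Apply $F$ to $\text{Fil}^M_\bullet(M)$. Exactness makes $F(\text{Fil}_i)$ a filtration of $F(M)$ with $F(\text{Gr}_i)=\text{Gr}_i$ of that filtration. Functoriality gives $F(N)F(\text{Fil}_i)\subseteq F(\text{Fil}_{i-2})$, and $F$ sends the isomorphisms $N^i$ on graded pieces to isomorphisms. Uniqueness then forces equality with $\text{Fil}^M_\bullet(F(M))$. The statement about graded pieces follows immediately from exactness, since
\[
F(\text{Fil}_i/\text{Fil}_{i-1})=F(\text{Fil}_i)/F(\text{Fil}_{i-1}).
\]

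The only potential obstacle is the uniqueness of the monodromy filtration in a general abelian category, as opposed to vector spaces. I would cite Deligne, or re-prove it by the same induction on the nilpotency index: any filtration with the stated properties must have $\text{Fil}_{-r}=\text{im}N^r$ and $\text{Fil}_{r-1}=\ker N^r$, and then one recurses on the subquotient.
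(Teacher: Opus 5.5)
Your proof is correct. Your "alternative" route is exactly the paper's argument: apply $F$ to $\text{Fil}^M_\bullet(M)$, use exactness to check that $F(N)$ lowers the induced filtration by $2$ and that $F(N)^i$ induces isomorphisms $\text{Gr}_i \to \text{Gr}_{-i}$, then conclude by uniqueness of the monodromy filtration in an abelian category (Weil II, Prop.~1.6.1). The explicit induction on the nilpotency bound is a valid, more hands-on version of the same idea, but it is not needed once that uniqueness is cited.
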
 
	\begin{proof}
		Since the functor $F$ is exact, the formation of the graded commutes with $F$. Let $G^i(F(M))$ be the filtration on $F(M)$ induces by the monodromy filtration of $M$. The nilpotent operator $F(N)$ induces isomorphism
		\[
		F(N)^i\colon \text{Gr}^G_{i}(F(M)) \rightarrow \text{Gr}^G_{-i}(F(M)).
		\]
		for all $i \geq 0$. By uniqueness of the monodromy filtration (see \cite[Prop.~1.6.1]{DeligneWeilII}), we see that $G^i(F(M))$ is the monodromy filtration of $F(M)$.
	\end{proof}
	For example, it allows us to prove the following proposition:
	\begin{proposition}\label{tensorformula_PROP} Let $K$ be a field of characteristic zero. Let $\CA$ be an abelian category enriched over $K$-vector spaces. We assume $\CA$ admits an exact and faithful $K$-linear functor
	\[
		\alpha\colon \CA \rightarrow K-\text{fdVec}.
	\]
	Let $M \in \CA$ be an object with a nilpotent operator $N\colon M(1) \rightarrow M$. Let $V$ be a finite dimensional $K$-vector space with a nilpotent operator $N'\colon V(1) \rightarrow V$. We equip $M\otimes V$ with the monodromy operator $N\otimes 1 + 1\otimes N'$. The monodromy filtration is given by the tensor product formula
		\[
		\text{Fil}^M_k(M\otimes V) = \sum_{i + j = k} \text{Fil}^M_i(M)\otimes \text{Fil}^M_j(V). 
		\]
		The graded pieces of the monodromy filtration satisfy 
		\[
		\text{Gr}^M_k(M\otimes V) = \sum_{i + j = k} \text{Gr}^M_i(M)\otimes \text{Gr}^m_j(V). 
		\]
		Let $P_i, P'_j, P''_k$ be the primitive parts of the monodromy filtrations on $M$, $V$, and $M\otimes V$ respectively. Then we have
		\[
		P''_{k} = \bigoplus_{(i, j) \in I_k} P_{i}\otimes P'_{j} ({k - i - j}/{2}).
		\]
		where
		\[
		I_j := \big\{(j', j'') ~\big|~ {|j' - j''| \leq j \leq |j' + j''|,~ j \equiv j' + j''\text{ mod }2}\big\}
		\]
	\end{proposition}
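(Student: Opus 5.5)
The plan is to reduce everything to finite-dimensional vector spaces through the faithful exact functor $\alpha$, combined with the preceding lemma that monodromy filtrations commute with exact functors. First I have to make sense of $M\otimes V$ inside $\CA$. Since $\CA$ is $K$-linear and $V$ is finite dimensional, I choose a basis and set $M\otimes V := M^{\oplus \dim V}$. The operator $1\otimes N'$ is then the matrix of $N'$ acting through the $K$-linear structure. Both $N\otimes 1$ and $1\otimes N'$ are nilpotent and commute, so their sum $N''$ is nilpotent, and the monodromy filtration of $(M\otimes V, N'')$ exists in $\CA$ by \cite[Prop.~1.6.1]{DeligneWeilII}. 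Because $\alpha$ is additive and $K$-linear, it identifies $\alpha(M\otimes V)$ with $\alpha(M)\otimes_K V$. Under this identification $\alpha(N'')$ becomes $\alpha(N)\otimes 1 + 1\otimes N'$. The subobjects $\text{Fil}^M_i(M)\otimes \text{Fil}^M_j(V)\subseteq M\otimes V$ and their sum $G_k$ are formed in $\CA$, and $\alpha$ carries them to the corresponding subspaces of $\alpha(M)\otimes V$.

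Second, I verify the tensor formula in $K$-fdVec. Both sides of each formula decompose compatibly, so by the Jacobson--Morozov theorem it suffices to treat single Jordan blocks. For such blocks the claim follows from $\mathfrak{sl}_2$-representation theory in characteristic zero. The monodromy filtration is the weight filtration for the $h$-grading, and the Clebsch--Gordan decomposition $V_a\otimes V_b = \bigoplus V_c$, with $|a-b|\le c\le a+b$ and $c\equiv a+b \bmod 2$, gives both the graded formula and the primitive decomposition indexed by $I_k$. The Tate twists $(k-i-j)/2$ record how far down the $N''$-strings the lowest weight vectors sit.

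Third, I transport the result back to $\CA$. By the vector-space case and the preceding lemma, $\alpha(G_k) = \text{Fil}^M_k(\alpha(M\otimes V)) = \alpha(\text{Fil}^M_k(M\otimes V))$. The filtration $G_\bullet$ satisfies $N''G_k\subseteq G_{k-2}$ in $\CA$. Its graded pieces map isomorphically under $N''^k\colon \text{Gr}^G_k\to \text{Gr}^G_{-k}$, because $\alpha$ is exact and faithful, and a faithful exact functor reflects isomorphisms since it kills the kernel and cokernel. By uniqueness of the monodromy filtration, $G_\bullet = \text{Fil}^M_\bullet(M\otimes V)$.

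It remains to get the statements for graded pieces and primitive parts. I expect this to be the main obstacle, because identifying the graded pieces requires a natural map in $\CA$, not just an equality after applying $\alpha$. For this I would use the natural morphism $\bigoplus_{i+j=k}\text{Gr}_i(M)\otimes\text{Gr}_j(V)\to\text{Gr}_k(M\otimes V)$ induced by the inclusions. This morphism becomes an isomorphism after applying $\alpha$, so it is already an isomorphism in $\CA$. The primitive parts are handled the same way: I take the maps $P_i\otimes P'_j\to \text{Gr}$ given by applying the appropriate combinations of $N''$, in the standard Clebsch--Gordan form, and check bijectivity after applying $\alpha$.
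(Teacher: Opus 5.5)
Your proposal is correct and follows the paper's proof. Like the paper, you pass through $\alpha$ to the vector-space tensor product formula (which you re-derive via $\mathfrak{sl}_2$ and Clebsch--Gordan rather than citing \cite[Prop.~1.6.9]{DeligneWeilII}), pull the filtration equality back because a faithful exact functor detects equality of subobjects and isomorphisms, and obtain the graded pieces and primitive parts from natural morphisms in $\CA$ checked after applying $\alpha$. One small correction: the Clebsch--Gordan maps $P_i\otimes P'_j\to P''_k$ are built from the operators $N^r\otimes N'^{s-r}$ with $s=(i+j-k)/2$, not from polynomials in $N''$ alone.
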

	\begin{proof}
		The formation of the monodromy filtration commutes with exact functors. Note that we have
		\[
		\alpha(M\otimes V) = \alpha(M)\otimes V
		\]
		because the functor is $\Qbarl$-linear. Thus we may apply the tensor product formula of \cite[Prop.~1.6.9]{DeligneWeilII} to $\alpha(M\otimes V)$. One can now deduce the three claims because the functor is faithful and exact.
		
		Let us indicate how to prove that the monodromy filtration and the tensor product filtration agree. We are given two filtrations on $M\otimes V$ such that the induced filtrations on $\alpha(M\otimes V)$ agree. This implies that the filtrations of $M\otimes V$ are equal because the functor is faithful and exact. 
		
		To determine the graded pieces and the primitive parts, one can construct morphisms of objects in $\CA$ and verify that they are isomorphisms by applying $\alpha$. 
	\end{proof}
\subsection{Generic base change theorems} The goal of this section is to prove generic base change, generic universal local acyclicity, and generic existence of an internal mapping object for constructible proetale complexes on qcqs schemes. We prove this theorem by reducing to the case of discrete coefficients, where these theorems are known.
\subsubsection{Generic base-change}

The goal of the following remark is to clarify the definitions used in the subsequent theorems.  
\begin{remark}\label{genbcprep_REM}
	Let $X$ be a qcqs scheme. In the following theorems, we consider constructible complexes on $X$ over one of two base rings $R$. In the first case, $R$ is a Noetherian ring with an ideal $I \subset R$. We assume $R$ is complete with respect to the ideal $I$ and we equip $R$ with the $I$-adic topology. We also assume the kernel of the canonical morphism $\BZ \rightarrow R$ is invertible on $X$.
	
	In the second case, we consider the ring $R = \Qbarl$. In this case, we can write
	\[
	\Qbarl = \varinjlim K
	\]  
	as a filtered colimit over finite extensions $K/\BQ_\ell$. The fields $K$ are locally compact Hausdorff spaces and we equip $\Qbarl$ with the colimit topology. In both cases, a category of constructible complexes is constructed in \cite{hemo2023constructiblesheavesschemes}.
	
	We remark that  \cite[Prop.~5.2]{hemo2023constructiblesheavesschemes} and \cite[Cor.~2.4]{hansen2023relativeperversity} show that for each $K \in \text{D}_c^b(X, \Qbarl)$, there is a finite extension $K/\BQ_\ell$ with ring of integers $\CO$, and a complex $K_0 \in \text{D}_c^b(X, \CO)$ with $K_0\otimes \Qbarl = K$. This will allow us to reduce statements for the coefficient ring $R = \Qbarl$ to the Noetherian rings considered above. 
	
	In the first case, we remark that \cite[Prop.~7.6]{hemo2023constructiblesheavesschemes} and \cite[Prop.~6.1]{hemo2023constructiblesheavesschemes} show that the category of constructible complexes is equivalent to the category of constructible complexes constructed in \cite[Def.~6.5.1]{BhattSPE}. We will work primarily with the latter definition.

	For the pushforward, we consider the pushforward in the proetale topology. For the pullback, we take the completed pullback defined as the $I$-derived completion of the pullback. For the ring $R = \Qbarl$, we can utilize the equivalences \cite[Prop.~5.2]{hemo2023constructiblesheavesschemes} and \cite[Cor.~2.4]{hansen2023relativeperversity} to construct a completed pullback functor by localization.
\end{remark}
We can now state the first theorem of this section: 
\begin{theorem}\label{genbc_THM}
	Let $f\colon X \rightarrow Y$ be a finitely presented morphism of qcqs schemes. We assume one of the following two conditions on the coefficient ring $R$:
	\begin{enumerate}
		\item The ring $R$ is a Noetherian ring complete with respect to an ideal $I \subset R$. We assume the characteristic of $R/I$ is invertible on $Y$.
		\item The ring $R$ is $R = \Qbarl$ for a prime $\ell$ invertible on $Y$.
	\end{enumerate}
	Let $K \in \text{D}^b_c(X, R)$. There exists a finite constructible partition
	\[
	Y = \bigsqcup_{i = 1}^n Y_i
	\]
	such that all $1 \leq i \leq n$ satisfy: 
	\begin{enumerate}
		\item The pushforward $f_*(K|_{Y_i})$ is constructible over $Y_i$.
		\item The formation of the pushforward $f_*(K|_{Y_i})$ commutes with any base-change $Y' \rightarrow Y_i$.
	\end{enumerate}
\end{theorem}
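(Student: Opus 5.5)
The plan is to reduce to discrete torsion coefficients, where generic base change is Deligne's theorem (SGA $4\tfrac12$, Th.~finitude, extended to qcqs bases by limit arguments), and then pass to the limit using the structure of $I$-adic constructible complexes.

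\emph{Reduction to $R$ Noetherian $I$-adic.} For $R=\Qbarl$, Remark \ref{genbcprep_REM} provides a finite extension $K/\BQ_\ell$ with ring of integers $\CO$ and a constructible $\CO$-complex $K_0$ with $K_0\otimes\Qbarl\cong K$. Pushforward and completed pullback commute with $-\otimes_{\CO}\Qbarl$ on constructible objects, since this is a localization of categories. It therefore suffices to treat $K_0$. This is case (1) with $R=\CO$ and $I=(\lambda)$.

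\emph{Discrete case.} Let $\Lambda:=R/I$ be a discrete Noetherian ring whose characteristic is invertible on $Y$. By a noetherian approximation we write $Y=\varprojlim Y_\alpha$ with $Y_\alpha$ of finite type over $\BZ[1/\text{char}]$. We then descend $f$ and $K\otimes^L_R\Lambda$ to some $Y_\alpha$. Deligne's generic base change theorem over $Y_\alpha$ yields a dense open, and Noetherian induction yields a finite constructible partition of $Y_\alpha$, over whose strata $f_*$ is constructible and commutes with arbitrary base change. Pulling this partition back to $Y$ gives the partition we want, because base change along $Y\to Y_\alpha$ is one of the allowed base changes.

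\emph{Passage to $I$-adic coefficients.} We fix the partition obtained for $K\otimes^L_R R/I$. On each stratum $Y_i$ we use that $K|_{Y_i}$ is $I$-complete and that $f_*$ commutes with derived limits:
\[
f_*(K|_{Y_i})=R\varprojlim_n f_*(K\otimes^L_R R/I^n).
\]
Each $K\otimes^L_R R/I^n$ admits a finite filtration whose graded pieces are of the form $K\otimes^L_R R/I$ tensored with the finite $R/I$-modules $I^k/I^{k+1}$. Since $f_*$ is exact, the discrete statement therefore propagates to every $n$: on $Y_i$, each $f_*(K\otimes^L R/I^n)$ is constructible and commutes with base change.

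\emph{Main obstacle.} The main difficulty is showing that the limit is constructible in the sense of \cite[Def.~6.5.1]{BhattSPE}. That requires the system $f_*(K\otimes^L R/I^n)$ to be a compatible system satisfying $f_*(K\otimes^L R/I^{n+1})\otimes^L_{R/I^{n+1}}R/I^n\cong f_*(K\otimes^L R/I^n)$. I would first try to prove this with the projection formula $f_*(L)\otimes^L P\cong f_*(L\otimes^L f^*P)$ for perfect $P$, which holds on the constructible locus. Given the isomorphism, \cite{BhattSPE}'s characterization identifies the derived limit with a constructible $R$-complex whose reduction mod $I$ is $f_*(K\otimes^L R/I)$. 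For base change $g\colon Y'\to Y_i$, the map $g^*f_*K\to f'_*g^*K$ is a map of $I$-complete complexes, using the completed pullback, and it becomes an isomorphism after $\otimes^L R/I$. By derived Nakayama it is then an isomorphism.
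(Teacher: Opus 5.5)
Your architecture matches the paper's. Both reduce $\Qbarl$ to $\CO$, treat torsion coefficients by Noetherian approximation and Deligne's generic base change, and then pass to $I$-adic coefficients via a coefficient-change formula and derived Nakayama. The gap is the step you yourself flag as the main obstacle: the tool you propose does not close it, and it is the heart of the argument.

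The projection formula $f_*(L)\otimes^L P\cong f_*(L\otimes^L f^*P)$ is formal only for perfect $P$. But $R/I^n$ is not perfect over $R/I^{n+1}$: already $\CO/\lambda$ over $\CO/\lambda^2$ has an infinite periodic resolution by multiplication by $\lambda$. For a pseudo-coherent but non-perfect $P$, the formula needs extra input such as finite cohomological dimension of $f_*$. You do not have that over an arbitrary qcqs $Y$, still less over the arbitrary $Y'$ in (2). The same problem appears twice more:
\begin{itemize}
\item In the propagation from $K\otimes^L R/I$ to $K\otimes^L R/I^n$. Controlling $f_*$ of $(K\otimes^L R/I)\otimes^L_{R/I} I^k/I^{k+1}$ by $f_*(K\otimes^L R/I)$ is again a coefficient change. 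It is harmless only when $I^k/I^{k+1}$ is free over $R/I$.
\item In your Nakayama argument for base change. The claim that the map becomes an isomorphism after $\otimes^L R/I$ requires commuting $f'_*$ with $-\otimes^L R/I$ on $Y'$.
\end{itemize}

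For $R=\CO$, which is all the $\Qbarl$ case needs, the gap disappears if you tensor over $\CO$ rather than over $\CO/\lambda^{n+1}$. Since $\CO/\lambda^n=\mathrm{cone}(\lambda^n)$ is $\CO$-perfect, $f_*(K)\otimes^L_{\CO}\CO/\lambda^n\cong f_*(K\otimes^L_{\CO}\CO/\lambda^n)$ is formal, and so are the analogous statements for $f'_*$ and the completed pullback. Your argument then goes through, even without the $R\varprojlim$.

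For general Noetherian $R$, the paper replaces $R/I$ by the Koszul complex $R(x_1,\ldots,x_m)$ on generators of $I$. This complex is always $R$-perfect, so tensoring with it commutes with $f_*$, $f'_*$, completed pullback and the base-change maps. Derived Nakayama detects isomorphisms of $I$-complete complexes after this tensoring. Since $K\otimes R(x_1,\ldots,x_m)$ is a single bounded classical complex whose cohomology sheaves are constructible and killed by $I$, one application of the torsion case gives (2).

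For (1), the paper proves $f_*(K)\otimes^L R/I\cong f_*(K\otimes^L R/I)$ by the same Koszul reduction, followed by a check at geometric points of $Y$ using the base change just proved. At a geometric point, $f_*$ becomes cohomology of a finite type scheme over a separably closed field. That has finite cohomological dimension, so the coefficient formula holds there (Bhatt--Scholze, Lem.~6.5.11(3)). Perfectness of the stalks of $f_*(K\otimes^L R/I)$ is checked the same way. Then $f_*(K)$ is $I$-complete with constructible reduction, hence constructible.
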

Before we give the proof, we prove a lemma and recall the definition of the Koszul complex.
\begin{definition}
	Let $\mathfrak{X}$ be a topos and $R \in \mathfrak{X}$ a ring object. For each $x \in \mathfrak{X}(R)$, we define the \textit{Koszul complex} $R(x)$ to be the cone of the morphism $$R \xrightarrow{x} R$$ of complexes concentrated in degree zero. For elements $x_1, \ldots,x_n \in \mathfrak{X}$, we define the Koszul complex to be the totalized tensor product
	\[
	R(x_1, \ldots, x_n) := R(x_1)\otimes\cdots\otimes R(x_n).
	\]
\end{definition}
\begin{remark}\label{koszulcom_REM}
	Let $R$ be a commutative ring and $D$ a triangulated category enriched over $R$-modules. Let $x \in R$. For each $K \in D$, we can define $K\otimes R(x)$ to be the cone of the morphism
	\[
	K \xrightarrow{x} K \rightarrow K\otimes R(x) \xrightarrow{+}
	\]
	
	Let $F\colon D \rightarrow D'$ be an exact $R$-linear functor of triangulated categories enriched over $R$-modules. Note that we can find an isomorphism
	\[
	F(K\otimes R(x)) \cong F(K)\otimes R(x)
	\]
	for all $K \in D$ because the functor is exact and $R$-linear. In particular, if $x_1, \ldots, x_n \in R$ are elements, we can find an isomorphism
	\[
	F(K\otimes R(x_1, \ldots,x_n)) \cong F(K)\otimes R(x_1, \ldots, x_n)
	\]
	by induction on $n$. We see that tensoring by a Koszul complex commutes with any exact $R$-linear functor. 
\end{remark}
\begin{remark}\label{genbcqcqs_REM}
	During the proofs of the following theorems, we need the generic base-change theorem, the generic universal local aciclicity theorem, and the generic constructibility theorem for the internal mapping object to hold for constructible etale sheaves over qcqs bases. We were only able to find the theorems over Noetherian schemes in the literature. One can reduce the case of qcqs bases to Noetherian bases by \cite[Cor.~2.7.4]{SGA4}. We give a detailed example of such a reduction during the proof of the generic universal local acyclicity theorem. We also give the reduction during the proof of the generic constructibility of the pushforward.
\end{remark}
\begin{lemma}\label{constlemma_LEM}
	Let $X$ be a qcqs scheme and $R$ an Noetherian ring, complete with respect to an ideal $I \subset R$. Let $K \in \text{D}^b_c(X, R)$ be a bounded constructible complex of $R$-modules. Let $L \in \text{D}^b_{\text{coh}}(R)$ be a bounded complex with finite cohomology modules. We assume that the cohomology modules of $L$ are $I$-power torsion. The complex $K\otimes L$ is classical. Moreover, all cohomology sheaves of $K\otimes L$ are constructible.
\end{lemma}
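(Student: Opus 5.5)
We reduce to the case where $L$ is a single $R$-module, and then to the case of the Koszul complex $R(x_1,\ldots,x_n)$ for generators $x_1,\ldots,x_n$ of $I$. For the first reduction, the property ``$K\otimes L$ is classical with constructible cohomology sheaves'' is stable under extensions in $L$. Indeed, if $L'\to L\to L''\xrightarrow{+}$ is a triangle, then $K\otimes L'\to K\otimes L\to K\otimes L''\xrightarrow{+}$ is a triangle. Classical complexes with constructible cohomology form a thick triangulated subcategory of the proetale derived category (by \cite[Prop.~5.2.6]{BhattSPE} and \cite[Cor.~5.2.9]{BhattSPE}, as used in the proof of Theorem \ref{prolfamily_THM}). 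Induction on the number of nonzero cohomology modules, via the truncation triangles, therefore reduces us to $L=N[0]$ with $N$ a finitely generated $I$-power torsion $R$-module.

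Since $N$ is finitely generated and $I$-power torsion, it is killed by some $I^m$, so $N$ is a finitely generated module over $R/I^m$. We then compare with a Koszul complex. The Koszul complex $R(x_1^m,\ldots,x_n^m)$ is a perfect complex whose cohomology modules are finitely generated $R/(x_1^m,\ldots,x_n^m)$-modules. Conversely, $N$ lies in the thick subcategory of $\text{D}^b_{\text{coh}}(R)$ generated by $R(x_1^m,\ldots,x_n^m)$. This is the standard fact that for a Noetherian ring, $\text{D}^b_{\text{coh}}$ with $I$-torsion cohomology is generated, as a thick subcategory closed under the relevant operations, by the Koszul complex on generators of $I$. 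More concretely, we will filter $N$ by $I^kN$ and reduce to $R/I$-modules, i.e. modules over the discrete Noetherian ring $\Lambda:=R/I$. So it suffices to treat $L=N$ with $N$ a finitely generated $\Lambda$-module. Then $K\otimes_R N=(K\otimes_R\Lambda)\otimes_\Lambda N$. By \cite[Def.~6.5.1]{BhattSPE} and the surrounding results, $K\otimes_R\Lambda$ is a classical constructible complex of $\Lambda$-sheaves with perfect stalks, i.e. locally constant with perfect values on a finite constructible stratification. Tensoring such a complex with the finitely generated $\Lambda$-module $N$ stays classical, since it is just the classical derived tensor product of étale sheaves with a constant module. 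It also has constructible cohomology sheaves, since on each stratum the cohomology sheaves are locally constant with finitely generated values, $\Lambda$ being Noetherian.

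The main obstacle is the middle step: making sure the derived tensor product over $R$ in the proetale topos, with $I$-completion conventions, agrees with the iterated tensor product $(K\otimes_R\Lambda)\otimes_\Lambda N$. We must also check that no completion is needed, because $N$ is $I$-power torsion and $K\otimes L$ is therefore already $I$-complete. For this we will use Remark \ref{koszulcom_REM}. Tensoring with a Koszul complex commutes with the exact $R$-linear functors at hand, and $K\otimes R(x_1,\ldots,x_n)$ is computed as an iterated cone of multiplication maps on $K$, which visibly equals $K\otimes^L_R R/I$ up to a finite filtration. Classicality and constructibility then follow from the case of discrete coefficients together with the thick-subcategory closure above.
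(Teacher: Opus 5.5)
Your argument is correct and is essentially the paper's proof. You filter $L$ by truncations and then by $I^kN$ to reduce to shifts of finitely generated $R/I$-modules $M'$, use $K\otimes_R M'=(K\otimes_R R/I)\otimes_{R/I}M'$ (bounded because $K\otimes_R R/I$ has perfect stalks), and conclude by closure of bounded-below classical complexes and of constructible sheaves under extensions. The Koszul-complex detour is unnecessary, and its claim that $N$ lies in the thick subcategory generated by $R(x_1^m,\ldots,x_n^m)$ is false in general, since that subcategory consists only of perfect complexes and e.g.\ $R/I$ need not be perfect over $R$; you never actually use it.
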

\begin{proof} 
	The cohomology sheaves of the bounded complex $L$ are finitely generated $I$-power torsion modules. Since $L$ is bounded, the complex $L$ admits a finite filtration by complexes of the form $M'[e]$, where $M'$ is a $R/I$-module and $e \in \BZ$. We have
	\[
	K\otimes_R M' = (K\otimes_R R/I)\otimes_{R/I} M'.
	\]
	Thus we see that $K\otimes_R M'$ is classical and the cohomology sheaves of $K\otimes_R M'$ are constructible. It follows from \cite[Cor.~5.1.9]{BhattSPE} and \cite[Prop.~5.2.9]{BhattSPE} that classical complexes are closed under cones for bounded below complexes. Since $K\otimes R/I$ is tor-finite, we obtain that all the terms $K\otimes_R M'$ are bounded. Thus $K\otimes_R L$ is classical.  Moreover, the cohomology sheaves of $K\otimes L$ are constructible because the category of constructible sheaves is closed under exact sequences.
\end{proof}
\begin{proof}[Proof.~(Thm.~\ref{genbc_THM})]
	By Remark \ref{genbcprep_REM}, we may assume $R$ to be a Noetherian ring complete with respect to an ideal $I \subset R$. 
	
	The statement (2) requires a set of morphisms between pushforwards to be isomorphisms. The formation of these morphisms commutes with tensoring with the Koszul complex $R(x_1, \ldots, x_n)$ by Remark \ref{koszulcom_REM}. Moreover, the pushforward of an $I$-derived complete complex is always $I$-derived complete by \cite[Lem.~6.5.9]{BhattSPE}. Therefore, we may apply Nakayama's Lemma (see \cite[\href{https://stacks.math.columbia.edu/tag/0H83}{Tag 0H83}]{stacks-project}) to show the morphisms are isomorphism. Thus it is sufficient to verify the isomorphism after taking the derived tensor product with the Koszul complex. Thus we may pass to $K\otimes R(x_1, \ldots, x_n)$. We remark that the cohomology sheaves of $K \otimes R(x_1, \ldots,x_n)$ are classical and constructible sheaves of $R/I$-modules by Lemma \ref{constlemma_LEM}. 
	
	
	For statement (2), the comparison isomorphism in \cite[Lem.~5.4.3]{BhattSPE} shows that the pushforward of the cohomology sheaves of $K\otimes R(x_1, \ldots,x_n)$ can be computed in the etale topology. Note that the completed pullback tensored with a Koszul complex reduces to an ordinary pullback tensored with a Koszul complex by definition of the completion functor. Therefore, the comparison isomorphism in \cite[Lem.~5.4.1]{BhattSPE} shows that the pullback can be computed in the etale topology. To prove that the pushforward commutes with base-change, it is therefore sufficient to prove that the pushforward of the cohomology sheaves of $K\otimes R(x_1, \ldots,x_n)$ commutes with base-change in the etale-topology.
	
	Thus we may assume $K$ is a constructible sheaf on the etale site of $X$. In this case, Noetherian approximation (see \cite[Cor.~2.7.4]{SGA4}) allows us to assume the sheaf descends to a base scheme $Y_0$ of finite type over $\BZ[1/\ell]$. The property of satisfying base change is preserved by base change, so we may replace $Y$ by $Y_0$ and assume $Y$ is of finite type over $\BZ[1/\ell]$. In this case, the existence of the stratification can be deduced from Noetherian induction and \cite[Ch.~7,~Cor.~2.9]{DeligneSGA45}.
		
	We turn to proving the statement (1). Our first step is to show that we have an isomorphism
	\[
	f_*(K)\otimes R/I \xrightarrow{\sim} f_*(K\otimes R/I).
	\] 
	Note that both sides of this morphism are $I$-derived complete complexes because they are annihilated by $I$. Thus it is sufficient if we show that
	\[
	(f_*(K)\otimes R/I) \otimes R(x_1, \ldots, x_n) \rightarrow f_*(K\otimes R/I)\otimes R(x_1, \ldots, x_n)
	\]
	is an isomorphism by Nakayama's Lemma. This morphism identifies with
	\[
	f_*(K\otimes R(x_1, \ldots, x_n))\otimes R/I \rightarrow f_*((K\otimes R(x_1, \ldots, x_n))\otimes R/I)
	\]
	by Remark \ref{koszulcom_REM}. Note that $K\otimes R(x_1, \ldots, x_n)$ is classical by Lemma \ref{constlemma_LEM}. Since we can compute the pushforward and the tensor product with $R/I$ in the etale topology, it is sufficient to show the isomorphism in the etale topology. In particular, it is sufficient if we show it over each geometric fiber of $X$ over $Y$ because the collection of all geometric points on $X$ is faithful in the etale topology. By (2), we may assume the formation of $f_*(K\otimes R(x_1, \ldots, x_n))$ and $f_*((K\otimes R(x_1, \ldots, x_n))\otimes R/I)$ commutes with base-change. In particular, we know the above morphism is an isomorphism over each geometric fiber of $X$ over $Y$ by \cite[Lem.~6.5.11~(3)]{BhattSPE}. Thus we obtain the desired isomorphism. 
	
	The isomorphism implies $f_*(K)\otimes R/I$ is classical. Therefore, it is sufficient if we show that the pushforward $f_*(K\otimes R/I)$, formed in the etale topology, is constructible. Point (2) and Noetherian approximation allow us to assume $Y$ is of finite type over $\BZ[1/\ell]$. In this case, the reference \cite[Ch.~7,~Cor.~2.9]{DeligneSGA45} shows that the cohomology sheaves of $f_*(K\otimes R/I)$ are constructible. To prove that the complex $f_*(K\otimes R/I)$ is constructible, all that is left to show is that the stalks are perfect complexes. Since we may assume the formation of $f_*(K\otimes R/I)$ commutes with base-change, we may reduce to $Y$ a field. In this case, this follows from the discrete coefficient case over fields treated in \cite[Thm.~9.5.2]{LeiFu}. 
\end{proof}
\subsubsection{Generic universal local acyclicity}
We establish the generic universal local acyclicity theorem. We make a remark on the definition of universal local acyclicity: 
\begin{remark}
	Universal local acylicity is defined in \cite{hansen2023relativeperversity}. Note that loc. cit. does not define universal local acyclicity for the coefficient rings we consider below. The argument below proves universal local acylicity with respect to the definition \cite[Thm.~1.6,~(ii)]{hansen2023relativeperversity}. This definition agrees with the classical definition when the coefficient ring is discrete (see \cite[p.~378,~Ch.~7.6]{LeiFu}).
\end{remark}
\begin{theorem}\label{genula_THM}
	Let $f\colon X \rightarrow Y$ be a finitely presented morphism of qcqs schemes. We assume one of the following two conditions on the coefficient ring $R$:
	\begin{enumerate}
		\item The ring $R$ is Noetherian and complete with respect to an ideal $I \subset R$. We assume the characteristic of $R/I$ is invertible on $Y$.
		\item The ring $R$ is $R = \Qbarl$ for a prime $\ell$ invertible on $Y$.
	\end{enumerate}
	Let $K \in \text{D}^b_c(X, R)$. There exists a finite constructible partition
	\[
	Y = \bigsqcup_{i = 1}^n Y_i
	\]
	such that $K|_{Y_i}$ is universally locally acyclic over $Y_i$.
\end{theorem}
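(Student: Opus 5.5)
We will follow the same pattern as the proof of Theorem \ref{genbc_THM}: reduce to discrete torsion coefficients, where generic universal local acyclicity is classical, and then lift back by derived Nakayama.

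First, by Remark \ref{genbcprep_REM}, we replace $\Qbarl$-coefficients by $\CO$-coefficients. Any $K \in \text{D}^b_c(X,\Qbarl)$ comes from some $K_0 \in \text{D}^b_c(X,\CO)$. The characterization \cite[Thm.~1.6,~(ii)]{hansen2023relativeperversity} of universal local acyclicity is stable under the localization $-\otimes\Qbarl$, so it suffices to treat $R$ Noetherian and $I$-adically complete.

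We use the criterion of \cite[Thm.~1.6,~(ii)]{hansen2023relativeperversity}. It asks that, for all base changes $Y'\to Y$ and all relevant test data, certain natural comparison maps built from $*$-pullback, $!$-pullback, $\otimes$ and $\text{Hom}$ be isomorphisms. For example, one such map is
\[
\pi_1^*\mathbb{D}(K)\otimes\pi_2^*K \to \text{Hom}(\pi_1^*K,\pi_2^!K)
\]
on $X\times_Y X$. All terms are $I$-derived complete: the pushforwards and $!$-functors preserve completeness by \cite[Lem.~6.5.9]{BhattSPE}, and we use completed pullbacks. Hence by Nakayama (\cite[\href{https://stacks.math.columbia.edu/tag/0H83}{Tag 0H83}]{stacks-project}) it suffices to check each comparison map after applying $-\otimes R(x_1,\ldots,x_n)$ for generators $x_1,\ldots,x_n$ of $I$. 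By Remark \ref{koszulcom_REM}, this Koszul tensoring commutes with every exact $R$-linear functor involved. So each comparison map for $K$ becomes the corresponding map for $K\otimes R(x_1,\ldots,x_n)$. By Lemma \ref{constlemma_LEM}, the latter complex is classical with constructible cohomology sheaves of $R/I$-modules. As in the proof of Theorem \ref{genbc_THM}, the comparison isomorphisms \cite[Lem.~5.4.1,~5.4.3]{BhattSPE} let us compute everything in the étale topology with discrete coefficients $\Lambda=R/I$. Since universal local acyclicity is stable under finite devissage, and $K\otimes R(x_1,\ldots,x_n)$ has finitely many constructible cohomology sheaves, we reduce to proving the statement for a single constructible étale sheaf of $\Lambda$-modules, uniformly for the finitely many pieces.

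For discrete torsion coefficients over a Noetherian base, generic universal local acyclicity is Deligne's theorem \cite[Th.~finitude,~Cor.~2.16]{DeligneSGA45}. To pass from qcqs $Y$ to that case, we apply Noetherian approximation \cite[Cor.~2.7.4]{SGA4}, as indicated in Remark \ref{genbcqcqs_REM}. We write $Y=\varprojlim Y_\alpha$ with $Y_\alpha$ of finite type over $\BZ[1/\ell]$, and descend $f$ and the sheaf to some $X_\alpha\to Y_\alpha$. We then take a constructible partition of $Y_\alpha$ on whose strata the descended sheaf is universally locally acyclic, refine it by Noetherian induction, and pull it back to $Y$. Universal local acyclicity is stable under arbitrary base change, so the pulled-back partition works for $K$. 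Finally, we take a common refinement of the finitely many partitions obtained for the cohomology sheaves.

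The main obstacle is the Nakayama step. Universal local acyclicity in the sense of \cite[Thm.~1.6,~(ii)]{hansen2023relativeperversity} involves dualizability and the functors $\text{Hom}$ and $!$-pullback, and we must check two things for them with completed coefficients. First, these functors preserve $I$-derived completeness. Second, they commute with Koszul reduction, which for $\text{Hom}$ requires $R(x_1,\ldots,x_n)$ to be perfect; it is, so this works. If this becomes delicate, we would first try the alternative characterization that $K$ is ULA if and only if $K\otimes_R R/I$ is ULA. This criterion is valid for complete coefficients, because the ULA condition amounts to dualizability in a correspondence category and is detected modulo $I$. It would reduce the argument directly to the discrete case above.
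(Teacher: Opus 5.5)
Your proposal follows essentially the same route as the paper: reduce to an $I$-adically complete Noetherian $R$, then use derived Nakayama with the Koszul complex and Lemma~\ref{constlemma_LEM} to pass to classical constructible sheaves with discrete coefficients in the \'etale topology, and conclude by Noetherian approximation, Deligne's generic universal local acyclicity plus Noetherian induction, and stability of ULA under base change. One small point to tidy: your example map $\pi_1^*\mathbb{D}(K)\otimes\pi_2^*K\to\text{Hom}(\pi_1^*K,\pi_2^!K)$ involves $K$ both contravariantly and covariantly, so the map for $K\otimes R(x_1,\ldots,x_n)$ is the map for $K$ tensored with $R(x_1,\ldots,x_n)^\vee\otimes R(x_1,\ldots,x_n)$, not with $R(x_1,\ldots,x_n)$; but each $x_i$ acts null-homotopically on $R(x_i)$, so this complex is a finite sum of shifts of $R(x_1,\ldots,x_n)$, and your reduction still goes through.
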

\begin{proof}
	We can assume $R$ is a Noetherian ring complete with respect to an ideal $I \subset R$ by Remark \ref{genbcprep_REM}. We denote by $x_1, \ldots, x_n \in I$ a set of generators of $I$. Note that the condition of being ULA involves a set of morphisms between cohomology groups of $I$-derived complete complexes on qcqs schemes being isomorphisms. Cohomology groups of an $I$-derived complete complex are $I$-derived complete by \cite[Lem.~6.5.9~(ii)]{BhattSPE}. By Nakayama's lemma (see \cite[\href{https://stacks.math.columbia.edu/tag/0H83}{Tag 0H83}]{stacks-project}) and Remark \ref{genbcqcqs_REM}, it is sufficient if we prove that such a partition exists for $K\otimes R(x_1, \ldots, x_n)$. Note that the cohomology sheaves of $K\otimes R(x_1, \ldots,x_n)$ are constructible and classical by Lemma \ref{constlemma_LEM}. Thus we may assume $K$ is a classical constructible sheaf over a discrete coefficient ring with characteristic invertible on $Y$. The comparison isomorphism in \cite[Lem.~5.4.3]{BhattSPE} shows it is sufficient to show that the cohomology sheaves of $K\otimes R(x_1, \ldots, x_n)$ are ULA over a constructible partition of $Y$ in the etale topology.
	
	In this case, Noetherian approximation (see \cite[Cor.~2.7.4]{SGA4}) allows us to assume the sheaf descends to a base scheme $Y_0$ of finite type over $\BZ[1/\ell]$. Since the property of being ULA is, by definition, preserved by base-change, we may assume $Y$ is of finite type over $\BZ[1/\ell]$. 
	
	In this case, Noetherian induction on $Y$ implies it is sufficient to prove that there exists a non-empty open subscheme of $Y$ over which $K$ is ULA. This statement is proven in \cite[Ch.~7,~Thm.~2.13]{DeligneSGA45}.
\end{proof}
\subsubsection{Constructible internal mapping object} We retain the definitions of Remark \ref{genbcprep_REM}. 
The goal of this section is to prove the following theorem:
\begin{theorem}\label{genbchom_THM}
	Let $f\colon X \rightarrow Y$ be a finitely presented morphism of qcqs schemes. We assume one of the following two conditions on the coefficient ring $R$:
	\begin{enumerate}
		\item The ring $R$ is a complete Noetherian local ring. We assume the characteristic of the residue field of $R$ is invertible on $Y$.
		\item The ring $R$ is $\Qbarl$ for a prime $\ell$ invertible on $Y$.
	\end{enumerate}
	Let $K, L \in \text{D}^b_c(X, R)$ be two constructible complexes. There exists a constructible finite partition
	\[
	Y = \bigsqcup_{i = 1}^n Y_i
	\]
	such that all $1 \leq i \leq n$ satisfy: 
	\begin{enumerate}
		\item The internal mapping object $\text{Hom}(K|_{Y_i}, L|_{Y_i}) \in \text{D}^b_c(X_{Y_i}, R)$ exists. 
		\item The formation of the internal mapping object $\text{Hom}(K|_{Y_i}, L|_{Y_i})$ commutes with base-change $Y' \rightarrow Y_i$. 
	\end{enumerate}
\end{theorem}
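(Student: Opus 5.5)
The plan follows the same template as Theorem \ref{genbc_THM} and Theorem \ref{genula_THM}: reduce to complete Noetherian coefficients, reduce modulo the maximal ideal via Koszul complexes and Nakayama, and then invoke the known discrete-coefficient statement over a finite type base.

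\textbf{Reduction to integral coefficients.} By Remark \ref{genbcprep_REM}, I pass from $\Qbarl$ to $\CO$ for a suitable finite extension of $\BQ_\ell$, choosing integral lattices $K_0, L_0$ of $K$ and $L$. I then construct $\text{Hom}(K_0, L_0)$ and invert $\ell$ afterwards. This requires checking that $\text{Hom}$ commutes with $-\otimes\Qbarl$. Since $K_0$ is constructible and hence compact in the appropriate sense, I expect this to follow formally. So assume $R$ is a complete Noetherian local ring with maximal ideal $\idm=(x_1,\dots,x_n)$.

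\textbf{Construction of the candidate.} Define $H := R\mathscr{H}om(K,L)$, formed in the proetale topology on $X$. Fix a base change $g\colon Y'\to Y$ with pullback $g'\colon X'\to X$. There is a natural map
\[
g'^*H\to R\mathscr{H}om(g'^*K,g'^*L).
\]
Two things must be shown: over a constructible partition this map is an isomorphism, and $H$ is constructible there.

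First, $H$ is $\idm$-derived complete, because $L$ is. The target is derived complete as well, since the completed pullback is used. By Nakayama it therefore suffices to check the isomorphism after tensoring with $R(x_1,\dots,x_n)$. Because $K$ is perfect-stalked, $\mathscr{H}om(K,L)\otimes R(\underline x)$ identifies with $\mathscr{H}om(K, L\otimes R(\underline x))$. Moreover $L\otimes R(\underline x)$ is classical and constructible over $R/\idm$ by Lemma \ref{constlemma_LEM}. Further, $\mathscr{H}om_R(K, L')$ for an $R/\idm$-module complex $L'$ equals $\mathscr{H}om_{R/\idm}(K\otimes R/\idm, L')$. This reduces both constructibility and base change to discrete coefficients $\Lambda=R/\idm$ (a finite field of characteristic invertible on $Y$) and etale sheaves, via the comparison of \cite[Lem.~5.4.3]{BhattSPE}.

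\textbf{The discrete case.} Noetherian approximation \cite[Cor.~2.7.4]{SGA4} descends $X\to Y$ and the sheaves to a base of finite type over $\BZ[1/\ell]$, so by Remark \ref{genbcqcqs_REM} I may assume $Y$ is Noetherian of finite type. There I take the known approach. By Theorem \ref{genula_THM}, after refining the partition, $K$ (or $D(K)$) is ULA over each stratum. For a ULA complex $K$ over $Y_i$, $\mathscr{H}om(K,L)$ commutes with arbitrary base change: write it as $D(K\otimes D L)$ up to relative duality. Alternatively, use the characterization of ULA via $\mathscr{H}om(K,-)$ commuting with pullback, as in \cite[Thm.~1.6]{hansen2023relativeperversity} and \cite[Ch.~7]{DeligneSGA45}, applied together with generic base change for $f_*$ and $f_!$ from Theorem \ref{genbc_THM}. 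Constructibility then follows from Deligne's finiteness over a finite type $\BZ[1/\ell]$-scheme, and perfect stalks follow from the field case as in the proof of Theorem \ref{genbc_THM}. Finally, a Noetherian induction assembles the generic statements into a finite partition.

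The main obstacle is the first Nakayama step. Commuting $\mathscr{H}om$ with the Koszul tensor and identifying the derived-complete constructible $H$ with an honest object of $\text{D}^b_c$ both use that $K$ has perfect stalks, and that the relevant $\mathscr{H}om$ stays bounded. I would try to prove boundedness by dévissage of $K$ along a stratification into lisse pieces, where $\mathscr{H}om$ is computed as a tensor with the dual. The restriction to local $R$ in the statement is presumably what makes this dévissage work.
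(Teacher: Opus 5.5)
\textbf{Gap: the discrete case.} Your key claim is false. You assert that once $K$ (or $\mathbb{D}K$) is ULA over a stratum $Y_i$, the formation of $R\mathscr{H}om(K,L)$ commutes with arbitrary base change for every $L$. This fails even when $L$ is ULA as well.

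Here is a counterexample. Take $S=\BA^1$ and $X=\BA^2\to S$ the first projection. Let $K=\Lambda_\Delta$ and $L=\Lambda_C$ be the constant sheaves on the closed subschemes $\Delta=\{x=y\}$ and $C=\{y=0\}$, extended by zero.
\begin{itemize}
\item Both subschemes are sections of $X\to S$, so $K$ and $L$ are ULA over $S$.
\item By base change and purity, $R\mathscr{H}om(K,L)=i_{\Delta*}i_\Delta^!L\cong\Lambda_p(-1)[-2]$ with $p=(0,0)$.
\item On the fiber over $0$, however, $R\mathscr{H}om(K_0,L_0)=R\mathscr{H}om(\Lambda_p,\Lambda_p)=\Lambda_p$, concentrated in degree $0$.
\end{itemize}
So base change fails at $0$, although it holds over $S\setminus\{0\}$. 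Your own formula shows where it breaks: $\mathbb{D}_{X/S}L=L$, and $K\otimes\mathbb{D}_{X/S}L=\Lambda_p$ is not ULA, so $\mathbb{D}_{X/S}$ does not commute with base change on it. ULA of $K$ controls $\mathbb{D}_{X/S}(K)$ and $R\mathscr{H}om(K,f^!B)$ for $B$ coming from the base. That is what the characterizations in \cite[Thm.~1.6]{hansen2023relativeperversity} concern. It does not control $R\mathscr{H}om(K,L)$ for arbitrary $L$ on $X$. The same example therefore also rules out your ``alternative'' characterization of ULA via $\mathscr{H}om(K,-)$ commuting with pullback.

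\textbf{How to repair it.} The statement you need is generic in the pair $(K,L)$ jointly. Any of the following works:
\begin{itemize}
\item Cite Deligne's generic base change for $R\mathscr{H}om$ from \cite[Ch.~7]{DeligneSGA45}, as the paper does.
\item Apply Theorem \ref{genula_THM} to $L$ and to $K\otimes\mathbb{D}_{X/S}L$ instead of to $K$. Here $\mathbb{D}_{X/S}L$ is constructible over the finite type base.
\item D\'evissage $K$ into pieces $j_!F$ with $F$ lisse on locally closed strata. Then $R\mathscr{H}om(j_!F,L)=j_*(F^\vee\otimes j^!L)$, and Theorem \ref{genbc_THM} handles $j_*$ and $j^!$.
\end{itemize}

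\textbf{A second, smaller point.} Your Koszul--Nakayama step is fine for base change. To get constructibility of $H$, however, you also need the coefficient formula $H\otimes^L_R R/\idm\simeq R\mathscr{H}om_{R/\idm}(K\otimes R/\idm,L\otimes R/\idm)$. This does not follow formally from the Koszul reduction when $R/\idm$ is not perfect over $R$.

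For comparison, the paper avoids Koszul complexes in this proof. It defines $\text{Hom}(K,L)$ as $\varprojlim\text{Hom}_{R/\idm^n,\text{cl}}(K\otimes R/\idm^n,L\otimes R/\idm^n)$ and proceeds in three steps.
\begin{itemize}
\item It takes base change mod $\idm$ from \cite{DeligneSGA45}.
\item It spreads this to all $R/\idm^n$ by filtering $R/\idm^n$ by copies of the field $R/\idm$ (Lemma \ref{genbchomcommutes_LEM}). This is where locality of $R$ is used, not in a d\'evissage of $K$ as you guessed.
\item It obtains the coefficient formula from that lemma together with \cite[Lem.~3.5.5]{BhattSPE}.
\end{itemize}
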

\begin{remark}
	Let $X$ be a qcqs scheme and $\Lambda$ a discrete coefficient ring. We consider classical complexes $K, L \in \text{D}(X, \Lambda)$ in the proetale topology on $X$. During the proof of the above theorem, we will utilize the classical internal mapping object. This is the internal mapping object of $K$ and $L$ formed in the etale topology and then sheafified in the proetale topology. We write $\text{Hom}_{\Lambda, \text{cl}}(K, L)$ for this object.
\end{remark}
%
The classical internal mapping object behaves like an internal mapping object: 
\begin{lemma}\label{genbcinternalhom_LEM}
	Let $X$ be a qcqs scheme and $\Lambda$ a discrete Noetherian ring. Let $K, L, L' \in \text{D}^b(X, \Lambda)$ be bounded classical complexes of $\Lambda$-modules on the proetale site of $X$. We assume $K$ is tor-finite. We have a natural isomorphism
	\[
	\text{Hom}_{\Lambda}(K, \text{Hom}_{\Lambda, \text{cl}}(L, L')) = \text{Hom}_{\Lambda}(K\otimes_{\Lambda} L, L')
	\]
	of objects in $\text{D}(\Lambda)$.
\end{lemma}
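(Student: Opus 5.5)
We prove the adjunction by reducing to the classical tensor--hom adjunction on the étale site and then transporting it to the proetale site. The comparison is via the fully faithful pullback $\nu^*\colon \text{D}^+(X_{\text{et}}, \Lambda) \rightarrow \text{D}(X_{\text{proet}}, \Lambda)$ of \cite[Cor.~5.1.9,~Prop.~5.2.6]{BhattSPE}. Write $K = \nu^*K_0$, $L = \nu^*L_0$, $L' = \nu^*L'_0$ with $K_0, L_0, L'_0$ bounded étale complexes. By definition,
\[
\text{Hom}_{\Lambda, \text{cl}}(L, L') = \nu^*\,R\mathscr{H}om_{\Lambda, \text{et}}(L_0, L'_0).
\]
This complex is bounded below, since $L_0$ is bounded above and $L'_0$ is bounded below. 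Hence it lies in the range where $\nu^*$ is fully faithful.

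The chain of isomorphisms runs as follows. The left side is
\[
R\text{Hom}_\Lambda\big(\nu^*K_0, \nu^*R\mathscr{H}om_{\text{et}}(L_0, L'_0)\big).
\]
By full faithfulness of $\nu^*$ on $\text{D}^+$, applied at the level of mapping complexes (it is fully faithful as a functor of stable $\infty$-categories, or one checks on all shifts), this equals
\[
R\text{Hom}_{X_{\text{et}}}\big(K_0, R\mathscr{H}om_{\text{et}}(L_0, L'_0)\big).
\]
The classical étale tensor--hom adjunction turns this into $R\text{Hom}_{X_{\text{et}}}(K_0\otimes^L L_0, L'_0)$. Since $K_0$ is tor-finite and $L_0$ is bounded, $K_0\otimes^L L_0$ is bounded, so it lies in $\text{D}^+$. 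Because $\nu^*$ is symmetric monoidal, $\nu^*(K_0\otimes^L L_0) = K\otimes_\Lambda L$. Applying full faithfulness once more gives $R\text{Hom}_\Lambda(K\otimes_\Lambda L, L')$. Naturality in all three variables follows because every step is a natural transformation.

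The main obstacle is making sure every object stays in the range where $\nu^*$ is fully faithful. Full faithfulness of $\nu^*$ is known only on $\text{D}^+(X_{\text{et}})$, and $R\mathscr{H}om$ of unbounded objects could leave this range. The boundedness hypotheses on $K$, $L$, $L'$ and the tor-finiteness of $K$ are exactly what keep each object bounded below.

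A secondary point is that $\text{Hom}_\Lambda$ here means the derived mapping complex in $\text{D}(\Lambda)$. I would phrase the full faithfulness statement as an equivalence of mapping complexes: apply it to $K_0[n]$ for all $n$ and use that $R\text{Hom}$ is determined by its cohomology groups $\text{Hom}(K_0, -[n])$ together with the natural maps. It is cleaner to note that $\nu_*\nu^* \simeq \text{id}$ on $\text{D}^+$, and then use the adjunction $\nu^*\dashv\nu_*$ at the level of derived mapping complexes.
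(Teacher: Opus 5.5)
Your argument is correct and follows the paper's proof. Both use full faithfulness of $\nu^*$ on bounded below complexes to pass to the etale site, then the etale tensor--hom adjunction, then full faithfulness again, and finally the identification $K\otimes_{\Lambda,\text{cl}}L=K\otimes_\Lambda L$. Your formulation via $\nu^*\dashv R\nu_*$ and $B\simeq R\nu_*\nu^*B$ for $B\in\text{D}^+$ is more careful than the paper about getting an isomorphism of mapping complexes rather than of Hom groups. It also shows that only the targets $\text{Hom}_{\Lambda,\text{cl}}(L,L')$ and $L'$ need to be bounded below, so the tor-finiteness of $K$ is not actually needed.
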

\begin{proof}
	The comparison functor is fully faithful for bounded below complexes by \cite[Prop.~5.2.6]{BhattSPE}. Thus we obtain
	\begin{align*}
		\text{Hom}_{\Lambda}(K, \text{Hom}_{\Lambda, \text{cl}}(L, L')) &= \text{Hom}_{\Lambda, \text{cl}}(K, \text{Hom}_{\Lambda, \text{cl}}(L, L')) \\
		&= \text{Hom}_{\Lambda, \text{cl}}(K\otimes_{\Lambda, \text{cl}} L, L') \\
		&= \text{Hom}_{\Lambda}(K\otimes_{\Lambda, \text{cl}} L, L').
	\end{align*}
	We have
	\begin{align*}
		K\otimes_{\Lambda, \text{cl}} L = K\otimes_{\Lambda} L,
	\end{align*}
	hence the claim. 
\end{proof}
We also require the following lemma, which is the reason we restrict to local rings: 
\begin{lemma}\label{genbchomcommutes_LEM}
	Let $R$ be a complete Noetherian local ring. Let $f\colon X \rightarrow Y$ be a finitely presented morphism of qcqs schemes. Let $K, L \in \text{D}^b_c(X, R)$ be constructible complexes on $X$. Suppose the formation of $\text{Hom}_{R/\idm, \text{cl}}(K\otimes R/\idm, L\otimes R/\idm)$ commutes with base change over $Y$. Let $n \geq m\geq 1$. We have
		\[
		\text{Hom}_{R/\idm^n, \text{cl}}(K\otimes R/\idm^n, L\otimes R/\idm^n)\otimes R/\idm^m = \text{Hom}_{R/\idm^m, \text{cl}}(K\otimes R/\idm^m, L\otimes R/\idm^m).
		\]
	Moreover, the formation of 
	\[
	\text{Hom}_{R/\idm^n, \text{cl}}(K\otimes R/\idm^n, L\otimes R/\idm^n)
	\]
	commutes with base-change.
\end{lemma}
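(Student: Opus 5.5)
Write $K_n := K\otimes R/\idm^n$, $L_n := L\otimes R/\idm^n$ and $H_n := \text{Hom}_{R/\idm^n, \text{cl}}(K_n, L_n)$. The plan is to compare $H_n\otimes R/\idm^m$ with $H_m$ using the adjunction of Lemma \ref{genbcinternalhom_LEM}, and then run a d\'evissage along the $\idm$-adic filtration. Locality of $R$ is what makes this work: every $R/\idm^n$-module of finite length has a finite filtration whose graded pieces are $R/\idm$-vector spaces, so everything reduces to the residue field, where the hypothesis on $H_1$ applies.

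\emph{Step 1: the comparison map.} Restriction of scalars along $R/\idm^n \to R/\idm^m$ together with the adjunction gives a natural map
\[
H_n\otimes_{R/\idm^n} R/\idm^m \rightarrow H_m .
\]
Indeed, $\text{Hom}_{R/\idm^n}(K_n, L_m) = \text{Hom}_{R/\idm^m}(K_m, L_m)$ because $K_m = K_n\otimes_{R/\idm^n} R/\idm^m$, and composing with $L_n \to L_m$ produces the map. More generally, for any finite $R/\idm^n$-module $Q$ there is a natural map
\[
\alpha_Q\colon H_n\otimes_{R/\idm^n} Q \rightarrow \text{Hom}_{R/\idm^n,\text{cl}}(K_n, L_n\otimes_{R/\idm^n} Q),
\]
and both sides are exact functors of $Q$. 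The source is exact because tensoring with $Q$ preserves triangles. The target is exact because $L_n\otimes Q$ is exact in $Q$ and $K_n$ is tor-finite, as required in Lemma \ref{genbcinternalhom_LEM}.

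\emph{Step 2: d\'evissage.} By the five lemma applied to a composition series of $Q$, the map $\alpha_Q$ is an isomorphism for all $Q$ as soon as it is one for $Q = R/\idm$. For $Q = R/\idm$ the target is $\text{Hom}_{R/\idm^n,\text{cl}}(K_n, L_1)$, which the adjunction identifies with $H_1$. So the essential claim is that
\[
H_n\otimes^{\mathbb L}_{R/\idm^n} R/\idm \rightarrow H_1
\]
is an isomorphism. The two sides are classical sheaves computed in the \'etale topology, so this can be checked on stalks at geometric points. The stalk of the classical internal Hom at a geometric point is the derived Hom on the strict henselization. That Hom commutes with $-\otimes^{\mathbb L} R/\idm$ when the first argument is perfect and the second is bounded. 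Constructible complexes with perfect stalks satisfy this on the strict henselization, after passing to a stratification of $X$ on which $K$ is locally constant with perfect stalks. This is the step I expect to be the main obstacle: internal Hom of constructible complexes is not constructible in general over a non-field base, so the stalk computation is delicate. Granting it, taking $Q = R/\idm^m$ gives the first claim.

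\emph{Step 3: base change.} For a base change $g\colon Y' \to Y$ there is a natural map $g^* H_n \to \text{Hom}(g^* K_n, g^* L_n)$, and its formation is compatible with the maps $\alpha_Q$. Argue by induction on $n$ using the triangle coming from
\[
0 \to \idm^{n-1}/\idm^n \to R/\idm^n \to R/\idm^{n-1} \to 0 .
\]
By Step 2 and its analogue after base change, this triangle becomes $H_n\otimes \idm^{n-1}/\idm^n \to H_n \to H_{n-1}$. The module $\idm^{n-1}/\idm^n$ is a finite direct sum of copies of $R/\idm$, so the outer term is a finite direct sum of copies of $H_1$. By hypothesis the base-change map is an isomorphism for $H_1$, and by induction it is one for $H_{n-1}$. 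The five lemma then gives it for $H_n$, which proves the second claim.
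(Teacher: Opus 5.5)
Your Step 2 is a genuine gap, and it is exactly the step you flag and then grant. The claim that $H_n\otimes^{\mathbb L}_{R/\idm^n}R/\idm\to H_1$ is an isomorphism is a coefficient formula for the internal Hom over an arbitrary qcqs base. The mechanism you sketch does not prove it.

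After restricting to the strict henselization $X_{(\bar x)}$, the complex $K_n$ is still only constructible, not locally constant with perfect stalks. Stratifying $X$ does not change this. Dévissage along $j_!j^*K_n\to K_n\to i_*i^*K_n$ shows that the stalk involves terms such as $R\Gamma(U\times_X X_{(\bar x)}, L_n)$, that is, $Rj_*$ and $Ri^!$. You would need these to commute with $-\otimes^{\mathbb L}_{R/\idm^n}R/\idm$.

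Since $R/\idm$ generally has infinite tor-dimension over $R/\idm^n$, this is a real statement. It needs a finite cohomological dimension input, for instance from Noetherian approximation together with the bounds of SGA~4~X. None of this is in the proposal. Note also that the hypothesis on $H_1$ plays no role in your Step 2, which is a sign that it is the wrong place to spend the effort.

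The fix is to reverse your Steps 2 and 3, which is what the paper does. The base-change dévissage does not need Step 2 at all.

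\begin{itemize}
\item Apply $\text{Hom}_{R/\idm^n,\text{cl}}(K_n,-)$ to the filtration of $L_n$ induced by a filtration of $R/\idm^n$ with graded pieces $R/\idm$.
\item The graded pieces are $\text{Hom}_{R/\idm^n,\text{cl}}(K_n,L_1)\cong\text{Hom}_{R/\idm,\text{cl}}(K_1,L_1)=H_1$. This follows directly from the restriction-of-scalars adjunction you already use in Step 1.
\item Equivalently, in your induction the outer terms are $H_1^{\oplus r}$ and $\text{Hom}_{R/\idm^n,\text{cl}}(K_n,L_{n-1})\cong H_{n-1}$, without invoking $\alpha_Q$.
\end{itemize}

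So base change for $H_n$ follows from the hypothesis on $H_1$ and the five lemma.

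With base change for all $H_n$ in hand, check the coefficient formula on stalks at a geometric point $\bar x$ of $X$. Pull back to the geometric fiber $X_{\bar y}$ over the image $\bar y$ of $\bar x$ in $Y$. Both $H_n$ and $H_m$ commute with this base change, so you reduce to $Y$ a separably closed field. There the formula is the classical one for constructible complexes of finite tor-dimension on a scheme of finite type over a field; the paper cites \cite[Prop.~10.1.17]{LeiFu}.
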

\begin{proof}
	Let $n \geq 1$. We can filter $R/\idm^n$ by $R$-submodules isomorphic to $R/\idm$ because $R/\idm$ is a field. With this filtration, proving that the formation of the internal mapping object $\text{Hom}_{R/\idm^n, \text{cl}}(K\otimes R/\idm^n, L\otimes R/\idm^n)$ commutes with base-change can be reduced to proving that the formation of $\text{Hom}_{R/\idm^n, \text{cl}}(K\otimes R/\idm^n, L\otimes R/\idm)$ commutes with base-change. We have an isomorphism
	\[
	\text{Hom}_{R/\idm^n, \text{cl}}(K\otimes R/\idm^n, L\otimes R/\idm) = \text{Hom}_{R/\idm, \text{cl}}(K\otimes R/\idm, L\otimes R/\idm).
	\]
	
	To prove the coefficient formula, we note that the base-change implies we may assume $Y$ is a field. In this case, the statement is proven in the proof of \cite[Prop.~10.1.17]{LeiFu}
\end{proof}
\begin{proof}[Proof.~({Thm.~\ref{genbchom_THM}})] By Remark \ref{genbcprep_REM}, we may assume $R$ is a complete Noetherian local ring.  We define 
	\[
	\text{Hom}(K, L) := \varprojlim \text{Hom}_{R/\idm^n, \text{cl}}(K\otimes R/\idm^n, L\otimes R/\idm^n).
	\]
This object is well-defined by Lemma \ref{constlemma_LEM}. Moreover, it is $\idm$-derived complete because it is a limit of $\idm$-derived complete complexes. Let $K' \in \text{D}^b_c(X, R)$ be a constructible complex. If $\text{Hom}(K, L)$ is constructible, then Lemma \ref{genbcinternalhom_LEM} implies that we have isomorphisms 
\begin{align*}
	\text{Hom}_R(K', \text{Hom}(K, L)) &= \varprojlim \text{Hom}_R(K', \text{Hom}_{R/I^n, \text{cl}}(K\otimes R/I^n, L\otimes R/I^n)) \\ 
	&= \varprojlim \text{Hom}_{R/I^n}(K'\otimes R/I^n, \text{Hom}_{R/I^n, \text{cl}}(K\otimes R/I^n, L\otimes R/I^n)) \\
	&= \varprojlim \text{Hom}_{R/I^n}((K'\otimes_R R/I^n)\otimes_{R/I^n}(K \otimes_R R/I^n), L\otimes_R R/I^n)\\
	&= \varprojlim \text{Hom}_{R/I^n}(K'\otimes_R K \otimes_R R/I^n, L\otimes R/I^n)\\
	&= \text{Hom}_R(K'\otimes_R K, L) \\
\end{align*}
Therefore, if $\text{Hom}(K, L)$ is constructible, then it is the internal mapping object of $K$ and $L$ in the category $\text{D}^b_c(X, R)$. Our next step is to find a finite stratification of $Y$ over which the complexes are constructible. 

By \cite[Ch.~7,~Cor.~2.9]{DeligneSGA45} and Remark \ref{genbcqcqs_REM}, we may pass to a stratum in a stratification of $Y$ and assume that the formation of $\text{Hom}_{R/\idm, \text{cl}}(K\otimes R/\idm, L\otimes R/\idm)$ commutes with base change in the etale topology. Since the pullback in the etale and the proetale topology are isomorphic by \cite[Lem.~5.4.1]{BhattSPE}, we see that the formation of the classical internal mapping object commutes with base-change in the proetale topology. Lemma \ref{genbchomcommutes_LEM} implies $$\text{Hom}_{R/\idm^n, \text{cl}}(K\otimes R/\idm^n, L\otimes R/\idm^n)\otimes R/\idm = \text{Hom}_{R/\idm, \text{cl}}(K\otimes R/\idm, L\otimes R/\idm)$$ and that the formation of 
\[
\text{Hom}_{R/\idm^n, \text{cl}}(K\otimes R/\idm^n, L\otimes R/\idm^n)
\]
commutes with base change for all $n \geq 1$.

By \cite[Ch.~7,~Cor.~2.9]{DeligneSGA45} and Remark \ref{genbcqcqs_REM}, we may pass to a further stratification to assume the complex $\text{Hom}_{R/\idm, \text{cl}}(K\otimes R/\idm, L\otimes R/\idm)$ is constructible. This implies $\text{Hom}_{R/\idm^n, \text{cl}}(K\otimes R/\idm^n, L\otimes R/\idm^n)$ is constructible for all $n \geq 1$ by \cite[Lem.~6.3.12]{BhattSPE}. In particular, these complexes are bounded for all $n \geq 1$. 

Because these complexes are bounded and satisfy the above compatibility, \cite[Lem.~3.5.5]{BhattSPE} implies the isomorphism
\[
\text{Hom}(K, L)\otimes R/\idm = \text{Hom}_{R/\idm}(K\otimes R/\idm, L\otimes R/\idm). 
\]
This implies that the complex on the left is classical and constructible. Since $\text{Hom}(K, L)$ is $\idm$-derived complete, it is a constructible complex. In particular, we obtain (1).

We now verify that the formation of the above complex commutes with base-change from the assumptions we imposed during the proof of (1). Let $g\colon Y' \rightarrow Y$ be a morphism of qcqs schemes. The reference \cite[Lem.~3.5.5]{BhattSPE} provides the isomorphism
\begin{align*}
	g^*_{\text{}}(\text{Hom}(K, L)) = \varprojlim g^*(\text{Hom}_{R/\idm^n, \text{cl}}(K\otimes R/\idm^n, L\otimes R/\idm^n)).
\end{align*}
The formation of the object on the right commutes with base-change, so we obtain
\[
g^*_{\text{}}(\text{Hom}(K, L))  = \varprojlim \text{Hom}_{R/\idm^n, \text{cl}}(g^*(K)\otimes R/\idm^n, g^*(L)\otimes R/\idm^n).
\]
This is the construction above applied to $g^*(K)$ and $g^*(L)$. Note that the assumptions imposed during the proof of (1) are invariant under base change. Therefore, we may apply the same argument as above to show that this is the internal mapping object of $g^*(K)$ and $g^*(L)$ in the category of constructible complexes.
\end{proof}
	\printbibliography		

@Book{KatzESDE,
  author    = {Katz, Nicholas M.},
  publisher = {Princeton University Press, Princeton, NJ},
  title     = {Exponential sums and differential equations},
  year      = {1990},
  series    = {Annals of Mathematics Studies},
  volume    = {124},
}

@Article{DeligneWeilII,
  author   = {Deligne, Pierre},
  journal  = {Inst. Hautes \'{E}tudes Sci. Publ. Math.},
  title    = {La conjecture de {W}eil. {II}},
  year     = {1980},
  pages    = {137--252},
  fjournal = {Institut des Hautes \'{E}tudes Scientifiques. Publications Math\'{e}matiques},
}

@Book{DeligneSGA45,
  author    = {Deligne, P.},
  publisher = {Springer-Verlag, Berlin},
  title     = {Cohomologie \'{e}tale},
  year      = {1977},
  note      = {S\'{e}minaire de g\'{e}om\'{e}trie alg\'{e}brique du Bois-Marie SGA $4\frac{1}{2}$},
  series    = {Lecture Notes in Mathematics},
  volume    = {569},
}

@InCollection{BBD,
  author    = {Beilinson, Alexander and Bernstein, Joseph and Deligne, Pierre and Gabber, Ofer},
  booktitle = {Ast\'erisque},
  title     = {Faisceaux pervers},
  year      = {2018},
  number    = {100},
}

@Book{LeiFu,
  author    = {Fu, Lei},
  publisher = {World Scientific Publishing Co. Pte. Ltd., Hackensack, NJ},
  title     = {Etale cohomology theory},
  year      = {2015},
  series    = {Nankai Tracts in Mathematics},
  volume    = {14},
}

@Article{hansen2023relativeperversity,
  author   = {Hansen, David and Scholze, Peter},
  journal  = {Commun. Am. Math. Soc.},
  title    = {Relative perversity},
  year     = {2023},
  pages    = {631--668},
  volume   = {3},
  fjournal = {Communications of the American Mathematical Society},
}

@Article{LafforguePurity,
  author   = {Lafforgue, Laurent},
  journal  = {Invent. Math.},
  title    = {Chtoucas de {D}rinfeld et correspondance de {L}anglands},
  year     = {2002},
  number   = {1},
  pages    = {1--241},
  volume   = {147},
  fjournal = {Inventiones Mathematicae},
}

@Misc{stacks-project,
  author       = {The {Stacks Project Authors}},
  howpublished = {\url{https://stacks.math.columbia.edu}},
  title        = {\textit{Stacks Project}},
  year         = {2018},
  shorthand    = {Stacks},
}

@Article{Lu2022,
  author  = {Lu, Qing and Zheng, Weizhe},
  journal = {Forum of Mathematics, Sigma},
  title   = {Categorical traces and a relative Lefschetz–Verdier formula},
  year    = {2022},
  volume  = {10},
}

@Article{DrinfeldKashiwara,
  author  = {Drinfeld, Vladimir},
  journal = {Mathematical Research Letters},
  title   = {On a conjecture of {K}ashiwara},
  year    = {2001},
  number  = {5-6},
  pages   = {713--728},
  volume  = {8},
}

@Article{BhattSPE,
  author   = {Bhatt, Bhargav and Scholze, Peter},
  journal  = {Ast\'erisque},
  title    = {The pro-\'etale topology for schemes},
  year     = {2015},
  number   = {369},
  pages    = {99--201},
  fjournal = {Ast\'erisque},
}

@Article{orgogozoFundamentalGroup,
  author   = {Orgogozo, Fabrice},
  journal  = {Bull. Soc. Math. France},
  title    = {Alt\'erations et groupe fondamental premier \`a{} {$p$}},
  year     = {2003},
  number   = {1},
  pages    = {123--147},
  volume   = {131},
  fjournal = {Bulletin de la Soci\'et\'e{} Math\'ematique de France},
}

@Article{conradgrpcohomology,
  author  = {Brian Conrad},
  journal = {Preprint available at \url{https://math.stanford.edu/~conrad/papers/locchar.pdf}},
  title   = {Lifting global representations with local properties},
  year    = {2011},
}

@Book{SGA1,
  author    = {Grothendieck, Alexander},
  publisher = {Institut des Hautes \'Etudes Scientifiques, Paris},
  title     = {Rev\^etements \'etales et groupe fondamental},
  year      = {1963},
  note      = {3\`eme \'edition, S\'eminaire de G\'eom\'etrie Alg\'ebrique},
}

@Article{esnault2025semistablelefschetzpencils,
  author  = {Esnault, H\'el\`ene and Kerz, Moritz},
  journal = {Journal f\"ur die Reine und Angewandte Mathematik. [Crelle's Journal]},
  title   = {Semistable {L}efschetz pencils},
  year    = {2026},
  pages   = {233--277},
  volume  = {835},
}

@InCollection{beilinsonbernsteinjantzen,
  author    = {Beilinson, A. and Bernstein, J.},
  booktitle = {I. M. Gelfand Seminar},
  title     = {A proof of {Jantzen} conjectures},
  year      = {1993},
  pages     = {1-50},
  volume    = {16, Part 1},
}

@Misc{milne2024descentalgebraicschemes,
  author        = {James S Milne},
  title         = {Descent for Algebraic Schemes},
  year          = {2024},
  archiveprefix = {arXiv},
  eprint        = {2406.05550},
}

@Book{SGA4,
  author    = {Grothendieck, A and Artin, M and Verdier, J. L. and Deligne, P. and Saint-Donat, B.},
  publisher = {Springer-Verlag, Berlin},
  title     = {Theorie des topos et cohomologie etale des schemas. {T}ome 1: {T}heorie des topos},
  year      = {1972},
  series    = {Lecture Notes in Mathematics},
  volume    = {269},
}

@Article{zockmasterthesis,
  author  = {Zock, H.},
  journal = {Master thesis, Universität Regensburg},
  title   = {A local version of Kashiwara’s conjecture},
  year    = {2024},
}

@Book{serreabelianladic,
  author    = {Serre, Jean-Pierre},
  publisher = {W. A. Benjamin, Inc., New York-Amsterdam},
  title     = {Abelian {$l$}-adic representations and elliptic curves},
  year      = {1968},
}

@Article{petrov23,
  author  = {Petrov, Alexander},
  journal = {Duke Mathematical Journal},
  title   = {Geometrically irreducible p-adic local systems are de Rham up to a twist},
  year    = {2023},
  number  = {5},
  volume  = {172},
}

@Misc{esnault2026nonabelianversiondelignesfixed,
  author        = {Hélène Esnault and Moritz Kerz},
  title         = {A non-abelian version of Deligne's Fixed Part Theorem},
  year          = {2026},
  archiveprefix = {arXiv},
  eprint        = {2408.13910},
}

@Article{EGA4,
  author  = {Grothendieck, A.},
  journal = {Inst. Hautes \'Etudes Sci. Publ. Math.},
  title   = {\'El\'ements de g\'eom\'etrie alg\'ebrique. {IV}. \'Etude locale des sch\'emas et des morphismes de sch\'emas {IV}},
  year    = {1967},
}

@Article{hemo2023constructiblesheavesschemes,
  author  = {Hemo, Tamir and Richarz, Timo and Scholbach, Jakob},
  journal = {Adv. Math.},
  title   = {Constructible sheaves on schemes},
  year    = {2023},
  pages   = {Paper No. 109179, pp. 46},
  volume  = {429},
}

@Article{zhensun16paritysymmetry,
  author  = {Shenghao Sun and Weizhe Zheng},
  journal = {Algebra {\&} Number Theory},
  title   = {{Parity and symmetry in intersection and ordinary cohomology}},
  year    = {2016},
  number  = {2},
  pages   = {235 -- 307},
  volume  = {10},
}

@InBook{kashiwara98conjectures,
  author    = {Kashiwara, Masaki},
  pages     = {267--271},
  publisher = {Birkh{\"a}user Boston},
  title     = {Semisimple Holonomic D-Modules},
  year      = {1998},
  booktitle = {Topological Field Theory, Primitive Forms and Related Topics},
}

@Misc{hansen2025arithmeticpropertieselladicetale,
  author        = {David Hansen and Bogdan Zavyalov},
  title         = {Arithmetic Properties Of $\ell$-adic \'Etale Cohomology and Nearby Cycles of Rigid-Analytic Spaces},
  year          = {2025},
  archiveprefix = {arXiv},
  eprint        = {2301.01800},
}
\end{document}